\documentclass[reqno]{amsart}
\usepackage{amsfonts}
\usepackage{amssymb}
\usepackage{amsmath}
\usepackage{amsthm}
\usepackage{amscd}
\usepackage{mathrsfs}
\usepackage{graphicx, color}
\usepackage{url}
\usepackage{tabularx}
\usepackage[normalem]{ulem}

\theoremstyle{plain}
\newtheorem{theorem}{Theorem}[section]

\newtheorem{lemma}[theorem]{Lemma}
\newtheorem{corollary}[theorem]{Corollary}
\newtheorem{proposition}[theorem]{Proposition}

\theoremstyle{definition}

\newtheorem{remark}[theorem]{Remark}

\numberwithin{equation}{section}
\numberwithin{figure}{section}
\numberwithin{table}{section}

\allowdisplaybreaks[1]

\begin{document}

\title[Lengths of 3-cocycles of 7-dihedral and octahedral quandles]{The lengths of 3-cocycles of \\ the 7-dihedral and the octahedral quandles}
\author{Ayumu Inoue}
\address{Department of Mathematics, Tsuda University, 2-1-1 Tsuda-machi, Kodaira-shi, Tokyo 187-8577, Japan}
\email{ayminoue@tsuda.ac.jp}

\subjclass[2020]{57K45, 57K12}
\keywords{surface knot, triple point number, quandle}

\dedicatory{Dedicated to Professor Tomotada Ohtsuki on the occasion of his 60th birthday}

\begin{abstract}
We determine the lengths of certain 3-cocycles of the 7-dihedral and the octahedral quandles.
As a consequence, we show that both of the 2-twist-spun $5_{2}$-knot and the 4-twist-spun trefoil have the triple point number eight.
\end{abstract}

\maketitle

\section{Introduction}
\label{sec:introduction}

Quandles and their homology are known to have really good chemistry with knot theory.
In particular, the lengths of 3-cocycles of quandles have been investigated by several authors to determine the triple point numbers of surface knots.
Here, a surface knot is a closed surface which is embedded in $\mathbb{R}^{4}$ smoothly.
The triple point number of a surface knot is defined in a similar way to the crossing number of a classical knot.
In general, it is difficult to determine the triple point number of a surface knot, as well as the crossing number of a classical knot.

In a celebrated paper \cite{SS2004}, Satoh and Shima essentially showed that the length of the Mochizuki 3-cocycle of the 3-dihedral quandle is equal to four, investigating ``networks of triple points''.
As a consequence, they proved that the 2-twist-spun trefoil has the triple point number four.
We note that this 2-twist-spun trefoil is the first surface knot whose triple point number is concretely determined except surface knots having the triple point number zero.
After that, Hatakenaka essentially showed in \cite{Hat2004} that the length of the Mochizuki 3-cocycle of the 5-dihedral quandle is at least six in a similar way.

The basic notion of the length of a 3-cocycle of a quandle is introduced by Satoh and Shima in \cite{SS2005}.
They showed that the length of a certain 3-cocycle of the tetrahedral quandle is equal to six, investigating ``graphs of 3-cycles'' of the tetrahedral quandle.
As a consequence, they proved that the 3-twist-spun trefoil has the triple point number six.
Then, Satoh organized arguments on the length, and showed that the length of the Mochizuki 3-cocycle of the 5-dihedral quandle is equal to eight in \cite{Sat2016}.
As a consequence, he proved that both of the 2-twist-spun $4_{1}$-knot and the 2-twist-spun $5_{1}$-knot have the triple point number eight.

In this paper, we show the following theorem in a similar way to \cite{Sat2016}.

\begin{theorem}
\label{thm:R7}
The length of the Mochizuki 3-cocycle $\zeta$ of the 7-dihedral quandle is equal to eight.
\end{theorem}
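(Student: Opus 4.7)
\medskip

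\noindent\textbf{Proof proposal.}
The plan is to follow the template established by Satoh in \cite{Sat2016} for the $5$-dihedral quandle, adapting the combinatorial analysis of ``graphs of $3$-cycles'' to the larger quandle $R_{7}=\{0,1,\dots,6\}$ with operation $a*b=2b-a\pmod 7$. The theorem asserts the equality of two quantities, so I would separately establish the upper bound $\mathrm{length}(\zeta)\le 8$ and the lower bound $\mathrm{length}(\zeta)\ge 8$, and the vast majority of the work will lie in the latter.

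For the upper bound, I would exhibit an explicit configuration of eight triple points, colored by $R_{7}$, whose total $\zeta$-weight is nontrivial in the target group. A natural source for such a configuration is a broken-surface diagram of the $2$-twist-spun $5_{2}$-knot (or the $4$-twist-spun trefoil), which the abstract advertises as having triple point number eight; one expects standard twist-spin diagrams to give eight triple points together with a concrete $R_{7}$-coloring on which $\zeta$ evaluates to a nonzero element. Verifying this is essentially a direct computation: list the eight triple points, read off the triple of colors at each, evaluate $\zeta$ and sum.

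The core of the paper will be the lower bound. Here I would proceed exactly as in \cite{Sat2016}: assume for contradiction that there is a realization of $\zeta$ with at most seven triple points, and analyze the associated graph of $3$-cycles. The vertices of this graph correspond to the triple points and the edges encode how the $3$-cycles (unordered triples of colors coming from the Reidemeister-type moves on broken surfaces) are shared, subject to the cocycle conditions for $\zeta$. One first catalogues the $3$-cycles of $R_{7}$ on which $\zeta$ is nontrivial, then enforces the parity/balance constraints that any honest realization must satisfy (each internal edge cancels, each free ``leg'' contributes a prescribed amount to the total weight). A short case analysis on the possible degree sequences and on how nontrivial $3$-cycles can be glued should show that seven or fewer vertices cannot produce a nontrivial total $\zeta$-weight.

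The hard part will be this case analysis. Passing from $R_{5}$ to $R_{7}$ enlarges the set of admissible $3$-cycles and therefore the combinatorial possibilities for the graph, so I expect the bookkeeping to inflate substantially relative to \cite{Sat2016}. The strategy for keeping it tractable is to exploit the $S_{3}$-symmetries of $\zeta$ and the action of the affine group $\mathbb{Z}/7\rtimes(\mathbb{Z}/7)^{\times}$ on $R_{7}$, which dramatically cut down the number of inequivalent configurations of colors at a triple point; one can then reduce to a short list of normal forms of small subgraphs and rule each one out by a linear computation over $\mathbb{Z}/7$. If that reduction goes through cleanly, combining it with the upper bound completes the proof.
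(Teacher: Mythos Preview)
Your overall two-step plan is correct and matches the paper: the upper bound is obtained exactly as you say, from an explicit $R_{7}$-colored diagram of the $2$-twist-spun $5_{2}$-knot with eight triple points and nonzero $\zeta$-weight (this is done in Section~\ref{sec:upper_bounds_of_zeta_and_eta}), and the lower bound follows Satoh's framework from \cite{Sat2016}. However, you have misidentified what that framework is. ``Graphs of $3$-cycles'' is the method of Satoh--Shima \cite{SS2005} for the tetrahedral quandle; the argument in \cite{Sat2016}, which is what the paper adapts, is structurally quite different, and your description of ``vertices corresponding to triple points, edges encoding how $3$-cycles are shared, subject to Reidemeister-type constraints'' does not match it.

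Concretely, the lower bound is purely algebraic and never looks at surface diagrams. One works with the $R_{7}$-set $S=\mathbb{Z}\times R_{7}$ on which generators act by $(n,u)^{a}=(n+1,u^{a})$ and shows $l(\zeta,S)\ge 8$. The first coordinate gives every $3$-term a \emph{degree}, and the boundary splits as $\partial=f+g$ with $f$ degree-preserving and $g$ degree-raising; a short $3$-cycle is then stratified by degree, and Lemma~\ref{lem:3-cycle_condition} forces cancellations between the graded pieces. The enumeration of small $f$-connected families (Lemmas~\ref{lem:f-connected1}--\ref{lem:f-connected5}) is carried out via cell decompositions of $S^{2}$ by bigons and triangles, and the symmetries that cut down cases are the \emph{reverse} (swapping $f$ and $g$, Lemma~\ref{lem:reverse}) and the \emph{reflection} specific to dihedral quandles (Lemma~\ref{lem:reflection}), not the affine group action you propose. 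Proposition~\ref{prop:R7_list} then reduces everything to a finite list of degree profiles (A), (B), (C), and Propositions~\ref{prop:R7_I}--\ref{prop:R7_VIII} dispose of each. Without this degree/index filtration there is no organizing principle for the case analysis you envision, and the ``short list of normal forms of small subgraphs'' would not materialize.
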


Moreover, developing Satoh's arguments in \cite{Sat2016}, we show the following theorem.

\begin{theorem}
\label{thm:O6}
The length of the 3-cocycle $\eta$, defined in Section \ref{sec:preliminaries}, of the octahedral quandle is equal to eight.
\end{theorem}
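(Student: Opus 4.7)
The plan is to mirror the two-sided structure of Satoh's proof in \cite{Sat2016} for the 5-dihedral case: establish the upper bound by exhibiting a concrete 3-chain of length $8$ whose $\eta$-value is non-zero, and establish the matching lower bound by showing that every 3-cycle of length at most $7$ is annihilated by $\eta$. Combining Theorem \ref{thm:R7} in the background as a template, the proof of Theorem \ref{thm:O6} will essentially be a more elaborate version of the argument used there, adapted to the richer algebra of the octahedral quandle.

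For the upper bound I would hunt for an octahedral analogue of the length-$8$ 3-cycle that Satoh produces in the dihedral setting. Concretely, I would enumerate short sums of triples $(x,y,z)$ in the octahedral quandle $Q$ that are 3-cycles in $C_{3}(Q)$, guided by the rotational symmetry of the octahedron, and then evaluate $\eta$ on each candidate. Because $\eta$ is the explicit 3-cocycle written down in Section \ref{sec:preliminaries}, once a plausible symmetric family of $8$-term cycles is identified the verification reduces to a finite algebraic check; I expect a cycle based on an orbit of the stabilizer of a pair of antipodal vertices to do the job.

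The lower bound is the heart of the argument. Following Satoh, I would attach to each 3-chain a combinatorial graph of 3-cycles whose vertices are the triples appearing in the chain and whose edges record the cancellation relations forced by $\partial = 0$. For a 3-cycle of length $n \leq 7$, one then has to prove that $\eta$ must vanish on it. This reduces to a case analysis on the possible isomorphism types of the associated graph, each case being resolved by an algebraic identity that $\eta$ satisfies on elementary local configurations such as pairs, triangles, and squares of triples. The $n \leq 6$ cases should be closely parallel to the dihedral argument; the main new work is at $n = 7$, where the graph has enough freedom that several previously rigid subcases bifurcate.

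The principal obstacle will be precisely this enlarged lower-bound analysis. Unlike the dihedral quandle $R_{5}$, the octahedral quandle has more than one orbit of pairs under its inner automorphism group, so both the number of local configurations and the complexity of the identities satisfied by $\eta$ grow substantially. To manage this, I would refine Satoh's graph-of-3-cycles framework by stratifying the local pieces of a short 3-cycle according to the conjugacy type of the base pair in $Q$, compute the $\eta$-contribution of each stratum, and combine these contributions via a pigeonhole or degree-counting argument on the graph to force vanishing whenever the length is at most $7$. Organizing the case analysis compactly, rather than any single computation, is where I expect to spend the most effort.
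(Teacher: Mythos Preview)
Your proposal has a genuine gap in the upper-bound direction. Recall that $l(\eta) = \max_{S} l(\eta, S)$, where the maximum runs over all $O_{6}$-sets $S$. Exhibiting a single 3-cycle of length eight with non-zero $\eta$-value only shows $l(\eta, S_{0}) \leq 8$ for that particular $S_{0}$; it says nothing about $l(\eta, S)$ for other $S$, and hence does not bound the maximum. The paper obtains $l(\eta) \leq 8$ by an entirely different route: it invokes Satoh's inequality $l(\theta) \leq t(F)$ (Theorem~\ref{thm:upper_bound}), valid whenever the cocycle invariant of an oriented surface knot $F$ associated with $\theta$ contains a non-zero element, and then exhibits a diagram of the 4-twist-spun trefoil with exactly eight triple points together with an $O_{6}$-coloring whose $\eta$-weight is $1$. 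This topological input is what makes the bound uniform over all $O_{6}$-sets, and your plan does not supply a substitute for it.

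Your lower-bound sketch is closer in spirit to the paper, but it omits the structural device that makes the case analysis feasible. The paper does not work in $C^{Q}_{3}(O_{6})$ directly; it passes to the $O_{6}$-set $S = \mathbb{Z} \times O_{6}$, which equips every term with a \emph{degree} and an \emph{index} and splits $\partial = f + g$ into a degree-preserving part $f$ and a degree-raising part $g$. This filtration, together with the reverse operation interchanging $f$- and $g$-connectedness, is what reduces the problem to the finite enumeration of $f$-connected configurations (Lemmas~\ref{lem:f-connected1}--\ref{lem:f-connected5}) and the subsequent Propositions~\ref{prop:O6_list}, \ref{prop:O6_I}--\ref{prop:O6_VIII}. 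Your proposed ``graph of 3-cycles'' stratified by conjugacy type of base pairs does not obviously reproduce this degree/index bookkeeping, and without it the $n=7$ case is likely to be unmanageable; the paper already devotes four separate sections to that analysis even with the $\mathbb{Z} \times O_{6}$ machinery in hand.
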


In light of the above theorems, we finally show the following corollaries.

\begin{corollary}
\label{cor:2-twist-spun_5_2}
The 2-twist-spun $5_{2}$-knot has the triple point number eight.
\end{corollary}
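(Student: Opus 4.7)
The plan is to squeeze the triple point number of the 2-twist-spun $5_2$-knot between matching upper and lower bounds, both equal to $8$. The lower bound is delivered by Theorem \ref{thm:R7} together with a nontriviality check for the quandle cocycle invariant, while the upper bound is obtained by a concrete diagram. This is exactly the template that Satoh used in \cite{Sat2016} for the 2-twist-spun $4_1$ and $5_1$ knots, with the 7-dihedral quandle $R_7$ replacing the 5-dihedral quandle $R_5$.

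For the upper bound, I would begin from a standard 5-crossing diagram of $5_2$ and form its 2-twist-spin by the motion picture (or spinning) construction; counting triple points in the resulting broken surface diagram gives at most $8$. This is entirely analogous to the surface diagrams Satoh draws in \cite{Sat2016} for $4_1$ and $5_1$, and is tedious but routine once the correct classical diagram of $5_2$ is chosen.

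For the lower bound, recall that the determinant of $5_2$ equals $7$, so $5_2$ admits nontrivial colorings by $R_7$, and these extend canonically to $R_7$-colorings of its 2-twist-spin. I would then evaluate the Mochizuki cocycle invariant $\Phi_\zeta$ on the 2-twist-spun $5_2$-knot: by the reduction formula of Satoh--Shima and Mochizuki for twist-spun knots, $\Phi_\zeta(\tau^2 5_2)$ is expressible as a signed sum over $R_7$-colorings of $5_2$ of explicit values of $\zeta$. Plugging in the nontrivial coloring of $5_2$ shows that $\Phi_\zeta(\tau^2 5_2)$ is nonzero in $\mathbb{Z}[R_7]/\cdots$. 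Consequently, every broken surface diagram of $\tau^2 5_2$ contains a coloring whose $\zeta$-weight is nonzero, and by Theorem \ref{thm:R7} the number of triple points in any such diagram is at least the length of $\zeta$, namely $8$.

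The main obstacle is the explicit verification that the Mochizuki-type invariant $\Phi_\zeta(\tau^2 5_2)$ is nonzero: this requires identifying the $R_7$-colorings of $5_2$ (a one-parameter family modulo the diagonal action), computing the corresponding Alexander-style contribution at each crossing, and checking that the resulting sum of values of $\zeta$ over $(\mathbb{Z}/7\mathbb{Z})^3$ does not vanish modulo $7$. Once this (finite, essentially mechanical) calculation is in hand, the corollary follows immediately by combining it with Theorem \ref{thm:R7} and the upper-bound diagram.
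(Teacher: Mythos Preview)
Your proposal is correct and follows the same overall strategy as the paper: squeeze $t(\tau^2 5_2)$ between $l(\zeta)=8$ (via Theorem~\ref{thm:R7} and nontriviality of the cocycle invariant) and an explicit diagram with eight triple points. Two small differences are worth noting. First, for the upper bound the paper does not build the diagram from scratch but invokes Yashiro's construction \cite{Yas2005}, which directly yields an eight-triple-point diagram of $\tau^2 5_2$; your sketch ``form the 2-twist-spin from a 5-crossing diagram and count'' is the right idea but would need that specific optimized motion picture to land at $8$ rather than something larger. Second, to verify that the cocycle invariant is nonzero the paper simply colors that same eight-triple-point diagram explicitly and computes the weight (obtaining $6\in\mathbb{Z}/7\mathbb{Z}$), whereas you propose to use a reduction formula for twist-spun knots; both routes work, but the paper's choice has the economy that a single diagram serves simultaneously for the upper bound and for the nontriviality check.
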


\begin{corollary}
\label{cor:4-twist-spun_3_1}
The 4-twist-spun trefoil has the triple point number eight.\footnote{Yashiro already showed that the $2 k$-twist-spun trefoil has the triple point number $4 k$ for each $k \geq 1$ in \cite{Yas2018}. However, there seems to be a slight leap in his proof (see Appendix \ref{app:counter_example}). Therefore, the author decided to provide an alternative proof for $k = 2$.}
\end{corollary}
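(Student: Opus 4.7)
The plan is to prove the two inequalities comprising the triple point number separately. For the upper bound, I would construct an explicit broken surface diagram of the 4-twist-spun trefoil having exactly eight triple points. A $k$-twist-spinning of a classical knot diagram with $c$ crossings admits a motion-picture description in which each twist contributes a bounded number of Reidemeister-III-type events; for the trefoil with $c=3$ and $k=4$, a direct motion-picture construction (of the type used in the footnoted work of Yashiro, and implicit in the 2- and 3-twist-spun cases handled by Satoh and Shima) yields a diagram with eight triple points. Since my goal here is only the upper bound, I do not need to verify minimality of this construction---only that such a diagram exists.

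For the lower bound, the plan is to invoke Theorem \ref{thm:O6}. The general principle connecting 3-cocycle lengths with triple point numbers says that if a surface knot $F$ admits a broken surface diagram $D$ and a quandle coloring $C$ of $D$ by $Q$ such that the contribution
\[
W_\eta(D,C) = \sum_{\tau} \pm \eta\bigl(C_1(\tau), C_2(\tau), C_3(\tau)\bigr)
\]
to the Carter--Jelsovsky--Kamada--Langford--Saito state sum is nonzero, then the number of triple points of $D$ is at least $\ell(\eta)$. Thus the task reduces to producing an octahedral-quandle coloring of some diagram of the 4-twist-spun trefoil whose $\eta$-weight is nontrivial in the coefficient group.

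To construct such a coloring, I would use the natural surjection of the fundamental quandle of the trefoil onto the octahedral quandle coming from the dihedral/tetrahedral-type representation of the trefoil group, and argue that this coloring extends over the 4-twist-spinning because the monodromy of the spinning preserves the coloring up to an automorphism compatible with $\eta$. I would then compute $W_\eta(D,C)$ on the motion-picture diagram from the upper-bound step (or on any convenient diagram) and check nonvanishing. Combining the resulting inequality $T(F) \geq \ell(\eta) = 8$ with the upper bound $T(F) \leq 8$ yields the corollary.

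The main obstacle will be the cocycle-invariant computation: producing a coloring whose $\eta$-weight is provably nonzero. Tracking the octahedral coloring across a motion picture of a twist-spun knot is combinatorially delicate, and Mochizuki-type cocycles exhibit numerous vanishing phenomena due to symmetries of the quandle operation. I expect to organize the calculation by grouping triple points into ``networks'' in the style of Satoh--Shima \cite{SS2004} so that most contributions cancel and only a controlled residual remains, whose nontriviality can then be verified directly in the image group of $\eta$.
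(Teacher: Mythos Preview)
Your proposal is correct and follows essentially the same route as the paper: an explicit eight-triple-point motion-picture diagram gives the upper bound, and a nontrivial $O_{6}$-coloring with nonzero $\eta$-weight combined with Theorem~\ref{thm:O6} gives the lower bound via Satoh's inequality $l(\eta)\le t(F)$. The only difference is one of emphasis: the paper simply writes down an explicit coloring on the motion-picture diagram (Figure~\ref{fig:motion_picture_of_3_1}) and evaluates $\eta$ at the eight resulting triples to get weight $1$, so the ``network'' organization you anticipate is unnecessary---the computation is a direct eight-term sum in $\mathbb{Z}/3\mathbb{Z}$.
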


This paper is organized as follows.
In Section \ref{sec:preliminaries}, we review the notion of the length of a cocycle of a quandle, and the definitions of a dihedral and the octahedral quandles.
Furthermore, we review the definition of the Mochizuki 3-cocycle of a dihedral quandle and introduce a 3-cocycle $\eta$ of the octahedral quandle.
In Section \ref{sec:tools}, we prepare some notions which are useful to discuss the length of a 3-cocycle of a quandle.
In Section \ref{sec:enumeration_of_f-connected_3-terms}, we enumerate 3-chains of a quandle satisfying a certain condition introduced in Section \ref{sec:tools}, for the subsequent arguments.
In Sections \ref{sec:lower_bound_of_zeta} and \ref{sec:lower_bound_of_eta}, we respectively give us lower bounds of the lengths of the Mochizuki 3-cocycle of the 7-dihedral quandle and the 3-cocycle $\eta$ of the octahedral quandle.
Since their arguments are quite complicated, we devote Sections \ref{sec:proof_of_lemma_O6_f2_special_2-5}--\ref{sec:proof_of_proposition_O6_VIII} to prove some claims which we need to achieve the latter lower bound.
Finally in Section \ref{sec:upper_bounds_of_zeta_and_eta}, we prove Theorems \ref{thm:R7} and \ref{thm:O6} in light of the lower bounds, and see that we naturally have Corollaries \ref{cor:2-twist-spun_5_2} and \ref{cor:4-twist-spun_3_1}.

\section{Preliminaries}
\label{sec:preliminaries}

In this section, we first review the notion of the length of a cocycle of a quandle in the style of Satoh's work \cite{Sat2016}.
Then, we introduce concrete quandles and their 3-cocycles on which we focus in this paper.
We refer the reader \cite{Kam2017} for more details on quandle theory.

A \emph{quandle} is a non-empty set $X$ equipped with a binary operation $(a, b) \mapsto a^{b}$ satisfying the following axioms.
\begin{itemize}
\item[(Q1)]
For each $a \in X$, $a^{a} = a$.
\item[(Q2)]
For each $y \in X$, the map $\square^{y} : X \rightarrow X$ ($x \mapsto x^{y}$) is bijective.
\item[(Q3)]
For each $a, b, c \in X$, $(a^{b})^{c} = (a^{c})^{b^{c}}$.
\end{itemize}
In the remaining, we abbreviate $(a^{b})^{c}$ as $a^{bc}$, $((a^{b})^{c})^{d}$ as $a^{bcd}$, and so on.
The notions of homomorphism and isomorphism are appropriately defined for quandles.

Let $X$ be a quandle.
Associated with $X$, consider a group $G(X)$ having the following presentation:
\[
 G(X) = \langle X \mid \{ b^{-1} a b (a^{b})^{-1} \mid a, b \in X \} \rangle.
\]
We call $G(X)$ the \emph{associated group} of $X$.
A set $S$ is said to be an \emph{$X$-set} if $S$ is equipped with a right action $(s, g) \mapsto s^{g}$ of $G(X)$.
We note that $X$ itself is a typical $X$-set, because $G(X)$ naturally acts on $X$ from the right considering each generator of $G(X)$ to be an element of the quandle $X$.

Let $S$ be an $X$-set, $C^{Q}_{m}(X)_{S}$ the free abelian group generated by the set
\[
 U^{Q}_{m}(X)_{S} = \{ (s; a_{1}, a_{2}, \dots, a_{m}) \in S \times X^{m} \mid a_{i} \neq a_{i+1} \ (1 \leq i \leq m - 1) \}
\]
for each $m \geq 1$, and $C^{Q}_{0}(X)_{S}$ the free abelian group generated by $S$.
We refer to an element of $C^{Q}_{m}(X)_{S}$ of the form $\pm u$ with some $u \in U^{Q}_{m}(X)_{S}$ as an \emph{$m$-term} of $(X, S)$.
Some $m$-terms $\gamma_{1}, \gamma_{2}, \dots, \gamma_{k}$ of $(X, S)$ is said to be \emph{efficient} if $\gamma_{i} \neq - \gamma_{j}$ for any $i \neq j$.
Obviously, each non-zero $m$-chain $\gamma$ of $(X, S)$ can be uniquely written as $\gamma = \sum_{i = 1}^{l} \gamma_{i}$, up to order of summation, with some efficient $m$-terms $\gamma_{1}, \gamma_{2}, \dots, \gamma_{l}$ of $(X, S)$.
We call the number $l$ the \emph{length} of $\gamma$.
We let $l(\gamma)$ denote the length of $\gamma$.
We further refer to $\sum_{i = 1}^{l} \gamma_{i}$ as a \emph{reduced form} of $\gamma$.

For each $m \geq 1$, define homomorphisms $f$ and $g$ from $C^{Q}_{m}(X)_{S}$ to $C^{Q}_{m - 1}(X)_{S}$ by
\begin{align*}
 f(s; a_{1}, a_{2}, \dots, a_{m})
 & = \sum_{i = 1}^{m} (-1)^{i} (s; a_{1}, \dots, a_{i-1}, a_{i+1}, \dots, a_{m}), \\
 g(s; a_{1}, a_{2}, \dots, a_{m})
 & = \sum_{i = 1}^{m} (-1)^{i+1} (s^{a_{i}}; a_{1}^{a_{i}}, \dots, a_{i-1}^{a_{i}}, a_{i+1}, \dots, a_{m}),
\end{align*}
removing all terms $\pm (t; b_{1}, b_{2}, \dots, b_{m - 1})$ in the right-hand sides of the formulae which satisfy $b_{j} = b_{j + 1}$ for some $j$ ($1 \leq j \leq m - 2$).
It is routine to check that the homomorphism $\partial = f + g$ from $C^{Q}_{m}(X)_{S}$ to $C^{Q}_{m - 1}(X)_{S}$ satisfies the condition $\partial \circ \partial = 0$.
Therefore, we have the chain complex $(C^{Q}_{\ast}(X)_{S}, \partial)$, and thus its homology or cohomology groups with coefficient in an abelian group.

Let $O$ be a set consisting of a single element $o$.
Then, $O$ is obviously equipped with the trivial right action of $G(X)$, which is given by $o^{g} = o$.
In the remaining, we abbreviate $(X, O)$ as $X$, $C^{Q}_{m}(X)_{O}$ as $C^{Q}_{m}(X)$ and $(o; a_{1}, a_{2}, \dots, a_{m}) \in C^{Q}_{m}(X)$ as $(a_{1}, a_{2}, \dots, a_{m})$ by tradition.
Obviously, we have a chain map $\pi : C^{Q}_{m}(X)_{S} \to C^{Q}_{m}(X)$ given by
\[
 \pi(s; a_{1}, a_{2}, \dots, a_{m}) = (a_{1}, a_{2}, \dots, a_{m}).
\]
For an $m$-cocycle $\theta$ of $X$, define the number $l(\theta, S)$ by
\[
 l(\theta, S)
 = \min \{ l(\gamma) \mid \text{$\gamma$ is an $m$-cycle of $(X, S)$ satisfying $\theta(\pi(\gamma)) \neq 0$} \},
\]
where we let $l(\theta, S) = 0$ if the set appearing in the right-hand side of the formula is empty.
Furthermore, define the number $l(\theta)$ by
\[
 l(\theta) = \max \{ l(\theta, S) \mid \text{$S$ is an $X$-set} \},
\]
where we let $l(\theta) = \infty$ if the set appearing in the right-hand side of the formula is unbounded as a subset of $\mathbb{R}^{1}$.
We call $l(\theta)$ the \emph{length} of $\theta$.

It is easy to see that, for each integer $n \geq 3$, the cyclic group $R_{n}$ of order $n$ is a quandle with the binary operation $a^{b} = 2 b - a$.
We call this quandle $R_{n}$ the \emph{$n$-dihedral quandle}.
It is originally shown by Mochizuki \cite{Moc2003} and explicitly claimed by Asami and Satoh \cite{AS2005} that a homomorphism $\zeta_{n} : C^{Q}_{3}(R_{n}) \to \mathbb{Z} / n \mathbb{Z}$ given by
\[
 \zeta_{n}(a, b, c)
 = (a - b) \dfrac{b^{n} + (2 c - b)^{n} - 2 c^{n}}{n}
\]
is a 3-cocycle of $R_{n}$ with coefficient in $\mathbb{Z} / n \mathbb{Z}$, if $n$ is odd prime.
We call this $\zeta_{n}$ the \emph{Mochizuki 3-cocycle}.
We let $\zeta$ denote $\zeta_{7}$ throughout this paper.

Consider a regular octahedron whose vertices are indexed by numbers as depicted in the left-hand side of Figure \ref{fig:octahedral_quandle}.
Let $O_{6} = \{ 0, 1, 2, 3, 4, 5 \}$ be the set consisting of vertices of the octahedron.
For each $b \in O_{6}$, let $l_{b}$ be the line passing through the center of the octahedron and $b$.
Define a binary operation on $O_{6}$ so that $a^{b}$ is the image of $a$ by the $\pi / 2$-rotation about $l_{b}$ (counterclockwise when we look the center from $b$).
For example, we have $1^{0} = 2$ by definition.
It is routine to check that this binary operation satisfies the axioms of a quandle.
We call this quandle $O_{6}$ the \emph{octahedral quandle}.
For each $a \in O_{6}$, let $[a + 3]$ denote the element of $O_{6}$ whose value as an integer is equivalent to $a + 3$ modulo 6.
We note that we have $a^{b} = a$ if and only if $b = a$ or $[a + 3]$.
Let $H_{0}$ be the subgroup of $G(O_{6})$ generated by $0$, which is cyclic of order four.
Define a homomorphism $\eta : C^{Q}_{3}(O_{6}) \to \mathbb{Z} / 3 \mathbb{Z}$ by
\begin{align*}
 \eta(a, b, c) =
 \begin{cases}
  1 & \text{if $(a, b, c) = (0^{h}, 1^{h}, 2^{h}), \, (0^{h}, 3^{h}, 1^{h}), \, (1^{h} ,2^{h}, 0^{h}), \, (1^{h}, 4^{h}, 2^{h}),$} \\
    & \text{\phantom{if $(a, b, c) = $} $(3^{h}, 1^{h}, 5^{h})$ \enskip ($h \in H_{0}$)}, \\
  2 & \text{if $(a, b, c) = (0^{h}, 1^{h}, 5^{h}), \, (1^{h}, 0^{h}, 1^{h}), \, (1^{h}, 0^{h}, 5^{h}), \, (1^{h}, 2^{h}, 1^{h}),$} \\
    & \text{\phantom{if $(a, b, c) = $} $(1^{h}, 3^{h}, 1^{h}), \, (1^{h}, 3^{h}, 2^{h}), \, (1^{h}, 4^{h}, 5^{h}), \, (1^{h}, 5^{h}, 1^{h}),$} \\
    & \text{\phantom{if $(a, b, c) = $} $(3^{h}, 0^{h}, 1^{h}), \, (3^{h}, 1^{h}, 0^{h}), \, (3^{h}, 1^{h},2^{h})$ \enskip ($h \in H_{0}$)}, \\
  0 & \text{otherwise}.
 \end{cases}
\end{align*}
Then, it is routine to check that $\eta$ is a 3-cocycle of $O_{6}$ with coefficient in $\mathbb{Z} / 3 \mathbb{Z}$.
\begin{figure}[htbp]
 \centering
 \includegraphics[scale=0.25]{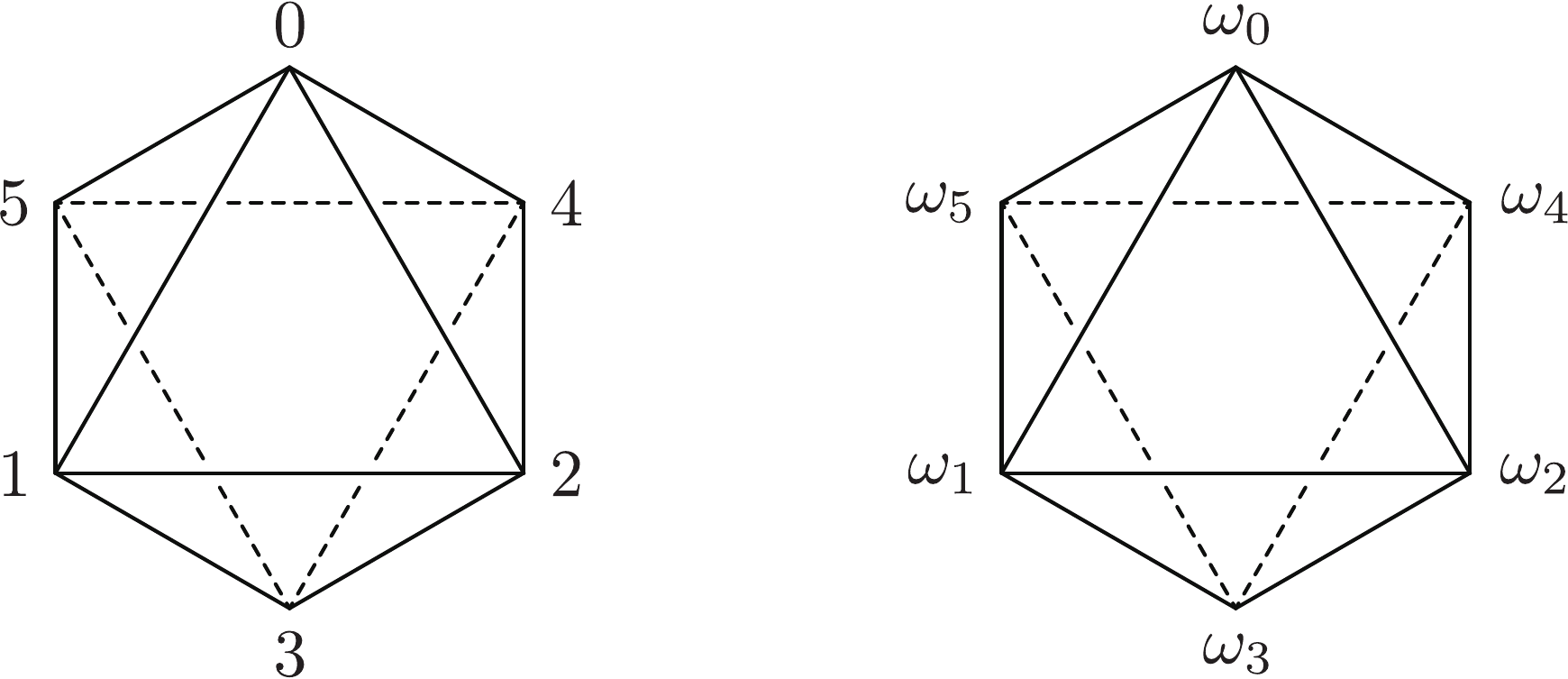}
 \caption{A regular octahedron (left) and its image by rotations (right).}
 \label{fig:octahedral_quandle}
\end{figure}

\section{Tools}
\label{sec:tools}

In this section, we introduce some notions and lemmas all of which are essentially given by Satoh in \cite{Sat2016} to discuss the length of a 3-cocycle of a quandle.
Although Satoh only focused on a dihedral quandle in \cite{Sat2016}, we do not only on a dihedral quandle but on the octahedral quandle.
Thus, we suitably develop arguments in \cite{Sat2016} to fit for any quandles.
We will summarize the modifications at the end of this section as Remarks \ref{rem:type0} and \ref{rem:reverse}.

Let $X$ be a quandle, $S$ an $X$-set, and $\gamma = \pm (s; a, b, c)$ a 3-term of $(X, S)$.
We define the \emph{type} of $\gamma$ as follows.
\begin{itemize}
\item
$\gamma$ is of type 0 if $a = c$ and $a^{b} = c$.
\item
$\gamma$ is of type 1 if $a = c$ and $a^{b} \neq c$.
\item
$\gamma$ is of type 2 if $a \neq c$ and $a^{b} = c$.
\item
$\gamma$ is of type 3 if $a \neq c$ and $a^{b} \neq c$.
\end{itemize}
We note that, in formulae
\begin{align*}
 f(\gamma) & = \mp \> (s; b, c) \> \uline{\pm \> (s; a, c)} \mp (s; a, b), \\
 g(\gamma) & = \pm \> (s^{a}; b, c) \> \uuline{\mp \> (s^{b}; a^{b}, c)} \pm (s^{c}, a^{c}, b^{c}),
\end{align*}
the single- (or double-) underlined 2-term vanishes if and only if $\gamma$ is of type 0 or 1 (or of type 0 or 2).
Further, the other 2-terms never vanish, because we have $b \neq c$, $a \neq b$ and $a^{c} \neq b^{c}$.

In the remaining of this section, we suppose that $S = \mathbb{Z} \times X$ on which $G(X)$ acts from the right as $(n, u)^{a} = (n + 1, u^{a})$ for each generator $a$ of $G(X)$.
We abbreviate an $m$-term $\gamma = \pm ((n, u); a_{1}, a_{2}, \dots, a_{m})$ of $(X, S)$ as $\pm (n, u; a_{1}, a_{2}, \dots, a_{m})$.
We call $\pm$, $n$, $u$ and $(a_{1}, a_{2}, \dots, a_{m})$ the \emph{sign}, \emph{degree}, \emph{index} and \emph{color} of $\gamma$, respectively.
Throughout this paper, we often abbreviate degrees or indices of $m$-terms if they are clear.
Furthermore, we let $\pm \langle n, u; a, b \rangle$ denote the 3-term $\pm (n, u; a, b, a)$ or $\pm (n, u; b, a, b)$.
We note that $g$ increases degrees by one, while $f$ preserves.

Let $\gamma$ be a 3-chain of $(X, S)$ and $\sum_{i = 1}^{l(\gamma)} \gamma_{i}$ a reduced form of $\gamma$.
We respectively call the minimal and maximal degrees of 3-terms $\gamma_{i}$ the \emph{minimal} and \emph{maximal degrees} of $\gamma$.
Furthermore, for each integer $k$, we let $T_{k}(\gamma)$ denote the set consisting of 3-terms $\gamma_{i}$ of degree $k$.
Then, we have the following lemma.

\begin{lemma}[cf.\ Lemma 3.1 of \cite{Sat2016}]
\label{lem:3-cycle_condition}
Let $\gamma$ be a 3-chain of $(X, S)$ and $\sum_{i = 1}^{l(\gamma)} \gamma_{i}$ a reduced form of $\gamma$.
Then, we have the following claim.
\begin{itemize}
\item[(i)]
$\gamma$ is a 3-cycle if and only if $\sum_{\gamma_{i} \in T_{k-1}(\gamma)} g(\gamma_{i}) + \sum_{\gamma_{i} \in T_{k}(\gamma)} f(\gamma_{i}) = 0$ for each $k \in \mathbb{Z}$.
\end{itemize}
Furthermore, if $\gamma$ is a 3-cycle, we have the following claims.
\begin{itemize}
\item[(ii)]
$\sum_{\gamma_{i} \in T_{k}(\gamma)} f(\gamma_{i}) = 0$ if $k$ is the minimal degree of $\gamma$.
\item[(iii)]
$\sum_{\gamma_{i} \in T_{k}(\gamma)} g(\gamma_{i}) = 0$ if $k$ is the maximal degree of $\gamma$.
\end{itemize}
\end{lemma}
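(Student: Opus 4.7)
The plan is to decompose the boundary $\partial\gamma = f(\gamma) + g(\gamma)$ by the degree of its 2-term summands, exploiting the key observation that on the particular $X$-set $S = \mathbb{Z} \times X$ with the action $(n,u)^{a} = (n+1, u^{a})$, the map $f$ preserves the $\mathbb{Z}$-component while $g$ raises it by exactly one. Concretely, for any 3-term $\gamma_i$ of degree $k$, every summand of $f(\gamma_i)$ has degree $k$ and every summand of $g(\gamma_i)$ has degree $k+1$; removing degenerate summands (those with $b_j = b_{j+1}$) does not disturb this degree bookkeeping. Since 2-terms of distinct degrees are distinct generators of the free abelian group $C^{Q}_{2}(X)_S$, any 2-chain vanishes if and only if its degree-$k$ component vanishes for every $k \in \mathbb{Z}$.

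With this in hand, for (i) I would collect, for fixed $k$, the degree-$k$ summands of $\partial\gamma = \sum_{i} f(\gamma_i) + \sum_{i} g(\gamma_i)$. They arise precisely from $f(\gamma_i)$ with $\gamma_i \in T_{k}(\gamma)$ together with $g(\gamma_j)$ with $\gamma_j \in T_{k-1}(\gamma)$. Thus the degree-$k$ component of $\partial\gamma$ equals
\[
 \sum_{\gamma_i \in T_{k}(\gamma)} f(\gamma_i) + \sum_{\gamma_j \in T_{k-1}(\gamma)} g(\gamma_j),
\]
and the equivalence claimed in (i) is immediate.

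Parts (ii) and (iii) then drop out of (i) as boundary cases. If $k$ is the minimal degree of $\gamma$, then $T_{k-1}(\gamma) = \emptyset$, so the identity in (i) reduces to $\sum_{\gamma_i \in T_{k}(\gamma)} f(\gamma_i) = 0$. Symmetrically, if $k$ is the maximal degree, then applying (i) at level $k+1$ gives $\sum_{\gamma_i \in T_{k}(\gamma)} g(\gamma_i) + \sum_{\gamma_j \in T_{k+1}(\gamma)} f(\gamma_j) = 0$, and since $T_{k+1}(\gamma) = \emptyset$ the second sum disappears. There is no real obstacle in this argument; the entire lemma is a formal consequence of the $\mathbb{Z}$-grading that the $X$-set $\mathbb{Z} \times X$ imposes on the chain complex, together with the fact that reduced forms express chains uniquely in the free abelian group. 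The only point requiring a moment of care is to confirm that the reduction of $\gamma$ into efficient terms and the removal of degenerate 2-terms in the definitions of $f$ and $g$ are both compatible with this degree decomposition, which is straightforward from the explicit formulae.
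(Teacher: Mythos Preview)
Your proof is correct and is essentially the same approach as the paper's, which simply states that the claims are immediate by definition. You have spelled out in detail the degree-grading argument that the paper leaves implicit: namely that $f$ preserves degree while $g$ raises it by one, so $\partial\gamma$ decomposes as a direct sum over degrees, and (ii)--(iii) are the boundary cases of (i).
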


\begin{proof}
We immediately have the claims by definition.
\end{proof}

Define a new binary operation $(a, b) \mapsto a^{\overline{b}}$ on $X$ by $a^{\overline{b}} = (\square^{b})^{-1}(a)$.
Then, it is routine to see that this new binary operation also satisfies the axioms of a quandle.
We call the quandle $X$ equipped with this new binary operation the \emph{dual} of the original quandle $X$.
We let $\overline{X}$ denote the dual of $X$.
Obviously, we have $\overline{\overline{X}} = X$ by definition.

For each $m$-term $\gamma = \pm (n, u; a_{1}, a_{2}, \dots, a_{m})$ of $(X, S)$, we define the \emph{reverse} of $\gamma$, which is an $m$-term of $\left(\overline{X}, \mathbb{Z} \times \overline{X} \right)$, by
\[
 \overline{\gamma} = \pm (- n, u^{a_{1} a_{2} \dots a_{m}}; a_{1}^{a_{2} a_{3} \dots a_{m}}, a_{2}^{a_{3} a_{4} \dots a_{m}}, \dots, a_{m-1}^{a_{m}}, a_{m}).
\]
We further extend it to the reverse of an $m$-chain of $(X, S)$ in a natural way.
Let $\sigma$ be an automorphism of $C^{Q}_{m}(X)_{S}$ given by
\[
 \sigma (n, u; a_{1}, a_{2}, \dots, a_{m}) = (n + 1, u; a_{1}, a_{2}, \dots, a_{m}).
\]
Then, we have the following lemma.

\begin{lemma}[cf.\ Lemma 4.1 of \cite{Sat2016}]
\label{lem:reverse}
For each $m$-chain $\gamma$ of $(X, S)$, we have the following claims.
\begin{itemize}
\item[(i)]
$\overline{\overline{\gamma}} = \gamma$.
\item[(ii)]
Assume that $\gamma$ is a 3-term.
Then, $\gamma$ is of type 0, 1, 2 or 3 if and only if $\overline{\gamma}$ is of type 0, 2, 1 or 3, respectively.
\item[(iii)]
$f \left( \overline{\gamma} \right) = - \> \overline{g(\sigma(\gamma))}$,
$g \left( \overline{\gamma} \right) = - \> \overline{f(\sigma(\gamma))}$, and thus
$\partial \left( \overline{\gamma} \right) = - \> \overline{\partial(\sigma(\gamma))}$.
\item[(iv)]
$\gamma$ is an $m$-cycle or boundary if and only if $\overline{\gamma}$ is an $m$-cycle or boundary, respectively.
\end{itemize}
\end{lemma}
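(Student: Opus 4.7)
My plan is to unwind the definitions directly, relying on two algebraic facts: Q3 (in the form $(a^{c})^{b^{c}} = a^{bc}$) and the associated-group identity $a_{1} a_{2} \cdots a_{k} = a_{2} \cdots a_{k} \cdot a_{1}^{a_{2} \cdots a_{k}}$ (equivalently $c \cdot a^{c} = ac$), which will collapse each of the indices $(u^{a})^{bc}$, $(u^{b})^{a^{b} c}$, $(u^{c})^{a^{c} b^{c}}$ to the uniform value $u^{abc}$. For part (i), I substitute $\overline{\gamma} = \pm(-n, u^{abc}; a^{bc}, b^{c}, c)$ into the reverse formula applied now in the dual quandle (where the operation is $\square^{\overline{\cdot}}$); using $(b^{c})^{\overline{c}} = b$ and $(a^{bc})^{\overline{b^{c}}\,\overline{c}} = (a^{c})^{\overline{c}} = a$ by Q3, and the analogous simplifications on the index, every coordinate restores to the original, while the sign and degree trivially revert.

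For part (ii), I analyse the two type-defining conditions on $\overline{\gamma}$ viewed as a 3-term of $(\overline{X}, \mathbb{Z} \times \overline{X})$: ``first color equals third'' and ``first acted upon by the second equals third''. Since $\square^{c}$ is a bijection and $c^{c} = c$, the first reduces to $a^{b} = c$; and since $(a^{bc})^{\overline{b^{c}}} = a^{c}$ by Q3, the second reduces to $a = c$. Matching these two dichotomies against the four type definitions then produces the claimed correspondence $0 \leftrightarrow 0$, $1 \leftrightarrow 2$, $3 \leftrightarrow 3$.

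Part (iii) is the computational core. I will expand $f(\overline{\gamma})$ directly from the definition of $f$, and $\overline{g(\sigma(\gamma))}$ by first applying $g$ to $\sigma(\gamma)$ and then reversing each of the three resulting 2-terms. The group identity above makes the three reversed indices coincide at $u^{abc}$, and Q3 collapses the reversed colors to $(b^{c}, c)$, $(a^{bc}, c)$, $(a^{bc}, b^{c})$, which are precisely the colors appearing in $f(\overline{\gamma})$; a sign check then yields $f(\overline{\gamma}) = -\overline{g(\sigma(\gamma))}$. The companion identity $g(\overline{\gamma}) = -\overline{f(\sigma(\gamma))}$ is the symmetric calculation, and summing the two gives the $\partial$-identity. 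Part (iv) then follows formally: $\sigma$ commutes with $\partial$ by inspection, so a 3-cycle $\gamma$ gives $\partial(\overline{\gamma}) = -\overline{\sigma(\partial \gamma)} = 0$, the converse is immediate from (i), and boundaries transform analogously by rearranging $\overline{\partial \delta} = \pm \sigma^{\pm 1}(\partial \overline{\delta})$. Along the way I also need to verify that the vanishing conventions (omitting 2-terms with equal consecutive colors) agree on both sides of the identities, but this follows from the same bijectivity arguments used in (ii). The chief obstacle is simply the bookkeeping in part (iii) — a misplaced sign or degree shift is very easy to introduce — but no idea beyond $G(X)$-conjugation and Q3 is required.
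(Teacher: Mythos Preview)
Your approach is correct and is exactly what the paper intends: its proof reads in full ``The proof is straightforward,'' and your plan---direct expansion of the definitions using Q3 in the form $(a^{c})^{b^{c}} = a^{bc}$ together with the associated-group identity $b\cdot a^{b} = ab$ to collapse all indices to $u^{abc}$, followed by a term-by-term sign comparison---is precisely that straightforward verification. One small caution on the bookkeeping you flag at the end: when you actually carry out (iii) you will find the colors, indices, and signs match perfectly but the degrees differ by a fixed $\sigma$-shift (the statement as printed should really carry $\sigma^{-1}$ rather than $\sigma$); this is harmless for (iv) and for every later use of the lemma, since $\sigma$ commutes with $\partial$ and is invertible.
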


\begin{proof}
The proof is straightforward.
\end{proof}

Let $\gamma_{1}, \gamma_{2}, \dots, \gamma_{k}$ be efficient 3-terms of $(X, S)$ having the same degree.
They are said to be \emph{$f$-connected} (or \emph{$g$-connected}) if $\sum_{i = 1}^{k} f(\gamma_{i}) = 0$ (or $\sum_{i = 1}^{k} g(\gamma_{i}) = 0$) and there are no non-empty proper subsets $I$ of $\{ 1, 2, \dots, k \}$ satisfying $\sum_{i \in I} f(\gamma_{i}) = 0$ (or $\sum_{i \in I} g(\gamma_{i}) = 0$).
We also refer to a set $\{ \gamma_{1}, \gamma_{2}, \dots, \gamma_{k} \}$ as being $f$- or $g$-connected if $\gamma_{1}, \gamma_{2}, \dots, \gamma_{k}$ are $f$- or $g$-connected, respectively.
With respect to $f$- and $g$-connectedness, we have the following lemma.

\begin{lemma}[cf.\ Lemma 4.3 of \cite{Sat2016}]
\label{lem:f-/g-connectedness}
For some integer $n$ and positive integer $k$, let $\gamma_{i} = \varepsilon_{i} (n, u_{i}; a_{i}, b_{i}, c_{i})$ be 3-terms of $(X, S)$ {\upshape (}$1 \leq i \leq k${\upshape )}.
Then, we have the following claims.
\begin{itemize}
\item[(i)]
$\gamma_{1}, \gamma_{2}, \dots, \gamma_{k}$ are $g$-connected if and only if $\overline{\gamma_{1}}, \overline{\gamma_{2}}, \dots, \overline{\gamma_{k}}$ are $f$-connected.
\item[(ii)]
If $\gamma_{1}, \gamma_{2}, \dots, \gamma_{k}$ are $f$-connected, then $u_{1} = u_{2} = \dots = u_{k}$.
\item[(iii)]
If $\gamma_{1}, \gamma_{2}, \dots, \gamma_{k}$ are $g$-connected, then $u_{1}^{a_{1} b_{1} c_{1}} = u_{2}^{a_{2} b_{2} c_{2}} = \dots = u_{k}^{a_{k} b_{k} c_{k}}$.
\end{itemize}
\end{lemma}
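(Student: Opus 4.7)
The plan is to prove (i) via Lemma~\ref{lem:reverse}, then (ii) by a direct cancellation argument on indices, and finally deduce (iii) from (i) and (ii). All three claims concern how the differentials $f$, $g$ and the reversal operation interact with the indices $u_i$, so the proof should be essentially bookkeeping.

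For (i), the first observation is that $\sigma$ commutes with both $f$ and $g$: the defining formulae for $f$ and $g$ are uniform in degree, while $\sigma$ only shifts the degree by one. Combining this with Lemma~\ref{lem:reverse}(iii) yields
\[
 f(\overline{\gamma_i}) \;=\; -\,\overline{g(\sigma(\gamma_i))} \;=\; -\,\overline{\sigma(g(\gamma_i))}.
\]
Because $\sigma$ is an automorphism of $C^{Q}_{\ast}(X)_{S}$ and reversal is a bijection (an involution, by Lemma~\ref{lem:reverse}(i)), the identity $\sum_{i \in I} g(\gamma_i) = 0$ holds for a subset $I \subseteq \{1, \dots, k\}$ if and only if $\sum_{i \in I} f(\overline{\gamma_i}) = 0$. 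Applying this equivalence simultaneously to $I = \{1, \dots, k\}$ and to all non-empty proper subsets delivers (i).

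For (ii), I would exploit the fact that $f$ preserves the index: every 2-term in the expansion of $f(\gamma_i)$ has index $u_i$. Partition $\{1, \dots, k\}$ into blocks $I_1, \dots, I_r$ according to the value of $u_i$. Since 2-terms with distinct indices lie in disjoint direct summands of the free abelian group $C^{Q}_{2}(X)_{S}$, the vanishing of $\sum_{i=1}^k f(\gamma_i)$ forces $\sum_{i \in I_j} f(\gamma_i) = 0$ for each $j$. The no-proper-subset hypothesis in the definition of $f$-connectedness then forces $r = 1$, i.e.\ all $u_i$ coincide.

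For (iii), if $\gamma_1, \dots, \gamma_k$ are $g$-connected, then by (i) the reverses $\overline{\gamma_1}, \dots, \overline{\gamma_k}$ are $f$-connected, and the definition of reversal shows that $\overline{\gamma_i}$ has index $u_i^{a_i b_i c_i}$; applying (ii) to the reverses yields (iii). The only substantive verification throughout is the commutation $\sigma \circ g = g \circ \sigma$, a one-line check against the definitions, so I do not anticipate any genuine obstacle.
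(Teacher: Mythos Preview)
Your proof is correct and follows essentially the same route as the paper's own proof, which simply invokes Lemma~\ref{lem:reverse}(i),(iii) for part (i), appeals to the definition of $f$ for part (ii), and combines (i) and (ii) for part (iii). Your version just makes explicit the commutation $\sigma \circ g = g \circ \sigma$ and the index-partition argument that the paper leaves implicit.
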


\begin{proof}
In light of Lemma \ref{lem:reverse} (i) and (iii), we immediately have the claim (i).
We obviously have the claim (ii) by the definition of $f$.
In light of (i) and (ii), we immediately have the claim (iii).
\end{proof}

\begin{remark}
\label{rem:type0}
There are no 3-terms of $(R_{n}, \mathbb{Z} \times R_{n})$ which are of type 0 if $n$ is odd, because $a = c$ and $a^{b} = c$ yield $a = b$.
Therefore, Satoh did not define type 0 in \cite{Sat2016}.
On the other hand, there are 3-terms of $(O_{6}, \mathbb{Z} \times O_{6})$ which are of type 0.
We note that a 3-term $\gamma$ of $(O_{6}, \mathbb{Z} \times O_{6})$ is of type 0 if and only if $\gamma = \pm(n, u; a, [a + 3], a)$ with some $n \in \mathbb{Z}$ and $u, a \in O_{6}$.
\end{remark}

\begin{remark}
\label{rem:reverse}
Since we have $R_{n} = \overline{R_{n}}$ for each $n \geq 3$, Satoh defined the reverses of 2- and 3-terms of $(R_{n}, \mathbb{Z} \times R_{n})$ as those of $(R_{n}, \mathbb{Z} \times R_{n})$ in \cite{Sat2016}.
On the other hand, we have $O_{6} \neq \overline{O_{6}}$.
We thus define the reverses as above in this paper.
We note that, since we have an isomorphism $\tau : O_{6} \to \overline{O_{6}}$ given by
\begin{align*}
 \tau(0) & = 0, &
 \tau(1) & = 1, &
 \tau(2) & = 5, &
 \tau(3) & = 3, &
 \tau(4) & = 4, &
 \tau(5) & = 2,
\end{align*}
we may regard the reverse of each $m$-term of $(O_{6}, \mathbb{Z} \times O_{6})$ as also an $m$-term of $(O_{6}, \mathbb{Z} \times O_{6})$ mapping it by $\tau^{-1}$.
\end{remark}

\section{Enumeration of $f$-connected 3-terms}
\label{sec:enumeration_of_f-connected_3-terms}

The aim of this section is to enumerate all $f$-connected 3-terms $\gamma_{1}, \gamma_{2}, \dots, \gamma_{k}$ for $1 \leq k \leq 5$.
We note that the enumeration does not depend on the choice of a quandle, because we can compute each $f(\gamma_{i})$ without using the binary operation of the quandle.

Let $X$ be a quandle and $S = \mathbb{Z} \times X$ our $X$-set.
Choose and fix $n \in \mathbb{Z}$ and $u \in X$.
Associated with a 3-term $\gamma$ of $(X, S)$ whose degree is $n$ and index $u$, we consider an oriented bigon or triangle whose vertices and edges are respectively labeled by elements of $X$ and oriented as depicted in Figure \ref{fig:pieces}.
We can obtain an ordered pair of elements of $X$ from an edge of the polygon by writing down the labels of the initial and end points of the edge in this order.
We call this ordered pair the \emph{label} of the edge.
Labels of the edges are also illustrated in Figure \ref{fig:pieces}.
We note that each label of the edges of the polygon corresponds to each color of the 2-terms of $f(\gamma)$, and each orientation of the edges coincides with the orientation of the polygon if and only if the sign of the corresponding 2-term is positive.
Furthermore, we remark that we have the same bigon associated with 3-terms $\pm(n, u; a, b, a)$ and $\pm(n, u; b, a, b)$ ($a \neq b$).
We allow two polygons $P$ and $P^{\prime}$ associated with 3-terms to be glued along an edge $e$ of $P$ and $e^{\prime}$ of $P^{\prime}$ if the 2-terms corresponding to $e$ and $e^{\prime}$ can be cancelled out, that is, if
\begin{itemize}
\item
$e$ and $e^{\prime}$ have the same label,
\item
the orientation of $e$ coincides with the orientation of $P$, and
\item
the orientation of $e^{\prime}$ does not coincide with the orientation of $P^{\prime}$.
\end{itemize}
Then, by definition, we can obtain a cell decomposition of an oriented close surface (which should not always be connected) from 3-terms $\gamma_{1}, \gamma_{2}, \dots, \gamma_{k}$ of $(X, S)$ having the same degree $n$ and the same index $u$, and satisfying $\sum_{i = 1}^{k} f(\gamma_{i}) = 0$.
\begin{figure}[htbp]
 \centering
 \includegraphics[scale=0.25]{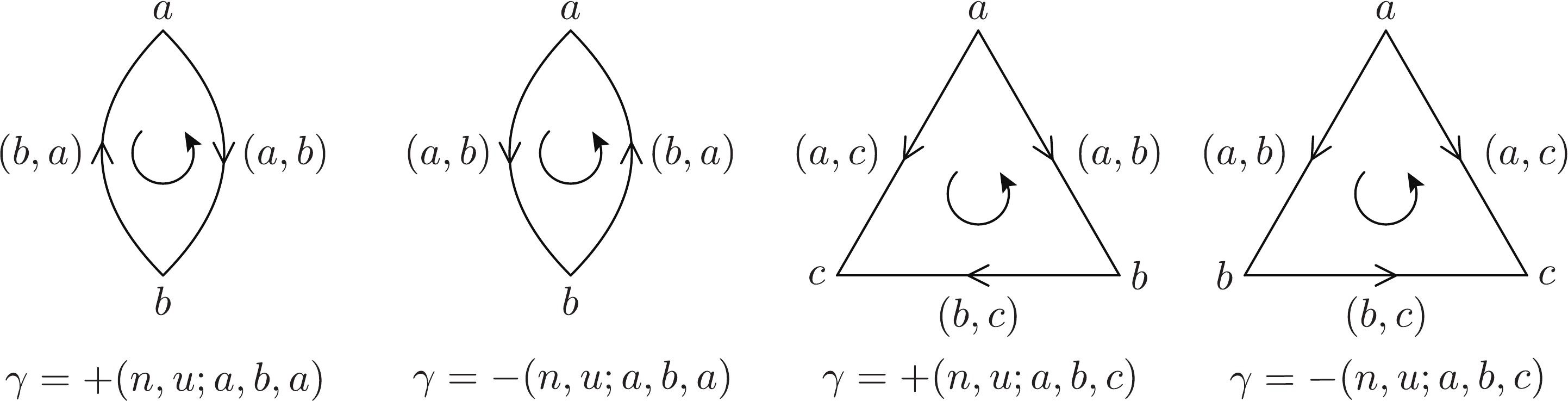}
 \caption{Bigons and triangles associated with 3-terms $\gamma$ of $(X, S)$ whose degree is $n$ and index $u$, where $a$, $b$ and $c$ are mutually distinct elements of $X$.}
 \label{fig:pieces}
\end{figure}

Conversely, consider a cell decomposition of an oriented closed surface which consists of bigons and triangles.
Assume that the 0- and 1-cells of the complex are respectively labeled by elements of $X$ and oriented so that each 2-cell looks like one of bigons or triangles depicted in Figure \ref{fig:pieces}.
Then, we have 3-terms $\gamma_{i} = \varepsilon_{i} (n, u; a_{i}, b_{i}, c_{i})$ ($1 \leq i \leq k$) of $(X, S)$ satisfying $\sum_{i = 1}^{k} f(\gamma_{i}) = 0$ so that the cell decomposition is obtained from $\gamma_{1}, \gamma_{2}, \dots, \gamma_{k}$.
Here, $k$ denotes the number of 2-cells.
It means that we can obtain all $f$-connected 3-terms by enumerating the cell decompositions of ``connected'' orientable closed surfaces consisting of bigons and triangles.
Therefore, for $1 \leq k \leq 5$, let us enumerate such cell decompositions concretely up to homeomorphism.

We start with enumerating the cell decompositions of connected orientable closed surfaces which consist of only triangles.
Since the number of 1-cells of such a cell decomposition is equal to $\frac{3 k}{2}$ and an integer, it is sufficient to consider the case $k = 2$ or $4$.
Furthermore, since we would like to label the vertices of each triangle by mutually distinct elements of $X$, we can exclude the following case from our enumeration: the number of 0-cells is less than three, or some triangle has vertices sharing the same 0-cell.
Then, since the number of 0-cells is at least three, the Euler characteristics of acceptable cell decompositions are at least one.
Under the constraints, we exactly have the cell decompositions (2b), (4b), (4c) and (4d) of the 2-sphere $S^{2}$ depicted in Figure \ref{fig:the_cell_decompositions}.
\begin{figure}[htbp]
 \centering
 \includegraphics[scale=0.25]{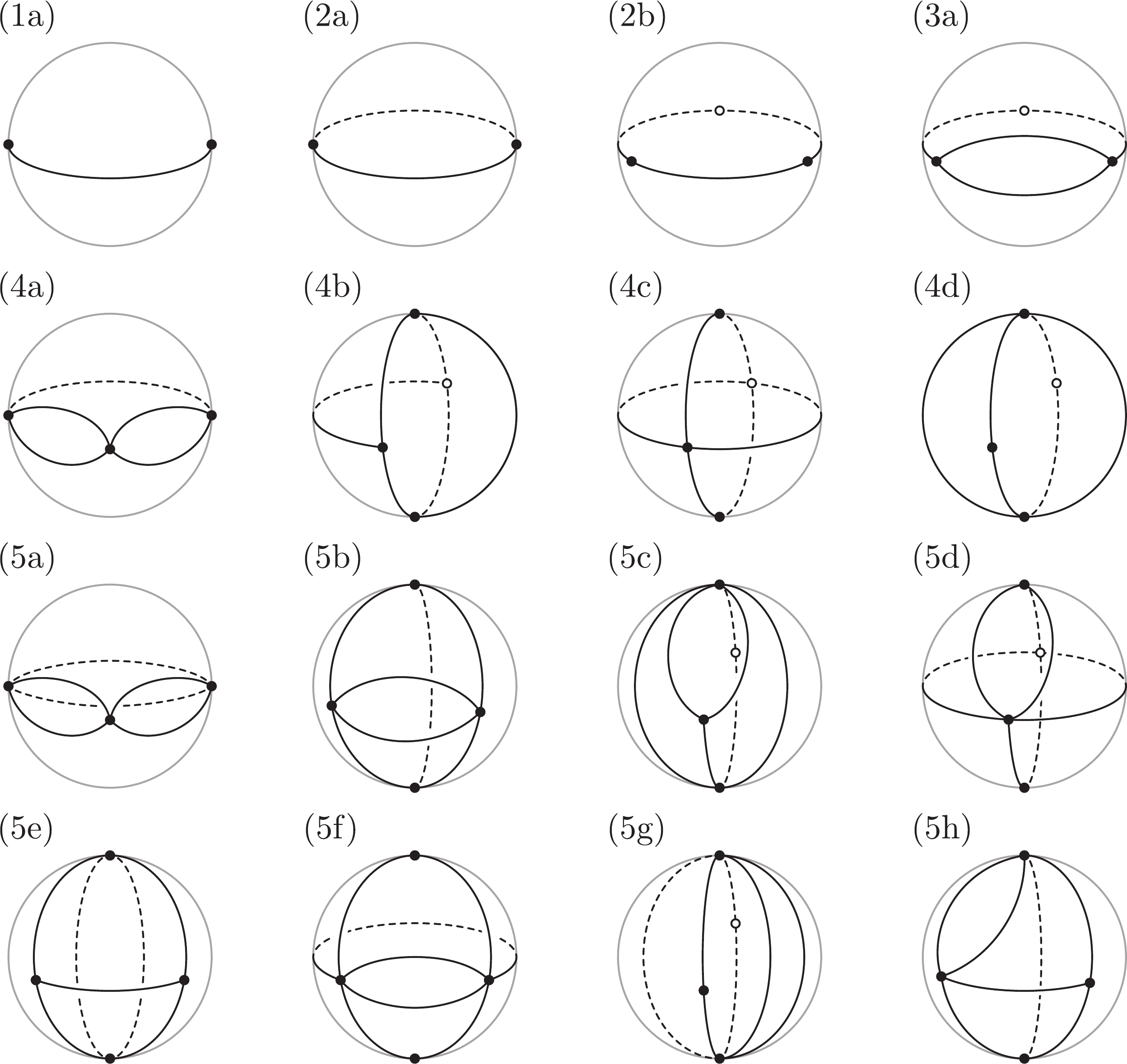}
 \caption{The cell decompositions of $S^{2}$ consisting of at most five bigons and triangles, which might yield $f$-connected 3-terms.}
 \label{fig:the_cell_decompositions}
\end{figure}

We note that each cell decomposition of a surface having at least one bigon as its 2-cell is recursively obtained from a cell decomposition of the surface by replacing some 1-cell with a bigon, except (1a) depicted in Figure \ref{fig:the_cell_decompositions}, if the vertices of each bigon do not share the same 0-cell as we desired.
It is easy to see that two adjacent bigons in a cell decomposition are obtained from $f$-connected 3-terms $+ (n, u; a, b, a)$ and $- (n, u; b, a, b)$, or inefficient 3-terms $+ (n, u; a, b, a)$ and $- (n, u; a, b, a)$ with some distinct $a, b \in X$.
Therefore, we can also exclude the following case from our enumeration except (2a) depicted in Figure \ref{fig:the_cell_decompositions}: there are adjacent bigons.
As a consequence, we exactly have the cell decompositions of $S^{2}$ depicted in Figure \ref{fig:the_cell_decompositions}.
In the figure, the number in the name assigned to each cell decomposition represents $k$.
In light of this enumeration, we have the following lemmas.

\begin{lemma}[cf.\ Lemma 5.1 of \cite{Sat2016}]
\label{lem:f-connected1}
For each 3-term $\gamma$ of $(X, S)$, we have $f(\gamma) \neq 0$.
\end{lemma}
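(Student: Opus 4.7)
The plan is to prove this by direct inspection of the definition of $f$, because the lemma concerns a single 3-term and there is no combinatorial interaction to track. Write $\gamma = \varepsilon(n, u; a, b, c)$ where, by the definition of a 3-term of $(X, S)$, we have $a \neq b$ and $b \neq c$. Then
\[
 f(\gamma) = -\,\varepsilon(n, u; b, c) \;+\; \varepsilon(n, u; a, c) \;-\; \varepsilon(n, u; a, b),
\]
with the understanding that the middle 2-term is deleted exactly when $a = c$ (i.e.\ when $\gamma$ is of type 0 or type 1).

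My plan is then to check that the basis 2-terms that survive in $f(\gamma)$ are pairwise distinct as elements of $U^{Q}_{2}(X)_{S}$, so that $f(\gamma)$ is a nontrivial integer combination of distinct generators and therefore nonzero. The two outer 2-terms $\varepsilon(n, u; b, c)$ and $\varepsilon(n, u; a, b)$ are always present, since $b \neq c$ and $a \neq b$ guarantee they are not deleted by the degeneracy rule. Their colors $(b, c)$ and $(a, b)$ disagree in the first entry because $a \neq b$. If in addition $a \neq c$, the middle 2-term $\varepsilon(n, u; a, c)$ survives; its color $(a, c)$ disagrees with $(b, c)$ in the first entry since $a \neq b$, and with $(a, b)$ in the second entry since $b \neq c$.

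Thus in every case (types 0, 1, 2, 3 alike) $f(\gamma)$ is a sum of at least two distinct basis 2-terms with coefficients $\pm \varepsilon$, so $f(\gamma) \neq 0$. There is no real obstacle here; the content of the lemma is essentially the observation, already noted geometrically in the discussion of Figure \ref{fig:pieces}, that no single bigon or triangle built from the prescribed data closes up on its own, which on the algebraic side is nothing more than the distinctness of the three boundary colors. The only care needed is the bookkeeping on the vanishing of the middle term in types 0 and 1, handled by splitting into the cases $a = c$ and $a \neq c$ as above.
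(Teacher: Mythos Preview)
Your proof is correct. You do a direct algebraic check that the two or three surviving basis 2-terms in $f(\gamma)$ are pairwise distinct, which is exactly what is needed.

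The paper takes a slightly different route: having set up the correspondence between $f$-connected sets of 3-terms and cell decompositions of oriented closed surfaces by bigons and triangles (Figure~\ref{fig:pieces} and Figure~\ref{fig:the_cell_decompositions}), it observes that the only candidate for a single 3-term is the decomposition (1a) of $S^{2}$ with one bigon and one 1-cell, and that this 1-cell cannot be oriented so that the bigon matches one of the allowed patterns. Your argument and the paper's are the same observation in different clothing---the impossibility of gluing the two edges of a single bigon to one oriented 1-cell is precisely the statement that the colors $(b,c)$ and $(a,b)$ of the two outer 2-terms disagree. Your version is more self-contained for this lemma in isolation; the paper's version is phrased so as to slot into the uniform geometric framework used for Lemmas~\ref{lem:f-connected2}--\ref{lem:f-connected5}.
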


\begin{proof}
Since there are no ways to orient the 1-cell of (1a) so that the 2-cell looks like one of bigons depicted in Figure \ref{fig:pieces}, we have the claim.
\end{proof}

\begin{lemma}[cf.\ Lemma 5.2 of \cite{Sat2016}]
\label{lem:f-connected2}
Let $\gamma_{1}$ and $\gamma_{2}$ be $f$-connected 3-terms of $(X, S)$ having the same degree $n$ and the same index $u$, and $\gamma = \gamma_{1} + \gamma_{2}$.
Then, we have
\[
 \gamma = + \> (a, b, a) - (b, a, b)
\]
with some distinct $a, b \in X$.
\end{lemma}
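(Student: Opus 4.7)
The plan is to combine Lemma \ref{lem:f-connected1} with the cell-decomposition enumeration developed earlier in this section. Since $\gamma_{1}$ and $\gamma_{2}$ are $f$-connected and share a common degree $n$ and index $u$, the polygons associated with them must assemble into a cell decomposition of a connected oriented closed surface with exactly two 2-cells. By the enumeration captured in Figure \ref{fig:the_cell_decompositions}, the only candidates are (2a), two bigons, and (2b), two triangles. A mixed bigon--triangle pairing is ruled out by cardinality: $f$ of a bigon has exactly two nonzero 2-terms while $f$ of a triangle has three, and every 2-term appearing in $f(\gamma_{i})$ occurs with coefficient $\pm 1$, so cancellation of $f(\gamma_{1}) + f(\gamma_{2})$ requires the numbers of distinct 2-terms in the two summands to agree.

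To rule out (2b), I would write $\gamma_{i} = \varepsilon_{i} (n, u; a_{i}, b_{i}, c_{i})$ with $a_{i}, b_{i}, c_{i}$ pairwise distinct and use the formula
\[
 f(\gamma_{i}) = -\varepsilon_{i} (n, u; b_{i}, c_{i}) + \varepsilon_{i} (n, u; a_{i}, c_{i}) - \varepsilon_{i} (n, u; a_{i}, b_{i}).
\]
The three ordered pairs appearing here are distinct, so the vanishing of $f(\gamma_{1}) + f(\gamma_{2})$ forces a position-by-position matching of the 2-terms. Tracking which entry of a triple can host a given ordered pair pins down $(a_{2}, b_{2}, c_{2}) = (a_{1}, b_{1}, c_{1})$ and $\varepsilon_{2} = -\varepsilon_{1}$, contradicting the efficiency of $\gamma_{1}, \gamma_{2}$.

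In case (2a), both summands are bigons, so after using the convention that $\pm(n, u; a, b, a)$ and $\pm(n, u; b, a, b)$ label the same bigon one may write $\gamma_{i} = \varepsilon_{i} (n, u; a_{i}, b_{i}, a_{i})$ with $a_{i} \neq b_{i}$. A direct computation gives
\[
 f(\varepsilon (n, u; a, b, a)) = -\varepsilon (n, u; b, a) - \varepsilon (n, u; a, b),
\]
and requiring $f(\gamma_{1}) + f(\gamma_{2}) = 0$ forces $\{a_{1}, b_{1}\} = \{a_{2}, b_{2}\}$ together with $\varepsilon_{2} = -\varepsilon_{1}$. After relabeling $a \leftrightarrow b$ if needed and possibly flipping the overall sign of $\gamma$, one reads off the claimed reduced form $\gamma = +(a, b, a) - (b, a, b)$.

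The only step that needs real care is the elimination of (2b), since a priori one might hope that permuting the three vertices of a triangle could yield a second efficient triangle cancelling the first. The position-by-position argument confirms this hope is not realized; everything else is routine bookkeeping directly from the definition of $f$.
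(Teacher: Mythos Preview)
Your proof is correct and follows essentially the same route as the paper: both reduce to the cell decompositions (2a) and (2b) from the enumeration and then check that (2b) only arises from inefficient pairs while (2a) forces $\gamma = +(a,b,a)-(b,a,b)$. Your version simply spells out the term-matching that the paper compresses into ``there is essentially one way to orient the 1-cells''; the remark about ``possibly flipping the overall sign'' is unnecessary (relabelling $a\leftrightarrow b$ already suffices), but it does no harm.
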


\begin{proof}
There is essentially one way to orient the 1-cells of (2a) or (2b) so that each 2-cell looks like one of bigons or triangles depicted in Figure \ref{fig:pieces}.
Since (2a) is obtained from $f$-connected 3-terms $+ (a, b, a)$ and $- (b, a, b)$ or inefficient 3-terms $+ (a, b, a)$ and $- (a, b, a)$ with some distinct $a, b \in X$, and (2b) from inefficient 3-terms $+ (a, b, c)$ and $- (a, b, c)$ with some mutually distinct $a, b, c \in X$, we have the claim.
\end{proof}

\begin{lemma}[cf.\ Lemma 5.4 of \cite{Sat2016}]
\label{lem:f-connected3}
Let $\gamma_{1}$, $\gamma_{2}$ and $\gamma_{3}$ be $f$-connected 3-terms of $(X, S)$ having the same degree $n$ and the same index $u$, and $\gamma = \gamma_{1} + \gamma_{2} + \gamma_{3}$.
Then, we have the following two cases up to sign.
\begin{itemize}
\item[(i)]
$\gamma = + \> (a, b, a) - (c, a, b) - (c, b, a)$, where $a$, $b$ and $c$ are mutually different.
\item[(ii)]
$\gamma = + \> (a, b, a) - (a, b, c) - (b, a, c)$, where $a$, $b$ and $c$ are mutually different.
\end{itemize}
\end{lemma}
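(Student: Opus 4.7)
The plan is to apply the bigon/triangle cell-decomposition framework of Section \ref{sec:enumeration_of_f-connected_3-terms}: $f$-connected 3-terms $\gamma_{1}, \gamma_{2}, \gamma_{3}$ of common degree and index correspond bijectively to cell decompositions of connected orientable closed surfaces by labeled, oriented bigons and triangles satisfying the distinctness and non-adjacency conditions spelled out there. So the task reduces to enumerating such decompositions with three 2-cells, and then matching each one with a family of 3-terms via the templates in Figure \ref{fig:pieces}.

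First I would identify the underlying complex. A complex with $t$ triangles and $b$ bigons, $t + b = 3$, has $E = (3t + 2b)/2$ edges; this is an integer only when $(t, b) = (2, 1)$ or $(0, 3)$. The latter forces two adjacent bigons and is excluded, so $t = 2$, $b = 1$ and $E = 4$. Each triangle has three distinct vertex labels, so $V \geq 3$; combined with $V - E + 3 \leq 2$ this forces $V = 3$ and the surface to be $S^{2}$. The unique such decomposition (already drawn in Figure \ref{fig:the_cell_decompositions}) has vertex set $\{ a, b, c \}$, a bigon consisting of two parallel edges between $a$ and $b$, and two triangles with vertex set $\{ a, b, c \}$ sharing the edges $bc$ and $ca$ and each using one of the two parallel $ab$-edges.

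Next I would enumerate the orientations and labels of this complex compatible with Figure \ref{fig:pieces}. Fix an orientation of $S^{2}$: the two triangular faces inherit opposite induced boundary orientations, and along each of the three shared edges the adjacent faces impose compatibility constraints on both label and orientation. Running through the finitely many choices modulo the symmetries of the complex (swap of the two triangles, swap of the two parallel bigon edges, swap $a \leftrightarrow b$, and reversal of the global orientation of $S^{2}$), exactly two configurations survive, and they yield precisely the expressions (i) and (ii) up to overall sign. A direct computation of the $f$-expansions, using $f(\pm (n, u; a, b, a)) = \mp (n, u; b, a) \mp (n, u; a, b)$ together with the standard $f$-formula on the two triangular summands, then confirms $f$-connectedness; Lemmas \ref{lem:f-connected1} and \ref{lem:f-connected2} rule out any proper non-empty $f$-vanishing subset of the three summands (no single 3-term can have $f(\gamma) = 0$, and an $f$-connected pair among the three would have to be of bigon/bigon shape, which is not present here).

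The main obstacle is the labeling bookkeeping in the third step: one has to track, across three distinct shared edges, which triangular and bigon template in Figure \ref{fig:pieces} each face realizes, and then be sure that, after quotienting by the symmetry group of the complex, exactly the two cases (i) and (ii) arise and that no further identification collapses them. Once that symmetry analysis is carried out, the classification falls out directly.
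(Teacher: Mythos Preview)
Your proposal is correct and follows essentially the same approach as the paper: identify the unique cell decomposition (3a) of $S^{2}$ with three 2-cells (two triangles and one bigon) and then enumerate the admissible orientations of its 1-cells to obtain exactly the two families (i) and (ii) up to sign. The paper's proof simply cites the prior enumeration in Figure~\ref{fig:the_cell_decompositions} and Figure~\ref{fig:f-connected3} rather than redoing the Euler-characteristic count, and it omits your final verification of $f$-connectedness (which is already part of the hypothesis), but the substance is the same.
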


\begin{proof}
There are essentially two ways, depicted in Figure \ref{fig:f-connected3}, to orient the 1-cells of (3a) so that each 2-cell looks like one of bigons or triangles depicted in Figure \ref{fig:pieces}.
We thus have the claim.
\begin{figure}[htbp]
 \centering
 \includegraphics[scale=0.25]{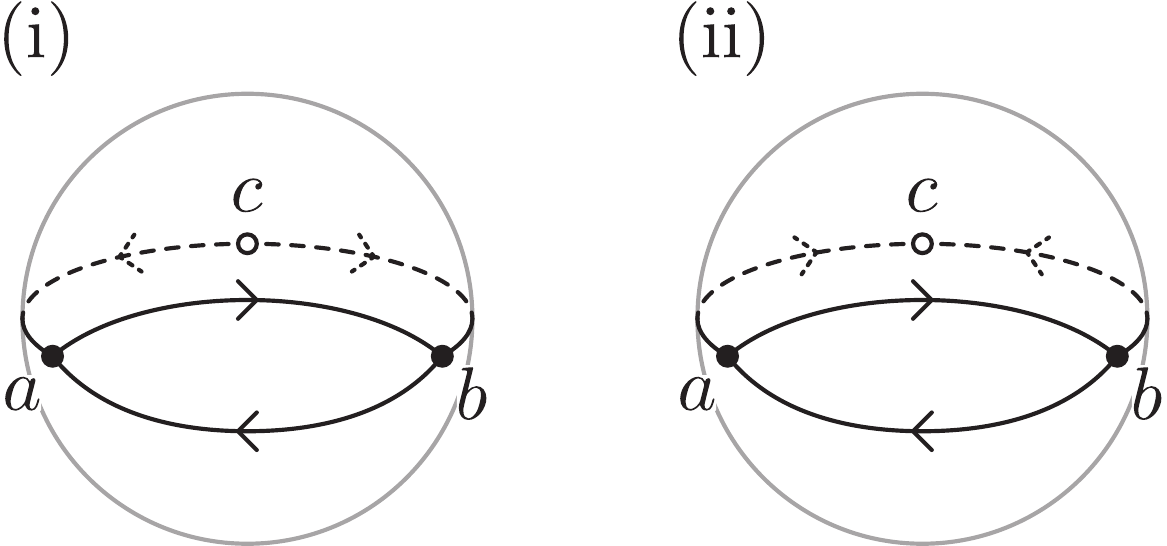}
 \caption{The appropriate ways to orient the 1-cells of (3a).}
 \label{fig:f-connected3}
\end{figure}
\end{proof}

\begin{lemma}
\label{lem:f-connected4}
Let $\gamma_{1}$, $\gamma_{2}$, $\gamma_{3}$ and $\gamma_{4}$ be $f$-connected 3-terms of $(X, S)$ having the same degree $n$ and the same index $u$, and $\gamma = \gamma_{1} + \gamma_{2} + \gamma_{3} + \gamma_{4}$.
Then, we have the following five cases up to sign.
\begin{itemize}
\item[(i)]
$\gamma = + \> (a, b, c) + \langle a, c \rangle - (c, a, b) - \langle b, c \rangle$, \\
where $a$, $b$ and $c$ are mutually different.
\item[(ii)]
$\gamma = + \> (a, b, c) + (a, c, d) - (a, b, d) - (b, c, d)$, \\
where $a$, $b$, $c$ and $d$ are mutually different.
\item[(iii)]
$\gamma = + \> (a, b, c) + (b, a, c) - (a, b, d) - (b, a, d)$, \\
where $a$, $b$, $c$ and $d$ are mutually different.
\item[(iv)]
$\gamma = + \> (c, a, b) + (c, b, a) - (d, a, b) - (d, b, a)$, \\
where $a$, $b$, $c$ and $d$ are mutually different.
\item[(v)]
$\gamma = + \> (c, a, b) + (c, b, a) - (a, b, d) - (b, a, d)$, \\
where $a$, $b$, $c$ are mutually different and $d \neq a, b$.
\end{itemize}
\end{lemma}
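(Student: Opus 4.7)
The proof will parallel those of Lemmas \ref{lem:f-connected2} and \ref{lem:f-connected3}, using the enumeration of cell decompositions of $S^{2}$ from Figure \ref{fig:the_cell_decompositions}. For $k = 4$ the relevant decompositions are (4a), (4b), (4c), (4d). For each of them I will enumerate the admissible ways to label the 0-cells by elements of $X$ and orient the 1-cells so that every 2-cell matches one of the model polygons in Figure \ref{fig:pieces}, and then read off the resulting $f$-connected 3-terms.

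The decomposition (4a) is the one involving bigons: it consists of two non-adjacent bigons and two triangles (the only way, by the construction recalled before Lemma \ref{lem:f-connected1}, to insert bigons into a 2-triangle pattern without creating adjacent bigons). The admissible orientations of (4a) yield exactly case (i), the two bigons being recorded by the abbreviations $\langle a, c \rangle$ and $\langle b, c \rangle$ and the two triangles by $+(a,b,c)$ and $-(c,a,b)$. The three remaining decompositions (4b), (4c), (4d) are purely triangulated and account for the tetrahedron and two further complexes in which certain 0-cells are shared in a more restrictive way. For each of them I will draw the dual edge-adjacency pattern, list the orientations of the six 1-cells compatible with the model triangles of Figure \ref{fig:pieces} up to a global reversal, and translate the outcome into 3-terms. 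The tetrahedron-type decomposition gives case (ii), while the two with additional identifications produce cases (iii) and (iv) (both with four mutually distinct 0-cell labels) and case (v) (in which a pair of opposite 0-cells may be either distinct or equal, the latter forcing $d = c$ and accounting for the weaker condition $d \neq a, b$).

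The main obstacle is purely combinatorial bookkeeping. Compared with Lemma \ref{lem:f-connected3} there are more cell complexes, more admissible orientations, and more possibilities for how 0-cells get identified, so I have to verify both that the list (i)--(v) exhausts all possibilities up to the obvious symmetries (swapping the two bigons in (4a); relabeling symmetries of the triangulations (4b)--(4d)) and that the five cases are pairwise distinct. Once each configuration is drawn with oriented labeled edges, the algebraic verification that the listed $\gamma$ satisfies $f(\gamma) = 0$ is a short direct computation, and the $f$-connectedness follows from the connectedness of the corresponding 2-complex.
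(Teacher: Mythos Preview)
Your approach is exactly the one the paper takes: enumerate the admissible orientations and labelings of the cell decompositions (4a)--(4d) from Figure~\ref{fig:the_cell_decompositions} and read off the 3-terms. The identification of (4a) with case~(i) and of the tetrahedral decomposition with case~(ii) is correct.

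However, your distribution of the remaining cases is wrong. The paper's proof shows that (4d) admits \emph{no} appropriate orientation at all: once you try to orient its 1-cells so that every 2-cell matches a model triangle from Figure~\ref{fig:pieces} and the resulting four 3-terms are efficient, you will find the constraints are inconsistent. All three of cases (iii), (iv), and (v) therefore come from (4c), which admits three essentially distinct admissible orientations rather than one. Your remark about case~(v) allowing $d=c$ is correct in spirit---it reflects that (4c) has a pair of 0-cells not lying in a common 2-cell, so they may receive the same label---but this happens in (4c), not in (4d). When you carry out the enumeration you will discover this; just be sure not to claim that (4d) contributes, and check carefully that (4c) really yields three inequivalent configurations.
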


\begin{proof}
It is routine to check that there are no ways to orient the 1-cells of (4d) appropriately (i.e., so that each 2-cell looks like one of bigons or triangles depicted in Figure \ref{fig:pieces} and it is obtained from efficient 3-terms).
Furthermore, it is also routine to see that there are essentially five ways, depicted in Figure \ref{fig:f-connected4}, to orient the 1-cells of (4a)--(4c) appropriately.
We thus have the claim.
\begin{figure}[htbp]
 \centering
 \includegraphics[scale=0.25]{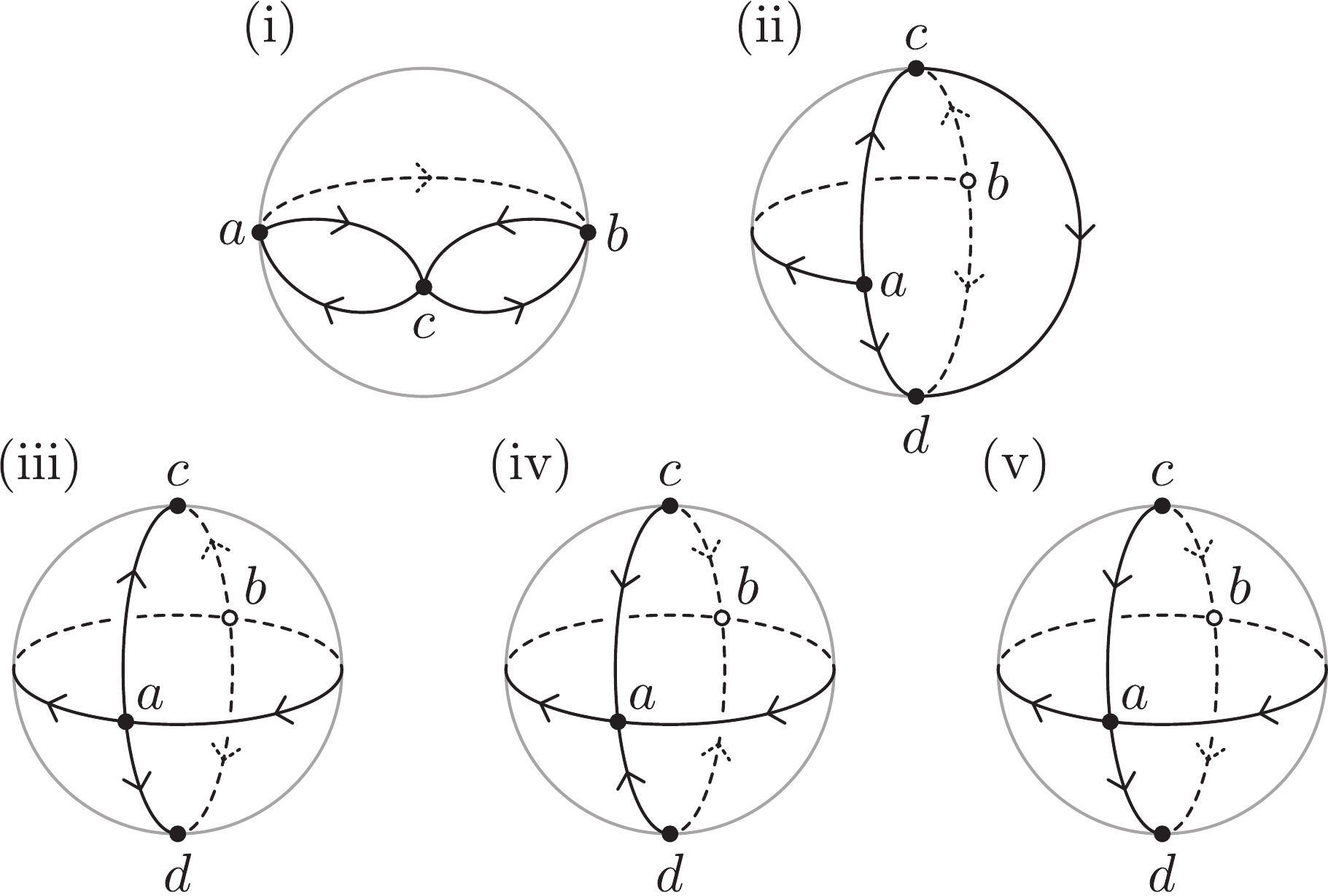}
 \caption{The appropriate ways to orient the 1-cells of (4a)--(4c).}
 \label{fig:f-connected4}
\end{figure}
\end{proof}

\begin{lemma}
\label{lem:f-connected5}
Let $\gamma_{1}$, $\gamma_{2}$, $\gamma_{3}$, $\gamma_{4}$ and $\gamma_{5}$ be $f$-connected 3-terms of $(X, S)$ having the same degree $n$ and the same index $u$, and $\gamma = \gamma_{1} + \gamma_{2} + \gamma_{3} + \gamma_{4} + \gamma_{5}$.
Then, we have the following ten cases up to sign.
\begin{itemize}
\item[(i)]
$\gamma = + \> \langle a, c \rangle + \langle b, c \rangle - (a, b, a) - (a, c, b) - (b, c, a)$, \\
where $a$, $b$ and $c$ are mutually different.
\item[(ii)]
$\gamma = + \> (a, b, c) + \langle b, d \rangle - (a, b, d) - (a, d, c) - (d, b, c)$, \\
where $a$, $b$, $c$ and $d$ are mutually different.
\item[(iii)]
$\gamma = + \> (a, b, c) + \langle a, d \rangle - (a, d, c) - (d, a, b) - (d, b, c)$, \\
where $a$, $b$, $c$ and $d$ are mutually different.
\item[(iv)]
$\gamma = + \> (a, b, c) + \langle c, d \rangle - (a, d, c) - (a, b, d) - (b, c, d)$, \\
where $a$, $b$, $c$ and $d$ are mutually different.
\item[(v)]
$\gamma = + \> (a, b, c) + \langle a, c \rangle - (d, a, b) - (d, b, c) - (d, c, a)$, \\
where $a$, $b$, $c$ and $d$ are mutually different.
\item[(vi)]
$\gamma = + \> (a, b, c) + \langle a, c \rangle - (a, b, d) - (b, c, d) - (c, a, d)$, \\
where $a$, $b$, $c$ and $d$ are mutually different.
\item[(vii)]
$\gamma = + \> (a, b, c) + \langle a, c \rangle - (c, a, b) - (d, b, c) - (d, c, b)$, \\
where $a$, $b$, $c$ and $d$ are mutually different.
\item[(viii)]
$\gamma = + \> (a, b, c) + \langle a, c \rangle - (a, b, d) - (b, a, d) - (b, c, a)$, \\
where $a$, $b$, $c$ and $d$ are mutually different.
\item[(ix)]
$\gamma = + \> (a, b, c) + \langle a, c \rangle - (b, c, a) - (d, a, b) - (d, b, a)$, \\
where $a$, $b$, $c$ are mutually different and $d \neq a, b$.
\item[(x)]
$\gamma = + \> (c, b, a) + \langle a, c \rangle - (a, b, d) - (a, c, b) - (b, a, d)$, \\
where $a$, $b$, $c$ are mutually different and $d \neq a, b$.
\end{itemize}
\end{lemma}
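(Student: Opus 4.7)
The plan is to apply the polygon-gluing framework developed for Lemmas \ref{lem:f-connected1}--\ref{lem:f-connected4}: an $f$-connected family of 3-terms with common degree $n$ and index $u$ corresponds bijectively to a cell decomposition of a connected oriented closed surface by bigons and triangles whose 0-cells are labeled by elements of $X$ and whose 1-cells are oriented so that every 2-cell matches a template of Figure \ref{fig:pieces}, subject to the constraints that each polygon has mutually distinct vertex labels and that no two of the induced 3-terms are negatives of one another.

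First I would isolate from Figure \ref{fig:the_cell_decompositions} the cell decompositions having exactly five 2-cells; these are the only candidates producing five $f$-connected 3-terms, since the bijective correspondence set up before Lemma \ref{lem:f-connected1} counts 2-cells as 3-terms. For each such decomposition I would enumerate, modulo its combinatorial automorphism group, all ways to orient the 1-cells so that every 2-cell fits one of the templates of Figure \ref{fig:pieces}. Orientations that force a bigon to degenerate (making its two vertices share a label) or that force two faces to produce 3-terms differing only by sign are discarded, exactly as was done for (4d) in the proof of Lemma \ref{lem:f-connected4}.

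The surviving orientation patterns are then read off directly by the recipe of Figure \ref{fig:pieces}: each face supplies one 3-term, and by freely assigning distinct elements $a, b, c, d \in X$ to the 0-cells one obtains the listed formulae. The weaker condition $d \neq a, b$ in (ix) and (x), as opposed to full distinctness of all four labels, arises precisely when the combinatorial structure of the decomposition does not force the vertex labeled $d$ to differ from $c$; this parallels case (v) of Lemma \ref{lem:f-connected4}. Matching the ten surviving patterns against (i)--(x) finishes the argument, and a figure analogous to Figure \ref{fig:f-connected4} would record the outcome diagrammatically.

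The main obstacle is the sheer volume of case analysis: there are more five-face decompositions than four-face ones, each carries a larger automorphism group that must be tracked to avoid double counting, and one must merge orientation patterns that differ only by a global reversal. I expect the most delicate bookkeeping to arise for the decompositions containing two or three bigons (responsible for case (i)), because the non-adjacency constraint on bigons interacts tightly there with the compatibility of edge orientations at shared vertices, and several a priori distinct orientation choices turn out to coincide under the decomposition's symmetry.
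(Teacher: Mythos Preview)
Your proposal is correct and follows essentially the same approach as the paper: the paper's proof amounts to checking that the five-face decompositions (5d)--(5h) of Figure~\ref{fig:the_cell_decompositions} admit no appropriate edge orientations, while (5a)--(5c) together admit exactly ten, which are recorded in Figure~\ref{fig:f-connected5} and read off as cases (i)--(x). Your description of the bookkeeping, including the relaxed constraint on $d$ in (ix) and (x), matches the paper's treatment.
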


\begin{proof}
It is routine to check that there are no ways to orient 1-cells of (5d)--(5h) appropriately, and essentially ten ways, depicted in Figure \ref{fig:f-connected5}, to orient the 1-cells of (5a)--(5c) appropriately.
We thus have the claim.
\begin{figure}[htbp]
 \centering
 \includegraphics[scale=0.25]{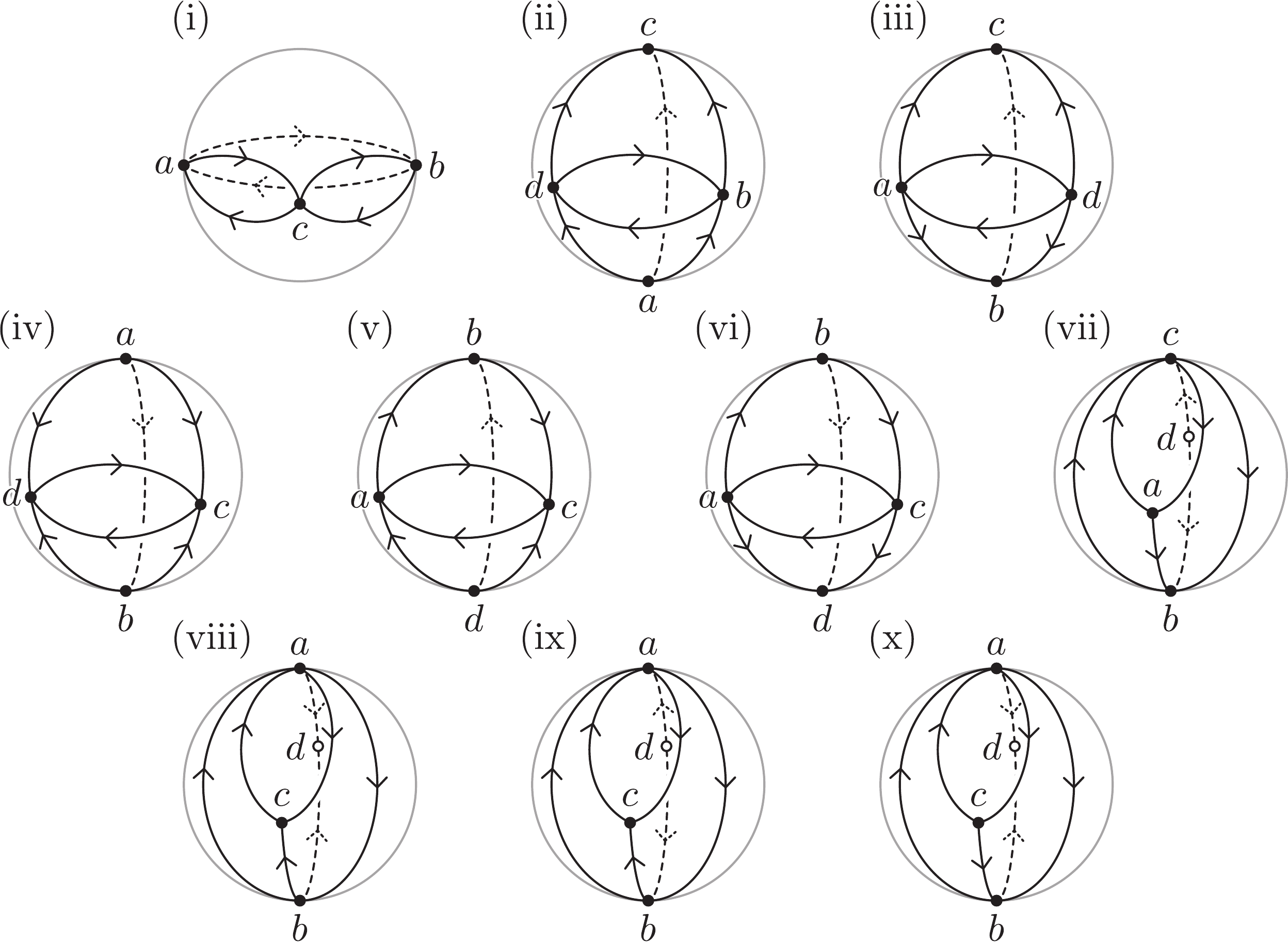}
 \caption{The appropriate ways to orient the 1-cells of (5a)--(5c).}
 \label{fig:f-connected5}
\end{figure}
\end{proof}

\section{A lower bound of the length of the Mochizuki 3-cocycle $\zeta$ of $R_{7}$}
\label{sec:lower_bound_of_zeta}

The aim of this section is to show the following key theorem in a similar way to Satoh's work \cite{Sat2016}.

\begin{theorem}
\label{thm:lower_bound_R7}
For the Mochizuki 3-cocycle $\zeta$ of the 7-dihedral quandle $R_{7}$, we have $l(\zeta, \mathbb{Z} \times R_{7}) \geq 8$.
\end{theorem}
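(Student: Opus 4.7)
The plan is to argue by contradiction, following the template laid out by Satoh for $R_{5}$ in \cite{Sat2016}. Assume there exists a 3-cycle $\gamma$ of $(R_{7}, \mathbb{Z}\times R_{7})$ with $\zeta(\pi(\gamma)) \neq 0$ and $l(\gamma) \leq 7$. After translating the degree we may assume the minimal degree of $\gamma$ is $0$; write $N$ for its maximal degree and $t_{k} = |T_{k}(\gamma)|$. By Lemma \ref{lem:3-cycle_condition}(ii), $T_{0}(\gamma)$ partitions into $f$-connected blocks, and by Lemma \ref{lem:3-cycle_condition}(iii) combined with Lemma \ref{lem:f-/g-connectedness}(i), $T_{N}(\gamma)$ partitions into $g$-connected blocks. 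Lemma \ref{lem:f-connected1} forbids singleton blocks, so $t_{0}, t_{N} \geq 2$. The total constraint $\sum_{k} t_{k} \leq 7$ then cuts the admissible ``profiles'' $(t_{0}, t_{1}, \ldots, t_{N})$ down to a manageable list (essentially $N \leq 2$, with only a few choices of $t_{1}$).

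For each profile, the possible forms of the extremal blocks are given by Lemmas \ref{lem:f-connected2}--\ref{lem:f-connected5} at degree $0$, and by their $g$-versions at degree $N$; the latter are obtained from Lemma \ref{lem:f-/g-connectedness}(i) together with $\overline{R_{7}} = R_{7}$, which keeps the reverse operation within the same setting. The cycle equation $\sum_{T_{k-1}} g(\gamma_{i}) + \sum_{T_{k}} f(\gamma_{j}) = 0$ at each intermediate degree $k$, together with the common-index statements of Lemma \ref{lem:f-/g-connectedness}(ii)(iii), then forces the intermediate-degree terms of $\gamma$. Using the explicit formula $a^{b} = 2b - a$ in $R_{7}$ and the type classification from Section \ref{sec:tools}, one reduces each profile to a finite list of candidate $\gamma$'s up to the natural symmetries (degree translation, reindexing by the $G(R_{7})$-action, and automorphisms of $R_{7}$).

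In each surviving configuration one then evaluates $\zeta(\pi(\gamma)) \in \mathbb{Z}/7\mathbb{Z}$ directly and shows it is $0$. The key inputs are that $\zeta(a, b, a) = 0$, so type-1 terms vanish individually, and that type-2 terms at adjacent degrees must pair off via Lemma \ref{lem:reverse}(ii), so their contributions cancel modulo $7$; what remains is a type-3 remainder which one identifies as a cycle whose Mochizuki-cocycle value is forced to be zero by the explicit formula for $\zeta$. The main obstacle will be the bookkeeping: several profiles, especially those pairing a three-term $f$-block at degree $0$ (from Lemma \ref{lem:f-connected3}) against a two- or three-term $g$-block at degree $1$, give rise to multiple sub-cases drawn from the five/ten possibilities in Lemmas \ref{lem:f-connected4} and \ref{lem:f-connected5}, and verifying the $\bmod 7$ cancellation case by case is the delicate part where the arithmetic of $R_{7}$ (as opposed to $R_{5}$) really enters.
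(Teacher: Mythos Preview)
Your overall scaffolding is right—one reduces to finitely many degree profiles and then handles each—but the endgame you propose has a genuine gap. The claim that $\zeta(a,b,a)=0$ is false: for instance
\[
\zeta_{7}(1,0,1)=(1-0)\cdot\frac{0^{7}+2^{7}-2\cdot 1^{7}}{7}=\frac{126}{7}=18\equiv 4\pmod 7.
\]
So type-1 terms do not vanish individually, and your subsequent assertion that type-2 contributions cancel via Lemma~\ref{lem:reverse}(ii) is unsupported (that lemma only relates the type of $\gamma$ to the type of $\overline{\gamma}$; it says nothing about values of $\zeta$). The mechanism you propose for forcing $\zeta(\pi(\gamma))=0$ therefore does not work as stated.

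The paper proceeds differently. To obtain the profile list (Proposition~\ref{prop:R7_list}) one needs Lemmas~\ref{lem:R7_f2} and~\ref{lem:R7_f3}, which say that a two- or three-term $f$-connected block at degree $n$ forces at least four or three terms respectively at degree $n+1$; these quantitative inputs are what actually collapse the degree span, and they are absent from your sketch. For the single-degree profiles (case~(A), $l=4,5,6,7$) the paper then proves that \emph{no} such 3-cycles exist at all (Propositions~\ref{prop:R7_I}--\ref{prop:R7_IV}), the key ingredient being that a simultaneously $f$- and $g$-connected family in $R_{7}$ requires at least $16$ terms (Lemma~\ref{lem:R7_f_and_g}). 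For the two-degree profiles (cases~(B) and~(C)) the paper enumerates the surviving cycles up to sign, reverse and reflection, and shows each is either impossible or equals $\pm\partial(n,w;a,b,c,d)$ for an explicit 4-chain (Propositions~\ref{prop:R7_V}--\ref{prop:R7_VIII}); being a boundary gives $\zeta(\pi(\gamma))=0$ for free. No direct term-by-term evaluation of $\zeta$ on candidate cycles is used anywhere.
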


We start with discussing the following things in the style of \cite{Sat2016}.
Let $a_{0}$ and $a_{1}$ be distinct elements of $R_{7}$ and $a_{i} = a_{0} + i (a_{1} - a_{0})$ for each $i$ ($2 \leq i \leq 6$).
Then, it is easy to see that the set consisting of $a_{0}, a_{1}, \dots, a_{6}$ coincides with $R_{7}$ and we have $a_{i}^{a_{j}} = a_{2 j - i}$ ($= a_{i^{j}}$).

Let $S$ denote our $R_{7}$-set $\mathbb{Z} \times R_{7}$.
For each $m$-term $\gamma = \pm (n, w; x_{1}, x_{2}, \dots, x_{m})$ of $(R_{7}, S)$, we define the \emph{reflection} of $\gamma$, which is an $m$-term of $(R_{7}, S)$, by
\[
 \gamma^{\ast} = \pm (n, (-1)^{n+1} w; (-1)^{n} (x_{m} - w), (-1)^{n} (x_{m-1} - w), \dots, (-1)^{n} (x_{1} - w)).
\]
We further extend it to the reflection of an $m$-chain of $(X, S)$ in a natural way.
Then, we have the following lemma.

\begin{lemma}[cf.\ Lemmas 4.2 and 4.3 of \cite{Sat2016}]
\label{lem:reflection}
For each $m$-chain $\gamma$ of $(R_{7}, S)$, we have the following claims.
\begin{itemize}
\item[(i)]
$\gamma^{\ast \ast} = \gamma$.
\item[(ii)]
Assume that $\gamma$ is a 3-term.
Then, $\gamma$ and $\gamma^{\ast}$ are the same type.
\item[(iii)]
$f(\gamma^{\ast}) = f(\gamma)^{\ast}$,
$g(\gamma^{\ast}) = g(\gamma)^{\ast}$, and thus
$\partial (\gamma^{\ast}) = \partial(\gamma)^{\ast}$.
\item[(iv)]
$\gamma$ is an $m$-cycle or boundary if and only if $\gamma^{\ast}$ is an $m$-cycle or boundary, respectively.
\end{itemize}
Furthermore, let $\gamma_{1}, \gamma_{2}, \dots, \gamma_{k}$ be 3-terms of $(R_{7}, S)$.
Then, we have the following claim.
\begin{itemize}
\item[(v)]
$\gamma_{1}, \gamma_{2}, \dots, \gamma_{k}$ are $f$-connected if and only if $\gamma_{1}^{\ast}, \gamma_{2}^{\ast}, \dots, \gamma_{k}^{\ast}$ are $f$-connected.
\end{itemize}
\end{lemma}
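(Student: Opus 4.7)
The plan is to verify the five claims in turn; (i) and (ii) reduce to direct substitution, (iii) is the computational core, and (iv) and (v) then follow formally.

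For (i), applying the reflection twice keeps the degree at $n$, sends the index to $(-1)^{n+1}\bigl((-1)^{n+1}w\bigr)=w$, and returns the $i$-th coordinate to $x_i$ via $(-1)^n\bigl((-1)^n(x_i-w)-(-1)^{n+1}w\bigr)=x_i$. For (ii), specialize to $m=3$ and set $A=(-1)^n(c-w)$, $B=(-1)^n(b-w)$, $C=(-1)^n(a-w)$. Since $x\mapsto(-1)^n(x-w)$ is an affine bijection of $R_7$, we have $A=C\iff a=c$, and using $x^y=2y-x$ the relation $A^B=C$ unpacks as $2b-a=c$, i.e.\ $a^b=c$. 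So the four types are preserved.

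For (iii), the identity $f(\gamma^*)=f(\gamma)^*$ is immediate once one notices that the reflection of a 2-term reverses its two coordinates, and that the three summands of $f(\gamma)^*$ are precisely the three summands of $f(\gamma^*)$ listed in reverse, with signs consistent thanks to the alternating signs in the definition of $f$. The identity $g(\gamma^*)=g(\gamma)^*$ requires matching summand-by-summand: I would show that the first summand of $g(\gamma^*)$ equals the reflection of the third summand of $g(\gamma)$, the middle summands match directly, and the third summand of $g(\gamma^*)$ equals the reflection of the first. The required equalities reduce to the $R_7$ identities $(w')^{c'}=(-1)^n w^a$, together with $w^a-c=-(c-w^a)$ and $w^a-b^a=w-b$, applied to the index and both coordinates of each 2-term. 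Adding $f$ and $g$ then yields $\partial(\gamma^*)=\partial(\gamma)^*$; claim (iv) follows by linearity and the involutivity in (i), since $\partial\gamma=0$ forces $\partial(\gamma^*)=0$, and $\gamma=\partial\beta$ forces $\gamma^*=\partial(\beta^*)$.

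Claim (v) follows by combining (iii) and (i): reflection is an involutive linear bijection satisfying $(-\gamma)^*=-\gamma^*$, so it preserves efficiency and the truth of $\sum_{i\in I}f(\gamma_i)=0$ for every index subset $I$. Hence $f$-connectedness transfers intact to the reflected terms. The main obstacle will be the bookkeeping in (iii): one must track the $(-1)^n$ factor coming from the outer reflection of the degree-$n$ 3-term against the $(-1)^{n+1}$ factors produced when reflecting the degree-$(n+1)$ 2-terms inside $g(\gamma)$, and verify that the ordering reversal among the three summands of $g$ does not flip any sign.
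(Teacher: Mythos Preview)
Your approach is correct and matches the paper's, which simply records that ``the proof is straightforward'' without giving any details. Your sketch fills in exactly the kind of direct verification the paper leaves to the reader; the only slip is the identity $w^{a}-b^{a}=w-b$, which should read $w^{a}-b^{a}=b-w$, but this does not affect the argument once the correct sign is carried through.
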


\begin{proof}
The proof is straightforward.
\end{proof}

To prove Theorem \ref{thm:lower_bound_R7}, we show the following Lemmas and Propositions.
We note that the reflection of $\gamma$ in Lemma \ref{lem:f-connected3} (ii) is nothing less than that in Lemma \ref{lem:f-connected3} (i) under a suitable transformation of variables (see the proof of Lemma 5.4 of \cite{Sat2016} for more detail).
In the remaining of this section, we select $w$ instead of $u$ in Lemmas \ref{lem:f-connected2} and \ref{lem:f-connected3} according to the notation in \cite{Sat2016}.

\begin{lemma}[cf.\ Lemma 5.3 of \cite{Sat2016}]
\label{lem:R7_f2}
Let $\gamma$ be the 3-chain in Lemma \ref{lem:f-connected2} with $X = R_{7}$.
Then, we have $g(\gamma) \neq 0$.
Furthermore, let $\gamma_{3}, \gamma_{4}, \dots, \gamma_{k + 2}$ be 3-terms of $(R_{7}, S)$ whose degrees are $n + 1$.
If $g(\gamma) + f(\sum_{i = 3}^{k + 2} \gamma_{i}) = 0$, then we have $k \geq 4$.
\end{lemma}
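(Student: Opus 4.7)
The plan is to compute $g(\gamma)$ explicitly from the definition and then exploit a sign-parity obstruction. Writing $\gamma = +(n, w; a, b, a) - (n, w; b, a, b)$ and using $a^a = a$, $b^b = b$, one obtains
\begin{align*}
 g(\gamma) =\ & +(n+1, w^a; b, a) + (n+1, w^a; a, b^a) + (n+1, w^a; b^a, b) \\
              & -(n+1, w^b; a^b, a) - (n+1, w^b; a, b) - (n+1, w^b; b, a^b).
\end{align*}
No 2-term vanishes here, since $a^b \neq a$ and $b^a \neq b$ when $a \neq b$ in $R_7$. The four elements $a$, $b$, $a^b = 2b - a$, $b^a = 2a - b$ are pairwise distinct in $R_7$, so within each index the three colors are pairwise distinct ordered pairs; and of course $w^a \neq w^b$. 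Consequently the six 2-terms above are all efficient, whence $g(\gamma) \neq 0$.

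For the second assertion, suppose $g(\gamma) + f(\sum_{i=3}^{k+2} \gamma_i) = 0$. Because the formula for $f$ does not touch the $S$-component, $f$ preserves the index of each 3-term, so 2-terms of different indices cannot cancel each other. Grouping the $\gamma_i$ by index, the sub-sum of those with index $w^a$ must satisfy
\[
 \sum f(\gamma_i) = -(b, a) - (a, b^a) - (b^a, b),
\]
and analogously the sub-sum for index $w^b$ must equal $+(a^b, a) + (a, b) + (b, a^b)$. Any further $\gamma_i$ of an alien index only inflates $k$, so it suffices to show that each of these two index groups contains at least two 3-terms.

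The crucial observation is a sign-parity obstruction. For any 3-term $\gamma_i = \varepsilon(n+1, u; x_1, x_2, x_3)$ with $\varepsilon \in \{ \pm 1 \}$, the definition gives
\[
 f(\gamma_i) = -\varepsilon(x_2, x_3) + \varepsilon(x_1, x_3) - \varepsilon(x_1, x_2),
\]
with any 2-term whose entries coincide suppressed. Since Remark \ref{rem:type0} precludes type 0 in $R_7$, either $\gamma_i$ is of type 1 (giving two 2-terms, both of sign $-\varepsilon$) or of type 2 or 3 (giving three 2-terms in the sign pattern $(-\varepsilon, +\varepsilon, -\varepsilon)$). In every case at most two of the 2-terms in $f(\gamma_i)$ share the same sign. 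But each of the two target sums displayed above is a sum of three 2-terms all carrying a common sign, so no single 3-term can realize either target. Hence each index group contains at least two 3-terms, yielding $k \geq 4$.

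The main obstacle is spotting the sign-parity argument; once it is formulated it immediately rules out $k \leq 3$ without any case-by-case enumeration of colors, since one never needs to guess the elements $x_1, x_2, x_3$ of a prospective 3-term.
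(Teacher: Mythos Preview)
Your proof is correct and follows essentially the same approach as the paper's (which defers to Satoh's argument in \cite{Sat2016}; the parallel Lemma~\ref{lem:O6_f2_usual} for $O_{6}$ makes the structure explicit). The paper's version, like yours, computes $g(\gamma)$, separates the six resulting 2-terms by index $w^{a}$ versus $w^{b}$, and uses the fact that each index block consists of three 2-terms of a common sign to force at least two $\gamma_{i}$'s per block; your write-up simply spells out the underlying sign-parity obstruction for $f(\gamma_{i})$ more explicitly.
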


\begin{proof}
The argument in the proof of Lemma 5.3 of \cite{Sat2016} works well even if we select $R_{7}$ instead of $R_{5}$, because we have $w^{a} \neq w^{b}$ for each $w \in R_{7}$ and distinct $a, b \in R_{7}$.
We thus have the claim.
\end{proof}

\begin{lemma}[cf.\ Lemma 5.5 of \cite{Sat2016}]
\label{lem:R7_f3}
Let $\gamma$ be one of the 3-chains in Lemma \ref{lem:f-connected3} with $X = R_{7}$.
Then, we have $g(\gamma) \neq 0$.
Furthermore, let $\gamma_{4}, \gamma_{5}, \dots, \gamma_{k + 3}$ be 3-terms of $(R_{7}, S)$ whose degrees are $n + 1$.
If $g(\gamma) + f(\sum_{i = 4}^{k + 3} \gamma_{i}) = 0$, then we have $k \geq 3$.
\end{lemma}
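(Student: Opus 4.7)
The plan is to adapt Satoh's argument for $R_{5}$ to $R_{7}$, exploiting a structural feature unaffected by the change in modulus: the map $g$, unlike $f$, shifts the index from $w$ to $w^{a_{i}}$, so the 2-terms of $g(\gamma)$ spread across three indices $w^{a}, w^{b}, w^{c}$ which are pairwise distinct in $R_{7}$ (the identity $2x - w = 2y - w$ forces $x = y$).

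First I would compute $g(\gamma)$ explicitly in each of the two cases of Lemma \ref{lem:f-connected3}. In case (i), $\gamma = +(n, w; a, b, a) - (n, w; c, a, b) - (n, w; c, b, a)$, and writing out the degree-$(n+1)$ 2-terms one obtains
\begin{align*}
 g(+(a,b,a)) &= +(w^{a}; b, a) - (w^{b}; a^{b}, a) + (w^{a}; a, b^{a}),\\
 -g((c,a,b)) &= -(w^{c}; a, b) + (w^{a}; c^{a}, b) - (w^{b}; c^{b}, a^{b}),\\
 -g((c,b,a)) &= -(w^{c}; b, a) + (w^{b}; c^{b}, a) - (w^{a}; c^{a}, b^{a}).
\end{align*}
The nine resulting 2-terms partition into three buckets according to their indices $w^{a}, w^{b}, w^{c}$. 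I would then verify that within each bucket the 2-terms have pairwise distinct colors, so none cancel. All coincidences reduce, via the formula $x^{y} = 2y - x$, to one of $a = b$, $b = c$ or $a = c$, which are excluded. In particular the $w^{c}$-bucket is $-(w^{c}; a, b) - (w^{c}; b, a)$, visibly non-zero, so $g(\gamma) \neq 0$.

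Next, since $f$ preserves both the degree and the index of an $m$-term while each 3-term $\gamma_{i}$ is supported at a single index, the equation $g(\gamma) + f(\sum_{i=4}^{k+3}\gamma_{i}) = 0$ forces $\{\gamma_{4}, \dots, \gamma_{k+3}\}$ to contain at least one 3-term of index $w^{x}$ for every index $w^{x}$ at which $g(\gamma)$ has a non-zero contribution. Having just shown that all three indices $w^{a}, w^{b}, w^{c}$ are required, this yields $k \geq 3$.

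Case (ii), where $\gamma = +(a,b,a) - (a,b,c) - (b,a,c)$, is handled in the same style; the only point worth isolating is that the $w^{c}$-bucket of $g(\gamma)$ is $-(w^{c}; a^{c}, b^{c}) - (w^{c}; b^{c}, a^{c})$, which consists of two distinct non-cancelling 2-terms because $a^{c} \neq b^{c}$ in $R_{7}$. The main (and only) obstacle is bookkeeping: one has to check the internal non-cancellation at each index-bucket for both cases, but no new idea beyond the index-distribution observation is needed, and the argument is robust under the change from $R_{5}$ to $R_{7}$ precisely because the required distinctness statements continue to hold.
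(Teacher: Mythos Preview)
Your proposal is correct and follows essentially the same approach as the paper, which simply cites Satoh's Lemma~5.5 and isolates the one fact needed for the $R_7$ case: that $w^{a}$, $w^{b}$, $w^{c}$ are pairwise distinct whenever $a,b,c$ are. Your explicit bucket-by-index computation is exactly the content of Satoh's argument, so the two proofs coincide.
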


\begin{proof}
The argument in the proof of Lemma 5.5 of \cite{Sat2016} works well even if we select $R_{7}$ instead of $R_{5}$, because $w^{a}$, $w^{b}$ and $w^{c}$ are mutually different for each $w \in R_{7}$ and mutually distinct $a, b, c \in R_{7}$.
We thus have the claim.
\end{proof}

\begin{proposition}[cf.\ Proposition 5.6 of \cite{Sat2016}]
\label{prop:R7_list}
Let $\gamma$ be a 3-cycle of $(R_{7}, S)$ whose length is $l$ {\upshape (}$1 \leq l \leq 7${\upshape )}, and $n$ the minimal degree of $\gamma$.
Then, we have the following cases up to reverse.
\begin{itemize}
\item[(A)]
$4 \leq l \leq 7$ and $|T_{n}(\gamma)| = l$.
\item[(B)]
$6 \leq l \leq 7$, $|T_{n}(\gamma)| = 2$ and $|T_{n + 1}(\gamma)| = l - 2$.
\item[(C)]
$6 \leq l \leq 7$, $|T_{n}(\gamma)| = 3$ and $|T_{n + 1}(\gamma)| = l - 3$.
\end{itemize}
\end{proposition}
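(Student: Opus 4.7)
The plan is to constrain the level profile $t_{k} := |T_{k}(\gamma)|$ (so $\sum_{k} t_{k} = l$, with support $\{ n, n+1, \ldots, N \}$) by a cascade of bounds, and then check that only the profiles listed in (A), (B), (C) can survive. First I would establish the two-sided boundary bounds $t_{n} \geq 2$ and $t_{N} \geq 2$. The former is immediate from Lemmas~\ref{lem:3-cycle_condition}(ii) and~\ref{lem:f-connected1}. For the latter, I would observe that $g(\delta) \neq 0$ for every 3-term $\delta$: writing $\delta = \overline{\gamma'}$, Lemma~\ref{lem:reverse}(iii) yields $g(\overline{\gamma'}) = -\overline{f(\sigma(\gamma'))}$, which vanishes only if $f(\gamma') = 0$, contradicting Lemma~\ref{lem:f-connected1}. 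Hence Lemma~\ref{lem:3-cycle_condition}(iii) forces $t_{N} \geq 2$.

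Next I would extract cancellation bounds at the two boundary layers. From $f(T_{n+1}) = -g(T_{n})$: if $t_{n} = 2$, then $T_{n}$ must be $f$-connected of size two (singletons are not $f$-closed, and since $f$ preserves indices the two terms share a common index), so Lemmas~\ref{lem:f-connected2} and~\ref{lem:R7_f2} yield $t_{n+1} \geq 4$; if $t_{n} = 3$, then $T_{n}$ is $f$-connected of size three, and Lemmas~\ref{lem:f-connected3} and~\ref{lem:R7_f3} yield $t_{n+1} \geq 3$. Applying the same analysis to $\overline{\gamma}$ via Lemma~\ref{lem:reverse}(iv) transports these bounds to the top of $\gamma$: $t_{N} = 2$ forces $t_{N-1} \geq 4$, and $t_{N} = 3$ forces $t_{N-1} \geq 3$.

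Then I would rule out gaps and configurations with three or more non-empty layers. A gap at some $m$ with $n < m < N$ would decompose $\gamma$ as a sum of two independent sub-3-cycles; each inherits the boundary and cancellation bounds above and therefore has length at least four, so $l \geq 8$. Similarly, if $N \geq n + 2$ then $l \geq 2 + (N - n - 1) + 2$ forces $N - n \leq 4$, and one enumerates the finite list of profiles $(t_{n}, \ldots, t_{N})$ with interior entries $\geq 1$, boundary entries $\geq 2$, and sum $\leq 7$; in each case the cancellation bound from the bottom (when $t_{n} \in \{ 2, 3 \}$), the analogous bound from the top via reverse (when $t_{N} \in \{ 2, 3 \}$), or the estimate $t_{n}, t_{N} \geq 4$ itself pushes the total above seven.

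Finally I would classify the remaining one- and two-level cases. For $N = n$, the condition $g(T_{n}) = 0$ combined with the cancellation bounds excludes $t_{n} \in \{ 2, 3 \}$, so $t_{n} = l \geq 4$, which is case (A). For $N = n + 1$, the profile $(t_{n}, t_{n+1})$ with $t_{n}, t_{n+1} \geq 2$ gives case (B) when $t_{n} = 2$ and case (C) when $t_{n} = 3$; sub-cases with $t_{n} \geq 4$ and $t_{n+1} \in \{ 2, 3 \}$ are handled by passing to $\overline{\gamma}$, where they become cases (B) or (C) respectively, while $t_{n}, t_{n+1} \geq 4$ is ruled out by $l \geq 8$. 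The main obstacle is the enumeration step ruling out three or more layers: interior layers carry no cycle-closure condition, so one has to combine the cancellation bounds at both ends with the global budget $l \leq 7$, and the argument works only because the lower bound $t_{n+1} \geq 4$ coming from Lemma~\ref{lem:R7_f2} is tight enough to immediately push every three-level profile past seven.
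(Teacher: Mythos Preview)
Your proposal is correct and follows essentially the same approach as the paper, which simply cites Lemmas~\ref{lem:3-cycle_condition}--\ref{lem:f-/g-connectedness}, \ref{lem:f-connected1}, \ref{lem:R7_f2}, \ref{lem:R7_f3} and defers to the proof of Proposition~5.6 in \cite{Sat2016}. You have reconstructed that argument in detail: the boundary bounds $t_n,t_N\geq 2$, the cancellation estimates from Lemmas~\ref{lem:R7_f2} and~\ref{lem:R7_f3} transported to the top via reverse (using $\overline{R_7}=R_7$), and the level-profile enumeration are exactly the intended steps.
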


\begin{proof}
In light of Lemmas \ref{lem:3-cycle_condition}--\ref{lem:f-/g-connectedness}, \ref{lem:f-connected1}, \ref{lem:R7_f2} and \ref{lem:R7_f3}, we have the claim in a similar way to the proof of Lemma 5.6 of \cite{Sat2016}.
\end{proof}

\begin{lemma}[cf.\ Lemma 6.1 of \cite{Sat2016}]
\label{lem:R7_f_and_g}
Let $\gamma_{1}, \gamma_{2}, \dots, \gamma_{l}$ be $f$- and $g$-connected 3-terms of $(R_{7}, S)$ having the same degree $n$.
Then, we have $l \geq 16$.
\end{lemma}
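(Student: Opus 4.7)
The plan is to follow the strategy of Lemma~6.1 of \cite{Sat2016}, adapting the arguments from $R_5$ to $R_7$. First I would normalize the setup: by Lemma~\ref{lem:f-/g-connectedness}(ii), the $\gamma_i$ share a common index $u \in R_7$, and using the reflection $\ast$ of Lemma~\ref{lem:reflection}(v) I may assume $u = 0$. By Lemma~\ref{lem:f-/g-connectedness}(iii), the values $u^{a_i b_i c_i} = 2(a_i - b_i + c_i) \bmod 7$ all coincide. By Lemma~\ref{lem:f-/g-connectedness}(i), the reverses $\overline{\gamma_1}, \dots, \overline{\gamma_l}$ form an $f$-connected collection at degree $-n$, so the original data encode two simultaneous cell decompositions of orientable closed surfaces of bigons and triangles, in the sense of Section~\ref{sec:enumeration_of_f-connected_3-terms}.

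With this setup, I would argue by contradiction, supposing $l \leq 15$. For $l \leq 5$, Lemmas~\ref{lem:f-connected1}--\ref{lem:f-connected5} list all possible $f$-connected configurations; for each I would compute the reverse explicitly---in $R_7$, $\overline{(0; a, b, c)}$ has index $2(a-b+c)$ and color $(a - 2b + 2c,\, 2c - b,\, c)$---and check either that the reverse does not lie in the enumeration (so $g$-connectedness fails), or that some proper subcollection of reverses is already $f$-connected (violating minimality in the $g$-direction). For $6 \leq l \leq 15$, I would decompose the collection into smaller $f$-connected sub-pieces of size $\leq 5$ and apply Lemmas~\ref{lem:R7_f2} and \ref{lem:R7_f3} iteratively to bound from below the total number of 3-terms required so that both $\sum f(\gamma_i) = 0$ and $\sum g(\gamma_i) = 0$ hold simultaneously without any proper subset already satisfying one of these relations.

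The principal difficulty is the case analysis for the intermediate range $6 \leq l \leq 15$, where the enumeration of Section~\ref{sec:enumeration_of_f-connected_3-terms} is not directly available: one must either extend the enumeration of $f$-connected cell complexes beyond size $5$, bookkeeping Euler characteristics and using that the vertex labels lie in the $7$-element set $R_7$, or propagate the constraints of $f$- and $g$-connectedness through careful $R_7$-arithmetic. The leverage afforded by the $7$-dihedral setting---in particular, that the map $a \mapsto w^a$ takes seven distinct values in $R_7$ for each fixed $w$, as already exploited in Lemmas~\ref{lem:R7_f2} and \ref{lem:R7_f3}---makes cancellations under $g$ genuinely harder than in $R_5$, ultimately forcing $l \geq 16$.
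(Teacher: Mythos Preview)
Your plan has a genuine gap in the range $6 \leq l \leq 15$. You propose to ``decompose the collection into smaller $f$-connected sub-pieces of size $\leq 5$'', but this is impossible by definition: the hypothesis is that $\gamma_1,\dots,\gamma_l$ are $f$-connected, which means precisely that no nonempty proper subset has $f$-sum zero. So there are no smaller $f$-connected sub-pieces to work with. Moreover, Lemmas~\ref{lem:R7_f2} and~\ref{lem:R7_f3} concern the interaction between $g$ at degree $n$ and $f$ at degree $n+1$; they give no leverage on a single-degree collection with $f(\gamma)=g(\gamma)=0$. Your proposal therefore has no actual mechanism for the heart of the argument, and you yourself flag this as ``the principal difficulty'' without resolving it.

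The paper's approach is entirely different and much more direct. After fixing the common index $w$ (Lemma~\ref{lem:f-/g-connectedness}(ii)) and the common value of $w^{a_ib_ic_i}$ (Lemma~\ref{lem:f-/g-connectedness}(iii)), one writes $\gamma=\sum \alpha_{abc}(n,w;a_{a},a_{b},a_{c})$ as an integer combination over the finitely many admissible colors and imposes $f(\gamma)=g(\gamma)=0$ as a linear system in the unknowns $\alpha_{abc}$. Solving this system explicitly (as in Lemma~6.1 of \cite{Sat2016}, but now over $R_7$) shows that every solution is of the form $\alpha\,\Gamma_1+\beta\,\Gamma_2+(\alpha+\beta)\,\Gamma_3$ for integers $\alpha,\beta$, where $\Gamma_1,\Gamma_2,\Gamma_3$ are three fixed, pairwise disjoint $8$-term sums. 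Since at most one of $\alpha$, $\beta$, $\alpha+\beta$ can vanish, at least two of the blocks contribute, forcing $l\geq 16$. No case analysis on $l$ is needed; the bound drops out of the structure of the solution space.
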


\begin{proof}
In light of Lemma \ref{lem:f-/g-connectedness} (ii), $\gamma_{1}, \gamma_{2}, \dots, \gamma_{l}$ have the same index $w$.
Let $\gamma = \sum_{i = 1}^{l} \gamma_{i}$.
Then, in a similar way to the proof of Lemma 6.1 of \cite{Sat2016}, we have
\begin{align*}
 \gamma
 & = \alpha ( (a_{1}, a_{3}, a_{2}) + (a_{1}, a_{6}, a_{5}) - (a_{2}, a_{1}, a_{6}) - (a_{2}, a_{3}, a_{1}) \\
 & \qquad \qquad \qquad - (a_{5}, a_{4}, a_{6}) - (a_{5}, a_{6}, a_{1}) + (a_{6}, a_{1}, a_{2}) + (a_{6}, a_{4}, a_{5}) ) \\
 & \phantom{=} + \beta ( (a_{2}, a_{5}, a_{3}) + (a_{2}, a_{6}, a_{4}) - (a_{3}, a_{1}, a_{5}) - (a_{3}, a_{5}, a_{2}) \\
 & \qquad \qquad \qquad - (a_{4}, a_{2}, a_{5}) - (a_{4}, a_{6}, a_{2}) + (a_{5}, a_{1}, a_{3}) + (a_{5}, a_{2}, a_{4}) ) \\
 &  \phantom{=} + (\alpha + \beta) ( (a_{1}, a_{4}, a_{3}) + (a_{1}, a_{5}, a_{4}) - (a_{3}, a_{2}, a_{6}) - (a_{3}, a_{4}, a_{1}) \\
 & \qquad \qquad \qquad - (a_{4}, a_{3}, a_{6}) - (a_{4}, a_{5}, a_{1}) + (a_{6}, a_{2}, a_{3}) + (a_{6}, a_{3}, a_{4}) )
\end{align*}
for some distinct $a_{0}, a_{1} \in R_{7}$ and $\alpha, \beta \in \mathbb{Z}$.
Since no two of $\alpha$, $\beta$ and $\alpha + \beta$ become zero simultaneously, we have the claim.
\end{proof}

\begin{proposition}[cf.\ Proposition 6.2 of \cite{Sat2016}]
\label{prop:R7_I}
There are no 3-cycles $\gamma$ of $(R_{7}, S)$ satisfying $l(\gamma) = |T_{n}(\gamma)| = 4$.
\end{proposition}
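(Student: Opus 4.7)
The plan is to suppose for contradiction that a 3-cycle $\gamma = \sum_{i = 1}^{4} \gamma_{i}$ with $l(\gamma) = |T_{n}(\gamma)| = 4$ exists, where $n$ is the minimal (and hence the maximal) degree of $\gamma$. Since every $\gamma_{i}$ sits in the single degree $n$, Lemma \ref{lem:3-cycle_condition} (ii) and (iii) simultaneously yield $\sum_{i = 1}^{4} f(\gamma_{i}) = 0$ and $\sum_{i = 1}^{4} g(\gamma_{i}) = 0$. First I would partition $\{\gamma_{1}, \gamma_{2}, \gamma_{3}, \gamma_{4}\}$ into $f$-connected subsets and, independently, into $g$-connected subsets. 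By Lemma \ref{lem:f-connected1}, no $f$-block is a singleton, and by Lemma \ref{lem:f-/g-connectedness} (i) together with Lemma \ref{lem:reverse} and Lemma \ref{lem:f-connected1}, no $g$-block is a singleton either. Hence both partitions have block pattern $(4)$ or $(2, 2)$, giving four combined cases.

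The case $(f, g) = (4, 4)$, in which the four terms form a single $f$-connected and single $g$-connected set, is immediately ruled out by Lemma \ref{lem:R7_f_and_g}, which would force $l \geq 16$. By Lemma \ref{lem:reverse} (iv) and Lemma \ref{lem:f-/g-connectedness} (i), the reverse of $\gamma$ is again a 3-cycle of $(R_{7}, S)$ of length $4$ concentrated at a single degree, whose $f$- and $g$-partition patterns are swapped from those of $\gamma$. Hence the cases $(4, 2+2)$ and $(2+2, 4)$ are equivalent under reverse, and it suffices to rule out $(4, 2+2)$ and $(2+2, 2+2)$.

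For $(4, 2+2)$, I would enumerate the five possible forms of the $f$-connected quadruple $\gamma$ from Lemma \ref{lem:f-connected4} (up to sign), and for each form compute the 2-chain $g(\gamma)$ at degree $n+1$ explicitly, then verify that it cannot be rewritten as the $g$-image of two $g$-cancelling pairs of 3-terms. The key arithmetic input, already used in the proofs of Lemmas \ref{lem:R7_f2} and \ref{lem:R7_f3}, is that for each $w \in R_{7}$ the map $a \mapsto w^{a} = 2a - w$ is injective, so that distinctness among the colors of $\gamma$ translates into distinctness among the indices in $g(\gamma)$. For $(2+2, 2+2)$, by Lemma \ref{lem:f-connected2} each $f$-pair has the form $\{+(a_{j}, b_{j}, a_{j}), -(b_{j}, a_{j}, b_{j})\}$ for $j = 1, 2$, and via Lemma \ref{lem:f-/g-connectedness} (i) and Lemma \ref{lem:reverse} the $g$-image of each $g$-pair also has such a shape; combining these with the $R_{7}$-arithmetic and normalizing via the reflection from Lemma \ref{lem:reflection} reduces the problem to finitely many explicit configurations, none of which turns out to be realizable.

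The main obstacle will be the concrete case-by-case check in $(4, 2+2)$: each of the five $f$-connected shapes yields a 2-chain $g(\gamma)$ of up to twelve 2-terms, and one must rule out every partition of it into two $g$-cancelling pairs. This is a combinatorial enumeration paralleling the proof of Proposition 6.2 of \cite{Sat2016}, and since the injectivity $w^{a} \neq w^{b}$ for $a \neq b$ holds uniformly in any $R_{n}$ with $n$ odd, the arguments for $R_{5}$ adapt to $R_{7}$ essentially verbatim.
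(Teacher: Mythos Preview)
Your plan is sound and tracks the paper's, which simply records that Satoh's argument for $R_5$ carries over. One organizational difference: after disposing of the fully $f$- and $g$-connected case via Lemma~\ref{lem:R7_f_and_g}, the paper (following Satoh, and as made explicit in the proof of Proposition~\ref{prop:O6_I}) uses the reverse to reduce to the situation where the four terms are \emph{not} $f$-connected, so the $f$-partition is $(2,2)$ and Lemma~\ref{lem:f-connected2} pins down a single normal form $+(u;a,b,a)-(u;b,a,b)+(v;c,d,c)-(v;d,c,d)$; one then shows directly that $g(\gamma)=0$ forces $\gamma=0$. You instead reduce to the $g$-partition being $(2,2)$ and then enumerate the five shapes of Lemma~\ref{lem:f-connected4} in the $(4,2{+}2)$ subcase. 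Both routes work; yours carries more bookkeeping, though it can be shortened by observing (via Lemma~\ref{lem:f-/g-connectedness}(i), Lemma~\ref{lem:reverse}(ii) and Lemma~\ref{lem:f-connected2}) that any $g$-connected pair in $R_7$ consists of two type-$2$ terms of opposite sign, which instantly disposes of shapes (i)--(iv) and makes $(2{+}2,2{+}2)$ vacuous.

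The one genuine omission is the arithmetic input you name. The injectivity $w^{a}\neq w^{b}$ is just $2(a-b)\neq 0$, automatic in every $R_n$ with $n$ odd, and is not the condition that closes the argument. The paper isolates the correct extra hypothesis: $b^{a}\neq a^{b}$ for distinct $a,b\in R_7$, i.e.\ $3(a-b)\neq 0$ in $\mathbb{Z}/7\mathbb{Z}$. This is exactly what is used at the crux---for instance, in shape~(v) of Lemma~\ref{lem:f-connected4}, forcing both $(c,a,b)$ and $(c,b,a)$ to be of type~$2$ gives $2a-c=b$ and $2b-c=a$, hence $3a=3b$, and the contradiction comes from $\gcd(3,7)=1$, not $\gcd(2,7)=1$. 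Your final sentence should invoke this condition rather than the weaker injectivity.
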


\begin{proof}
The argument in the proof of Proposition 6.2 of \cite{Sat2016} works well even if we select $R_{7}$ instead of $R_{5}$, because we have the above lemmas and $b^{a} \neq a^{b}$ for any distinct $a, b \in R_{7}$.
We thus have the claim.
\end{proof}

\begin{proposition}[cf.\ Proposition 6.3 of \cite{Sat2016}]
\label{prop:R7_II}
There are no 3-cycles $\gamma$ of $(R_{7}, S)$ satisfying $l(\gamma) = |T_{n}(\gamma)| = 5$.
\end{proposition}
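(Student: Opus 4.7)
The plan is to follow Satoh's proof of Proposition 6.3 of \cite{Sat2016} line-by-line, substituting $R_{7}$ for $R_{5}$ where it matters. Write $\gamma = \sum_{i=1}^{5} \gamma_{i}$ in reduced form with every $\gamma_{i}$ of degree $n$. Since $n$ is simultaneously the minimal and maximal degree of $\gamma$, Lemma \ref{lem:3-cycle_condition} (ii) and (iii) force
\begin{equation*}
 \sum_{i=1}^{5} f(\gamma_{i}) = 0 \qquad \text{and} \qquad \sum_{i=1}^{5} g(\gamma_{i}) = 0.
\end{equation*}
By Lemma \ref{lem:f-connected1}, no single 3-term satisfies $f(\gamma_{i}) = 0$, and via Lemma \ref{lem:reverse} the same holds for $g$. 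Hence the partitions of $\{\gamma_{1}, \dots, \gamma_{5}\}$ into $f$-connected and into $g$-connected pieces are each either $(5)$ or $(2, 3)$.

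Lemma \ref{lem:R7_f_and_g} forbids any set of size $\leq 15$ from being simultaneously $f$- and $g$-connected, so the five $\gamma_{i}$ cannot form a single $f$- and $g$-connected piece. Using Lemma \ref{lem:reverse} to interchange the $f$- and $g$-roles, we may assume up to reverse that either
\begin{itemize}
\item[(A)] the $f$-partition is $(5)$ and the $g$-partition is $(2, 3)$, or
\item[(B)] both partitions are $(2, 3)$, necessarily with different groupings (again by Lemma \ref{lem:R7_f_and_g} applied to each common piece).
\end{itemize}

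Case (A) is the heart of the argument. Lemma \ref{lem:f-connected5} enumerates the ten possible shapes (i)--(x) for five $f$-connected terms. For each shape I would write out the five 2-chains $g(\gamma_{i})$ at degree $n + 1$ explicitly using $a^{b} = 2 b - a$ in $R_{7}$, and then verify that this collection admits no splitting into an $f$-connected pair (of the form in Lemma \ref{lem:f-connected2}) together with an $f$-connected triple (of one of the two forms in Lemma \ref{lem:f-connected3}); Lemma \ref{lem:reflection} (v) is used to identify subcases and cut the amount of bookkeeping. In case (B), the $f$-pair has the shape of Lemma \ref{lem:f-connected2} and the $f$-triple one of the two shapes of Lemma \ref{lem:f-connected3}; I then apply the same shape-matching test on the degree-$n+1$ side, where Lemmas \ref{lem:R7_f2} and \ref{lem:R7_f3} already encode the main obstructions to a $(2, 3)$ split.

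The main obstacle is the sheer bookkeeping of Case (A): ten subcases, each producing five 2-terms at degree $n + 1$, and for each the need to rule out every grouping into a pair plus a triple that could come from Lemmas \ref{lem:f-connected2} and \ref{lem:f-connected3}. The arithmetic engine that closes every subcase is that $7$ is an odd prime, so that $w^{a} \neq w^{b}$ whenever $a \neq b$ and the three values $w^{a}, w^{b}, w^{c}$ are pairwise distinct whenever $a, b, c$ are; this is exactly what Satoh relies on for $R_{5}$, and it transfers verbatim to $R_{7}$.
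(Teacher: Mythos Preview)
Your proposal and the paper take the same route at the meta level: both defer to Satoh's argument for $R_{5}$ and note that it carries over to $R_{7}$; the paper's proof is literally a one-line citation to that effect.

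That said, two points in your sketch of what you believe Satoh's argument looks like are off. First, your Case~(A) is not how Satoh proceeds: Lemma~\ref{lem:f-connected5} is new to the present paper (note the absence of a ``cf.'' tag, in contrast to Lemmas~\ref{lem:f-connected1}--\ref{lem:f-connected3}), so Satoh cannot be enumerating ten shapes of five $f$-connected terms. Rather, as mirrored in the parallel proof of Proposition~\ref{prop:O6_II}, one uses Lemma~\ref{lem:R7_f_and_g} together with the reverse operation (Lemma~\ref{lem:f-/g-connectedness}~(i)) to assume from the outset that the five terms are \emph{not} $f$-connected, so the $f$-partition is $(2,3)$ and your Case~(A) never arises. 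Second, your appeal to Lemmas~\ref{lem:R7_f2} and~\ref{lem:R7_f3} in Case~(B) is misplaced: those lemmas govern the interaction between $g$ of degree-$n$ terms and $f$ of degree-$(n{+}1)$ terms, whereas here all five terms sit at the single degree $n$. The actual work, once one is in the $(2,3)$ $f$-partition, is to write out $g$ of the explicit $f$-pair and $f$-triple (from Lemmas~\ref{lem:f-connected2} and~\ref{lem:f-connected3}) and check directly that $g(\gamma)=0$ is impossible; this is indeed where the pairwise distinctness of $w^{a},w^{b},w^{c}$ for distinct $a,b,c\in R_{7}$ enters, so your closing remark about the arithmetic engine is on target.
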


\begin{proof}
The argument in the proof of Proposition 6.3 of \cite{Sat2016} works well even if we select $R_{7}$ instead of $R_{5}$, because we have the above lemmas.
Therefore, we have the claim.
\end{proof}

\begin{proposition}[cf.\ Proposition 6.4 of \cite{Sat2016}]
\label{prop:R7_III}
There are no 3-cycles $\gamma$ of $(R_{7}, S)$ satisfying $l(\gamma) = |T_{n}(\gamma)| = 6$.
\end{proposition}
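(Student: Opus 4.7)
The plan is to follow the proof of Proposition 6.4 of \cite{Sat2016} and adapt it to $R_7$ using the enumerations of Section \ref{sec:enumeration_of_f-connected_3-terms} together with Lemmas \ref{lem:R7_f2}, \ref{lem:R7_f3}, and \ref{lem:R7_f_and_g}. Assume for contradiction that a 3-cycle $\gamma = \sum_{i=1}^{6}\gamma_i$ with $l(\gamma) = |T_n(\gamma)| = 6$ exists. Since all six terms lie at the single degree $n$, Lemma \ref{lem:3-cycle_condition}(ii)--(iii) forces the separate identities $\sum_i f(\gamma_i)=0$ and $\sum_i g(\gamma_i)=0$, and Lemma \ref{lem:f-/g-connectedness}(ii) gives a common index $w\in R_7$.

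Partition $\{\gamma_1,\ldots,\gamma_6\}$ into its $f$-connected blocks $F_1,\ldots,F_p$ and, independently, into its $g$-connected blocks $G_1,\ldots,G_q$. Lemma \ref{lem:f-connected1} and its $g$-analogue (obtained via Lemmas \ref{lem:reverse} and \ref{lem:f-/g-connectedness}(i)) rule out singleton blocks, so the block-size vector of either partition lies in $\{(6),(4,2),(3,3),(2,2,2)\}$. If both partitions equal $(6)$, then Lemma \ref{lem:R7_f_and_g} yields $6\geq 16$, a contradiction. Using the reverse symmetry $\gamma\mapsto\overline{\gamma}$, we may assume in every remaining case that the $f$-partition is at most as coarse as the $g$-partition. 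For each admissible $f$-partition, normalise each $F_j$ via Lemmas \ref{lem:f-connected2}, \ref{lem:f-connected3} and \ref{lem:f-connected4}, using the reflection symmetry of Lemma \ref{lem:reflection} and the coordinatization $a_i = a_0+i(a_1-a_0)$; then compute $g(F_j)$ explicitly in $R_7$ coordinates via $a_i^{a_j}=a_{2j-i}$ and require the resulting degree-$(n+1)$ $2$-chain $\sum_j g(F_j)$ to split into $g$-connected blocks of the prescribed sizes, subject to Lemma \ref{lem:f-/g-connectedness}(iii) (constant $w^{abc}$ within each $g$-block) and the counting bounds of Lemmas \ref{lem:R7_f2} and \ref{lem:R7_f3}.

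The main obstacle is the densest subcase in which the $f$- and $g$-partitions are both $(2,2,2)$: each $F_j$ then has the rigid shape $+(a_j,b_j,a_j)-(b_j,a_j,b_j)$ of Lemma \ref{lem:f-connected2}, and stitching three such contributions together compatibly with a $g$-partition of the same type pins down the unordered pairs $\{a_j,b_j\}$ so tightly that the arithmetic of $R_7$ (in particular $b^a=2a-b\neq 2b-a=a^b$ for distinct $a,b$, which holds because $\gcd(3,7)=1$) prevents any global cancellation. The mixed subcases, $(2,2,2)$ against $(3,3)$ or $(4,2)$, and $(3,3)$ or $(4,2)$ against themselves, are dispatched in the same style by combining the explicit forms of Lemmas \ref{lem:f-connected3} and \ref{lem:f-connected4} with their $g$-images and tracking the colours $(w^{a}; b, c)$ through each block. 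Finally, the remaining subcases in which the $f$-partition is $(6)$ (with a strictly finer $g$-partition) are controlled by applying Lemma \ref{lem:R7_f_and_g} internally: each $g$-connected sub-block sits inside a simultaneously $f$- and $g$-constrained configuration, whose minimal size lower bound again exceeds what six terms can supply.
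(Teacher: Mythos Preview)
Your overall strategy matches the paper's: partition the six terms into $f$- and $g$-connected blocks, eliminate the $(6)$--$(6)$ case via Lemma~\ref{lem:R7_f_and_g}, normalise the $f$-blocks by Lemmas~\ref{lem:f-connected2}--\ref{lem:f-connected4}, and rule out each configuration by explicit $R_7$ arithmetic. Two concrete problems remain, though.

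First, your final paragraph is incoherent. Having used reverse symmetry to assume the $f$-partition is at least as fine as the $g$-partition, the case ``$f$-partition $=(6)$ with a strictly finer $g$-partition'' cannot occur; what does remain is the case $g$-partition $=(6)$ with $f$-partition $\neq(6)$, and this is not handled by Lemma~\ref{lem:R7_f_and_g}. That lemma applies only to a single set that is \emph{simultaneously} $f$- and $g$-connected; it says nothing about $g$-connected sub-blocks of an $f$-connected set or vice versa. The $g=(6)$ cases are dispatched by the same explicit computation of $g(F_j)$ as the others, not by invoking Lemma~\ref{lem:R7_f_and_g} again.

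Second, your $R_7$ arithmetic is incomplete. The condition $b^a\neq a^b$ you cite is exactly the paper's Case~2 ingredient (phrased there as the incompatibility of $c^a=b$ and $c^b=a$). But for the other cases the paper isolates two further identities: $w^{aba}\neq w^{bab}$ for distinct $a,b$ (since $6(a-b)\neq 0$ in $R_7$) and $w^{aba}\neq w^{cba}$ for $a\neq c$ (since $2(a-c)\neq 0$). These are precisely what separate the indices of $\overline{\gamma_i}$ and hence control the $g$-block structure in the $(2,2,2)$ and $(3,3)$ analyses; without them your ``stitching'' argument has no teeth. Once these three arithmetic facts are recorded, Satoh's Proposition~6.4 argument goes through verbatim for $R_7$ (with the index relabelling $a_3\mapsto a_5$, $a_4\mapsto a_6$ in Case~2), which is exactly how the paper proceeds.
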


\begin{proof}
The argument in the proof of Proposition 6.4 of \cite{Sat2016} works well even if we select $R_{7}$ instead of $R_{5}$ except Case 2, because we have the above lemmas, and $w^{aba} \neq w^{bab}$ and $w^{aba} \neq w^{cba}$ for any $w \in R_{7}$ and mutually distinct $a, b, c \in R_{7}$.
Furthermore, replacing $a_{3}$ and $a_{4}$ with $a_{5}$ and $a_{6}$ respectively, the argument in Case 2 also works well, because the conditions $c^{a} = b$ and $c^{b} = a$ are never compatible for any mutually distinct $a, b, c \in R_{7}$.
We thus have the claim.
\end{proof}

\begin{proposition}[cf.\ Proposition 6.5 of \cite{Sat2016}]
\label{prop:R7_IV}
There are no 3-cycles $\gamma$ of $(R_{7}, S)$ satisfying $l(\gamma) = |T_{n}(\gamma)| = 7$.
\end{proposition}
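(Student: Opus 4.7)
Assume for contradiction that a 3-cycle $\gamma$ of $(R_{7}, S)$ with $l(\gamma) = |T_{n}(\gamma)| = 7$ exists; all seven of its efficient 3-terms $\gamma_{1}, \ldots, \gamma_{7}$ share degree $n$, which is simultaneously the minimal and the maximal degree. By Lemma \ref{lem:3-cycle_condition}(ii)--(iii), both $\sum_{i} f(\gamma_{i}) = 0$ and $\sum_{i} g(\gamma_{i}) = 0$. The plan is to partition $\{\gamma_{1}, \ldots, \gamma_{7}\}$ into $f$-connected components and rule out each possibility. Since Lemma \ref{lem:f-connected1} forbids an $f$-component of size $1$, the only partitions of $7$ into parts $\geq 2$ are $(7)$, $(5,2)$, $(4,3)$, and $(3,2,2)$.

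The pivotal observation is that no $f$-connected component $C$ of size $k \in \{2,3,4,5,6\}$ can satisfy $\sum_{\gamma_{j} \in C} g(\gamma_{j}) = 0$. Indeed, if it did, then $C$ alone would be a 3-cycle of $(R_{7}, S)$ with $l(C) = |T_{n}(C)| = k$, contradicting Lemma \ref{lem:R7_f2} for $k=2$, Lemma \ref{lem:R7_f3} for $k=3$, or Propositions \ref{prop:R7_I}--\ref{prop:R7_III} for $k \in \{4,5,6\}$. Consequently, in the three non-trivial partitions, each $f$-component carries a non-zero $g$-sum, and these must cancel mutually in the degree-$(n+1)$ 2-chain $\sum_{i} g(\gamma_{i})$.

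For each of the three non-trivial partitions, I would instantiate the $f$-components using the explicit forms from Lemmas \ref{lem:f-connected2}--\ref{lem:f-connected5}, working up to the symmetries afforded by Lemma \ref{lem:reflection}. The terms of a single $f$-component share a common index by Lemma \ref{lem:f-/g-connectedness}(ii), but distinct components may carry distinct indices, so the 2-terms appearing in $\sum g(\gamma_{i})$ live at a tightly controlled set of indices of the form $w^{x}$. Tracking the resulting cancellation equations in $R_{7}$ and exploiting the numerical specifics of $R_{7}$ already used in Propositions \ref{prop:R7_I}--\ref{prop:R7_III} (for example, the injectivity of $w \mapsto w^{a}$ for fixed $a$, and the incompatibility of systems like $c^{a} = b$ with $c^{b} = a$ for distinct $a, b, c$) rules out each of the partitions $(5,2)$, $(4,3)$, $(3,2,2)$.

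The remaining obstacle is the partition $(7)$, for which Section \ref{sec:enumeration_of_f-connected_3-terms} supplies no enumeration. Here I would argue by duality. If the seven terms are simultaneously $g$-connected as a single block, Lemma \ref{lem:R7_f_and_g} forces $l \geq 16$, contradicting $l = 7$. Otherwise, some proper subset has vanishing $g$-sum, so the $g$-component partition of $\gamma$ is non-trivial and thus lies in $\{(5,2),(4,3),(3,2,2)\}$. Applying Lemmas \ref{lem:reverse} and \ref{lem:f-/g-connectedness}(i) to the reverse $\overline{\gamma}$ (which is again a 3-cycle of $(R_{7}, S)$ with $l = 7$ concentrated in a single degree, using $R_{7} = \overline{R_{7}}$) produces a counterexample whose $f$-component partition coincides with that non-trivial $g$-partition of $\gamma$, thereby reducing Case $(7)$ to one of the already-treated cases. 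Combining these reductions completes the contradiction, and the main technical work lies in the careful bookkeeping for the partition $(3,2,2)$, where three small components of prescribed form must interact non-trivially through $g$.
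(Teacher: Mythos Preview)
Your high-level structure is sound: partition $\{\gamma_{1},\dots,\gamma_{7}\}$ into $f$-connected blocks, note that no proper block can itself be a $3$-cycle (by Lemmas~\ref{lem:R7_f2}, \ref{lem:R7_f3} and Propositions~\ref{prop:R7_I}--\ref{prop:R7_III}), and dispose of the single-block case $(7)$ via Lemma~\ref{lem:R7_f_and_g} together with the reverse. That last reduction is exactly what Satoh and the present paper do.

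The gap is in the three non-trivial partitions. ``Tracking the resulting cancellation equations \dots\ rules out each of the partitions $(5,2)$, $(4,3)$, $(3,2,2)$'' is an assertion, not an argument, and this is where the entire content of the proposition lives. Your proposed direct attack---instantiate each $f$-block from Lemmas~\ref{lem:f-connected2}--\ref{lem:f-connected5} and force $\sum g(\gamma_{i})=0$---is workable in principle but much heavier than you suggest: in the $(4,3)$ case alone you face five shapes for the $4$-block, two for the $3$-block, independent indices $u,v$ for the two blocks, and several free colour parameters, with the $g$-images spreading across many distinct indices $u^{x},v^{y}$ that must be matched.

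The paper (following Satoh) does not do this by raw $g$-cancellation. It applies the duality you reserved for case $(7)$ \emph{throughout}: for each $f$-partition type of $\gamma$ one also constrains the $f$-partition type of $\overline{\gamma}$ (equivalently the $g$-partition of $\gamma$), and the pair of types, via Lemma~\ref{lem:f-/g-connectedness}, pins down indices and colours far more tightly than either alone. Satoh's conditions (a), (b), (c) and his Cases 1--4 organise exactly this joint classification. The paper observes that Satoh's $R_{5}$ argument transfers verbatim to $R_{7}$ except in Cases 3 and 4, where $\gamma$ has $f$-partition $(3,4)$; it then computes the indices of $\overline{\gamma_{1}},\overline{\gamma_{2}},\overline{\gamma_{3}}$ explicitly (for instance $w^{a_{p}a_{0}a_{1}}=w^{a_{p+1}}\neq w^{a_{p-1}}=w^{a_{p}a_{1}a_{0}}$) and shows $\overline{\gamma}$ cannot fall into type (b) or (c), forcing it back to the already-excluded Case 1. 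That explicit two-sided index chase is the missing ingredient in your outline; without it, the ``would rule out'' step is not justified.
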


\begin{proof}
The argument in the proof of Proposition 6.5 of \cite{Sat2016} works well even if we select $R_{7}$ instead of $R_{5}$ except Cases 3 and 4, because we have the above lemmas.
We thus only prove that Cases 3 and 4 do not happen as well as Cases 1 and 2, in this paper.

Let $\gamma = \sum_{i = 1}^{7} \gamma_{i}$ be a 3-cycle of $(R_{7}, S)$ satisfying $l(\gamma) = |T_{n}(\gamma)| = 7$.
Assume that $\gamma$ satisfies the condition (c) introduced in the proof of Proposition 6.5 of \cite{Sat2016}.
Then, we may assume that
\begin{itemize}
\item
$\gamma_{1}, \gamma_{2}$ and $\gamma_{3}$ are $f$-connected, and
\item
$\gamma_{4}, \gamma_{5}, \gamma_{6}$ and $\gamma_{7}$ are $f$-connected.
\end{itemize}
Furthermore, we may assume that $\gamma_{1}$ is of type 1.

If one of $\gamma_{2}$ and $\gamma_{3}$ is of type 2, it is routine to see that the indices of $\overline{\gamma_{1}}, \overline{\gamma_{2}}$ and $\overline{\gamma_{3}}$ are mutually different.
Therefore, in light of Lemma \ref{lem:f-/g-connectedness} (ii), $\overline{\gamma}$ satisfies the condition (a) introduced in the proof of Proposition 6.5 of \cite{Sat2016}.
Since we have already denied such a case as Case 1, we have no candidates of $\gamma$.
We thus assume that both of $\gamma_{2}$ and $\gamma_{3}$ are of type 3.
Then, we may assume that
\begin{align*}
 \gamma_{1} & = + (n, w; a_{0}, a_{1}, a_{0}), &
 \gamma_{2} & = - (n, w; a_{p}, a_{0}, a_{1}), &
 \gamma_{3} & = - (n, w; a_{p}, a_{1}, a_{0})
\end{align*}
up to sign and reflection, with some distinct $a_{0}, a_{1} \in R_{7}$ and $p \in \{ 3, 4, 5 \}$.
Since $w^{a_{p} a_{0} a_{1}} = w^{a_{p + 1}}$, $w^{a_{p} a_{1} a_{0}} = w^{a_{p - 1}}$, and $w^{a_{p + 1}} \neq w^{a_{p - 1}}$ by $a_{p + 1} \neq a_{p - 1}$, indices of $\overline{\gamma_{2}}$ and $\overline{\gamma_{3}}$ are different.
Furthermore, since both of $\overline{\gamma_{2}}$ and $\overline{\gamma_{3}}$ are not of type (0 or) 1, $\overline{\gamma}$ could not satisfy the condition (b) introduced in the proof of Proposition 6.5 of \cite{Sat2016}.

Assume that $\overline{\gamma}$ satisfies the condition (c) introduced in the proof of Proposition 6.5 of \cite{Sat2016}.
Then, since the index of $\overline{\gamma_{1}}$ is $w^{a_{0} a_{1} a_{0}} = w^{a_{6}}$, one of $a_{p - 1}$ and $a_{p + 1}$ is equal to $a_{6}$.
On the other hand, the case $a_{p - 1} = a_{6}$ is not occurred, since it yields $p = 0$.
We thus have $p = 5$.
Since
\begin{align*}
 \overline{\gamma_{1}} & = + (- n, w^{a_{6}}; a_{2}, a_{6}, a_{0}), &
 \overline{\gamma_{2}} & = + (- n, w^{a_{6}}; a_{0}, a_{2}, a_{1}),
\end{align*}
in light of Lemma \ref{lem:f-connected3}, $\overline{\gamma_{1}}, \overline{\gamma_{2}}$ and $\overline{\gamma_{i}}$ are not $f$-connected for any $i$ ($3 \leq i \leq 7$).
Therefore, in light of Lemma \ref{lem:f-/g-connectedness} (i), we may assume that
\begin{itemize}
\item
$\gamma_{3}, \gamma_{4}$ and $\gamma_{5}$ are $g$-connected, and
\item
$\gamma_{1}, \gamma_{2}, \gamma_{6}$ and $\gamma_{7}$ are $g$-connected.
\end{itemize}
Furthermore, we may assume that $\gamma_{5}$ is of type 2.
Since $\overline{\gamma_{3}} = - (- n, w^{a_{4}}; a_{3}, a_{6}, a_{0})$, we have $\overline{\gamma_{4}} = - (- n, w^{a_{4}}; a_{6}, a_{3}, a_{0})$ or $- (- n, w^{a_{4}}; a_{3}, a_{0}, a_{6})$ by Lemma \ref{lem:f-connected3}.

If $\overline{\gamma_{4}} = - (- n, w^{a_{4}}; a_{6}, a_{3}, a_{0})$, we have
\[
 \overline{\gamma_{5}} = + \> (- n, w^{a_{4}}; a_{6}, a_{3}, a_{6}) \ \text{or} \, + (- n, w^{a_{4}}; a_{3}, a_{6}, a_{3}).
\]
On the other hand, the index of $\gamma_{4} = \overline{\overline{\gamma_{4}}}$ is $w + 5 (a_{1} - a_{0})$, even though the index of $\gamma_{5}$ is respectively $w + 3 (a_{1} - a_{0})$ or $w + 6 (a_{1} - a_{0})$.
It contradicts to that $\gamma_{4}, \gamma_{5}, \gamma_{6}$ and $\gamma_{7}$ are $f$-connected by Lemma \ref{lem:f-/g-connectedness} (ii).

If $\overline{\gamma_{4}} = - (- n, w^{a_{4}}; a_{3}, a_{0}, a_{6})$, we have
\[
 \overline{\gamma_{5}} = + \> (- n, w^{a_{4}}; a_{6}, a_{0}, a_{6}) \ \text{or} \, + (- n, w^{a_{4}}; a_{0}, a_{6}, a_{0}).
\]
On the other hand, the index of $\gamma_{4}$ is $w + 3 (a_{1} - a_{0})$, even though the index of $\gamma_{5}$ is respectively $w + 2 (a_{1} - a_{0})$ or $w + (a_{1} - a_{0})$.
It also contradicts to that $\gamma_{4}, \gamma_{5}, \gamma_{6}$ and $\gamma_{7}$ are $f$-connected.

In conclusion, $\overline{\gamma}$ could not satisfy the condition (b) or (c) introduced in the proof of Proposition 6.5 of \cite{Sat2016}, as long as $\gamma$ satisfies the condition (c) introduced in the proof of Proposition 6.5 of \cite{Sat2016}.
Therefore, Cases 3 and 4 do not happen.
\end{proof}

\begin{proposition}[cf.\ Proposition 7.1 of \cite{Sat2016}]
\label{prop:R7_V}
If $\gamma$ is a 3-cycle of $(R_{7}, S)$ satisfying $l(\gamma) = 6$, $|T_{n}(\gamma)| = 2$ and $|T_{n + 1}(\gamma)| = 4$, then $\gamma$ is a 3-boundary.
\end{proposition}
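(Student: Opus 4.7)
The plan is to normalize the two minimal-degree terms into a canonical form, exhibit an explicit candidate 4-chain whose boundary handles them cleanly, and then use the preceding classification results to eliminate the residue.

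By Lemma \ref{lem:3-cycle_condition}(ii), Lemma \ref{lem:f-connected1}, and Lemma \ref{lem:f-connected2}, after replacing $\gamma$ by its reverse and changing signs if necessary, we may normalize so that
\[
 T_{n}(\gamma) = \{\, +(n, w; a, b, a),\ -(n, w; b, a, b)\,\}
\]
for some distinct $a, b \in R_{7}$ and some $w \in R_{7}$. The natural candidate 4-chain is $\beta_{0} := (n, w; a, b, a, b)$: a direct computation using $a^{a} = a$ and $b^{b} = b$ (which removes the middle two summands of $f(\beta_{0})$) gives $f(\beta_{0}) = (n,w;a,b,a) - (n,w;b,a,b)$, matching the sum of the two terms in $T_{n}(\gamma)$, while $g(\beta_{0})$ is an explicit four-term 3-chain supported at degree $n+1$. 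Consequently $\gamma' := \gamma - \partial \beta_{0}$ is a 3-cycle supported entirely at the single degree $n + 1$, of length at most $|T_{n+1}(\gamma)| + |g(\beta_{0})| = 4 + 4 = 8$.

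Single-degree 3-cycles in $(R_{7}, S)$ are severely restricted by the material already developed: length 1 is excluded by Lemma \ref{lem:f-connected1}, lengths 2 and 3 by Lemmas \ref{lem:R7_f2} and \ref{lem:R7_f3} (applied with no further terms to cancel), and lengths 4 through 7 by Propositions \ref{prop:R7_I}--\ref{prop:R7_IV}. Hence $l(\gamma') \in \{0, 8\}$. If $l(\gamma') = 0$, then $\gamma = \partial \beta_{0}$ is a boundary and we are done.

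In the remaining case $l(\gamma') = 8$, no 3-term of $T_{n+1}(\gamma)$ cancels a term of $g(\beta_{0})$, and the compatibility constraint $f(T_{n+1}(\gamma)) = -g(T_{n}(\gamma))$ combined with the $g$-closedness of $T_{n+1}(\gamma)$ (Lemma \ref{lem:3-cycle_condition}(iii)) forces $T_{n+1}(\gamma)$ into one of finitely many configurations. For each such configuration one exhibits a further 4-chain $\beta_{1}$ with $\partial \beta_{1} = \gamma'$, thus writing $\gamma = \partial(\beta_{0} + \beta_{1})$. This last step is the main obstacle: enumerating the length-8 single-degree 3-cycles that can arise as $\gamma'$ and verifying that each is a boundary. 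The rigidity of $R_{7}$, in particular $w^{x} \neq w^{y}$ for $x \neq y$ (already exploited in Lemmas \ref{lem:R7_f2}, \ref{lem:R7_f3}, and \ref{lem:R7_f_and_g}), should keep the case analysis tractable.
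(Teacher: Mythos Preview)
Your reduction is correct as far as it goes: $f(\beta_{0})$ really does match the degree-$n$ part of $\gamma$, $g(\beta_{0})$ really does have four nonvanishing terms, and so $\gamma' = \gamma - \partial\beta_{0}$ is a single-degree cycle of length at most $8$, with lengths $1$--$7$ excluded by the results you cite. But you explicitly leave the case $l(\gamma') = 8$ open (``this last step is the main obstacle''; ``should keep the case analysis tractable''), and that is exactly where the content of the proposition lies. Nothing in the paper classifies length-$8$ single-degree cycles or shows they are boundaries, so your detour through Propositions~\ref{prop:R7_I}--\ref{prop:R7_IV} has not actually reduced the problem: to finish, you would still have to enumerate the possibilities for $T_{n+1}(\gamma)$ using the constraints $f(T_{n+1}(\gamma)) = -g(T_{n}(\gamma))$ and $g(T_{n+1}(\gamma)) = 0$, which is precisely the work you tried to avoid.

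The paper (following Satoh) takes the direct route from the outset. After normalizing $T_{n}(\gamma)$, one computes $g(T_{n}(\gamma))$ explicitly (six $2$-terms, three each with indices $w^{a_{0}}$ and $w^{a_{1}}$), uses $w^{a} = w^{b} \Leftrightarrow a = b$ in $R_{7}$ to split $T_{n+1}(\gamma)$ into two pairs by index, enumerates the few configurations compatible with $f(T_{n+1}(\gamma)) = -g(T_{n}(\gamma))$, and then imposes $g(T_{n+1}(\gamma)) = 0$ to pin $\gamma$ down uniquely (up to sign, reverse, reflection). The outcome is that $\gamma = \partial(n,w;a_{0},a_{1},a_{0},a_{1})$ exactly --- in your language, $l(\gamma') = 0$ always, and the case you deferred never occurs. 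So your candidate $\beta_{0}$ is the right one, but the argument needs the direct determination of $T_{n+1}(\gamma)$ rather than the appeal to Propositions~\ref{prop:R7_I}--\ref{prop:R7_IV}.
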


\begin{proof}
Replacing $a_{4}$ with $a_{6}$, the argument in the proof of Proposition 7.1 of \cite{Sat2016} works well even if we select $R_{7}$ instead of $R_{5}$, because we have the above lemmas and that $w^{a} = w^{b}$ if and only if $a = b$ ($w, a, b \in R_{7}$).
Therefore, if $\gamma$ is a 3-cycle of $(R_{7}, S)$ satisfying $l(\gamma) = 6$, $|T_{n}(\gamma)| = 2$ and $|T_{n + 1}(\gamma)| = 4$, we have
\begin{align*}
 \gamma
 & = + \> (n, w; a_{0}, a_{1}, a_{0}) - (n, w; a_{1}, a_{0}, a_{1}) + (n + 1, w^{a_{0}}; a_{1}, a_{0}, a_{1}) \\
 & \qquad + \> (n + 1, w^{a_{0}}; a_{0}, a_{4}, a_{1}) - (n + 1, w^{a_{1}}; a_{2}, a_{1}, a_{2}) - (n + 1, w^{a_{1}}; a_{2}, a_{0}, a_{1})
\end{align*}
up to sign, reverse and reflection, with some $w \in R_{7}$ and distinct $a_{0}, a_{1} \in R_{7}$.
Since $\gamma = \partial (n, w; a_{0}, a_{1}, a_{0}, a_{1})$, we have the claim.
\end{proof}

\begin{proposition}[cf.\ Proposition 7.2 of \cite{Sat2016}]
\label{prop:R7_VI}
There are no 3-cycles $\gamma$ of $(R_{7}, S)$ satisfying $l(\gamma) = 7$, $|T_{n}(\gamma)| = 2$ and $|T_{n + 1}(\gamma)| = 5$.
\end{proposition}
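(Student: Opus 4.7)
The plan is to argue by contradiction, following the strategy of Proposition \ref{prop:R7_V} but pushing one step further. Suppose such a 3-cycle $\gamma = \sum_{i=1}^{7} \gamma_{i}$ exists, with $\gamma_{1}, \gamma_{2}$ at degree $n$ and $\gamma_{3}, \ldots, \gamma_{7}$ at degree $n{+}1$. By Lemma \ref{lem:3-cycle_condition}(ii) together with Lemma \ref{lem:f-connected2}, I may assume, up to sign, reverse, and reflection (Lemmas \ref{lem:reverse} and \ref{lem:reflection}), that $\gamma_{1} = +(n, w; a_{0}, a_{1}, a_{0})$ and $\gamma_{2} = -(n, w; a_{1}, a_{0}, a_{1})$ for some $w \in R_{7}$ and distinct $a_{0}, a_{1} \in R_{7}$. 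A direct calculation then yields
\begin{align*}
g(\gamma_{1} + \gamma_{2}) = \; & +(w^{a_{0}}; a_{1}, a_{0}) + (w^{a_{0}}; a_{0}, a_{6}) + (w^{a_{0}}; a_{6}, a_{1}) \\
& -(w^{a_{1}}; a_{2}, a_{0}) - (w^{a_{1}}; a_{0}, a_{1}) - (w^{a_{1}}; a_{1}, a_{2}),
\end{align*}
two oriented triangles at indices $w^{a_{0}}$ and $w^{a_{1}}$ on the vertex sets $\{a_{0}, a_{1}, a_{6}\}$ and $\{a_{0}, a_{1}, a_{2}\}$ respectively.

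From Lemma \ref{lem:3-cycle_condition}(i) at $k = n{+}1$ we get $f(\sum_{i=3}^{7}\gamma_{i}) = -g(\gamma_{1} + \gamma_{2})$; since each 3-term has a single index (Lemma \ref{lem:f-/g-connectedness}(ii)), the five terms split by index. A count of 2-terms (type-1 terms contribute 2 to $f$, type-2 and type-3 terms contribute 3) together with Lemmas \ref{lem:f-connected1} and \ref{lem:f-connected2} shows that no auxiliary index can appear and that neither $w^{a_{0}}$ nor $w^{a_{1}}$ can host only a single 3-term; the only surviving possibility is a $2+3$ split, which by the reflection of Lemma \ref{lem:reflection} (which exchanges the two indices) I may take to be two terms at $w^{a_{0}}$ and three at $w^{a_{1}}$. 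The two-term group at $w^{a_{0}}$ is analysed exactly as in the proof of Proposition \ref{prop:R7_V}: parity forces one type-1 term and one term of type 2 or 3, and the pair must be a cyclic rotation of $+\langle a_{1}, a_{0}\rangle + (a_{0}, a_{6}, a_{1})$ around the triangle $\{a_{0}, a_{1}, a_{6}\}$, giving three configurations. For the three-term group at $w^{a_{1}}$, parity forces either zero or two type-1 3-terms, and a direct enumeration of how each of the three target 2-terms can appear as the first, second, or third summand in some $f(\delta_{j})$ produces a finite list of candidates parameterized by one or two auxiliary vertices in $R_{7} \setminus \{a_{0}, a_{1}, a_{2}\}$.

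The final step is to eliminate every candidate using $\sum_{i=3}^{7} g(\gamma_{i}) = 0$ (Lemma \ref{lem:3-cycle_condition}(iii) at the maximal degree $n{+}1$). I implement this via Lemma \ref{lem:reverse}(iv): the reverses $\overline{\gamma_{3}}, \ldots, \overline{\gamma_{7}}$ lie at the minimal degree of $\overline{\gamma}$, so by Lemma \ref{lem:3-cycle_condition}(ii) their $f$-sum vanishes; Lemma \ref{lem:f-/g-connectedness}(ii) then forces the reverse-indices $u_{i}^{a_{i} b_{i} c_{i}} = 2(a_{i} - b_{i} + c_{i}) - u_{i}$ to agree within each $f$-connected subgroup of the reverses, with no singleton subgroup permitted by Lemma \ref{lem:f-connected1}. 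Going through each candidate, the required identifications of these reverse-indices translate into equations on $d = a_{1} - a_{0}$ and the auxiliary vertex or vertices; each such equation ultimately reduces to an impossible identity in $R_{7}$, such as $3 d \equiv 0 \pmod{7}$, which cannot hold since $d \neq 0$ and $\gcd(3, 7) = 1$. Hence no candidate survives, completing the contradiction. The main obstacle will be the bookkeeping in the enumeration at $w^{a_{1}}$, particularly sub-cases where the middle entry of some $\delta_{j}$ coincidentally satisfies $a^{b} = c$ and produces a type-2 term, so that both the parity accounting and the cancellation pattern need to be re-examined in the affected sub-case.
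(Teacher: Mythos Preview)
Your proposal is essentially the same strategy the paper invokes (it simply cites Satoh's Proposition~7.2), and the structure you outline---normalise $\gamma_{1}+\gamma_{2}$, split the five degree-$(n{+}1)$ terms by index into a $2{+}3$ pattern, enumerate the possibilities, and kill each candidate with the $g$-vanishing condition via Lemma~\ref{lem:f-/g-connectedness}(iii)---is exactly right.

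One small correction: reflection does \emph{not} exchange the two indices at degree $n{+}1$. A direct check shows that if $\gamma_i$ has index $w^{a_0}$ then $\gamma_i^{\ast}$ has index $(w')^{a_0'}$, so the $3{+}2$ and $2{+}3$ splits are preserved under reflection. The reduction you want is instead furnished by the ``up to sign'' freedom: negating $\gamma$ and relabelling $a_0\leftrightarrow a_1$ sends $+(w;a_0,a_1,a_0)-(w;a_1,a_0,a_1)$ to itself while swapping $w^{a_0}$ and $w^{a_1}$, hence swapping the $2$- and $3$-term groups. With that fix, your plan is sound; the only remaining work is the case-by-case elimination you describe, which is precisely the content of Satoh's original argument.
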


\begin{proof}
Replacing $a_{4}$ with $a_{6}$, the argument in the proof of Proposition 7.2 of \cite{Sat2016} works well even if we select $R_{7}$ instead of $R_{5}$, because we have the above lemmas and that $w^{a} = w^{b}$ if and only if $a = b$ ($w, a, b \in R_{7}$).
We thus have the claim.
\end{proof}

\begin{proposition}[cf.\ Proposition 8.1 of \cite{Sat2016}]
\label{prop:R7_VII}
If $\gamma$ is a 3-cycle of $(R_{7}, S)$ satisfying $l(\gamma) = 6$, $|T_{n}(\gamma)| = 3$ and $|T_{n + 1}(\gamma)| = 3$, then $\gamma$ is a 3-boundary.
\end{proposition}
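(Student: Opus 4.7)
The plan is to follow Satoh's argument for Proposition 8.1 of \cite{Sat2016}, with the $R_{5} \to R_{7}$ adaptations already used in Propositions \ref{prop:R7_V} and \ref{prop:R7_VI}. Write the reduced form $\gamma = \gamma^{(n)} + \gamma^{(n+1)}$, where $\gamma^{(n)}$ collects the three 3-terms of degree $n$ and $\gamma^{(n+1)}$ the three of degree $n+1$. By Lemma \ref{lem:3-cycle_condition} (ii) and (iii), we have $f(\gamma^{(n)}) = 0$ and $g(\gamma^{(n+1)}) = 0$; moreover, since no single 3-term vanishes under $f$ (Lemma \ref{lem:f-connected1}) and the only $f$-connected pairs are of the shape in Lemma \ref{lem:f-connected2} (which in particular have a non-trivial third summand forbidden by $T_n$ having exactly three terms), these triples are actually $f$-connected and $g$-connected. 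Lemma \ref{lem:f-/g-connectedness} (ii) then gives $\gamma^{(n)}$ a common index $w$, and Lemma \ref{lem:f-connected3} classifies $\gamma^{(n)}$ into two shapes up to sign.

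For $\gamma^{(n+1)}$, I would pass to reverses: by Lemma \ref{lem:f-/g-connectedness} (i) the reverses are $f$-connected, so Lemma \ref{lem:f-connected3} again produces two shapes, which I translate back via Lemma \ref{lem:reverse} (i), (iii). Together with the symmetries of sign, reflection (Lemma \ref{lem:reflection} (v)) and reverse (Lemma \ref{lem:reverse} (iv), which preserves the boundary property), this yields a small finite list of candidate pairs $(\gamma^{(n)}, \gamma^{(n+1)})$. The middle-degree compatibility $g(\gamma^{(n)}) + f(\gamma^{(n+1)}) = 0$ from Lemma \ref{lem:3-cycle_condition} (i) then cuts these candidates down: invoking the $R_{7}$-specific fact that $w^{a} = w^{b}$ if and only if $a = b$ (used already in Proposition \ref{prop:R7_V}) to match 2-term indices across the two degrees, and tracking signs carefully, I expect $\gamma$ to be forced into a small number of explicit normal forms involving only $a_{0}$, $a_{1}$ and possibly one further element $a_{p}$.

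Finally, for each surviving normal form I would exhibit an explicit 4-chain $\delta$ with $\partial \delta = \gamma$; by analogy with Proposition \ref{prop:R7_V}, the natural candidates are 4-terms of the shape $\pm(n, w; a_{0}, a_{1}, a_{0}, a_{p})$ (or small sums thereof), and $\partial \delta = f(\delta) + g(\delta)$ is then verified by direct expansion, noting that the $f$-part contributes degree-$n$ 3-terms while the $g$-part contributes degree-$(n+1)$ 3-terms, matching the shape of $\gamma$. The main obstacle is the bookkeeping in the middle step: each of the (up to) four a priori combinations of shapes for $(\gamma^{(n)}, \gamma^{(n+1)})$ produces a system of 2-term equations whose unknowns include the indices $w^{a}, w^{ab}, w^{abc}$ and assorted signs, and one must keep track of the $\sigma$-shift in Lemma \ref{lem:reverse} (iii) when undoing the reverse to reinterpret the degree-$(n+1)$ shapes back in $R_{7}$ itself.
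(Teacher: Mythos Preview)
Your outline is essentially the paper's (i.e., Satoh's) argument, and it will go through. Two small corrections. First, your justification that the three degree-$n$ terms are $f$-connected is muddled: the clean argument is simply that a $2+1$ split would leave a singleton with $f=0$, contradicting Lemma~\ref{lem:f-connected1}; Lemma~\ref{lem:f-connected2} plays no role here. Second, the paper's surviving normal form (up to sign, reverse, reflection) has $\gamma^{(n)}$ of shape~(i) from Lemma~\ref{lem:f-connected3} with $c=a_{6}$, and the bounding $4$-chain is $-\,(n,w;a_{6},a_{0},a_{1},a_{0})$, not $(n,w;a_{0},a_{1},a_{0},a_{p})$; your guess corresponds to shape~(ii), which is the reflection. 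The one $R_{7}$-specific computation the paper singles out (beyond ``$w^{a}=w^{b}\Leftrightarrow a=b$'') is $w^{a_{6}a_{1}a_{0}a_{1}}=w+6s$, needed when matching indices across degrees; you should expect to hit this when checking $g(\gamma^{(n+1)})=0$.
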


\begin{proof}
Replacing $a_{4}$ with $a_{6}$, the argument in the proof of Proposition 8.1 of \cite{Sat2016} works well even if we select $R_{7}$ instead of $R_{5}$, because we have the above lemmas, $w^{a_{6} a_{1} a_{0} a_{1}} = w + 6 s$, and that $w^{a} = w^{b}$ if and only if $a = b$ ($w, a, b \in R_{7}$).
Therefore, if $\gamma$ is a 3-cycle of $(R_{7}, S)$ satisfying $l(\gamma) = 6$, $|T_{n}(\gamma)| = 3$ and $|T_{n + 1}(\gamma)| = 3$, we have
\begin{align*}
 \gamma
 & = + \> (n, w; a_{0}, a_{1}, a_{0}) - (n, w; a_{6}, a_{0}, a_{1}) - (n, w; a_{6}, a_{1}, a_{0}) \\
 & \qquad + \> (n + 1, w^{a_{0}}; a_{1}, a_{0}, a_{6}) - (n + 1, w^{a_{1}}; a_{3}, a_{2}, a_{0}) - (n + 1, w^{a_{6}}; a_{0}, a_{1}, a_{0})
\end{align*}
up to sign, reverse and reflection, with some $w \in R_{7}$ and distinct $a_{0}, a_{1} \in R_{7}$.
Since $\gamma = - \partial (n, w; a_{6}, a_{0}, a_{1}, a_{0})$, we have the claim.
\end{proof}

\begin{proposition}[cf.\ Proposition 8.2 of \cite{Sat2016}]
\label{prop:R7_VIII}
If $\gamma$ is a 3-cycle of $(R_{7}, S)$ satisfying $l(\gamma) = 7$, $|T_{n}(\gamma)| = 3$ and $|T_{n + 1}(\gamma)| = 4$, then $\gamma$ is a 3-boundary.
\end{proposition}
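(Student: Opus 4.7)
The plan follows the template of Propositions \ref{prop:R7_V} and \ref{prop:R7_VII}. First, since $n$ is the minimal degree and $|T_n(\gamma)|=3$, Lemma \ref{lem:3-cycle_condition}(ii) forces $\sum_{\gamma_i \in T_n(\gamma)} f(\gamma_i)=0$, while Lemmas \ref{lem:f-connected1} and \ref{lem:f-connected2} rule out the splittings $1+1+1$ and $2+1$ (a singleton summing to zero is impossible, and neither shape from Lemma \ref{lem:f-connected2} admits a third $f$-summing-to-zero companion). Hence $T_n(\gamma)$ is $f$-connected, and Lemma \ref{lem:f-connected3} leaves only the two shapes (i) and (ii); up to reverse (Lemma \ref{lem:reverse}) and reflection (Lemma \ref{lem:reflection}), these normalize further, exactly as in Proposition \ref{prop:R7_VII}.

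Next, since $n+1$ is the maximal degree and $|T_{n+1}(\gamma)|=4$, Lemma \ref{lem:3-cycle_condition}(iii) yields $\sum_{T_{n+1}(\gamma)} g(\gamma_i)=0$. Via Lemma \ref{lem:f-/g-connectedness}(i) this translates to an $f$-relation on the reverses, and Lemmas \ref{lem:f-connected1}, \ref{lem:f-connected2} and \ref{lem:f-connected4} enumerate the possible shapes: a single $g$-connected 4-tuple (matching one of (i)--(v) of Lemma \ref{lem:f-connected4} via reverse), or a disjoint union of two $g$-connected pairs. In parallel, Lemma \ref{lem:3-cycle_condition}(i) at $k=n+1$ gives $\sum_{T_{n+1}} f(\gamma_i)=-\sum_{T_n} g(\gamma_i)$. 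Computing the right-hand side explicitly from the normalized form of $T_n(\gamma)$, using $a_i^{a_j}=a_{2j-i}$ and the degree-shifting rule of $g$, produces a concrete degree-$(n+1)$ 2-chain that must be cancelled by $f$ of the four degree-$(n+1)$ 3-terms.

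Matching these two constraints narrows $T_{n+1}(\gamma)$ to a small family of candidates. In each matching, the key leverage is the $R_7$-specific arithmetic: $w^a=w^b$ if and only if $a=b$, $a_i^{a_j a_k}=a_{2k-2j+i}$, and conjugations by distinct elements yield distinct results. These identities rule out the candidates that would survive over $R_5$ by parity coincidences; the remaining ones, after possibly applying reverse or reflection, produce an explicit reduced form of $\gamma$ analogous to the one obtained in Proposition \ref{prop:R7_VII}.

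Finally, for each surviving explicit form of $\gamma$, I would exhibit an explicit 4-term $\delta$ of $(R_7,S)$ with $\partial\delta=\gamma$. Based on Proposition \ref{prop:R7_VII}, where $\gamma=-\partial(n,w;a_6,a_0,a_1,a_0)$, a natural ansatz is $\delta=\pm(n,w;x_1,x_2,x_3,x_4)$ with $x_i$ drawn from the three elements $a_0,a_1,a_p$ already appearing in $T_n(\gamma)$; verifying $\partial\delta=\gamma$ is then a direct computation via the definitions of $f$ and $g$. The main obstacle I anticipate is the bookkeeping in the sub-case analysis of the third step: with four degree-$(n+1)$ terms instead of three, the enumeration of $f$-connected shapes for $T_{n+1}(\gamma)$ branches more widely than in Proposition \ref{prop:R7_VII}, and each branch must be cross-referenced against the fixed form of $T_n(\gamma)$ before the correct $\delta$ can be read off.
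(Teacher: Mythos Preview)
Your plan is correct and follows essentially the same route as the paper: normalize $T_n(\gamma)$ via Lemma~\ref{lem:f-connected3} up to sign and reflection (reverse is not needed at this step---reflection alone already exchanges shapes (i) and (ii)), compute $g(\gamma_1+\gamma_2+\gamma_3)$ explicitly, use index-counting of its 2-terms together with the $g$-connectedness constraint on $T_{n+1}(\gamma)$ to pin down $\gamma_4,\dots,\gamma_7$, and then verify $\gamma=-\partial(n,w;a_p,a_0,a_1,a_0)$. The paper organizes the case analysis by $p\in\{2,3,4,5,6\}$, deferring $p=2$ and $p=6$ to Satoh's argument in \cite{Sat2016} (with the obvious relabelings $a_3\mapsto a_5$, $a_4\mapsto a_6$) and handling $p=3,4,5$ uniformly; your anticipated bookkeeping is exactly this split.
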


\begin{proof}
Replace $a_{3}$ and $a_{4}$ with $a_{5}$ and $a_{6}$ respectively, the argument in Case 1 of the proof of Proposition 8.2 of \cite{Sat2016} works well even if we select $R_{7}$ instead of $R_{5}$, because we have the above lemmas,
\begin{align*}
 w^{a_{1} a_{0} a_{2} a_{0}} & = w + s, &
 w^{a_{0} a_{2} a_{0} a_{2}} & = w + 6 s, &
 w^{a_{2} a_{0} a_{1} a_{0}} & = w + s, &
 w^{a_{2} a_{1} a_{0} a_{1}} & = w, \\
 w^{a_{0} a_{5} a_{1} a_{0}} & = w + s, &
 w^{a_{0} a_{5} a_{0} a_{6}} & = w + s, &
 w^{a_{0} a_{1} a_{0} a_{6}} & = w,
\end{align*}
and that $w^{a} = w^{b}$ if and only if $a = b$ ($w, a, b \in R_{7}$).
Therefore, in this case, we have
\begin{align*}
 \gamma
 & = + \> (n, w; a_{0}, a_{1}, a_{0}) - (n, w; a_{2}, a_{0}, a_{1}) - (n, w; a_{2}, a_{1}, a_{0}) \\
 & \phantom{=} \ + (n + 1, w^{a_{0}}; a_{5}, a_{1}, a_{0}) + (n + 1, w^{a_{0}}; a_{5}, a_{0}, a_{6}) \\
 & \phantom{=} \ - (n + 1, w^{a_{1}}; a_{0}, a_{2}, a_{0}) - (n + 1, w^{a_{2}}; a_{0}, a_{1}, a_{0}) \\
 & = - \> \partial (n, w; a_{2}, a_{0}, a_{1}, a_{0})
\end{align*}
up to sign, reverse and reflection, with some $w \in R_{7}$ and distinct $a_{0}, a_{1} \in R_{7}$.

Replacing $a_{4}$ with $a_{6}$, the argument in Case 2 of the proof of Proposition 8.2 of \cite{Sat2016} also works well even if we select $R_{7}$ instead of $R_{5}$.
Therefore, in this case, we have no candidates of $\gamma$.

To consider the remaining cases, we first review the following things in the style of \cite{Sat2016}.
Let $\gamma = \sum_{i = 1}^{7} \gamma_{i}$ be a 3-cycle of $(R_{7}, S)$ satisfying $l(\gamma) = 7$, $|T_{n}(\gamma)| = 3$ and $|T_{n + 1}(\gamma)| = 4$.
Then, in light of Lemmas \ref{lem:f-connected1} and \ref{lem:f-connected3}, we may assume that
\[
 \gamma_{1} + \gamma_{2} + \gamma_{3}
 = + \> (n, w; a_{0}, a_{1}, a_{0}) - (n, w; a_{p}, a_{0}, a_{1}) - (n, w; a_{p}, a_{1}, a_{0})
\]
up to sign and reflection, with some $w \in R_{7}$, distinct $a_{0}, a_{1} \in R_{7}$, and $p \neq 0, 1$.
Immediately, we have
\begin{align*}
 & g(\gamma_{1} + \gamma_{2} + \gamma_{3}) = \\
 & + (n + 1, w^{a_{0}}; a_{1}, a_{0}) - (n + 1, w^{a_{1}}; a_{2}, a_{0}) + (n + 1, w^{a_{0}}; a_{0}, a_{6}) \\
 & - (n + 1, w^{a_{p}}; a_{0}, a_{1}) + (n + 1, w^{a_{0}}; a_{- p}, a_{1}) - (n + 1, w^{a_{1}}; a_{2 - p}, a_{2}) \\
 & - (n + 1, w^{a_{p}}; a_{1}, a_{0}) + (n + 1, w^{a_{1}}; a_{2 - p}, a_{0}) - (n + 1, w^{a_{0}}; a_{- p}, a_{6}).
\end{align*}
We note that $w^{a_{0}}$, $w^{a_{1}}$ and $w^{a_{p}}$ are mutually different.
We let $s = a_{1} - a_{0}$ for the subsequent arguments.

Since we already considered the case $p = 2$ or $6$ as Case 1 or 2, respectively, we assume that $p = 3$, $4$ or $5$.
Since $g(\gamma_{1} + \gamma_{2} + \gamma_{3}) + f(\gamma_{4} + \gamma_{5} + \gamma_{6} + \gamma_{7}) = 0$ by Lemma \ref{lem:3-cycle_condition} (i) and reduced $g(\gamma_{1} + \gamma_{2} + \gamma_{3})$ has four 2-terms of index $w^{a_{0}}$, at least two of $\gamma_{4}, \gamma_{5}, \gamma_{6}$ and $\gamma_{7}$ have index $w^{a_{0}}$.
Therefore, we may assume that
\begin{align*}
 \gamma_{6} & = - \> (n + 1, w^{a_{1}}; a_{2 - p}, a_{2}, a_{0}), \\
 \gamma_{7} & = - \> (n + 1, w^{a_{p}}; a_{0}, a_{1}, a_{0}) \ \text{or} \, - (n + 1, w^{a_{p}}; a_{1}, a_{0}, a_{1}).
\end{align*}
We note that the index of $\overline{\gamma_{6}}$ is $w - 2 (1 + p) s$ and the index of $\overline{\gamma_{7}}$ is respectively $w - 2 (1 + p) s$ or $w + 2 (2 - p) s$.
Furthermore, it is routine to check that we may assume that
\[
 \gamma_{4} + \gamma_{5} =
 \begin{cases}
  + \> (n + 1, w^{a_{0}}; a_{-p}, a_{1}, a_{0}) + (n + 1, w^{a_{0}}; a_{-p}, a_{0}, a_{6}) \ \text{or} \\
  + \> (n + 1, w^{a_{0}}; a_{1}, a_{0}, a_{6}) + (n + 1, w^{a_{0}}; a_{-p}, a_{1}, a_{6}).
 \end{cases} 
\]
We note that the indices of $\overline{\gamma_{4}}$ and $\overline{\gamma_{5}}$ are both $w - 2 (1 + p) s$ in the former case, and $w$ and $w + 2 (5 - p) s$ respectively in the latter case.
Since $\overline{\gamma_{7}}$ is not of type (0 or) 1, in light of Lemmas \ref{lem:3-cycle_condition} (iii), \ref{lem:f-/g-connectedness} (i), \ref{lem:f-connected1} and \ref{lem:f-connected2}, $\gamma_{4}, \gamma_{5}, \gamma_{6}$ and $\gamma_{7}$ are $g$-connected.
Therefore, in light of Lemma \ref{lem:f-/g-connectedness} (iii), we have
\begin{align*}
 \gamma
 & = + \> (n, w; a_{0}, a_{1}, a_{0}) - (n, w; a_{p}, a_{0}, a_{1}) - (n, w; a_{p}, a_{1}, a_{0}) \\
 & \phantom{=} \ + (n + 1, w^{a_{0}}; a_{-p}, a_{1}, a_{0}) + (n + 1, w^{a_{0}}; a_{-p}, a_{0}, a_{6}) \\
 & \phantom{=} \ - (n + 1, w^{a_{1}}; a_{2 - p}, a_{2}, a_{0}) - (n + 1, w^{a_{p}}; a_{0}, a_{1}, a_{0}).
\end{align*}
Since $\gamma = - \partial (n, w; a_{p}, a_{0}, a_{1}, a_{0})$, we have the claim.
\end{proof}

We are now ready to prove Theorem \ref{thm:lower_bound_R7}.

\begin{proof}[Proof of Theorem \ref{thm:lower_bound_R7}]
It is routine to see that a 3-chain
\begin{align*}
 \gamma
 & = + \> (0, 0; 0, 6, 1) - (1, 5; 5, 1, 4) - (0, 0; 1, 6, 0) + (1, 2; 6, 3, 0) \\
 & \phantom{=} \ + (0, 0; 0, 1, 6) - (1, 2; 2, 6, 3) - (0, 0; 6, 1, 0) + (1, 5; 1, 4, 0)
\end{align*}
of $(R_{7}, \mathbb{Z} \times R_{7})$ is a 3-cycle satisfying $\zeta(\pi(\gamma)) = 6$.
We thus have $l(\zeta, \mathbb{Z} \times R_{7}) \neq 0$.
Then, in light of Propositions \ref{prop:R7_list}, \ref{prop:R7_I}--\ref{prop:R7_VIII} and Lemma \ref{lem:reverse} (iv), we immediately have the claim.
\end{proof}

\section{A lower bound of the length of the 3-cocycle $\eta$ of $O_{6}$}
\label{sec:lower_bound_of_eta}

The aim of this section is to show the following key theorem.

\begin{theorem}
\label{thm:lower_bound_O6}
For the 3-cocycle $\eta$ of the octahedral quandle $O_{6}$, defined in Section \ref{sec:preliminaries}, we have $l(\eta, \mathbb{Z} \times O_{6}) \geq 8$.
\end{theorem}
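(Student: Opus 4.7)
The strategy mirrors the proof of Theorem \ref{thm:lower_bound_R7}: classify all possible ``short'' 3-cycles of $(O_6, \mathbb{Z}\times O_6)$ by the distribution of their terms across consecutive degrees, and then rule out each profile as either impossible or a 3-boundary (on which $\eta \circ \pi$ necessarily vanishes). The desired nonvanishing of $l(\eta, \mathbb{Z}\times O_6)$ itself would be witnessed by exhibiting one concrete 3-cycle of length exactly $8$ on which $\eta \circ \pi$ evaluates nontrivially, so the crux is really to push the ``no short 3-cycle carries $\eta$'' argument through.

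First I would establish $O_6$-analogues of Lemmas \ref{lem:R7_f2} and \ref{lem:R7_f3}: for each $f$-connected configuration of $k$ 3-terms with $2 \leq k \leq 5$ enumerated in Lemmas \ref{lem:f-connected2}--\ref{lem:f-connected5}, I would instantiate the free variables in $O_6$, compute $g$ of the configuration, verify that it is nonzero, and determine a lower bound on the number of additional degree-$(n+1)$ 3-terms required to cancel it under $f$ (using Lemma \ref{lem:3-cycle_condition}(i)). Combining these bounds with the $f$-/$g$-connectedness duality of Lemma \ref{lem:f-/g-connectedness}(i) and with the index-compatibility statements of parts (ii)--(iii) of the same lemma, I would extract a structural classification analogous to Proposition \ref{prop:R7_list}, constraining the profile $(|T_n(\gamma)|, |T_{n+1}(\gamma)|)$ of any 3-cycle $\gamma$ of length at most $7$ to a short list of cases, possibly with one or two extra cases produced by the type 0 phenomenon specific to $O_6$.

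Then I would treat each profile case-by-case, paralleling Propositions \ref{prop:R7_I}--\ref{prop:R7_VIII}. For the profiles that can be shown not to exist, the goal is to derive a contradiction from the concrete action of $O_6$ on itself together with the index constraints from Lemma \ref{lem:f-/g-connectedness}. For the profiles that can occur, I would exhibit each such $\gamma$ as $\partial\delta$ for an explicit 4-chain $\delta$, so $\eta(\pi(\gamma)) = 0$ automatically. Together with the length-$8$ witness, this yields $l(\eta, \mathbb{Z}\times O_6) \geq 8$.

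The main obstacle is the case-by-case step, which should be substantially harder than the $R_7$ analogue for two reasons. First, by Remark \ref{rem:type0}, $O_6$ admits type 0 3-terms $\pm(n, u; a, [a+3], a)$; these are killed in their central face by both $f$ and $g$, so they can participate in cancellations that simply cannot arise for $R_7$, and the classification in Step 1 has to accommodate them throughout. Second, by Remark \ref{rem:reverse}, $O_6 \neq \overline{O_6}$, so there is no analogue of the reflection $\gamma \mapsto \gamma^{*}$ of Lemma \ref{lem:reflection}; the only available global symmetry is \emph{reverse} (Lemma \ref{lem:reverse}) transported through the isomorphism $\tau$ of Remark \ref{rem:reverse}. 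Consequently, several ``symmetry reductions'' that let Satoh collapse subcases in \cite{Sat2016} have to be redone by hand, and the internal structure of $O_6$ (in particular which triples of elements close up under $\square^{b}$) must be used directly. This is presumably why the paper devotes Sections \ref{sec:proof_of_lemma_O6_f2_special_2-5}--\ref{sec:proof_of_proposition_O6_VIII} to a string of auxiliary claims: the type-0-admitting profile in Step 1 already splits into several subcases at $k = 2$, and the symmetry-less Proposition VIII-style argument at profile $(3,4)$ appears to be where the bulk of the work lives.
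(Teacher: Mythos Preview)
Your overall strategy matches the paper's, and you correctly anticipate the two genuine obstacles (type-0 terms from Remark~\ref{rem:type0} and the loss of the reflection symmetry from Remark~\ref{rem:reverse}).  There is, however, one structural point on which the paper proceeds differently and which your plan as stated would stumble on.  You propose the dichotomy ``either the profile is impossible, or every cycle with that profile is exhibited as $\partial\delta$.''  For $O_6$ this dichotomy is too narrow: already at length two, Lemma~\ref{lem:O6_f2_special} produces genuine 3-cycles $\pm(n,u;a,[a{+}3],a)\mp(n,u;[a{+}3],a,[a{+}3])$ concentrated in a single degree, and there is no reason these should be boundaries in $C^{Q}_{*}(O_6)_{\mathbb{Z}\times O_6}$ (any 4-chain contributes to two consecutive degrees via $f$ and $g$).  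The paper sidesteps this by weakening the target: every $O_6$-proposition (Proposition~\ref{prop:O6_list} and Propositions~\ref{prop:O6_I}--\ref{prop:O6_VIII}) carries the extra hypothesis $\eta(\pi(\gamma))\neq 0$, and Lemma~\ref{lem:O6_weight} is introduced precisely so that in many branches the argument can terminate by a direct computation $\eta(\pi(\gamma))=0$ rather than by producing a bounding 4-chain.  This third branch (``the cycle exists but $\eta$ vanishes on it'') is used repeatedly---for instance in Lemmas~\ref{lem:O6_f2_special}, \ref{lem:O6_f2_special_2}(ii), \ref{lem:O6_f3_special_3}, \ref{lem:O6_f3_special_4}, in Lemma~\ref{lem:O6_f_and_g} (the $O_6$-analogue of Lemma~\ref{lem:R7_f_and_g} concludes $\eta(\pi(\gamma))=0$ rather than a length bound), and in portions of Propositions~\ref{prop:O6_I}--\ref{prop:O6_IV}.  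With that single amendment---carry $\eta(\pi(\gamma))\neq 0$ as a standing hypothesis and invoke Lemma~\ref{lem:O6_weight} whenever the boundary route is unavailable---your plan becomes exactly the paper's.
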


We start with discussing the following things.
Let $\omega_{0}$ and $\omega_{1}$ be distinct vertices of the regular octahedron depicted in the left-hand side of Figure \ref{fig:octahedral_quandle} which stand as the end points of an edge.
Furthermore, let $\omega_{2}$, $\omega_{3}$, $\omega_{4}$ and $\omega_{5}$ denote the other vertices of the octahedron in the manner as depicted in the right-hand side of Figure \ref{fig:octahedral_quandle}.
Then, we have $\omega_{a}^{\omega_{b}} = \omega_{a^{b}}$ by the definition of the binary operation of $O_{6}$.

For each 3-chain $\gamma = \sum_{i = 1}^{k} \varepsilon_{i} (a_{i}, b_{i}, c_{i})$ of $O_{6}$ and $g \in G(O_{6})$, define a 3-chain $\gamma^{g}$ of $O_{6}$ by $\gamma^{g} = \sum_{i = 1}^{k} \varepsilon_{i} (a_{i}^{g}, b_{i}^{g}, c_{i}^{g})$.
Assume that $\gamma$ is a 3-cycle.
Then, it is routine to see that $\gamma^{g}$ is a 3-cycle homologous to $\gamma$ (see Lemma 5 of \cite{IK2014} for example).
Therefore, we have $\eta(\gamma) = \eta(\gamma^{g})$.
We note that there is an element $g$ of $G(X)$ satisfying $\omega_{a}^{g} = a$ for any $a \in O_{6}$, because any rotational transformation of the octahedron is realized as compositions of some $\pi / 2$-rotations about lines each of which passes through a vertex and the center of the octahedron.
Therefore, we immediately have the following lemma.

\begin{lemma}
\label{lem:O6_weight}
If $\gamma = \sum_{i = 1}^{k} \varepsilon_{i} (\omega_{a_{i}}, \omega_{b_{i}}, \omega_{c_{i}})$ is a 3-cycle of $O_{6}$, then we have $\eta(\gamma) = \sum_{i = 1}^{k} \varepsilon_{i} \eta (a_{i}, b_{i}, c_{i})$.
\end{lemma}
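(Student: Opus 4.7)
The plan is to reduce the lemma to the $G(O_{6})$-equivariance observation already spelled out in the paragraph preceding the statement. Specifically, I would first invoke the existence of an element $g \in G(O_{6})$ satisfying $\omega_{a}^{g} = a$ for every $a \in O_{6}$; this is asserted just above the lemma, and follows from the fact that the rotation group of the regular octahedron acts transitively on its ordered vertex-frames, together with the fact that every such rotation is a composition of $\pi/2$-rotations about vertex-axes, which are precisely the generators of $G(O_{6})$.

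Given such a $g$, the next step is to form the chain $\gamma^{g}$. Since the right $G(O_{6})$-action on $C^{Q}_{3}(O_{6})$ is by a chain map, $\gamma^{g}$ is a $3$-cycle whenever $\gamma$ is, and by the defining property of $g$ one has $\gamma^{g} = \sum_{i=1}^{k} \varepsilon_{i} (a_{i}, b_{i}, c_{i})$. The result cited in the preceding paragraph (Lemma 5 of \cite{IK2014}) guarantees that $\gamma^{g}$ is homologous to $\gamma$, so the $3$-cocycle $\eta$ takes the same value on both. Combining these two facts gives
\[
 \eta(\gamma) \;=\; \eta(\gamma^{g}) \;=\; \sum_{i=1}^{k} \varepsilon_{i}\, \eta(a_{i}, b_{i}, c_{i}),
\]
which is the asserted identity.

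Both ingredients, namely the existence of $g$ and the homology-invariance of $\eta$, are already established in the text immediately preceding the lemma, so in principle there is no remaining obstacle. The only point that merits any care is verifying that the assignment $\omega_{i} \mapsto i$ really is induced by a single rotation of the octahedron; this is where the concrete labeling fixed via Figure \ref{fig:octahedral_quandle} is used, but it is a finite combinatorial check. In effect, the lemma is a packaged restatement of the preceding discussion in a form convenient for the weight computations of $\eta$ that will be carried out in the later sections.
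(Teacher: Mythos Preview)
Your proposal is correct and follows exactly the approach the paper takes: the paper presents the entire argument in the paragraph preceding the lemma and then states that the lemma follows immediately, with no separate proof given. Your write-up simply spells out those same two steps (existence of $g$ with $\omega_{a}^{g}=a$, and $\eta(\gamma)=\eta(\gamma^{g})$ via the homology invariance from \cite{IK2014}) explicitly.
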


Let $S$ denote our $O_{6}$-set $\mathbb{Z} \times O_{6}$.
To enumerate candidates of 3-cycles $\gamma$ of $(O_{6}, S)$ satisfying $1 \leq l(\gamma) \leq 7$ and $\eta(\pi(\gamma)) \neq 0$ in a similar way to Proposition \ref{prop:R7_list}, we show the following lemmas.

\begin{lemma}
\label{lem:O6_f2_usual}
Let $\gamma$ be the 3-chain in Lemma \ref{lem:f-connected2} with $X = O_{6}$.
Assume that $b \neq [a + 3]$.
Then, we have $g(\gamma) \neq 0$.
Furthermore, let $\delta_{3}, \delta_{4}, \dots, \delta_{k + 2}$ be 3-terms of $(O_{6}, S)$ whose degrees are $n + 1$, and $\delta = \sum_{i = 3}^{k + 2} \delta_{i}$.
If $g(\gamma) + f(\delta) = 0$, then we have $k \geq 4$.
\end{lemma}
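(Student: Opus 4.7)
The plan is to mirror the argument of Satoh (Lemma 5.3 of \cite{Sat2016}) and of Lemma \ref{lem:R7_f2}. The key preliminary observation I would establish is that, under the hypothesis $b \neq [a + 3]$, we have $w^{a} \neq w^{b}$ for every $w \in O_{6}$. To justify this, I would observe that the composition $\square^{a} \circ (\square^{b})^{-1}$, as an element of the rotation group of the octahedron, is a product of two $\pi/2$-rotations about distinct non-antipodal vertex axes; a direct computation on $O_{6}$ shows that this composition is a rotation of order 3 about a face-axis, so it acts on the six vertices as a product of two disjoint 3-cycles and fixes no vertex.

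With this distinctness in hand, I would compute $g(\gamma)$ directly as
\begin{align*}
 g(\gamma)
 & = +(n+1, w^{a}; b, a) + (n+1, w^{a}; a, b^{a}) + (n+1, w^{a}; b^{a}, b) \\
 & \phantom{=} -(n+1, w^{b}; a^{b}, a) - (n+1, w^{b}; a, b) - (n+1, w^{b}; b, a^{b}),
\end{align*}
noting that $b \neq [a + 3]$ ensures $a^{b}, b^{a} \notin \{a, b\}$, so every listed 2-term is efficient. Since $w^{a} \neq w^{b}$, the three terms at index $w^{a}$ cannot cancel the three at index $w^{b}$, and within each index the three 2-terms have mutually distinct colors. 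Hence $g(\gamma) \neq 0$, with exactly six terms in reduced form, which gives the first claim.

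For the second claim, I would partition $\delta = \delta^{a} + \delta^{b} + \delta^{\mathrm{rest}}$, where $\delta^{a}$ and $\delta^{b}$ collect the 3-terms of $\delta$ whose indices are $w^{a}$ and $w^{b}$ respectively. The identity $g(\gamma) + f(\delta) = 0$, read index by index (and using that $f$ preserves the index), forces $f(\delta^{a})$ and $f(\delta^{b})$ to be precisely the negatives of the respective blocks of $g(\gamma)$, each consisting of three efficient 2-terms that all carry the same sign. The crucial observation is that $f$ applied to any single 3-term produces either two 2-terms of a common sign (when the 3-term is of type 0 or 1) or three 2-terms split as two of one sign and one of the opposite sign (when it is of type 2 or 3); in neither case can one obtain three 2-terms sharing a common sign. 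Consequently $|\delta^{a}| \geq 2$ and $|\delta^{b}| \geq 2$, whence $k \geq 4$.

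The main obstacle, compared to the dihedral setting, is precisely the distinctness statement $w^{a} \neq w^{b}$: in $R_{n}$ for odd $n$ this is immediate from $w^{a} = 2a - w$, but in $O_{6}$ it requires a genuine (if small) symmetry argument about compositions in the rotation group. Once this distinctness is available, the partition of $\delta$ by index together with the uniform-sign counting argument proceeds exactly as in \cite{Sat2016}.
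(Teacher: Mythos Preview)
Your proposal is correct and follows essentially the same approach as the paper. The paper normalizes to $a=\omega_{0}$, $b=\omega_{1}$ and then says only that ``it is routine to check that $u^{\omega_{0}}\neq u^{\omega_{1}}$'', whereas you keep $a,b$ general and supply a rotation-group justification for $w^{a}\neq w^{b}$; after that, both proofs compute the six reduced $2$-terms of $g(\gamma)$, split them by index into a block of three positives and a block of three negatives, and use the same sign-count (no single $3$-term can contribute three $f$-images of one sign) to force at least two $\delta_{i}$ at each index.
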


\begin{proof}
Since $b \neq [a + 3]$, we may assume that $a = \omega_{0}$ and $b = \omega_{1}$.
Then, we have
\begin{align}
 g(\gamma)
 & = + \> (n + 1, u^{\omega_{0}}; \omega_{1}, \omega_{0}) - (n + 1, u^{\omega_{1}}; \omega_{5}, \omega_{0}) + (n + 1, u^{\omega_{0}}; \omega_{0}, \omega_{2}) \notag \\
 & \phantom{=} \ - (n + 1, u^{\omega_{1}}; \omega_{0}, \omega_{1}) + (n + 1, u^{\omega_{0}}; \omega_{2}, \omega_{1}) - (n + 1, u^{\omega_{1}}; \omega_{1}, \omega_{5}), \label{eq:O6_f2_usual}
\end{align}
and thus $g(\gamma) \neq 0$.

It is routine to check that we have $u^{\omega_{0}} \neq u^{\omega_{1}}$.
Assume that $g(\gamma) + f(\delta) = 0$.
Then, since reduced $g(\gamma)$ has three positive (or negative) 2-terms of index $u^{\omega_{0}}$ (or $u^{\omega_{1}}$), at least two of $\delta_{3}, \delta_{4}, \dots, \delta_{k + 2}$ have index $u^{\omega_{0}}$ (or $u^{\omega_{1}}$).
We thus have $k \geq 4$.
\end{proof}

\begin{lemma}
\label{lem:O6_f2_special}
Let $\gamma$ be the 3-chain in Lemma \ref{lem:f-connected2} with $X = O_{6}$.
Assume that $b = [a + 3]$.
If $u = a$ or $b$, then $\gamma$ is a 3-cycle satisfying $\eta(\pi(\gamma)) = 0$.
Otherwise, we have $g(\gamma) \neq 0$.
\end{lemma}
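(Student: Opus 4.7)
The plan is to compute $g(\gamma)$ explicitly, analyse when it vanishes, and in the vanishing case check $\eta(\pi(\gamma))$ against the tabulated non-zero values of $\eta$.

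First I would exploit the antipodal condition: if $b = [a+3]$, then $a$ lies on the rotation axis $l_{b}$ and $b$ lies on $l_{a}$, so $a^{b} = a$ and $b^{a} = b$. Plugging these into the definition of $g$, several terms are killed because consecutive entries coincide, and one obtains
\[
 g(\gamma) = +(n+1, u^{a}; b, a) + (n+1, u^{a}; a, b) - (n+1, u^{b}; a, b) - (n+1, u^{b}; b, a).
\]
All four 2-terms are nonzero because $a \neq b$, so after reduction $g(\gamma) = 0$ precisely when the pairs with color $(b,a)$ and with color $(a,b)$ cancel, which happens if and only if $u^{a} = u^{b}$.

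Next I would show $u^{a} = u^{b}$ if and only if $u \in \{a, b\}$. Since $l_{a} = l_{b}$ (as $a$ and $b$ are antipodal) and the two rotations point to opposite tips of this axis, $\square^{b} = (\square^{a})^{-1}$, so $u^{a} = u^{b}$ is equivalent to $u$ being fixed by $(\square^{a})^{2}$, the $\pi$-rotation about $l_{a}$. The fixed points of this involution are exactly the two poles $a$ and $b$. This gives the second assertion of the lemma (the case $u \neq a,b$) and, in the remaining case $u \in \{a,b\}$, shows $g(\gamma) = 0$; since $f(\gamma) = 0$ by Lemma \ref{lem:f-connected2}, $\gamma$ is a 3-cycle.

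It remains to verify $\eta(\pi(\gamma)) = \eta(a,b,a) - \eta(b,a,b) = 0$ when $b = [a+3]$. Because every element of $H_{0}$ acts on $O_{6}$ by a bijection that preserves the antipodal pairing, the set of triples of the form $(x, [x+3], x)$ is invariant under the $H_{0}$-action. Hence it suffices to inspect the finite base list appearing in the definition of $\eta$ and confirm that no listed triple is of this shape: in every base triple either the first and third coordinates differ, or the middle coordinate is not antipodal to them (e.g.\ $(1,0,1)$ has $0 \neq [1+3] = 4$, $(1,2,1)$ has $2 \neq 4$, etc.). Thus $\eta$ vanishes on every triple $(x, [x+3], x)$, giving $\eta(a,b,a) = \eta(b,a,b) = 0$ and completing the proof. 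The only slightly delicate point is the finite tabular check for $\eta$; everything else is a direct computation enabled by the antipodal identities $a^{b} = a$, $b^{a} = b$ and $\square^{b} = (\square^{a})^{-1}$.
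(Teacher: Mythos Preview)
Your proof is correct and follows essentially the same approach as the paper. The paper normalises by setting $a=\omega_{0}$ (hence $b=\omega_{3}$) and, in the non-vanishing case, $u=\omega_{1}$, then writes out the same four-term expression for $g(\gamma)$; for the vanishing case it invokes Lemma~\ref{lem:O6_weight} to reduce $\eta(\pi(\gamma))$ to $\eta(0,3,0)-\eta(3,0,3)=0$, whereas you check the table directly and use the $H_{0}$-invariance of the antipodal pairing, which is an equally valid route to the same verification.
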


\begin{proof}
We may assume that $a = \omega_{0}$.
Then, since $b = [a + 3]$, we have $b = \omega_{3}$.
If $u = \omega_{0}$ or $\omega_{3}$, we immediately have $g(\gamma) = 0$.
Therefore, $\gamma$ is a 3-cycle.
In light of Lemma \ref{lem:O6_weight}, we have $\eta(\pi(\gamma)) = 0$.

If $u \neq \omega_{0}, \omega_{3}$, we may assume that $u = \omega_{1}$.
Obviously, we have
\begin{align}
 g(\gamma)
 & = + \> (n + 1, \omega_{2}; \omega_{3}, \omega_{0}) + (n + 1, \omega_{2}; \omega_{0}, \omega_{3}) \notag \\
 & \phantom{=} \ - (n + 1, \omega_{5}; \omega_{0}, \omega_{3}) - (n + 1, \omega_{5}; \omega_{3}, \omega_{0}), \label{eq:O6_f2_special}
\end{align}
and thus $g(\gamma) \neq 0$.
\end{proof}

\begin{lemma}
\label{lem:O6_f2_special_2-5}
Let $\gamma$ be the 3-chain in Lemma \ref{lem:f-connected2} with $X = O_{6}$.
Assume that $b = [a + 3]$ and $u \neq a, b$.
Furthermore, for $1 \leq k \leq 5$, let $\gamma_{3}, \gamma_{4}, \dots, \gamma_{k + 2}$ be 3-terms of $(O_{6}, S)$ whose degrees are at least $n + 1$.
If $\gamma + \sum_{i = 3}^{k + 2} \gamma_{i}$ is a 3-cycle of length $k + 2$, then we have $\eta \left( \pi \left( \gamma + \sum_{i = 3}^{k + 2} \gamma_{i} \right) \right) = 0$.
\end{lemma}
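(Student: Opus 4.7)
The plan is to perform a careful case analysis, using rotational symmetry of $O_6$ to fix coordinates and then enumerating all possible extensions of $\gamma$ to a 3-cycle of small length. Since every element of the rotation group of the octahedron is realized by a composition of generators of $G(O_6)$, and this action preserves $\eta$ on 3-cycles by (the derivation preceding) Lemma \ref{lem:O6_weight}, I may assume $a = 0$, $b = 3$, and $u = 1$. Then $\gamma = +(n, 1; 0, 3, 0) - (n, 1; 3, 0, 3)$, and equation \eqref{eq:O6_f2_special} in the proof of Lemma \ref{lem:O6_f2_special} pins down $g(\gamma)$ as a sum of exactly four nonzero 2-terms, two at index $2$ and two at index $5$, all whose colors involve only $\{0, 3\}$.

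First I will analyze the cycle condition at degree $n+1$. By Lemma \ref{lem:3-cycle_condition}(i), the degree-$(n+1)$ 3-terms $\delta_j$ among $\gamma_3, \dots, \gamma_{k+2}$ must satisfy $\sum f(\delta_j) = -g(\gamma)$, and by Lemma \ref{lem:f-/g-connectedness}(ii) the sum splits by index. Thus the $\delta_j$'s of index $2$ form (together with the index-$2$ part of $g(\gamma)$) an $f$-closed contribution, and similarly at index $5$. Using Lemmas \ref{lem:f-connected1}--\ref{lem:f-connected5} I will enumerate the possible $f$-connected subsets that carry one of the four edges $\pm(0,3)$ or $\pm(3,0)$ of $g(\gamma)$; the options come from a small list because the two 2-terms to cancel at each index share the same colors $\{0,3\}$. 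In particular the minimal-length solutions are type 0 terms such as $\pm(n+1, 2; 0, 3, 0)$ and their $\omega_5$ analogues (cf. Remark \ref{rem:type0}), but larger $f$-connected combinations involving type 2 and type 3 terms must also be considered.

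Second I will propagate to higher degrees: each $\delta_j$ that exceeds what was needed to cancel $g(\gamma)$ produces a residue $g(\delta_j)$ at degree $n+2$, which must be killed by $f$-images of degree-$(n+2)$ 3-terms in the extension, and so on. Because $k \leq 5$, this recursion terminates after very few layers, and at the maximal degree Lemma \ref{lem:3-cycle_condition}(iii) together with Lemmas \ref{lem:O6_f2_usual} and \ref{lem:O6_f2_special} further restrict the shapes of the top-layer 3-terms. This yields a finite list of candidate 3-cycles. For each candidate, I will invoke Lemma \ref{lem:O6_weight} to pass to the underlying chain in $C^Q_3(O_6)$ and then read off $\eta$ term by term from the definition in Section \ref{sec:preliminaries}, verifying that every candidate has total weight $0$ modulo $3$.

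The main obstacle will be the bookkeeping of the enumeration: when both the index-$2$ and index-$5$ groups have length $\geq 2$, the $g$-images spawn further cancellations and the branching quickly becomes delicate, and the type 0 configurations (which are absent in Satoh's dihedral setting) must be handled separately at every stage. This is precisely why the argument requires its own section (Section \ref{sec:proof_of_lemma_O6_f2_special_2-5}). The ultimate reason $\eta$ vanishes in every subcase is that the antipodal constraint $b = [a+3]$ forces the extensions to sit in a region of the chain complex where the $\eta$-weighted summands either pair up with opposite signs or avoid the support of $\eta$ altogether; verifying this case by case is the crux of the proof.
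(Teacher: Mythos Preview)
Your proposal is essentially the same strategy the paper carries out in Section~\ref{sec:proof_of_lemma_O6_f2_special_2-5}: fix coordinates (the paper does this via the $\omega_i$ device, which amounts to the same symmetry reduction you invoke), split the degree-$(n{+}1)$ cancellation of $g(\gamma)$ by index into the $\omega_2$-part and the $\omega_5$-part, enumerate the $f$-connected pieces that can absorb $+(0,3)+(3,0)$ (the paper packages this as Lemmas~\ref{lem:O6_ab_ba_1}--\ref{lem:O6_ab_ba_4}), then recurse to higher degrees and finish each surviving candidate with Lemma~\ref{lem:O6_weight}. The only cosmetic difference is that the paper organizes the recursion as a sequence of named sublemmas (Lemmas~\ref{lem:O6_f2_special_geq2}--\ref{lem:O6_f2_special_5}) stratified first by the number of degree-$(n{+}1)$ terms and then by the residual degree-$(n{+}2)$ and $(n{+}3)$ layers, whereas you describe the same tree in prose; your plan and theirs will generate the same case list.
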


Although the proof of Lemma \ref{lem:O6_f2_special_2-5} is straightforward, its argument is rather complicated.
We thus defer the proof of this lemma to Section \ref{sec:proof_of_lemma_O6_f2_special_2-5}.

\begin{lemma}
\label{lem:O6_f3}
Let $\gamma$ be one of the 3-chains in Lemma \ref{lem:f-connected3} with $X = O_{6}$.
Then, we have $g(\gamma) \neq 0$.
\end{lemma}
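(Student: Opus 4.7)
The plan is to verify $g(\gamma) \neq 0$ directly in each of the two cases of Lemma~\ref{lem:f-connected3}, leveraging the finite size of $O_{6}$.

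First, for case~(i) with $\gamma = +(u; a, b, a) - (u; c, a, b) - (u; c, b, a)$, I would expand $g(\gamma)$ and group the 2-terms by their index. The positive 2-terms carry indices only in $\{u^{a}, u^{b}\}$, while the negative 2-terms include the distinct pair $-(u^{c}; a, b)$ and $-(u^{c}; b, a)$ with index $u^{c}$. Case~(ii), namely $\gamma = +(u; a, b, a) - (u; a, b, c) - (u; b, a, c)$, expands analogously, its two index-$u^{c}$ negative 2-terms being $-(u^{c}; a^{c}, b^{c})$ and $-(u^{c}; b^{c}, a^{c})$.

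Next, I would establish the structural fact that, in $O_{6}$, the equation $u^{a} = u^{c}$ with $a \neq c$ holds if and only if $c = [a + 3]$ and $u \in \{a, [a + 3]\}$. This is a finite check on the coordinate realization of the octahedron: rotations about two distinct axes disagree on every vertex, while rotations about a common axis are mutual inverses and agree exactly on the two axial vertices. As a consequence, when the three indices $u^{a}, u^{b}, u^{c}$ are pairwise distinct (the generic situation, given that $a, b, c$ are mutually distinct), the dedicated index-$u^{c}$ negative 2-terms identified above have no positive partner to cancel against, and $g(\gamma) \neq 0$ is immediate.

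The main obstacle is the exceptional subcases where two indices collapse. In case~(i) with $u^{a} = u^{c}$, the structural fact forces $c = [a + 3]$, which in turn yields $c^{a} = c$ (as $c$ lies on the axis through $a$), together with $b^{a} \neq b$ and $a^{b} \neq a$ (using $b \neq a, c$). Substituting and reducing, the index-$u^{a}$ part of $g(\gamma)$ simplifies to $+(u^{a}; a, b^{a}) + (u^{a}; c, b) - (u^{a}; c, b^{a}) - (u^{a}; a, b)$, a sum of four 2-terms with pairwise distinct colors $(a, b^{a}), (c, b), (c, b^{a}), (a, b)$, which is therefore nonzero. The symmetric collapse $u^{b} = u^{c}$ and the parallel analysis for case~(ii) are handled in the same fashion, completing the argument.
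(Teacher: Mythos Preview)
Your proof is correct, and the strategy is sound: the structural fact that $u^{a}=u^{c}$ in $O_{6}$ forces $c=[a+3]$ and $u\in\{a,[a+3]\}$ is exactly what is needed, and your reduction of the index-$u^{a}$ block in the collapsed subcase is accurate (the $+(u^{a};b,a)$ and $-(u^{c};b,a)$ cancel, leaving the four 2-terms you list with pairwise distinct colors). The symmetric subcase $u^{b}=u^{c}$ and the parallel analysis for case~(ii) go through as you indicate.

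Your route differs from the paper's in the choice of case split. The paper divides according to whether $b=[a+3]$, then normalizes $(a,b)=(\omega_{0},\omega_{1})$ (or $(a,c)=(\omega_{0},\omega_{1})$) and, in the generic branch, locates a single 2-term of a color that appears exactly once in the reduced expansion (e.g.\ the term of color $(c^{\omega_{1}},\omega_{5})$ in case~(i)). You instead split by whether the indices $u^{a},u^{b},u^{c}$ are pairwise distinct, handling the generic branch uniformly via the isolated index-$u^{c}$ block and treating the collapses via your structural fact. Your decomposition is more conceptual and avoids fixing explicit $\omega$-coordinates; the paper's is more computational but produces, as a by-product, the explicit formulae~(\ref{eq:O6_f3_special_i})--(\ref{eq:O6_f3_usual_ii}) that are reused verbatim in the proofs of Lemmas~\ref{lem:O6_f3_usual} and~\ref{lem:O6_f3_special} and in Proposition~\ref{prop:O6_VII}. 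So the paper's extra concreteness is not wasted: those labeled formulae feed directly into later arguments, which your approach would have to redo.
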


\begin{proof}
We first consider the case that
\[
 \gamma = + \> (n, u; a, b, a) - (n, u; c, a, b) - (n, u; c, b, a)
\]
up to sign, with some mutually distinct $a, b, c \in O_{6}$.
If $b = [a + 3]$, we may assume that $a = \omega_{0}$ and $c = \omega_{1}$, and have $b = \omega_{3}$.
Then, we have
\begin{align}
 g(\gamma)
 & = + \> (n + 1, u^{\omega_{0}}; \omega_{3}, \omega_{0}) + (n + 1, u^{\omega_{0}}; \omega_{0}, \omega_{3}) \notag \\
 & \phantom{=} \ - (n + 1, u^{\omega_{1}}; \omega_{0}, \omega_{3}) - (n + 1, u^{\omega_{1}}; \omega_{3}, \omega_{0}). \label{eq:O6_f3_special_i}
\end{align}
It is routine to see that we have $u^{\omega_{0}} \neq u^{\omega_{1}}$.
We thus have $g(\gamma) \neq 0$.
If $b \neq [a + 3]$, we may assume that $a = \omega_{0}$ and $b = \omega_{1}$.
Then, we have
\begin{align}
 g(\gamma)
 & = + \> (u^{\omega_{0}}; \omega_{1}, \omega_{0}) - (u^{\omega_{1}}; \omega_{5}, \omega_{0}) + (u^{\omega_{0}}; \omega_{0}, \omega_{2}) \notag \\
 & \phantom{=} \ - (u^{c}; \omega_{0}, \omega_{1}) \> \uline{+ \> (u^{\omega_{0}}; c^{\omega_{0}}, \omega_{1})} - (u^{\omega_{1}}; c^{\omega_{1}}, \omega_{5}) \notag \\
 & \phantom{=} \ - (u^{c}; \omega_{1}, \omega_{0}) \> \uuline{+ \> (u^{\omega_{1}}; c^{\omega_{1}}, \omega_{0})} - (u^{\omega_{0}}; c^{\omega_{0}}, \omega_{2}). \label{eq:O6_f3_usual_i}
\end{align}
We note that the single- or double-underlined 2-term vanishes if and only if $c = \omega_{5}$ or $\omega_{2}$ respectively, and the other 2-terms never vanish.
Since reduced $g(\gamma)$ has exactly one 2-term of color $(c^{\omega_{1}}, \omega_{5})$, we have $g(\gamma) \neq 0$.

We next consider the case that
\[
 \gamma = + \> (n, u; a, b, a) - (n, u; a, b, c) - (n, u; b, a, c)
\]
up to sign, with some mutually distinct $a, b, c \in O_{6}$.
If $b = [a + 3]$, we may assume that $a = \omega_{0}$ and $c = \omega_{1}$, and have $b = \omega_{3}$.
Then, we have
\begin{align}
 g(\gamma)
 & = + \> (n + 1, u^{\omega_{0}}; \omega_{3}, \omega_{0}) + (n + 1, u^{\omega_{0}}; \omega_{0}, \omega_{3}) \notag \\
 & \phantom{=} \ - (n + 1, u^{\omega_{1}}; \omega_{5}, \omega_{2}) - (n + 1, u^{\omega_{1}}; \omega_{2}, \omega_{5}). \label{eq:O6_f3_special_ii}
\end{align}
Since $u^{\omega_{0}} \neq u^{\omega_{1}}$, we have $g(\gamma) \neq 0$.
If $b \neq [a + 3]$, we may assume that $a = \omega_{0}$ and $b = \omega_{1}$.
Then, we have
\begin{align}
 g(\gamma)
 & = + \> (u^{\omega_{0}}; \omega_{1}, \omega_{0}) - (u^{\omega_{1}}; \omega_{5}, \omega_{0}) + (u^{\omega_{0}}; \omega_{0}, \omega_{2}) \notag \\
 & \phantom{=} \ - (u^{\omega_{0}}; \omega_{1}, c) \> \uline{+ \> (u^{\omega_{1}}; \omega_{5}, c)} - (u^{c}; \omega_{0}^{c}, \omega_{1}^{c}) \notag \\
 & \phantom{=} \ - (u^{\omega_{1}}; \omega_{0}, c) \> \uuline{+ \> (u^{\omega_{0}}; \omega_{2}, c)} - (u^{c}; \omega_{1}^{c}, \omega_{0}^{c}). \label{eq:O6_f3_usual_ii}
\end{align}
We note that the single- or double-underlined 2-term vanishes if and only if $c = \omega_{5}$ or $\omega_{2}$ respectively, and the other 2-terms never vanish.
Since reduced $g(\gamma)$ has exactly one 2-term of color $(\omega_{1}, \omega_{0})$, we have $g(\gamma) \neq 0$.
\end{proof}

\begin{lemma}
\label{lem:O6_f3_usual}
Let $\gamma$ be one of the 3-chains in Lemma \ref{lem:f-connected3} with $X = O_{6}$.
Assume that $b \neq [a + 3]$.
Furthermore, let $\delta_{4}, \delta_{5}, \dots, \delta_{k + 3}$ be 3-terms of $(O_{6}, S)$ whose degrees are $n + 1$, and $\delta = \sum_{i = 4}^{k + 3} \delta_{i}$.
If $g(\gamma) + f(\delta) = 0$, then we have $k \geq 3$.
\end{lemma}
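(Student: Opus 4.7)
My plan is to adapt the counting argument from Lemma~\ref{lem:R7_f3}. In the $R_7$ setting, the three indices $u^a, u^b, u^c$ of the 2-terms of $g(\gamma)$ are automatically distinct because $u^x = u^y$ in $R_7$ forces $x = y$. In $O_6$ this fails in general, and handling the possible coincidences of indices is the only new complication.

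First I would reduce to the case $a = \omega_0, b = \omega_1$ using a rotational symmetry of $O_6$ (which is an inner automorphism of the quandle), and split into the two forms of $\gamma$ provided by Lemma~\ref{lem:f-connected3}. For each form, $g(\gamma)$ has been computed explicitly in the proof of Lemma~\ref{lem:O6_f3} --- namely in \eqref{eq:O6_f3_usual_i} and \eqref{eq:O6_f3_usual_ii} --- as a sum of $9$ 2-terms of which at most one vanishes, depending on whether $c = \omega_2$ or $c = \omega_5$.

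The core claim is that the reduced form of $g(\gamma)$ has at least $7$ non-zero 2-terms. Granted this, since each $f(\delta_i)$ produces at most three 2-terms, the equality $f(\delta) = - g(\gamma)$ forces the reduced form of $f(\delta)$ to contain at most $3 k$ 2-terms, and the inequality $3 k \geq 7$ gives $k \geq 3$.

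To verify the claim I would first inspect the two formulas and check that no two 2-terms sharing the same index cancel --- this is immediate from the colors, using $c \notin \{\omega_0, \omega_1\}$. The only remaining source of cancellation is that two of the indices $u^{\omega_0}, u^{\omega_1}, u^c$ coincide. Using the structural observation that $u^x = u^y$ with $x \neq y$ holds in $O_6$ if and only if $y = [x + 3]$ and $u \in \{x, y\}$, together with $u^{\omega_0} \neq u^{\omega_1}$ (already exploited in Lemma~\ref{lem:O6_f2_usual}), the only sub-cases left to treat are $c = \omega_3$ with $u \in \{\omega_0, \omega_3\}$ and $c = \omega_4$ with $u \in \{\omega_1, \omega_4\}$. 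In each of these finitely many configurations I would substitute the relevant values of $c^{\omega_0}, c^{\omega_1}, \omega_0^c, \omega_1^c$ and count the cancellations; the tight case is $c = \omega_3$ under form~(i) of Lemma~\ref{lem:f-connected3}, in which $+(u^{\omega_0}; \omega_1, \omega_0)$ and $-(u^c; \omega_1, \omega_0)$ merge and cancel, leaving exactly $7$ 2-terms. Every other sub-case leaves at least $8$.

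The main obstacle is precisely this case analysis around the index coincidences, which requires the explicit action of the $\pi/2$-rotations on $O_6$; but since only two $(c, u)$ configurations arise and the remaining cancellations can be read off directly from the explicit formulas, the calculation is finite and mechanical.
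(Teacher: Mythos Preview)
Your argument is correct. Reducing to $a=\omega_0$, $b=\omega_1$, computing $g(\gamma)$ via \eqref{eq:O6_f3_usual_i} and \eqref{eq:O6_f3_usual_ii}, and then verifying that the reduced form always has at least seven 2-terms is sound; the inequality $3k\geq 7$ then forces $k\geq 3$. The structural fact you invoke, that $u^{x}=u^{y}$ with $x\neq y$ in $O_6$ only when $y=[x+3]$ and $u\in\{x,y\}$, is exactly what isolates the sub-cases (a) and (b), and your identification of form~(i) with $c=\omega_3$, $u\in\{\omega_0,\omega_3\}$ as the unique configuration with seven surviving terms is correct.

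The paper takes a slightly different route. Rather than a single global count, it partitions the 2-terms of $g(\gamma)$ by their index and argues, for each index separately, how many of the $\delta_i$ must carry that index (e.g.\ four 2-terms at index $u^{\omega_0}$ force at least two $\delta_i$ of that index, since each $f(\delta_i)$ contributes at most three terms at a single index). This per-index count yields $k\geq 3$ in the tight cases but actually $k\geq 4$ in all the others. Your global count is more economical and perfectly adequate for the lemma as stated; what it does not record is \emph{which} sub-cases admit $k=3$. The paper later exploits precisely that finer information: the proofs of Propositions~\ref{prop:O6_VII} and~\ref{prop:O6_VIII} explicitly cite ``as we saw in the proof of Lemma~\ref{lem:O6_f3_usual}'' to restrict to the short list of configurations where equality is possible. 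So both arguments prove the lemma, but the paper's version doubles as a classification that is reused downstream.
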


\begin{proof}
We first consider the case that
\[
 \gamma = + \> (n, u; a, b, a) - (n, u; c, a, b) - (n, u; c, b, a)
\]
up to sign, with some mutually distinct $a, b, c \in O_{6}$.
Since $b \neq [a + 3]$, we may assume that $a = \omega_{0}$ and $b = \omega_{1}$.
Then, we have the formula (\ref{eq:O6_f3_usual_i}).
We note that we have $u^{\omega_{0}} \neq u^{\omega_{1}}$.
Furthermore, it is routine to check that $u^{\omega_{0}}$, $u^{\omega_{1}}$ and $u^{c}$ are mutually different except the following cases:
\begin{itemize}
\item[(a)]
$u = \omega_{0}$ or $\omega_{3}$ and $c = \omega_{3}$, and
\item[(b)]
$u = \omega_{1}$ or $\omega_{4}$ and $c = \omega_{4}$.
\end{itemize}
Assume that $g(\gamma) + f(\delta) = 0$.

In the case other than (a) or (b), since reduced $g(\gamma)$ has at least one 2-term of index $u^{\omega_{1}}$ (or $u^{c}$), at least one of $\delta_{4}, \delta_{5}, \dots, \delta_{k + 3}$ has index $u^{\omega_{1}}$ (or $u^{c}$).
If $c = \omega_{5}$, since reduced $g(\gamma)$ has at least one 2-terms of index $u^{\omega_{0}}$, at least one of $\delta_{4}, \delta_{5}, \dots, \delta_{k + 3}$ has index $u^{\omega_{0}}$.
We thus have $k \geq 3$.
If $c \neq \omega_{5}$, since reduced $g(\gamma)$ has four 2-terms of index $u^{\omega_{0}}$, at least two of $\delta_{4}, \delta_{5}, \dots, \delta_{k + 3}$ have index $u^{\omega_{0}}$.
We thus have $k \geq 4$.

In the case (a), we have $u^{\omega_{0}} = u^{c} = u$ and
\begin{align}
 g(\gamma)
 & = + \> (u; \omega_{0}, \omega_{2}) + (u; \omega_{3}, \omega_{1}) - (u; \omega_{0}, \omega_{1}) - (u; \omega_{3}, \omega_{2}) \notag \\
 & \phantom{=} \ + (u^{\omega_{1}}; \omega_{2}, \omega_{0}) - (u^{\omega_{1}}; \omega_{5}, \omega_{0}) - (u^{\omega_{1}}; \omega_{2}, \omega_{5}). \label{eq:O6_f3_usual_i_a}
\end{align}
Since reduced $g(\gamma)$ has four 2-terms of index $u$, at least two of $\delta_{4}, \delta_{5}, \dots, \delta_{k + 3}$ have index $u$.
Furthermore, since reduced $g(\gamma)$ has three 2-terms of index $u^{\omega_{1}}$, at least one of $\delta_{4}, \delta_{5}, \dots, \delta_{k + 3}$ has index $u^{\omega_{1}}$.
We thus have $k \geq 3$.

In the case (b), we have $u^{\omega_{1}} = u^{c} = u$ and
\begin{align*}
 g(\gamma)
 & = + \> (u^{\omega_{0}}; \omega_{1}, \omega_{0}) + (u^{\omega_{0}}; \omega_{0}, \omega_{2}) + (u^{\omega_{0}}; \omega_{5}, \omega_{1}) - (u^{\omega_{0}}; \omega_{5}, \omega_{2}) \\
 & \phantom{=} \ + (u; \omega_{4}, \omega_{0}) - (u; \omega_{5}, \omega_{0}) - (u; \omega_{0}, \omega_{1}) - (u; \omega_{4}, \omega_{5}) - (u; \omega_{1}, \omega_{0}).
\end{align*}
Since reduced $g(\gamma)$ has four 2-terms of index $u^{\omega_{0}}$, at least two of $\delta_{4}, \delta_{5}, \dots, \delta_{k + 3}$ have index $u^{\omega_{0}}$.
Furthermore, since reduced $g(\gamma)$ has five 2-terms of index $u$, at least two of $\delta_{4}, \delta_{5}, \dots, \delta_{k + 3}$ have index $u$.
We thus have $k \geq 4$.

We next consider the case that
\[
 \gamma = + \> (n, u; a, b, a) - (n, u; a, b, c) - (n, u; b, a, c)
\]
up to sign, with some mutually distinct $a, b, c \in O_{6}$.
We may assume that $a = \omega_{0}$ and $b = \omega_{1}$.
Then, we have the formula (\ref{eq:O6_f3_usual_i}).
Assume that $g(\gamma) + f(\delta) = 0$.
We note that $u^{\omega_{0}} \neq u^{\omega_{1}}$, and $u^{\omega_{0}}$, $u^{\omega_{1}}$ and $u^{c}$ are mutually different except the case (a) or (b).

In the case other than (a) or (b), since reduced $g(\gamma)$ has at least one 2-term of index $u^{\omega_{1}}$ (or $u^{c}$), at least one of $\delta_{4}, \delta_{5}, \dots, \delta_{k + 3}$ has index $u^{\omega_{1}}$ (or $u^{c}$).
If $c = \omega_{2}$, since reduced $g(\gamma)$ has at least one 2-term of index $u^{\omega_{0}}$, at least one of $\delta_{4}, \delta_{5}, \dots, \delta_{k + 3}$ has index $u^{\omega_{0}}$.
We thus have $k \geq 3$.
If $c \neq \omega_{2}$, since reduced $g(\gamma)$ has four 2-terms of index $u^{\omega_{0}}$, at least two of $\delta_{4}, \delta_{5}, \dots, \delta_{k + 3}$ have index $u^{\omega_{0}}$.
We thus have $k \geq 4$.

In the case (a), we have $u^{\omega_{0}} = u^{c} = u$ and
\begin{align}
 g(\gamma)
 & = + \> (u; \omega_{1}, \omega_{0}) + (u; \omega_{0}, \omega_{2}) + (u; \omega_{2}, \omega_{3}) \notag \\
 & \phantom{=} \ - (u; \omega_{1}, \omega_{3}) - (u; \omega_{0}, \omega_{5}) - (u; \omega_{5}, \omega_{0}) \notag \\
 & \phantom{=} \ + (u^{\omega_{1}}; \omega_{5}, \omega_{3}) - (u^{\omega_{1}}; \omega_{5}, \omega_{0}) - (u^{\omega_{1}}; \omega_{0}, \omega_{3}) \label{eq:O6_f3_usual_ii_a}.
\end{align}
Since reduced $g(\gamma)$ has six 2-terms of index $u$, at least two of $\delta_{4}, \delta_{5}, \dots, \delta_{k + 3}$ have index $u$.
Furthermore, since reduced $g(\gamma)$ has three 2-terms of index $u^{\omega_{1}}$, at least one of $\delta_{4}, \delta_{5}, \dots, \delta_{k + 3}$ has index $u^{\omega_{1}}$.
We thus have $k \geq 3$.

In the case (b), we have $u^{\omega_{1}} = u^{c} = u$ and
\begin{align*}
 g(\gamma)
 & = + \> (u^{\omega_{0}}; \omega_{1}, \omega_{0}) + (u^{\omega_{0}}; \omega_{0}, \omega_{2}) + (u^{\omega_{0}}; \omega_{2}, \omega_{4}) - (u^{\omega_{0}}; \omega_{1}, \omega_{4}) \\
 & \phantom{=} \ + (u; \omega_{5}, \omega_{4}) - (u; \omega_{5}, \omega_{0}) - (u; \omega_{2}, \omega_{1}) - (u; \omega_{0}, \omega_{4}) - (u; \omega_{1}, \omega_{2}).
\end{align*}
Since reduced $g(\gamma)$ has four 2-terms of index $u^{\omega_{0}}$, at least two of $\delta_{4}, \delta_{5}, \dots, \delta_{k + 3}$ have index $u^{\omega_{0}}$.
Furthermore, since reduced $g(\gamma)$ has five 2-terms of index $u$, at least two of $\delta_{4}, \delta_{5}, \dots, \delta_{k + 3}$ have index $u$.
We thus have $k \geq 4$.
\end{proof}

\begin{lemma}
\label{lem:O6_f3_special}
Let $\gamma$ be one of the 3-chains in Lemma \ref{lem:f-connected3} with $X = O_{6}$.
Assume that $b = [a + 3]$.
Furthermore, for $1 \leq k \leq 4$, let $\gamma_{4}, \gamma_{5}, \dots, \gamma_{k + 3}$ be 3-terms of $(O_{6}, S)$ whose degrees are at least $n + 1$.
If $\gamma + \sum_{i = 4}^{k + 3} \gamma_{i}$ is a 3-cycle of length $k + 3$, then we have $\eta \left( \pi \left( \gamma + \sum_{i = 4}^{k + 3} \gamma_{i} \right) \right) = 0$.
\end{lemma}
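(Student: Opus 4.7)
The plan is to carry out a case analysis similar in spirit to (and compatible with) the arguments behind Lemma \ref{lem:O6_f2_special_2-5}, starting from the two 3-chains enumerated in Lemma \ref{lem:f-connected3} under the specialization $b = [a+3]$.

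First I would normalize the configuration. Using the rotational symmetry of the octahedron, I may assume $a = \omega_{0}$ and $b = \omega_{3}$. The remaining vertex $c$ of Lemma \ref{lem:f-connected3} then ranges over $\{\omega_{1}, \omega_{2}, \omega_{4}, \omega_{5}\}$, but these four are permuted transitively by the quarter-turn about $l_{\omega_{0}}$, which fixes both $a$ and $b$; so in each of cases (i) and (ii) it suffices to treat $c = \omega_{1}$. The reduced forms of $g(\gamma)$ are then exactly those recorded in (\ref{eq:O6_f3_special_i}) and (\ref{eq:O6_f3_special_ii}) in the proof of Lemma \ref{lem:O6_f3}: four 2-terms of degree $n+1$ split into two pairs carrying the distinct indices $u^{\omega_{0}}$ and $u^{\omega_{1}}$.

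Set $\delta = \sum_{i=4}^{k+3}\gamma_{i}$. By Lemma \ref{lem:3-cycle_condition}(i) applied at degree $n+1$, the part $T_{n+1}(\delta)$ must satisfy $f(T_{n+1}(\delta)) = -g(\gamma)$. I would enumerate the possibilities for $T_{n+1}(\delta)$ using the $f$-connected classifications in Lemmas \ref{lem:f-connected1}--\ref{lem:f-connected5}, constrained by $|T_{n+1}(\delta)| \leq k \leq 4$ and by Lemma \ref{lem:f-/g-connectedness}(ii), which forces the index of each element of an $f$-connected component to be constant; this already splits $T_{n+1}(\delta)$ into a $u^{\omega_{0}}$-part and a $u^{\omega_{1}}$-part, each of which has to $f$-cancel a specific pair of 2-terms appearing in $g(\gamma)$. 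For every admissible $T_{n+1}(\delta)$ I would compute $g(T_{n+1}(\delta))$ and, if nonzero, iterate the same procedure at degrees $n+2$ (and, if necessary, $n+3$); the total budget of at most four extra 3-terms forces the recursion to terminate quickly. For each surviving candidate 3-cycle $\gamma+\delta$ I would then apply Lemma \ref{lem:O6_weight} to rewrite it as $\sum \varepsilon_{i}(\omega_{a_{i}},\omega_{b_{i}},\omega_{c_{i}})^{g}$ for a suitable $g \in G(O_{6})$, compute $\eta$ directly from its piecewise definition, and verify that the value in $\mathbb{Z}/3\mathbb{Z}$ vanishes.

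The main obstacle is pure bookkeeping: the branching of subcases, especially when some $\gamma_{i}$ have degree strictly greater than $n+1$, is substantial, and one must also keep track of type-0 terms allowed by Remark \ref{rem:type0}. What keeps the analysis tractable is the extremely restrictive index-matching imposed by Lemma \ref{lem:f-/g-connectedness}(ii), combined with the short list of $f$-connected patterns available for $1 \leq k \leq 5$. As with Lemma \ref{lem:O6_f2_special_2-5}, I anticipate that completing all the subcase verifications is lengthy enough to warrant a dedicated section rather than an inline proof.
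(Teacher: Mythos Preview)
Your proposal is correct and follows essentially the same approach as the paper: the paper's Section \ref{sec:proof_of_lemma_O6_f3_special} performs exactly this normalization to $a=\omega_{0}$, $b=\omega_{3}$, $c=\omega_{1}$, unifies cases (i) and (ii) via the formula (\ref{eq:O6_f3_special}), and then carries out the degree-by-degree case analysis you describe (Lemmas \ref{lem:O6_f3_special_geq2}--\ref{lem:O6_f3_special_4}), using Lemma \ref{lem:O6_weight} to verify $\eta(\pi(\cdot))=0$ on the surviving candidates. The only minor remark is that with $k\leq 4$ the recursion never reaches degree $n+3$, so your ``if necessary'' hedge there is vacuous.
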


Although the proof of Lemma \ref{lem:O6_f3_special} is straightforward, its argument is somewhat complicated.
We thus defer the proof of this lemma to Section \ref{sec:proof_of_lemma_O6_f3_special}.
In light of Lemmas \ref{lem:3-cycle_condition}--\ref{lem:f-/g-connectedness}, \ref{lem:f-connected1}, \ref{lem:O6_f2_usual}--\ref{lem:O6_f3_special} and Remark \ref{rem:reverse}, we immediately have the following proposition in a similar way to Proposition \ref{prop:R7_list}.

\begin{proposition}
\label{prop:O6_list}
Let $\gamma$ be a 3-cycle of $(O_{6}, S)$ whose length is $l$ {\upshape (}$1 \leq l \leq 7${\upshape )}, and $n$ the minimal degree of $\gamma$.
If $\eta(\pi(\gamma)) \neq 0$, then we have the following cases up to reverse.
\begin{itemize}
\item[(A)]
$4 \leq l \leq 7$ and $|T_{n}(\gamma)| = l$.
\item[(B)]
$6 \leq l \leq 7$, $|T_{n}(\gamma)| = 2$ and $|T_{n + 1}(\gamma)| = l - 2$.
\item[(C)]
$6 \leq l \leq 7$, $|T_{n}(\gamma)| = 3$ and $|T_{n + 1}(\gamma)| = l - 3$.
\end{itemize}
\end{proposition}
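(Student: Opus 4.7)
The plan is to adapt the proof of Proposition \ref{prop:R7_list} (which itself follows Proposition 5.6 of \cite{Sat2016}) to the octahedral setting, substituting the preparatory lemmas of this section for their dihedral counterparts. Write $\gamma$ in reduced form with minimal degree $n$ and maximal degree $m$. Lemma \ref{lem:3-cycle_condition}(ii)--(iii) decomposes $T_n(\gamma)$ into $f$-connected components and $T_m(\gamma)$ into $g$-connected components. By Lemma \ref{lem:f-connected1}, and by a direct check of the formula for $g$ applied to a single 3-term of $(O_6,S)$ in each of the four types (exploiting that in $O_6$ one has $a^b = a$ iff $b \in \{a,[a+3]\}$), no singleton can be $f$-connected or $g$-connected, so $|T_n(\gamma)|, |T_m(\gamma)| \geq 2$.

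The argument then proceeds by cases on $|T_n(\gamma)|$. When $|T_n(\gamma)| = l$ (so $m = n$), Lemmas \ref{lem:O6_f2_usual}, \ref{lem:O6_f2_special}, and \ref{lem:O6_f3} together rule out $l \in \{1,2,3\}$: for $l = 2$ the only configuration with $g(\gamma)=0$ forces $\eta(\pi(\gamma))=0$ via Lemma \ref{lem:O6_f2_special}, and for $l = 3$ Lemma \ref{lem:O6_f3} forbids a length-$3$ cycle. This leaves case (A) for $4 \leq l \leq 7$. When $|T_n(\gamma)| = 2$, the unique $f$-connected pair at degree $n$ has the form of Lemma \ref{lem:f-connected2}: Lemma \ref{lem:O6_f2_special_2-5} eliminates the subcase $b = [a+3]$, $u \notin \{a,b\}$, and Lemma \ref{lem:O6_f2_usual} for $b \neq [a+3]$ forces $|T_{n+1}(\gamma)| \geq 4$. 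Combined with $l \leq 7$ and $|T_k(\gamma)| \geq 2$ at each occupied degree, no terms may live above $n+1$, placing $\gamma$ in case (B). In the remaining subcase $b = [a+3]$, $u \in \{a,b\}$ the degree-$n$ part of $\gamma$ is itself a length-$2$ sub-cycle with $\eta$-weight zero (Lemma \ref{lem:O6_f2_special}), so subtracting it and iterating the same analysis on the length-$(l-2)$ residual cycle again lands $\gamma$ in case (B). The case $|T_n(\gamma)| = 3$ runs in parallel: Lemma \ref{lem:O6_f3_special} kills $b = [a+3]$ by forcing $\eta(\pi(\gamma))=0$, and Lemma \ref{lem:O6_f3_usual} for $b \neq [a+3]$ supplies $|T_{n+1}(\gamma)| \geq 3$, yielding case (C).

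Finally, when $|T_n(\gamma)| \geq 4$ with $|T_n(\gamma)| < l$, we have $m > n$ and $l \geq |T_n(\gamma)| + |T_m(\gamma)| \geq 6$, forcing $|T_m(\gamma)| \in \{2,3\}$. Passing to the reverse via Lemmas \ref{lem:reverse} and \ref{lem:f-/g-connectedness}(i), $\overline{\gamma}$ has $|T_{-m}(\overline{\gamma})| = |T_m(\gamma)| \in \{2,3\}$ and thus falls under one of the two preceding cases, giving case (B) or (C) for $\gamma$ up to reverse. The main obstacle is the bookkeeping in the $|T_n(\gamma)| = 2$, $b = [a+3]$, $u \in \{a,b\}$ subcase: one must carefully verify that the residual cycle obtained after peeling off the length-$2$ sub-cycle at degree $n$ does not spread beyond a single higher degree, which reduces---via the same lemmas combined with the nonexistence of length-$\leq 3$ cycles carrying nonzero $\eta$-weight---to confining the residual cycle entirely to degree $n+1$.
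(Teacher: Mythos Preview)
Your overall strategy matches the paper's: adapt Satoh's degree-profile argument using the $O_6$-specific Lemmas \ref{lem:O6_f2_usual}--\ref{lem:O6_f3_special} in place of their $R_7$ analogues. The non-exceptional subcases are handled correctly.

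There is, however, a genuine gap in the subcase $|T_n(\gamma)| = 2$, $b = [a+3]$, $u \in \{a,b\}$. Here the pair $\gamma_1+\gamma_2$ has $f=0$ \emph{and} $g=0$, so it is a closed sub-cycle that imposes no constraint whatsoever on degree $n+1$: Lemma \ref{lem:3-cycle_condition}(i) at level $n+1$ degenerates to $\sum_{T_{n+1}} f = 0$. Hence the residual cycle $\gamma' = \gamma - (\gamma_1+\gamma_2)$ may begin at any degree $n'' \geq n+1$. Your induction does force $\gamma'$ into case (A) (concentrated at a single degree $n''$), but nothing forces $n'' = n+1$. If $n'' > n+1$, then $\gamma$ has profile $|T_n|=2$, $|T_{n''}|=l-2$ with nothing in between, and one checks directly that neither $\gamma$ nor $\overline{\gamma}$ lies in (A), (B), or (C). Thus the claim that ``iterating lands $\gamma$ in case (B)'' is unjustified; the nonexistence of length-$\leq 3$ cycles with $\eta\neq 0$ only gives $l(\gamma')\geq 4$, it does not pin down the degree of $\gamma'$.

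The correct resolution is that this subcase is \emph{vacuous} under the hypothesis $\eta(\pi(\gamma))\neq 0$: one must rule out every cycle of length $\leq 5$ carrying nonzero $\eta$-weight. For $l\leq 3$ the cited lemmas suffice, but $l\in\{4,5\}$ requires the content of Propositions \ref{prop:O6_I} and \ref{prop:O6_II}. Those are in fact logically independent of Proposition \ref{prop:O6_list} (the lone invocation of \ref{prop:O6_list} inside the proof of \ref{prop:O6_I} is merely to exclude a length-$2$ cycle with $\eta\neq 0$, which already follows from Lemmas \ref{lem:O6_f2_usual}--\ref{lem:O6_f2_special}), so one may establish them first and then invoke them here to dispose of the subcase.
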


Moreover, to prove Theorem \ref{thm:lower_bound_O6}, we require the following claims.

\begin{lemma}
\label{lem:O6_f_and_g}
Let $\gamma_{1}, \gamma_{2}, \dots, \gamma_{l}$ be $f$- and $g$-connected 3-terms of $(O_{6}, S)$ having the same degree $n$, and $\gamma = \sum_{i = 1}^{l} \gamma_{i}$.
Then, we have $\eta(\pi(\gamma)) = 0$.
\end{lemma}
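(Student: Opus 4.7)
My plan is to first observe that the combined hypothesis of $f$- and $g$-connectedness at the single degree $n$ is extremely rigid: we have $f(\gamma) = 0$ and $g(\gamma) = 0$, hence $\partial \gamma = 0$, so $\gamma$ is itself a 3-cycle concentrated in degree $n$. By Lemma \ref{lem:f-/g-connectedness}(ii) all terms $\gamma_i$ share a common index $u \in O_6$, and by part (iii) the elements $u^{a_i b_i c_i}$ are all equal. Since $\pi : C^{Q}_3(O_6)_S \to C^{Q}_3(O_6)$ is a chain map, $\pi(\gamma)$ is a 3-cycle of $O_6$ in the ordinary sense, and the task reduces to evaluating $\eta$ on it.

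Next, I would exploit the transitive action of the rotation group of the octahedron. Since there exists $g \in G(O_6)$ with $u^{g} = \omega_0$, the rotated chain $\gamma^{g}$ (in the sense of conjugating each color by $g$) still consists of $f$- and $g$-connected 3-terms, and by Lemma \ref{lem:O6_weight} it satisfies $\eta(\pi(\gamma^{g})) = \eta(\pi(\gamma))$. So I may assume without loss of generality that the common index is $u = \omega_0$, i.e.\ all 3-terms have the form $\varepsilon_i(n, \omega_0; \omega_{a_i}, \omega_{b_i}, \omega_{c_i})$.

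The heart of the argument is then an explicit parameterization. Combining the $f$-connectedness classifications from Section \ref{sec:enumeration_of_f-connected_3-terms} with the $g$-closure constraint $\omega_0^{\omega_{a_i}\omega_{b_i}\omega_{c_i}} = $ const, I would enumerate the possible $\gamma$ up to symmetry. The operator $\square^{\omega_0}$ fixes $\omega_0,\omega_3$ and cyclically permutes $\omega_1,\omega_2,\omega_4,\omega_5$ (so $H_0$ is the natural symmetry that shows up in the definition of $\eta$), and this, together with the analogous relations for $\square^{\omega_b}$, forces the color triples $(\omega_{a_i},\omega_{b_i},\omega_{c_i})$ to fit a small number of orbit patterns. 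For each such pattern I would read off $\eta(\pi(\gamma))$ directly from the case-by-case definition of $\eta$ in Section \ref{sec:preliminaries} and verify that the signed sum collapses to $0 \in \mathbb{Z}/3\mathbb{Z}$, mirroring how the $R_7$ analogue (Lemma \ref{lem:R7_f_and_g}) produced a two-parameter family.

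The main obstacle I anticipate is exactly this enumeration: unlike the linear dihedral case where the formula reduced to a tidy $(\alpha,\beta)$-combination, the octahedral quandle has the peculiarity of type 0 terms $\pm(n,\omega_0;\omega_a,\omega_{[a+3]},\omega_a)$ (Remark \ref{rem:type0}) and the asymmetric values of $\eta$, which means the bookkeeping will need to split into the subcases parallel to those of Lemmas \ref{lem:O6_f2_usual}--\ref{lem:O6_f3_special} (whether some $b_i$ equals $[a_i+3]$ or not). Once the finite list of surviving configurations is in hand, the cancellation of $\eta$-weights should be a mechanical verification.
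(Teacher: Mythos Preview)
Your setup is sound and matches the paper: normalizing the common index to $\omega_0$ (the paper does this simply by choosing $\omega_0,\omega_1$ so that $u=\omega_0$), observing that $\gamma$ is a 3-cycle with $f(\gamma)=g(\gamma)=0$, and invoking Lemma~\ref{lem:O6_weight} to evaluate $\eta$ on the colors alone.

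The gap is in the enumeration step. The classifications in Section~\ref{sec:enumeration_of_f-connected_3-terms} (Lemmas~\ref{lem:f-connected1}--\ref{lem:f-connected5}) only list $f$-connected sets of size $k\le 5$; they say nothing about arbitrary $l$, which is what the lemma must cover. Likewise Lemmas~\ref{lem:O6_f2_usual}--\ref{lem:O6_f3_special} are tailored to the aftermath of size-2 and size-3 $f$-connected blocks and do not organize an enumeration of all $f$- and $g$-connected $\gamma$. So your proposed case split does not close.

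What the paper does (and what the $R_7$ analogue in Lemma~\ref{lem:R7_f_and_g} already hinted at) is pure linear algebra over a finite index set. Fixing the common target $v=\omega_0^{\omega_a\omega_b\omega_c}$ from Lemma~\ref{lem:f-/g-connectedness}(iii), write $\gamma=\sum \alpha_{abc}(n,\omega_0;\omega_a,\omega_b,\omega_c)$, where the sum runs over the finitely many triples with that target (tabulated in Table~\ref{tab:O6_f_and_g}). The conditions $f(\gamma)=0$ and $g(\gamma)=0$ become a homogeneous linear system in the integers $\alpha_{abc}$; solving it yields explicit bases $\gamma_{0j}$ ($1\le j\le 6$) when $v=\omega_0$, the zero solution when $v=\omega_3$, and $\gamma_{pj}$ ($1\le j\le 3$) when $v=\omega_p$ for $p\in\{1,2,4,5\}$. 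One then checks $\eta(\pi(\gamma_{ij}))=0$ for each basis element via Lemma~\ref{lem:O6_weight}. This handles every $l$ at once and sidesteps any dependence on the small-$k$ classifications.
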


\begin{proof}
For the subsequent arguments, we first list the values of $\omega_{0}^{\omega_{a} \omega_{b} \omega_{c}}$ for each $\omega_{a}, \omega_{b}, \omega_{c} \in O_{6}$ ($\omega_{b} \neq \omega_{a}, \omega_{c}$) in Table \ref{tab:O6_f_and_g}.\footnote{The author listed the values of $\omega_{0}^{\omega_{a} \omega_{b} \omega_{c}}$ with the aid of a computer. The C source code for the task is available at \url{https://github.com/ayminoue/LQC/blob/main/LI.c}.}
In the table, $p$ takes value in $\{ 1, 2, 4, 5 \}$ and the values $p_{i}$ are defined as follows.
\begin{align*}
 p_{1} & = p, &
 p_{2} & = \begin{cases}
	    2 & \text{if $p = 1$}, \\
            4 & \text{if $p = 2$}, \\
            5 & \text{if $p = 4$}, \\
            1 & \text{if $p = 5$},
	   \end{cases} &
 p_{4} & = \begin{cases}
	    4 & \text{if $p = 1$}, \\
            5 & \text{if $p = 2$}, \\
            1 & \text{if $p = 4$}, \\
            2 & \text{if $p = 5$},
	   \end{cases} &
 p_{5} & = \begin{cases}
	    5 & \text{if $p = 1$}, \\
            1 & \text{if $p = 2$}, \\
            2 & \text{if $p = 4$}, \\
            4 & \text{if $p = 5$}.
	   \end{cases}
\end{align*}
\begin{table}[htbp]
 \centering
 \caption{The values of $\omega_{0}^{\omega_{a} \omega_{b} \omega_{c}}$ ($\omega_{b} \neq \omega_{a}, \omega_{c}$).}
 \label{tab:O6_f_and_g}
 \begin{tabular}{|c|l|}
  \hline
  $\omega_{0}^{\omega_{a} \omega_{b} \omega_{c}}$ & \multicolumn{1}{c|}{$(a, b, c)$} \\ \hline \hline
  $\omega_{0}$ & 
  \hspace{-0.8em} \begin{tabular}{l}
  $(0, 1, 4)$, $(0, 2, 5)$, $(0, 3, 0)$, $(0, 4, 1)$, $(0, 5, 2)$, $(1, 0, 5)$, $(1, 2, 4)$, $(1, 3, 2)$, \\
  $(1, 4, 0)$, $(1, 4, 3)$, $(1, 5, 4)$, $(2, 0, 1)$, $(2, 1, 5)$, $(2, 3, 4)$, $(2, 4, 5)$, $(2, 5, 0)$, \\
  $(2, 5, 3)$, $(3, 0, 3)$, $(3, 1, 4)$, $(3, 2, 5)$, $(3, 4, 1)$, $(3, 5, 2)$, $(4, 0, 2)$, $(4, 1, 0)$, \\
  $(4, 1, 3)$, $(4, 2, 1)$, $(4, 3, 5)$, $(4, 5, 1)$, $(5, 0, 4)$, $(5, 1, 2)$, $(5, 2, 0)$, $(5, 2, 3)$, \\
  $(5, 3, 1)$, $(5, 4, 2)$
  \end{tabular} \hspace{-0.8em} \\ \hline
  $\omega_{3}$ &
  \hspace{-0.8em} \begin{tabular}{l}
  $(1, 0, 2)$, $(1, 2, 1)$, $(1, 3, 5)$, $(1, 5, 1)$, $(2, 0, 4)$, $(2, 1, 2)$, $(2, 3, 1)$, $(2, 4, 2)$, \\
  $(4, 0, 5)$, $(4, 2, 4)$, $(4, 3, 2)$, $(4, 5, 4)$, $(5, 0, 1)$, $(5, 1, 5)$, $(5, 3, 4)$, $(5, 4, 5)$ \\
  \end{tabular} \hspace{-0.8em} \\ \hline
  $\omega_{p}$ &
  \hspace{-0.8em} \begin{tabular}{l}
  $(0, p_{1}, 0)$, $(0, p_{2}, p_{1})$, $(0, p_{2}, p_{4})$, $(0, 3, p_{2})$, $(0, p_{4}, 3)$, $(p_{1}, 0, p_{1})$, \\
  $(p_{1}, 0, p_{4})$, $(p_{1}, p_{2}, 0)$, $(p_{1}, p_{4}, p_{2})$, $(p_{1}, p_{5}, 0)$, $(p_{2}, 0, 3)$, $(p_{2}, p_{1}, p_{4})$, \\
  $(p_{2}, 3, 0)$, $(p_{2}, p_{4}, p_{1})$, $(p_{2}, p_{5}, p_{2})$, $(3, 0, p_{2})$, $(3, p_{1}, 0)$, $(3, p_{2}, p_{1})$, \\
  $(3, p_{2}, p_{4})$, $(3, p_{4}, 3)$, $(p_{4}, p_{1}, p_{2})$, $(p_{4}, p_{2}, 3)$, $(p_{4}, 3, p_{1})$, $(p_{4}, 3, p_{4})$, \\
  $(p_{4}, p_{5}, 3)$
  \end{tabular} \hspace{-0.8em} \\ \hline
 \end{tabular}
\end{table}

In light of Lemma \ref{lem:f-/g-connectedness} (ii), we may assume that $\gamma_{1}, \gamma_{2}, \dots, \gamma_{l}$ have the same index $\omega_{0}$.
Furthermore, in light of Lemma \ref{lem:f-/g-connectedness} (iii), we are allowed to rewrite $\gamma$ as $\sum \alpha_{a b c} (n, \omega_{0}; \omega_{a}, \omega_{b}, \omega_{c})$ ($\alpha_{a b c} \in \mathbb{Z}$), where the sum runs over all triples $(a, b, c)$ appearing in a cell in the second column of Table \ref{tab:O6_f_and_g}.
The assumption $f(\gamma) = g(\gamma) = 0$ yields a system of linear equations in variables $\alpha_{a b c}$.
Solving it in a similar way to the proof of Lemma 6.1 of \cite{Sat2016}, we have
\[
 \gamma =
 \begin{cases}
  \alpha_{01} \gamma_{01} + \alpha_{02} \gamma_{02} + \alpha_{03} \gamma_{03} + \alpha_{04} \gamma_{04} + \alpha_{05} \gamma_{05} + \alpha_{06} \gamma_{06} & \text{if $\omega_{0}^{\omega_{a} \omega_{b} \omega_{c}} = \omega_{0}$}, \\
  0 & \text{if $\omega_{0}^{\omega_{a} \omega_{b} \omega_{c}} = \omega_{3}$}, \\
  \alpha_{p1} \gamma_{p1} + \alpha_{p2} \gamma_{p2} + \alpha_{p3} \gamma_{p3} & \text{if $\omega_{0}^{\omega_{a} \omega_{b} \omega_{c}} = \omega_{p}$},
 \end{cases}
\]
where $p \in \{ 1, 2, 4, 5 \}$, $\alpha_{ij} \in \mathbb{Z}$, and
\begin{align*}
 \gamma_{01}
 & = + \> (\omega_{0}, \omega_{3}, \omega_{0}) - (\omega_{3}, \omega_{0}, \omega_{3}), \\
 \gamma_{02}
 & = + \> (\omega_{0}, \omega_{1}, \omega_{4}) + (\omega_{0}, \omega_{4}, \omega_{1}) - (\omega_{3}, \omega_{1}, \omega_{4}) - (\omega_{3}, \omega_{4}, \omega_{1}), \\
 \gamma_{03}
 & = + \> (\omega_{0}, \omega_{2}, \omega_{5}) + (\omega_{0}, \omega_{5}, \omega_{2}) - (\omega_{3}, \omega_{2}, \omega_{5}) - (\omega_{3}, \omega_{5}, \omega_{2}), \\
 \gamma_{04}
 & = + \> (\omega_{1}, \omega_{4}, \omega_{0}) + (\omega_{4}, \omega_{1}, \omega_{0}) - (\omega_{1}, \omega_{4}, \omega_{3}) - (\omega_{4}, \omega_{1}, \omega_{3}), \\
 \gamma_{05}
 & = + \> (\omega_{1}, \omega_{4}, \omega_{0}) + (\omega_{4}, \omega_{1}, \omega_{0}) + (\omega_{2}, \omega_{5}, \omega_{3}) + (\omega_{5}, \omega_{2}, \omega_{3}) \\
 & \phantom{=} \ - (\omega_{3}, \omega_{1}, \omega_{4}) - (\omega_{3}, \omega_{4}, \omega_{1}) - (\omega_{3}, \omega_{2}, \omega_{5}) - (\omega_{3}, \omega_{5}, \omega_{2}), \\
 \gamma_{06}
 & = + \> (\omega_{1}, \omega_{4}, \omega_{0}) + (\omega_{4}, \omega_{1}, \omega_{0}) + (\omega_{2}, \omega_{5}, \omega_{0}) + (\omega_{5}, \omega_{2}, \omega_{0}) \\
 & \phantom{=} \ - (\omega_{3}, \omega_{1}, \omega_{4}) - (\omega_{3}, \omega_{4}, \omega_{1}) - (\omega_{3}, \omega_{2}, \omega_{5}) - (\omega_{3}, \omega_{5}, \omega_{2}), \\
 \gamma_{p1}
 & = + \> (\omega_{0}, \omega_{3}, \omega_{p_{2}}) + (\omega_{0}, \omega_{p_{2}}, \omega_{p_{4}}) + (\omega_{0}, \omega_{p_{4}}, \omega_{3}) \\
 & \phantom{=} \ - (\omega_{3}, \omega_{p_{2}}, \omega_{p_{4}}) - (\omega_{3}, \omega_{p_{4}}, \omega_{3}), \\
 \gamma_{p2}
 & = + \> (\omega_{3}, \omega_{0}, \omega_{p_{2}}) + (\omega_{3}, \omega_{p_{1}}, \omega_{0}) + (\omega_{3}, \omega_{p_{2}}, \omega_{p_{1}}) \\
 & \phantom{=} \ - (\omega_{0}, \omega_{p_{1}}, \omega_{0}) - (\omega_{0}, \omega_{p_{2}}, \omega_{p_{1}}), \\
 \gamma_{p3}
 & = + \> (\omega_{p_{2}}, \omega_{0}, \omega_{3}) + (\omega_{p_{2}}, \omega_{3}, \omega_{0}) + (\omega_{p_{2}}, \omega_{p_{1}}, \omega_{p_{4}}) + (\omega_{p_{2}}, \omega_{p_{4}}, \omega_{1}) \\
 & \phantom{=} \ + (\omega_{0}, \omega_{p_{2}}, \omega_{p_{4}}) + (\omega_{0}, \omega_{p_{4}}, \omega_{3}) + (\omega_{3}, \omega_{p_{1}}, \omega_{0}) + (\omega_{3}, \omega_{p_{2}}, \omega_{p_{1}}) \\
 & \phantom{=} \ - (\omega_{0}, \omega_{p_{1}}, \omega_{0}) - (\omega_{0}, \omega_{p_{2}}, \omega_{p_{1}}) - (\omega_{p_{1}}, \omega_{p_{4}}, \omega_{p_{2}}) - (\omega_{p_{4}}, \omega_{p_{1}}, \omega_{p_{2}}) \\
 & \phantom{=} \ - (\omega_{3}, \omega_{p_{2}}, \omega_{p_{4}}) - (\omega_{3}, \omega_{p_{4}}, \omega_{3}).
\end{align*}
In light of Lemma \ref{lem:O6_weight}, we have $\eta(\pi(\gamma_{ij})) = 0$ for each $\gamma_{ij}$.
Therefore, we have $\eta(\pi(\gamma)) = 0$.
\end{proof}

\begin{proposition}
\label{prop:O6_I}
There are no 3-cycles $\gamma$ of $(O_{6}, S)$ satisfying $l(\gamma) = |T_{n}(\gamma)| = 4$ and $\eta(\pi(\gamma)) \neq 0$.
\end{proposition}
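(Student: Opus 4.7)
The plan is to argue by contradiction. Let $\gamma = \gamma_1 + \gamma_2 + \gamma_3 + \gamma_4$ be a 3-cycle of $(O_6, S)$ in reduced form with $|T_n(\gamma)| = 4$ and $\eta(\pi(\gamma)) \neq 0$. Since $n$ is both the minimal and the maximal degree of $\gamma$, Lemma \ref{lem:3-cycle_condition}(ii),(iii) forces $\sum_{i=1}^{4} f(\gamma_i) = 0$ and $\sum_{i=1}^{4} g(\gamma_i) = 0$. Lemma \ref{lem:f-connected1} rules out singletons in either partition, so $\{\gamma_1, \gamma_2, \gamma_3, \gamma_4\}$ splits into $f$-connected blocks, and also into $g$-connected blocks, of sizes either $\{4\}$ or $\{2, 2\}$. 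The ``$\{4\}$-and-$\{4\}$'' combination is immediately excluded by Lemma \ref{lem:O6_f_and_g}, which gives $\eta(\pi(\gamma)) = 0$; so at least one of the two partitions is $\{2, 2\}$, and by applying the reverse together with Lemma \ref{lem:reverse}(iv), Lemma \ref{lem:f-/g-connectedness}(i), and the $\tau^{-1}$ identification of Remark \ref{rem:reverse}, I may assume that the $f$-partition is $\{\gamma_1, \gamma_2\}, \{\gamma_3, \gamma_4\}$.

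By Lemma \ref{lem:f-connected2} the two blocks take the shape
\begin{align*}
 \gamma_1 + \gamma_2 & = \varepsilon\,[(n, u; a, b, a) - (n, u; b, a, b)], \\
 \gamma_3 + \gamma_4 & = \varepsilon'\,[(n, u'; a', b', a') - (n, u'; b', a', b')],
\end{align*}
for some $u, u' \in O_6$, distinct $a, b$, distinct $a', b'$, and $\varepsilon, \varepsilon' \in \{\pm 1\}$. I then split on whether each pair is individually a 3-cycle. If $g(\gamma_1 + \gamma_2) = 0$, Lemma \ref{lem:O6_f2_usual} rules out $b \neq [a + 3]$, and Lemma \ref{lem:O6_f2_special} further forces $u \in \{a, b\}$ while simultaneously giving $\eta(\pi(\gamma_1 + \gamma_2)) = 0$. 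The identical reasoning applied to $\gamma_3 + \gamma_4 = \gamma - (\gamma_1 + \gamma_2)$ then yields $\eta(\pi(\gamma)) = 0$, a contradiction. Hence I may assume both $g(\gamma_1 + \gamma_2) \neq 0$ and $g(\gamma_3 + \gamma_4) \neq 0$, so each pair is either \emph{generic} ($b \neq [a + 3]$, with $g$ given by formula (\ref{eq:O6_f2_usual})) or \emph{antipodal} ($b = [a + 3]$ and $u \notin \{a, b\}$, with $g$ given by formula (\ref{eq:O6_f2_special})).

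The main obstacle is the remaining matching problem $g(\gamma_1 + \gamma_2) = -g(\gamma_3 + \gamma_4)$. Both formulas (\ref{eq:O6_f2_usual}) and (\ref{eq:O6_f2_special}) have the crucial feature that all positive 2-terms share a single common index and all negative ones share another; comparing these indices across the two pairs reduces the admissible relative configurations to a short list. For each survivor I would use the rotational symmetry of $O_6$ together with Lemma \ref{lem:O6_weight} to normalize $a = \omega_0$ and $b \in \{\omega_1, \omega_3\}$, enumerate the remaining parameters, write $\gamma$ out explicitly, and evaluate $\eta(\pi(\gamma))$ from the table defining $\eta$ in Section \ref{sec:preliminaries}. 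The expectation is that every admissible configuration yields $\eta(\pi(\gamma)) = 0$, contradicting the hypothesis and completing the proof.
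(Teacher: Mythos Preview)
Your plan is correct and follows essentially the same route as the paper: reduce to an $f$-partition $\{2,2\}$ via Lemma~\ref{lem:O6_f_and_g} and reversal, invoke Lemma~\ref{lem:f-connected2} for the shape of each pair, peel off the case where one pair is already a cycle using Lemmas~\ref{lem:O6_f2_usual} and~\ref{lem:O6_f2_special}, and then match $g(\gamma_1+\gamma_2)$ against $-g(\gamma_3+\gamma_4)$ by comparing indices.

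The only point where the paper's execution differs from your stated plan is the endgame in the generic--generic case ($b\neq[a+3]$ and $b'\neq[a'+3]$). You propose to enumerate the surviving configurations and evaluate $\eta(\pi(\gamma))$ directly. The paper instead shows that no nontrivial $3$-cycle survives at all: after normalizing $a=\omega_0$, $b=\omega_1$, the positive $2$-term $(u^{\omega_0};\omega_1,\omega_0)$ in $g(\gamma_1+\gamma_2)$ must cancel against one of the three negative $2$-terms in $g(\gamma_3+\gamma_4)$, and the three possibilities force either $\gamma=0$ (so $l(\gamma)\neq 4$) or leave the term $-(v^{d};c^{d},c)$ uncancelled (so $g(\gamma)\neq 0$). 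Your plan would reach the same contradiction, since the enumeration would simply be empty (or yield $\gamma=0$, on which $\eta$ vanishes trivially); the paper's argument just avoids ever opening the table for~$\eta$ in this branch.
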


\begin{proof}
Let $\gamma_{1}, \gamma_{2}, \gamma_{3}$ and $\gamma_{4}$ be 3-terms of $(O_{6}, S)$ and $\gamma = \gamma_{1} + \gamma_{2} + \gamma_{3} + \gamma_{4}$.
Assume that $\gamma$ is a 3-cycle satisfying $l(\gamma) = |T_{n}(\gamma)| = 4$ and $\eta(\pi(\gamma)) \neq 0$.
Then, in light of Lemma \ref{lem:O6_f_and_g}, $\gamma_{1}, \gamma_{2}, \gamma_{3}$ and $\gamma_{4}$ are not $f$- and $g$-connected.
Therefore, in light of Lemma \ref{lem:f-/g-connectedness} (i), we may assume that $\gamma_{1}, \gamma_{2}, \gamma_{3}$ and $\gamma_{4}$ are not $f$-connected taking reverse if necessary.
Furthermore, in light of Lemmas \ref{lem:3-cycle_condition} (ii), \ref{lem:f-connected1} and \ref{lem:f-connected2}, we may assume that
\begin{align*}
 \gamma_{1} + \gamma_{2} & = + \> (u; a, b, a) - (u; b, a, b), \\
 \gamma_{3} + \gamma_{4} & = + \> (v; c, d, c) - (v; d, c, d)
\end{align*}
with some $u, v \in O_{6}$, distinct $a, b \in O_{6}$, and distinct $c, d \in O_{6}$.

If $b = [a + 3]$ and $u = a$ or $b$, in light of Lemma \ref{lem:O6_f2_special}, $\gamma_{1} + \gamma_{2}$ is a 3-cycle satisfying $\eta(\pi(\gamma_{1} + \gamma_{2})) = 0$.
Therefore, $\gamma_{3} + \gamma_{4}$ is a 3-cycle of length two satisfying $\eta(\pi(\gamma_{3} + \gamma_{4})) \neq 0$.
It contradicts to Proposition \ref{prop:O6_list}.
Obviously, we are faced with the same situation if $d = [c + 3]$ and $v = c$ or $d$
We thus have $b \neq [a + 3]$ or $u \neq a, b$, and $d \neq [c + 3]$ or $v \neq c, d$.

Assume that $b = [a + 3]$ and $u \neq a, b$.
Then, since reduced $g(\gamma_{1} + \gamma_{2})$ has four 2-terms, in light of Lemma \ref{lem:3-cycle_condition} (iii), reduced $g(\gamma_{3} + \gamma_{4})$ has four 2-terms.
It yields $d = [c + 3]$ (and $v \neq c, d$).
Therefore, we have $\eta(\pi(\gamma)) = 0$ contradicting to our assumption.
We are obviously faced with the same situation if $d = [c + 3]$ and $v \neq c, d$.

Assume that $b \neq [a + 3]$ and $d \neq [c + 3]$.
Then, we may assume that $a = \omega_{0}$ and $b = \omega_{1}$.
Immediately, we have
\begin{align*}
 g(\gamma_{1} + \gamma_{2})
 & = + \> (u^{\omega_{0}}; \omega_{1}, \omega_{0}) - (u^{\omega_{1}}; \omega_{5}, \omega_{0}) + (u^{\omega_{0}}; \omega_{0}, \omega_{2}) \\
 & \phantom{=} \ - (u^{\omega_{1}}; \omega_{0}, \omega_{1}) + (u^{\omega_{0}}; \omega_{2}, \omega_{1}) - (u^{\omega_{1}}; \omega_{1}, \omega_{5}), \\
 g(\gamma_{3} + \gamma_{4})
 & = + \> (v^{c}; d, c) - (v^{d}; c^{d}, c) + (v^{c}; c, d^{c}) \\
 & \phantom{=} \ - (v^{d}; c, d) + (v^{c}; d^{c}, d) - (v^{d}; d, c^{d}).
\end{align*}
Since the 2-term $+ (v^{c}; d, c)$ survives in reduced $g(\gamma_{3} + \gamma_{4})$, in light of Lemma \ref{lem:3-cycle_condition} (iii), we have
\[
 (v^{c}; d, c) = (u^{\omega_{1}}; \omega_{5}, \omega_{0}), \, (u^{\omega_{1}}; \omega_{0}, \omega_{1}) \ \text{or} \ (u^{\omega_{1}}; \omega_{1}, \omega_{5}).
\]
If $(v^{c}; d, c) = (u^{\omega_{1}}; \omega_{0}, \omega_{1})$, we have $\gamma = 0$ contradicting to our assumption.
If $(v^{c}; d, c) = (u^{\omega_{1}}; \omega_{5}, \omega_{0})$ or $(u^{\omega_{1}}; \omega_{1}, \omega_{5})$, since $c^{d} = \omega_{4}$ or $\omega_{3}$ respectively, the 2-term $- (v^{d}; c^{d}, c)$ survives in reduced $g(\gamma)$.
We thus have $g(\gamma) \neq 0$ contradicting to Lemma \ref{lem:3-cycle_condition} (iii).
\end{proof}

\begin{proposition}
\label{prop:O6_II}
There are no 3-cycles $\gamma$ of $(O_{6}, S)$ satisfying $l(\gamma) = |T_{n}(\gamma)| = 5$ and $\eta(\pi(\gamma)) \neq 0$.
\end{proposition}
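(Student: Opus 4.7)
The plan is to mirror the strategy for Proposition~\ref{prop:R7_II} (and Proposition~6.3 of \cite{Sat2016}), isolating the octahedral-specific $b = [a+3]$ configurations via the auxiliary lemmas already in hand. Suppose $\gamma = \sum_{i=1}^{5} \gamma_i$ is a 3-cycle with $l(\gamma) = |T_n(\gamma)| = 5$ and $\eta(\pi(\gamma)) \neq 0$. Lemma~\ref{lem:3-cycle_condition}(ii)(iii) forces $f(\gamma) = g(\gamma) = 0$, while Lemma~\ref{lem:f-connected1} forbids $f$-connected singletons, so the $f$-partition of $\{\gamma_1, \dots, \gamma_5\}$ is either the whole set or a $\{2, 3\}$-split; by Lemma~\ref{lem:f-/g-connectedness}(i) applied to reverses, the same dichotomy holds for $g$-connectedness. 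If $\gamma_1, \dots, \gamma_5$ are simultaneously $f$- and $g$-connected, Lemma~\ref{lem:O6_f_and_g} gives $\eta(\pi(\gamma)) = 0$, contradiction. Otherwise, replacing $\gamma$ by its reverse if necessary (permitted by Lemma~\ref{lem:reverse}(iv) together with the isomorphism $\tau$ of Remark~\ref{rem:reverse}), I may assume the $f$-partition is $\{2, 3\}$ and, after relabelling, that $\gamma_1 + \gamma_2$ and $\gamma_3 + \gamma_4 + \gamma_5$ are the $f$-connected blocks. Lemmas~\ref{lem:f-connected2} and~\ref{lem:f-connected3} then supply normal forms $\gamma_1 + \gamma_2 = +(n, u; a, b, a) - (n, u; b, a, b)$ with $a \neq b$, together with one of the two prescribed shapes for the 3-block.

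I would next stratify according to whether $b = [a+3]$ arises in either block. When the 2-block satisfies $b = [a+3]$ with $u \in \{a, b\}$, Lemma~\ref{lem:O6_f2_special} makes $\gamma_1 + \gamma_2$ already a 3-cycle, so $\gamma_3 + \gamma_4 + \gamma_5$ would also have to be a 3-cycle, contradicting Lemma~\ref{lem:O6_f3}. When the 2-block satisfies $b = [a+3]$ with $u \notin \{a, b\}$, I would normalize to $a = \omega_0$, $b = \omega_3$, $u = \omega_1$, read off from formula~(\ref{eq:O6_f2_special}) that $g(\gamma_3 + \gamma_4 + \gamma_5)$ must consist of exactly the four 2-terms at indices $\omega_2, \omega_5$, enumerate the very few 3-block shapes from Lemma~\ref{lem:f-connected3} compatible with that, and verify $\eta(\pi(\gamma)) = 0$ for each via Lemma~\ref{lem:O6_weight} --- essentially an in-degree version of the bookkeeping behind Lemma~\ref{lem:O6_f2_special_2-5}. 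The parallel subcase where the 3-block has $b = [a+3]$ is dispatched analogously, starting from formula~(\ref{eq:O6_f3_special_i}) or~(\ref{eq:O6_f3_special_ii}).

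In the remaining generic subcase, where neither block has $b = [a+3]$, I would normalize the 2-block to $a = \omega_0$, $b = \omega_1$ and use the six-term expression~(\ref{eq:O6_f2_usual}) to conclude that $g(\gamma_3 + \gamma_4 + \gamma_5)$ is supported on exactly the two index classes $u^{\omega_0}$ and $u^{\omega_1}$. Substituting the two possible shapes of Lemma~\ref{lem:f-connected3} for the 3-block into formulae~(\ref{eq:O6_f3_usual_i}) and~(\ref{eq:O6_f3_usual_ii}) and accounting for the two exceptional index-coincidences (a), (b) identified inside the proof of Lemma~\ref{lem:O6_f3_usual} forces the 3-block's parameters $c, v$ to collide with $u, \omega_0, \omega_1$ in very rigid ways. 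The main obstacle will be precisely this final enumeration: one must check, index class by index class, that every surviving configuration either violates the length-5 assumption or yields $\eta$-weight zero via Lemma~\ref{lem:O6_weight}, directly paralleling the dihedral bookkeeping of Proposition~6.3 of \cite{Sat2016} but with the additional $\tau$-twisted reverse symmetry taken into account.
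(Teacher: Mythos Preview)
Your strategy is sound and would eventually succeed, but it takes a longer route than the paper. After reducing to the $\{2,3\}$ $f$-split exactly as you do, the paper does \emph{not} stratify by which block carries the $[+3]$ condition. Instead it writes out $g(\gamma_{3}+\gamma_{4}+\gamma_{5})$ explicitly with fresh variables $c,d,e$ for the $3$-block and observes that, since $g(\gamma_{1}+\gamma_{2})$ always carries an equal number of positive and negative $2$-terms, the same must hold for $g(\gamma_{3}+\gamma_{4}+\gamma_{5})$. A direct sign count on the nine raw terms shows this balance forces the single term $\mp(v^{d};c^{d},c)$ to be degenerate, i.e.\ $d=[c+3]$. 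With that in hand, the reduced $g(\gamma_{3}+\gamma_{4}+\gamma_{5})$ has exactly four $2$-terms, so the reduced $g(\gamma_{1}+\gamma_{2})$ must also have four, which forces $b=[a+3]$ (and $u\neq a,b$). Thus your ``generic'' subcase is in fact empty, and both blocks land in the special $[+3]$ form simultaneously; a single routine check via Lemma~\ref{lem:O6_weight} then gives $\eta(\pi(\gamma))=0$. This parity-and-length argument replaces your Cases~2--4 and the proposed index-class enumerations by two lines. A minor notational point: in your generic case you invoke formulae~(\ref{eq:O6_f3_usual_i})--(\ref{eq:O6_f3_usual_ii}), but those are already normalized with the $3$-block at $(\omega_{0},\omega_{1})$, clashing with the normalization you fixed for the $2$-block; you would need to keep the $3$-block variables independent, as the paper does.
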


\begin{proof}
Let $\gamma_{1}, \gamma_{2}, \dots, \gamma_{5}$ be 3-terms of $(O_{6}, S)$ and $\gamma = \gamma_{1} + \gamma_{2} + \dots + \gamma_{5}$.
Assume that $\gamma$ is a 3-cycle satisfying $l(\gamma) = |T_{n}(\gamma)| = 5$ and $\eta(\pi(\gamma)) \neq 0$.
Then, for the same reason as in the proof of Proposition \ref{prop:O6_I}, we may assume that $\gamma_{1}, \gamma_{2}, \dots, \gamma_{5}$ are not $f$-connected taking reverse if necessary.
Furthermore, in light of Lemmas \ref{lem:3-cycle_condition} (ii) and \ref{lem:f-connected1}--\ref{lem:f-connected3}, we may assume that
\begin{align*}
 \gamma_{1} + \gamma_{2} & = + \> (u; a, b, a) - (u; b, a, b), \\
 \gamma_{3} + \gamma_{4} + \gamma_{5} & =
 \begin{cases}
  \pm \> (v; c, d, c) \mp (v; e, c, d) \mp (v; e, d, c) \ \text{or} \\
  \pm \> (v; c, d, c) \mp (v; c, d, e) \mp (v; d, c, e),
 \end{cases} 
\end{align*}
with some $u, v \in O_{6}$, distinct $a, b \in O_{6}$, and mutually distinct $c, d, e \in O_{6}$.
Then, we respectively have
\begin{align*}
 g(\gamma_{3} + \gamma_{4} + \gamma_{5})
 & =
 \begin{cases}
  \pm \> (v^{c}; d, c) \> \uline{\mp \> (v^{d}; c^{d}, c)} \pm (v^{c}; c, d^{c}) \\
  \mp \> (v^{e}; c, d) \> \uuline{\pm \> (v^{c}; e^{c}, d)} \mp (v^{d}; e^{d}, c^{d}) \\[-1ex]
  \mp \> (v^{e}; d, c) \> \uuline{\pm \> (v^{d}; e^{d}, c)} \mp (v^{c}; e^{c}, d^{c}) \ \text{or} \\[-0.5ex]
  \pm \> (v^{c}; d, c) \> \uline{\mp \> (v^{d}; c^{d}, c)} \pm (v^{c}; c, d^{c}) \\
  \mp \> (v^{c}; d, e) \> \uuline{\pm \> (v^{d}; c^{d}, e)} \mp (v^{e}; c^{e}, d^{e}) \\[-1ex]
  \mp \> (v^{d}; c, e) \> \uuline{\pm \> (v^{c}; d^{c}, e)} \mp (v^{e}; d^{e}, c^{e}). \\
 \end{cases}
\end{align*}
Since $g(\gamma_{1} + \gamma_{2})$ has the same number of positive and negative 2-terms, in light of Lemma \ref{lem:3-cycle_condition} (iii), the single-underlined 2-terms vanish and the double-underlined 2-terms survive in the above formula.
It yields $d = [c + 3]$.
Thus, we respectively have
\begin{align*}
 g(\gamma_{3} + \gamma_{4} + \gamma_{5}) & =
 \begin{cases}
  \pm \> (v^{c}; d, c) \pm (v^{c}; c, d) \mp (v^{e}; c, d) \mp (v^{e}; d, c) \ \text{or} \\
  \pm \> (v^{c}; d, c) \pm (v^{c}; c, d) \mp (v^{e}; c^{e}, d^{e}) \mp (v^{e}; d^{e}, c^{e}).
 \end{cases}
\end{align*}
Since reduced $g(\gamma_{3} + \gamma_{4} + \gamma_{5})$ has four 2-terms, in light of Lemma \ref{lem:3-cycle_condition} (iii), reduced $g(\gamma_{1} + \gamma_{2})$ has four 2-terms.
It yields $b = [a + 3]$ (and $u \neq a, b$).
Then, it is routine to check that we have $\eta(\pi(\gamma)) = 0$ contradicting to our assumption.
\end{proof}

\begin{proposition}
\label{prop:O6_III}
There are no 3-cycles $\gamma$ of $(O_{6}, S)$ satisfying $l(\gamma) = |T_{n}(\gamma)| = 6$ and $\eta(\pi(\gamma)) \neq 0$.
\end{proposition}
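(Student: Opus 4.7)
The plan is to mimic the strategy used for Propositions \ref{prop:O6_I} and \ref{prop:O6_II}. Suppose toward a contradiction that $\gamma = \sum_{i=1}^{6} \gamma_{i}$ is such a 3-cycle. By Lemma \ref{lem:3-cycle_condition} (ii) and (iii), $\gamma_{1}, \ldots, \gamma_{6}$ are both $f$- and $g$-closed in the sense that $f(\sum \gamma_{i}) = g(\sum \gamma_{i}) = 0$. If they form a single $f$- and $g$-connected family, then Lemma \ref{lem:O6_f_and_g} forces $\eta(\pi(\gamma)) = 0$, a contradiction. Hence some nontrivial partition into $f$-connected pieces or $g$-connected pieces exists; by taking the reverse via Lemma \ref{lem:f-/g-connectedness} (i) and Lemma \ref{lem:reverse} (iv) if necessary, we may assume that $\{\gamma_{1}, \ldots, \gamma_{6}\}$ splits as a disjoint union of proper $f$-connected subsets.

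Since Lemma \ref{lem:f-connected1} rules out singleton blocks, the $f$-connected partition can only be one of the three shapes $2+2+2$, $2+4$, or $3+3$. I would dispatch each case by first listing normal forms from Lemmas \ref{lem:f-connected2}, \ref{lem:f-connected3}, \ref{lem:f-connected4}, then tracking the surviving 2-terms of $g$ on each block. The special situation $b = [a+3]$ in any pair-block is already handled: Lemma \ref{lem:O6_f2_special} shows that when the index $u$ equals $a$ or $b$ the block is itself a 3-cycle with $\eta = 0$, so the remaining four terms would form a shorter 3-cycle with $\eta \neq 0$, contradicting Propositions \ref{prop:O6_I}--\ref{prop:O6_II}; when $u \neq a, b$, Lemma \ref{lem:O6_f2_special_2-5} applied to $\gamma$ (with the other four terms playing the role of $\gamma_{3}, \ldots, \gamma_{6}$) forces $\eta(\pi(\gamma)) = 0$. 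An analogous reduction using Lemma \ref{lem:O6_f3_special} kills every partition containing a $3$-block of the ``special'' form $b=[a+3]$.

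This leaves the generic subcases, which I would treat using the explicit formulas (\ref{eq:O6_f2_usual}), (\ref{eq:O6_f3_usual_i}), (\ref{eq:O6_f3_usual_ii}). In the $2+2+2$ case, each pair block yields six 2-terms in $g$ with three of index $u_{i}^{a_{i}}$ and three of index $u_{i}^{b_{i}}$, and one applies Lemma \ref{lem:f-/g-connectedness} (ii) across blocks to force matchings of indices; after normalizing one block to $(\omega_{0},\omega_{1})$, the remaining two pairs must reproduce the same three ``rows'' of 2-terms, which — after a case-by-case inspection of surviving colors such as $(u^{\omega_{1}};\omega_{5},\omega_{0})$ — either collapses $\gamma$ to zero or leaves a stray 2-term. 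In the $2+4$ case, the four-block is one of types (i)--(v) of Lemma \ref{lem:f-connected4}, and I would list $g$ on each type and compare to the three-pair patterns; again the constraints on indices $u^{a}$ eliminate all configurations. In the $3+3$ case, each triple has $g$ of the form (\ref{eq:O6_f3_usual_i}) or (\ref{eq:O6_f3_usual_ii}), possibly specialized to subcases (a), (b) of Lemma \ref{lem:O6_f3_usual}; since (\ref{eq:O6_f3_usual_i_a}), (\ref{eq:O6_f3_usual_ii_a}) carry an 8-color obstruction, pairing two such blocks produces a 2-term of color $(\omega_{2}, \omega_{1})$ or $(u^{\omega_{1}}; c^{\omega_{1}}, \omega_{5})$ which no mate in the other triple can cancel.

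The main obstacle is the sheer combinatorial bookkeeping in the $3+3$ and generic $2+2+2$ subcases: after applying Lemma \ref{lem:f-/g-connectedness} (ii) to force coincidences of indices $u^{a_{i}}$ across blocks, one still has to verify by inspection that no assignment of the remaining quandle elements in $O_{6}$ simultaneously cancels every surviving 2-term from (\ref{eq:O6_f2_usual}), (\ref{eq:O6_f3_usual_i}), (\ref{eq:O6_f3_usual_ii}). As in the proof of Proposition \ref{prop:O6_II}, I would isolate one distinguished 2-term in the $g$ of one block (for instance $+(v^{c};d,c)$ there) and argue that the matching requirement forces $d = [c+3]$, putting us back into the ``special'' regime already excluded; the only remaining configurations admit the explicit coboundary or vanish, completing the contradiction.
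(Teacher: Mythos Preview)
Your overall architecture matches the paper's: reduce via Lemma \ref{lem:O6_f_and_g} and reversal to a proper $f$-partition, then split into the shapes $2+2+2$, $3+3$, $2+4$. But there is a concrete error in how you dispose of the ``special'' blocks.

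You invoke Lemma \ref{lem:O6_f2_special_2-5} (and analogously Lemma \ref{lem:O6_f3_special}) to kill any configuration containing a pair- or triple-block with $b=[a+3]$ and $u\neq a,b$. Those lemmas do not apply here: both hypotheses require the remaining 3-terms $\gamma_{3},\dots$ (resp.\ $\gamma_{4},\dots$) to have degree \emph{at least $n+1$}, whereas in Proposition \ref{prop:O6_III} every $\gamma_i$ lies in $T_n(\gamma)$. The paper handles this subcase by a direct length count: if one pair-block is special with $u\neq a,b$ then $l(g(\delta_1))=4$, and combining with a non-special pair gives $l(g(\delta_1+\delta_2))=10>6\geq l(g(\delta_3))$, forcing $g(\gamma)\neq 0$; hence all three pairs must be special, and then $\eta(\pi(\gamma))=0$ by Lemma \ref{lem:O6_weight}. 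You need an argument of this kind rather than an appeal to the degree-shift lemmas.

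Beyond this, your treatment of the generic $3+3$ and $2+4$ cases is too thin to count as a proof. In the paper the $3+3$ case is organized around the $g$-partition of $\{\gamma_1,\dots,\gamma_6\}$ and the constraint from Lemma \ref{lem:f-/g-connectedness} (iii) on the values $u^{a_ib_ic_i}$ (not on the indices $u^{a_i}$ as you write), splitting on whether $c\in\{\omega_2,\omega_5\}$ or $c\in\{\omega_3,\omega_4\}$; the $2+4$ case requires enumerating all four-blocks from Lemma \ref{lem:f-connected4} with $|U|\leq 2$ where $U=\{u^{a_1b_1c_1},\dots,u^{a_4b_4c_4}\}$ (the paper does this by computer, producing Tables \ref{tab:O6_4_a}--\ref{tab:O6_4_c}), and then checking type/sign constraints coming from the $g$-connected side. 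Your sketch of ``isolating a distinguished 2-term and forcing $d=[c+3]$'' does not reproduce this; in particular, several surviving configurations in the $2+4$ case are eliminated not by a stray 2-term but by a type-0 obstruction or by $\eta(\pi(\gamma))=0$ via Lemma \ref{lem:O6_weight}.
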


\begin{proposition}
\label{prop:O6_IV}
There are no 3-cycles $\gamma$ of $(O_{6}, S)$ satisfying $l(\gamma) = |T_{n}(\gamma)| = 7$ and $\eta(\pi(\gamma)) \neq 0$.
\end{proposition}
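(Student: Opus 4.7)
The plan is to follow the pattern of Proposition \ref{prop:R7_IV} (which handles the analogous $R_7$ case), but carefully tracking the extra features of $O_6$, namely the existence of type~$0$ terms (Remark \ref{rem:type0}) and the failure $O_6\neq\overline{O_6}$ (Remark \ref{rem:reverse}), and making systematic use of the $O_6$-specific Lemmas \ref{lem:O6_f2_special}, \ref{lem:O6_f2_special_2-5} and \ref{lem:O6_f3_special}.

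Suppose for contradiction that $\gamma=\sum_{i=1}^{7}\gamma_i$ is a 3-cycle with $l(\gamma)=|T_n(\gamma)|=7$ and $\eta(\pi(\gamma))\neq 0$. Since $n$ is both the minimal and the maximal degree of $\gamma$, Lemma \ref{lem:3-cycle_condition}(ii)(iii) yields $f(\gamma)=g(\gamma)=0$, so the $\gamma_i$ admit both an $f$-connectedness partition and a $g$-connectedness partition, each block of size $\geq 2$ by Lemma \ref{lem:f-connected1}. The allowed shapes are $(7)$, $(5,2)$, $(4,3)$, and $(3,2,2)$. I would first dispose of shape $(7)$ on the $f$-side: if the collection is simultaneously $g$-connected, Lemma \ref{lem:O6_f_and_g} gives $\eta(\pi(\gamma))=0$, a contradiction; otherwise the $g$-partition is nontrivial, and via Lemmas \ref{lem:reverse}(iv) and \ref{lem:f-/g-connectedness}(i), applied after the identification $\tau$ of Remark \ref{rem:reverse}, the reverse $\overline{\gamma}$ is a 3-cycle of $(O_6,S)$ whose $f$-partition has one of the remaining shapes. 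Thus it suffices to rule out the shapes $(5,2)$, $(4,3)$, and $(3,2,2)$.

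For each of these, the plan is to write the $f$-parts explicitly via the enumeration Lemmas \ref{lem:f-connected2}--\ref{lem:f-connected5}, and then impose $g(\gamma)=0$ using Lemma \ref{lem:f-/g-connectedness}(ii)(iii) together with the explicit $g$-computations recorded in Lemmas \ref{lem:O6_f2_usual}, \ref{lem:O6_f2_special}, \ref{lem:O6_f3}, and \ref{lem:O6_f3_usual} (and the analogous computations for the size-$4$ and size-$5$ configurations). Whenever a size-$2$ block is of the ``antipodal'' form $\pm\langle u;a,[a+3]\rangle$ with $u\neq a,[a+3]$, Lemma \ref{lem:O6_f2_special_2-5} already forces $\eta(\pi(\gamma))=0$; similarly, when a size-$3$ block satisfies $b=[a+3]$, Lemma \ref{lem:O6_f3_special} does the same. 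In the remaining ``generic'' sub-cases, index-matching through Lemma \ref{lem:f-/g-connectedness}(ii) should rapidly obstruct compatibility (as in the proofs of Propositions \ref{prop:O6_I} and \ref{prop:O6_II}), and any surviving 3-cycle can be expanded in $G(O_6)$-orbits and evaluated by Lemma \ref{lem:O6_weight} to confirm $\eta=0$.

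The main obstacle is the combinatorial volume of shape $(5,2)$: Lemma \ref{lem:f-connected5} supplies ten configurations, each of which must be cross-matched against the six 2-terms of the size-$2$ block under all allowable index assignments. I expect the most effective way to tame this is to simultaneously constrain the $g$-partition shape of $\gamma$ by applying the same case analysis to $\overline{\gamma}$; the resulting joint constraints on the indices $u_i$ and $u_i^{a_ib_ic_i}$, combined with the values of $\omega_{0}^{\omega_a\omega_b\omega_c}$ compiled in Table \ref{tab:O6_f_and_g}, should cut the residual casework down to a short list. Each such residual case should then either collapse to $\gamma=0$ or fall within the hypotheses of Lemma \ref{lem:O6_f2_special_2-5} or \ref{lem:O6_f3_special}, yielding the contradiction $\eta(\pi(\gamma))=0$.
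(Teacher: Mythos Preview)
Your overall architecture---reduce to a nontrivial $f$-partition via Lemma~\ref{lem:O6_f_and_g} and Remark~\ref{rem:reverse}, then case-split on the partition shape $(3,2,2)$, $(4,3)$, $(5,2)$ and match against the $g$-partition---is exactly the paper's approach in Section~\ref{sec:proof_of_proposition_O6_IV}.

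However, your proposed shortcut for the ``antipodal'' sub-cases does not work. Lemmas~\ref{lem:O6_f2_special_2-5} and~\ref{lem:O6_f3_special} both require the additional 3-terms $\gamma_{3},\gamma_{4},\dots$ (resp.\ $\gamma_{4},\gamma_{5},\dots$) to have degree \emph{at least $n+1$}. In the setting of Proposition~\ref{prop:O6_IV} all seven terms have degree~$n$, so neither lemma applies. These lemmas were designed for, and are only invoked in, the proof of Proposition~\ref{prop:O6_list} (the mixed-degree cases~(B) and~(C)); they say nothing about the single-degree case~(A) you are in.

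This is not a cosmetic issue: the antipodal sub-cases (where some block has $b=[a+3]$) account for a large share of the actual work. The paper handles them by direct analysis of the $g$-partition, using Lemma~\ref{lem:f-/g-connectedness}(iii) to force equalities among the values $u^{a_ib_ic_i}$ and then tabulating the few colorings compatible with those constraints (Tables~\ref{tab:O6_4_a}--\ref{tab:O6_4_c} for the size-$4$ block, Tables~\ref{tab:O6_5_a}--\ref{tab:O6_5_b} for the size-$5$ block). Each surviving configuration is then eliminated either by a type/sign count against the $g$-block structure from Lemmas~\ref{lem:f-connected2}--\ref{lem:f-connected4}, or by evaluating $\eta$ via Lemma~\ref{lem:O6_weight}. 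Your final paragraph gestures at this index-table method, but you should recognize that it is the \emph{primary} tool here, not a fallback, and that the antipodal cases must be fed through it rather than dispatched by the degree-shifted lemmas.
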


Although the proofs of Propositions \ref{prop:O6_III} and \ref{prop:O6_IV} are straightforward, their arguments are complicated.
Therefore, we defer the proofs of these propositions to Sections \ref{sec:proof_of_proposition_O6_III} and \ref{sec:proof_of_proposition_O6_IV}, respectively.

\begin{proposition}
\label{prop:O6_V}
There are no 3-cycles $\gamma$ of $(O_{6}, S)$ satisfying $l(\gamma) = 6$, $|T_{n}(\gamma)| = 2$, $|T_{n + 1}(\gamma)| = 4$ and $\eta(\pi(\gamma)) \neq 0$.
\end{proposition}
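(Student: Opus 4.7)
The plan is to structurally analyze $\gamma$ using the 3-cycle conditions of Lemma~\ref{lem:3-cycle_condition} together with the classifications of Sections \ref{sec:tools}--\ref{sec:enumeration_of_f-connected_3-terms}, and then rule out the resulting candidates via Lemmas \ref{lem:O6_f2_usual}--\ref{lem:O6_f2_special_2-5} and Proposition \ref{prop:O6_I}. Writing $\gamma = \sum_{i=1}^{6} \gamma_i$ in reduced form with $\gamma_1, \gamma_2 \in T_n(\gamma)$ and $\gamma_3, \ldots, \gamma_6 \in T_{n+1}(\gamma)$, Lemma~\ref{lem:3-cycle_condition}(ii) forces $\gamma_1, \gamma_2$ to be $f$-connected, and Lemma~\ref{lem:f-connected2} then gives
\[
\gamma_1 + \gamma_2 = +(n, u; a, b, a) - (n, u; b, a, b)
\]
for some $u \in O_6$ and distinct $a, b \in O_6$. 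I would then split according to the trichotomy suggested by Lemmas \ref{lem:O6_f2_usual}, \ref{lem:O6_f2_special}, \ref{lem:O6_f2_special_2-5}: whether $b = [a+3]$, and, if so, whether $u \in \{a, b\}$.

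If $b = [a+3]$ and $u \in \{a, b\}$, Lemma~\ref{lem:O6_f2_special} shows that $\gamma_1 + \gamma_2$ is already a 3-cycle with $\eta(\pi(\gamma_1+\gamma_2)) = 0$; then $\gamma_3 + \cdots + \gamma_6$ is a 3-cycle of length $4$ whose four terms all lie in degree $n+1$, and Proposition~\ref{prop:O6_I} forces $\eta(\pi(\gamma_3 + \cdots + \gamma_6)) = 0$, whence $\eta(\pi(\gamma)) = 0$, contradicting the hypothesis. If $b = [a+3]$ and $u \notin \{a, b\}$, Lemma~\ref{lem:O6_f2_special_2-5} with $k = 4$ directly gives $\eta(\pi(\gamma)) = 0$, again a contradiction.

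For the remaining case $b \neq [a+3]$, the plan is to mimic the strategy of Proposition~\ref{prop:R7_V} (Proposition 7.1 of \cite{Sat2016}). Using the rotational action of $G(O_6)$ I may assume $a = \omega_0$ and $b = \omega_1$, and formula (\ref{eq:O6_f2_usual}) then writes $g(\gamma_1+\gamma_2)$ as three positive 2-terms of index $u^{\omega_0}$ together with three negative 2-terms of index $u^{\omega_1}$ (with $u^{\omega_0} \neq u^{\omega_1}$). Lemma~\ref{lem:3-cycle_condition}(i) forces $f(\gamma_3+\cdots+\gamma_6) = -g(\gamma_1+\gamma_2)$, so by Lemma~\ref{lem:f-/g-connectedness}(ii) each of $\gamma_3,\ldots,\gamma_6$ has index in $\{u^{\omega_0}, u^{\omega_1}\}$, and a quick count of 2-terms pins down the distribution of indices. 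The additional relation $g(\gamma_3+\cdots+\gamma_6) = 0$ from Lemma~\ref{lem:3-cycle_condition}(iii) provides further constraints, and from these, together with the classification of $f$-connected triples in Lemmas \ref{lem:f-connected1}--\ref{lem:f-connected3} (handling any type-0 3-terms via Remark~\ref{rem:type0}), I expect to reduce to a single canonical form for $\gamma$ up to reverse and the rotational action.

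The final step in this case is to exhibit the surviving $\gamma$ as a 3-boundary $\partial \beta$ for an explicit 4-chain $\beta$ (the octahedral analogue of $\partial(n, w; a_0, a_1, a_0, a_1)$ at the end of the proof of Proposition~\ref{prop:R7_V}); since $\pi$ is a chain map and $\eta$ is a 3-cocycle this gives $\eta(\pi(\gamma)) = \eta(\partial \pi(\beta)) = 0$, contradicting $\eta(\pi(\gamma)) \neq 0$. The main obstacle I anticipate is precisely this last bookkeeping step: the lack of the additive/linear structure that Satoh exploits for dihedral quandles means one cannot import his identities verbatim, and quadruples involving antipodal vertex pairs $(x, [x+3])$ — potentially introducing type-0 3-terms at degree $n+1$ — must be handled as separate subcases.
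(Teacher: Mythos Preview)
Your approach is essentially the same as the paper's: reduce via Lemmas \ref{lem:O6_f2_special} and \ref{lem:O6_f2_special_2-5} (and Proposition \ref{prop:O6_I}) to the case $b \neq [a+3]$, normalize to $(a,b) = (\omega_0,\omega_1)$, split $\gamma_3,\dots,\gamma_6$ by index into two pairs using formula \eqref{eq:O6_f2_usual}, and show the surviving candidates are boundaries. Two small corrections: first, the enumeration of each pair requires Lemma \ref{lem:f-connected4}, not just Lemmas \ref{lem:f-connected1}--\ref{lem:f-connected3} --- the paper fixes reference terms $\delta_i$ with the same $f$-image and applies Lemma \ref{lem:f-connected4} when $\{\gamma_i,\gamma_j,-\delta_i,-\delta_j\}$ is $f$-connected; second, after imposing $g(\gamma_3+\cdots+\gamma_6)=0$ one finds \emph{two} canonical forms rather than one, namely $\gamma = \partial(n,u;\omega_0,\omega_1,\omega_0,\omega_1)$ and $\gamma = -\partial(n,u;\omega_1,\omega_0,\omega_1,\omega_0)$.
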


\begin{proof}
Let $\gamma_{1}, \gamma_{2}, \dots, \gamma_{6}$ be 3-terms of $(O_{6}, S)$ and $\gamma = \gamma_{1} + \gamma_{2} + \dots + \gamma_{6}$.
Assume that $\gamma$ is a 3-cycle satisfying $l(\gamma) = 6$, $|T_{n}(\gamma)| = 2$, $|T_{n + 1}(\gamma)| = 4$ and $\eta(\pi(\gamma)) \neq 0$.
Then, in light of Lemmas \ref{lem:3-cycle_condition} (ii), \ref{lem:f-connected1}, \ref{lem:f-connected2}, \ref{lem:O6_f2_special} and \ref{lem:O6_f2_special_2-5}, we may assume that
\[
 \gamma_{1} + \gamma_{2} =  + \> (n, u; \omega_{0}, \omega_{1}, \omega_{0}) - (n, u; \omega_{1}, \omega_{0}, \omega_{1})
\]
with some $u \in O_{6}$.
Since $g(\gamma_{1} + \gamma_{2}) + f(\gamma_{3} + \gamma_{4} + \gamma_{5} + \gamma_{6}) = 0$ by Lemma \ref{lem:3-cycle_condition} (i), as we saw in the proof of Lemma \ref{lem:O6_f2_usual}, at least two of $\gamma_{3}, \gamma_{4}, \gamma_{5}$ and $\gamma_{6}$ have index $u^{\omega_{0}}$ (or $u^{\omega_{1}}$).
Therefore, in light of the formula (\ref{eq:O6_f2_usual}), we may assume that
\begin{align*}
 f(\gamma_{3} + \gamma_{4})
 & = - \> (n + 1, u^{\omega_{0}}; \omega_{1}, \omega_{0}) - (n + 1, u^{\omega_{0}}; \omega_{0}, \omega_{2}) - (n + 1, u^{\omega_{0}}; \omega_{2}, \omega_{1}), \\
 f(\gamma_{5} + \gamma_{6})
 & = + \> (n + 1, u^{\omega_{1}}; \omega_{5}, \omega_{0}) + (n + 1, u^{\omega_{1}}; \omega_{0}, \omega_{1}) + (n + 1, u^{\omega_{1}}; \omega_{1}, \omega_{5}).
\end{align*}

Consider the following four 3-terms
\begin{align*}
 \delta_{3} & = + (n + 1, u^{\omega_{0}}; \omega_{0}, \omega_{2}, \omega_{1}), &
 \delta_{4} & = + (n + 1, u^{\omega_{0}}; \omega_{0}, \omega_{1}, \omega_{0}), \\
 \delta_{5} & = - (n + 1, u^{\omega_{1}}; \omega_{0}, \omega_{1}, \omega_{5}), &
 \delta_{6} & = - (n + 1, u^{\omega_{1}}; \omega_{0}, \omega_{5}, \omega_{0}).
\end{align*}
Then, since $f(\delta_{3} + \delta_{4}) = f(\gamma_{3} + \gamma_{4})$ and $f(\delta_{5} + \delta_{6}) = f(\gamma_{5} + \gamma_{6})$, we have the following case:
\begin{itemize}
\item[(a)]
$\gamma_{3} + \gamma_{4} = \delta_{3} + \delta_{4}$,
\item[(b)]
$l(\gamma_{3} + \gamma_{4} - \delta_{3} - \delta_{4}) = l(\gamma_{i} - \delta_{j}) = 2$ and $\{ \gamma_{i}, - \delta_{j} \}$ is $f$-connected for some $3 \leq i, j \leq 4$, or
\item[(c)]
$\{ \gamma_{3}, \gamma_{4}, - \delta_{3}, - \delta_{4} \}$ is $f$-connected,
\end{itemize}
and
\begin{itemize}
\item[(d)]
$\gamma_{5} + \gamma_{6} = \delta_{5} + \delta_{6}$,
\item[(e)]
$l(\gamma_{5} + \gamma_{6} - \delta_{5} - \delta_{6}) = l(\gamma_{k} - \delta_{l}) = 2$ and $\{ \gamma_{k}, - \delta_{l} \}$ is $f$-connected for some $5 \leq k, l \leq 6$, or
\item[(f)]
$\{ \gamma_{5}, \gamma_{6}, - \delta_{5}, - \delta_{6} \}$ is $f$-connected.
\end{itemize}
In the cases (b) and (e), we respectively have
\[
 \gamma_{3} + \gamma_{4} = + \> (n + 1, u^{\omega_{0}}; \omega_{0}, \omega_{2}, \omega_{1}) + (n + 1, u^{\omega_{0}}; \omega_{1}, \omega_{0}, \omega_{1})
\]
and
\[
 \gamma_{5} + \gamma_{6} = - \> (n + 1, u^{\omega_{1}}; \omega_{0}, \omega_{1}, \omega_{5}) - (n + 1, u^{\omega_{1}}; \omega_{5}, \omega_{0}, \omega_{5})
\]
by Lemma \ref{lem:f-connected2}.
Furthermore, in the cases (c) and (f), we respectively have
\[
 \gamma_{3} + \gamma_{4} =
 \begin{cases}
  + \> (n + 1, u^{\omega_{0}}; \omega_{2}, \omega_{1}, \omega_{0}) + \langle n + 1, u^{\omega_{0}}; \omega_{0}, \omega_{2} \rangle \ \text{or} \\
  + \> (n + 1, u^{\omega_{0}}; \omega_{1}, \omega_{0}, \omega_{2}) + \langle n + 1, u^{\omega_{0}}; \omega_{1}, \omega_{2} \rangle
 \end{cases}
\]
and
\[
 \gamma_{5} + \gamma_{6} =
 \begin{cases}
  - \> (n + 1, u^{\omega_{1}}; \omega_{1}, \omega_{5}, \omega_{0}) - \langle n + 1, u^{\omega_{1}}; \omega_{0}, \omega_{1} \rangle \ \text{or} \\
  - \> (n + 1, u^{\omega_{1}}; \omega_{5}, \omega_{0}, \omega_{1}) - \langle n + 1, u^{\omega_{1}}; \omega_{1}, \omega_{5} \rangle
 \end{cases}
\]
by Lemma \ref{lem:f-connected4}.
Thus, in light of Lemma \ref{lem:3-cycle_condition} (iii), we eventually have
\begin{align*}
 & \gamma_{3} + \gamma_{4} + \gamma_{5} + \gamma_{6} = \\
 &
 \begin{cases}
  + \> (u^{\omega_{0}}; \omega_{0}, \omega_{2}, \omega_{1}) + (u^{\omega_{0}}; \omega_{1}, \omega_{0}, \omega_{1}) - (u^{\omega_{1}}; \omega_{5}, \omega_{0}, \omega_{1}) - (u^{\omega_{1}}; \omega_{5}, \omega_{1}, \omega_{5}) \ \text{or} \\
  + \> (u^{\omega_{0}}; \omega_{2}, \omega_{1}, \omega_{0}) + (u^{\omega_{0}}; \omega_{2}, \omega_{0}, \omega_{2}) - (u^{\omega_{1}}; \omega_{1}, \omega_{5}, \omega_{0}) - (u^{\omega_{1}}; \omega_{0}, \omega_{1}, \omega_{0}).
 \end{cases}
\end{align*}
In the former and latter cases, we respectively have $\gamma = \partial (n, u; \omega_{0}, \omega_{1}, \omega_{0}, \omega_{1})$ and $\gamma = - \partial (n, u; \omega_{1}, \omega_{0}, \omega_{1}, \omega_{0})$ contradicting to our assumption.
\end{proof}

\begin{proposition}
\label{prop:O6_VI}
There are no 3-cycles $\gamma$ of $(O_{6}, S)$ satisfying $l(\gamma) = 7$, $|T_{n}(\gamma)| = 2$, $|T_{n + 1}(\gamma)| = 5$ and $\eta(\pi(\gamma)) \neq 0$.
\end{proposition}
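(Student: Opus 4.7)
The plan is to follow the template of Proposition \ref{prop:O6_V}, now adapted to the asymmetric $5 = 3 + 2$ split at degree $n+1$. First I would reduce $\gamma_{1}+\gamma_{2}$: Lemmas \ref{lem:3-cycle_condition}(ii), \ref{lem:f-connected1}, \ref{lem:f-connected2}, \ref{lem:O6_f2_special}, and \ref{lem:O6_f2_special_2-5} together force
$$\gamma_{1} + \gamma_{2} = +(n, u; \omega_{0}, \omega_{1}, \omega_{0}) - (n, u; \omega_{1}, \omega_{0}, \omega_{1})$$
for some $u \in O_{6}$, so that $g(\gamma_{1}+\gamma_{2})$ takes the explicit form of (\ref{eq:O6_f2_usual}). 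The 3-cycle condition (Lemma \ref{lem:3-cycle_condition}(i)) then requires $f(\gamma_{3}+\cdots+\gamma_{7}) = -g(\gamma_{1}+\gamma_{2})$, namely a sum of three negative 2-terms of index $u^{\omega_{0}}$ and three positive 2-terms of index $u^{\omega_{1}}$. Since $f$ preserves index and a lone 3-term of any third index would contribute uncancellable 2-terms, the five terms split as three of index $u^{\omega_{0}}$ (say $\gamma_{3}, \gamma_{4}, \gamma_{5}$) and two of index $u^{\omega_{1}}$ (say $\gamma_{6}, \gamma_{7}$); the mirror case (two on $u^{\omega_{0}}$ and three on $u^{\omega_{1}}$) is handled by passing to the reverse via Lemma \ref{lem:reverse}.

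Next I would classify the type-and-sign profiles via a parity-of-sign count, using that a type-$0/1$ term contributes two same-sign 2-terms under $f$ whereas a type-$2/3$ term contributes three 2-terms with alternating signs. Matching the prescribed excess of positive over negative 2-terms ($+3$ on the $u^{\omega_{1}}$ side, $-3$ on the $u^{\omega_{0}}$ side), one finds exactly three surviving profiles: on the two-term side, one of $\gamma_{6}, \gamma_{7}$ is of type $0$ or $1$ and the other of type $2$ or $3$, with signs forced so that the excess totals $+3$; on the three-term side, either all of $\gamma_{3}, \gamma_{4}, \gamma_{5}$ are of type $2$ or $3$, or exactly one of them is of type $2$ or $3$ with the other two of type $0$ or $1$. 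For each profile I would enumerate all pre-images of the prescribed target 2-terms under $f$ by matching the cancelling 2-terms, appealing to Lemmas \ref{lem:f-connected3}, \ref{lem:f-connected4}, and \ref{lem:f-connected5} to track how candidate pre-images may differ from a chosen canonical pre-image.

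For each surviving combination of a two-term side and a three-term side, I would then test top-degree closure using Lemma \ref{lem:3-cycle_condition}(iii), i.e.\ $\sum_{\gamma_{i} \in T_{n+1}(\gamma)} g(\gamma_{i}) = 0$; the short list of candidates passing this test I expect to rewrite as $\gamma = \partial \alpha$ for an explicit 4-chain $\alpha$ (built from terms like $(n, u; \omega_{0}, \omega_{1}, \omega_{0}, \omega_{1})$ together with one correction 4-term accounting for the extra 3-term beyond Proposition \ref{prop:O6_V}), forcing $\eta(\pi(\gamma)) = 0$ and contradicting our standing assumption. The main obstacle will be the combinatorial explosion on the three-term side when all three entries are of type $2$ or $3$: this invokes the ten-case Lemma \ref{lem:f-connected5}, and each case must be cross-checked against every candidate on the two-term side and against the $g$-compatibility above. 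The saving grace, mirroring Proposition \ref{prop:R7_VI} in the analogous dihedral setting, is that the $g$-constraint sharply restricts which two-term and three-term sides can coexist, so the bulk of the list should collapse quickly.
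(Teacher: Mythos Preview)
Your plan follows the paper's template, but one step is a genuine gap and two others diverge in ways worth noting.

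The gap is your reduction of the $3{+}2$ versus $2{+}3$ index-split ``by passing to the reverse via Lemma~\ref{lem:reverse}''. Reverse sends a $3$-cycle with $|T_n|=2$, $|T_{n+1}|=5$ to one in $(\overline{O_6},\mathbb{Z}\times\overline{O_6})$ whose \emph{minimal}-degree piece has size $5$; after pulling back by $\tau^{-1}$ this lands in case~(A) of Proposition~\ref{prop:O6_list}, not back in the present case, and it certainly does not interchange the $u^{\omega_0}$- and $u^{\omega_1}$-blocks at degree $n+1$. The correct symmetry is simpler: the choice $a=\omega_0$, $b=\omega_1$ was arbitrary among adjacent pairs of the octahedron, and relabelling $\omega_0\leftrightarrow\omega_1$ (together with negating $\gamma$) swaps the two index-blocks. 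The paper uses exactly this, taking two terms at index $u^{\omega_0}$ and three at $u^{\omega_1}$.

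Two further differences. First, the paper sharpens the type-count by imposing \emph{both} the $f$-relation and the $g$-relation (Lemma~\ref{lem:3-cycle_condition}~(i) and~(iii)) simultaneously on all five degree-$(n{+}1)$ terms, obtaining two linear equations in the $N_i^\varepsilon$ and thereby distinguishing types $0,1,2,3$ individually rather than only $0/1$ versus $2/3$. This pins down the profiles $(N_0^+,\ldots,N_3^-)=(0,0,1,0,1,0,0,3)$ or $(0,2,1,0,1,0,1,0)$, and the second is killed in one line: its two negative type-$0$ terms on the three-term side would force a $2$-term of colour $(a,[a+3])$ into the prescribed $f$-target, but none of $(\omega_5,\omega_0),(\omega_0,\omega_1),(\omega_1,\omega_5)$ has that shape. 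Your coarser $f$-only count reaches the same two cases but leaves more to enumerate. Second, your expected endgame is off: in the paper's analysis \emph{no} candidate survives the $g$-closure check---every remaining combination yields $g(\gamma_3+\cdots+\gamma_7)\neq 0$, contradicting Lemma~\ref{lem:3-cycle_condition}~(iii). You will not need to produce any $4$-chain; the proposition holds because there are no such $3$-cycles at all, not merely no non-bounding ones.
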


\begin{proof}
Let $\gamma_{1}, \gamma_{2}, \dots, \gamma_{7}$ be 3-terms of $(O_{6}, S)$ and $\gamma = \gamma_{1} + \gamma_{2} + \dots + \gamma_{7}$.
Assume that $\gamma$ is a 3-cycle satisfying $l(\gamma) = 7$, $|T_{n}(\gamma)| = 2$, $|T_{n + 1}(\gamma)| = 5$ and $\eta(\pi(\gamma)) \neq 0$.
Then, in a similar way to the proof of Proposition \ref{prop:O6_V}, we may assume that
\begin{align*}
 \gamma_{1} + \gamma_{2}
 & =  + \> (n, u; \omega_{0}, \omega_{1}, \omega_{0}) - (n, u; \omega_{1}, \omega_{0}, \omega_{1}), \\
 f(\gamma_{3} + \gamma_{4})
 & = - \> (n + 1, u^{\omega_{0}}; \omega_{1}, \omega_{0}) - (n + 1, u^{\omega_{0}}; \omega_{0}, \omega_{2}) - (n + 1, u^{\omega_{0}}; \omega_{2}, \omega_{1}), \\
 f(\gamma_{5} + \gamma_{6} + \gamma_{7})
 & = + \> (n + 1, u^{\omega_{1}}; \omega_{5}, \omega_{0}) + (n + 1, u^{\omega_{1}}; \omega_{0}, \omega_{1}) + (n + 1, u^{\omega_{1}}; \omega_{1}, \omega_{5})
\end{align*}
with some $u \in O_{6}$.
Furthermore, in a similar way to the proof of Proposition \ref{prop:O6_V}, we have
\[
 \gamma_{3} + \gamma_{4} =
 \begin{cases}
  + \> (n + 1, u^{\omega_{0}}; \omega_{0}, \omega_{2}, \omega_{1}) + \langle n + 1, u^{\omega_{0}}; \omega_{0}, \omega_{1} \rangle, \\
  + \> (n + 1, u^{\omega_{0}}; \omega_{2}, \omega_{1}, \omega_{0}) + \langle n + 1, u^{\omega_{0}}; \omega_{0}, \omega_{2} \rangle, \, \text{or} \\
  + \> (n + 1, u^{\omega_{0}}; \omega_{1}, \omega_{0}, \omega_{2}) + \langle n + 1, u^{\omega_{0}}; \omega_{1}, \omega_{2} \rangle.
 \end{cases}
\]

Let $N_{i}^{\varepsilon}$ denote the number of 3-terms among $\gamma_{3}, \gamma_{4}, \dots, \gamma_{7}$ whose types are $i$ and signs $\varepsilon$.
Then, since one of $\gamma_{3}$ and $\gamma_{4}$ is of type 2 and the other of type 1, we have $N_{1}^{+} \geq 1$ and $N_{2}^{+} \geq 1$.
Furthermore, since $g(\gamma_{1} + \gamma_{2}) + f(\gamma_{3} + \gamma_{4} + \gamma_{5} + \gamma_{6} + \gamma_{7}) = 0$ and $g(\gamma_{3} + \gamma_{4} + \gamma_{5} + \gamma_{6} + \gamma_{7}) = 0$ by Lemma \ref{lem:3-cycle_condition}, we respectively have
\[
 2 (N_{0}^{+} - N_{0}^{-}) + 2 (N_{1}^{+} - N_{1}^{-}) + (N_{2}^{+} - N_{2}^{-}) + (N_{3}^{+} - N_{3}^{-}) = 0
\]
and
\[
 2 (N_{0}^{+} - N_{0}^{-}) + (N_{1}^{+} - N_{1}^{-}) + 2 (N_{2}^{+} - N_{2}^{-}) + (N_{3}^{+} - N_{3}^{-}) = 0.
\]
Thus, remarking that $N_{0}^{+}, N_{0}^{-}, \dots, N_{3}^{-} \geq 0$ and $N_{0}^{+} + N_{0}^{-} + \dots + N_{3}^{-} = 5$, we have
\[
 (N_{0}^{+}, N_{0}^{-}, N_{1}^{+}, N_{1}^{-}, N_{2}^{+}, N_{2}^{-}, N_{3}^{+}, N_{3}^{-}) =
 \begin{cases}
  (0, 0, 1, 0, 1, 0, 0, 3) \ \text{or} \\
  (0, 2, 1, 0, 1, 0, 1, 0).
 \end{cases}
\]
In the latter case, since two of $\gamma_{5}, \gamma_{6}$ and $\gamma_{7}$ are negative and of type 0, and the other is not of type 0, reduced $f(\gamma_{5} + \gamma_{6} + \gamma_{7})$ has at least one 2-term of the form $+ (n + 1, u^{\omega_{1}}; a, [a + 3])$ for some $a \in O_{6}$.
Since $\omega_{0} \neq [\omega_{5} + 3]$, $\omega_{1} \neq [\omega_{0} + 3]$ and $\omega_{5} \neq [\omega_{1} + 3]$, it leads to a contradiction.
Therefore, we assume the former case in the remaining.

Let $\delta_{5}$ and $\delta_{6}$ be the 3-terms defined in the proof of Proposition \ref{prop:O6_V}.
Since $f(\delta_{5} + \delta_{6}) = f(\gamma_{5} + \gamma_{6} + \gamma_{7})$, in light of Lemma \ref{lem:f-connected1}, we have the following case:
\begin{itemize}
\item[(a)]
$l(\gamma_{5} + \gamma_{6} + \gamma_{7} - \delta_{5} - \delta_{6}) = l(\gamma_{i} + \gamma_{j} - \delta_{k}) = 3$ and $\{ \gamma_{i}, \gamma_{j}, - \delta_{k} \}$ is $f$-connected for some $5 \leq i < j \leq 7$ and $5 \leq k \leq 6$, or
\item[(b)]
$\{ \gamma_{5}, \gamma_{6}, \gamma_{7}, - \delta_{5}, - \delta_{6} \}$ is $f$-connected.
\end{itemize}

In the case (a), since $\delta_{6}$ and $\delta_{5}$ are respectively of type 1 and 2, at least one of $\gamma_{5}, \gamma_{6}$ and $\gamma_{7}$ is of type 1 or 2.
It contradicts to $(N_{1}^{+}, N_{1}^{-}, N_{2}^{+}, N_{2}^{-}) = (1, 0, 1, 0)$.

In the case (b), in light of Lemma \ref{lem:f-connected5}, we have
\begin{align*}
 & \gamma_{5} + \gamma_{6} + \gamma_{7} = \\
 &
 \begin{cases}
  - \> (n + 1, u^{\omega_{1}}; d, \omega_{0}, \omega_{1}) - (n + 1, u^{\omega_{1}}; d, \omega_{1}, \omega_{5}) - (n + 1, u^{\omega_{1}}; d, \omega_{5}, \omega_{0}) \ \text{or} \\
  - \> (n + 1, u^{\omega_{1}}; \omega_{0}, \omega_{1}, d) - (n + 1, u^{\omega_{1}}; \omega_{1}, \omega_{5}, d) - (n + 1, u^{\omega_{1}}; \omega_{5}, \omega_{0}, d)
 \end{cases}
\end{align*}
with some $d \in O_{6}$ ($d \neq \omega_{0}, \omega_{1}, \omega_{5}$).
In each case, it is routine to check that we have $g(\gamma_{3} + \gamma_{4} + \gamma_{5} + \gamma_{6} + \gamma_{7}) \neq 0$ contradicting to Lemma \ref{lem:3-cycle_condition} (iii).
\end{proof}

\begin{proposition}
\label{prop:O6_VII}
There are no 3-cycles $\gamma$ of $(O_{6}, S)$ satisfying $l(\gamma) = 6$, $|T_{n}(\gamma)| = 3$, $|T_{n + 1}(\gamma)| = 3$ and $\eta(\pi(\gamma)) \neq 0$.
\end{proposition}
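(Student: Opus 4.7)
The plan is to mirror the structure of Proposition \ref{prop:R7_VII} (with a weaker target conclusion because we only need $\eta(\pi(\gamma)) = 0$, not that $\gamma$ be a boundary). Write $\gamma = \gamma_1 + \gamma_2 + \gamma_3 + \gamma_4 + \gamma_5 + \gamma_6$ with $\gamma_1, \gamma_2, \gamma_3 \in T_n(\gamma)$ and $\gamma_4, \gamma_5, \gamma_6 \in T_{n+1}(\gamma)$. By Lemma \ref{lem:3-cycle_condition}(ii) together with Lemmas \ref{lem:f-connected1} and \ref{lem:f-connected3}, the triple $\gamma_1 + \gamma_2 + \gamma_3$ is $f$-connected of type (i) or (ii) with parameters $(a, b, c)$. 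Since $\eta(\pi(\gamma)) \neq 0$, Lemma \ref{lem:O6_f3_special} rules out $b = [a+3]$, so without loss of generality $a = \omega_0$ and $b = \omega_1$. Appealing to Lemma \ref{lem:O6_f3_usual}, the requirement $|T_{n+1}(\gamma)| = 3$ is compatible only with a short list of sub-cases: $c = \omega_5$ in type (i), $c = \omega_2$ in type (ii), and the ``special case (a)'' $u \in \{\omega_0, \omega_3\}$ with $c = \omega_3$ in either type.

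In each surviving sub-case, I would read off $g(\gamma_1 + \gamma_2 + \gamma_3)$ from the explicit formula among (\ref{eq:O6_f3_usual_i}), (\ref{eq:O6_f3_usual_ii}), (\ref{eq:O6_f3_usual_i_a}), (\ref{eq:O6_f3_usual_ii_a}), and determine $f(\gamma_4 + \gamma_5 + \gamma_6)$ uniquely from Lemma \ref{lem:3-cycle_condition}(i). Combining this with the closure constraint $g(\gamma_4 + \gamma_5 + \gamma_6) = 0$ from Lemma \ref{lem:3-cycle_condition}(iii), together with Lemma \ref{lem:f-/g-connectedness}(i) and Lemma \ref{lem:f-connected3} applied to the reverses $\overline{\gamma_4}, \overline{\gamma_5}, \overline{\gamma_6}$, should pin down $\gamma_4 + \gamma_5 + \gamma_6$ up to a handful of admissible candidates per sub-case. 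I expect each admissible $\gamma$ to turn out to be a boundary of the form $\pm \partial(n, u; c, \omega_0, \omega_1, \omega_0)$, paralleling the explicit boundary exhibited in Proposition \ref{prop:R7_VII}; since boundaries satisfy $\eta(\pi(\cdot)) = 0$, this contradicts $\eta(\pi(\gamma)) \neq 0$.

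The main obstacle will be the bookkeeping in the ``special'' sub-cases. When $c = \omega_3$ and $u \in \{\omega_0, \omega_3\}$ one has $u^{\omega_0} = u^{\omega_3} = u$, so multiple indices among the 2-terms of $g(\gamma_1 + \gamma_2 + \gamma_3)$ coincide and the pool of possible groupings into $\gamma_4, \gamma_5, \gamma_6$ expands. To cut this down I would lean on Lemma \ref{lem:f-/g-connectedness}(iii) (which pins the values $u_i^{a_i b_i c_i}$ of $g$-connected terms), together with direct evaluation of $\eta$ on the surviving candidates using the explicit definition in Section \ref{sec:preliminaries} and Lemma \ref{lem:O6_weight}. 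In every sub-case I expect the resulting $\gamma$ either to match a boundary of the above form, or (after applying a suitable $H_0$-translate via Lemma \ref{lem:O6_weight}) to be a sum of triples each of which lies outside the support of $\eta$, so that $\eta(\pi(\gamma)) = 0$ in either situation.
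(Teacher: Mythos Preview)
Your proposal is correct and follows essentially the same approach as the paper's proof: reduce to the cases (i) and (ii) of Lemma~\ref{lem:f-connected3} with $b \neq [a+3]$ via Lemma~\ref{lem:O6_f3_special}, then use the case analysis from the proof of Lemma~\ref{lem:O6_f3_usual} to isolate the sub-cases $c = \omega_5$ (type (i)), $c = \omega_2$ (type (ii)), and $c = \omega_3$ with $u \in \{\omega_0, \omega_3\}$, and in each sub-case determine $\gamma_4 + \gamma_5 + \gamma_6$ from the explicit form of $g(\gamma_1 + \gamma_2 + \gamma_3)$.

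One small correction to your expectations: in the ``generic'' sub-cases the paper indeed exhibits $\gamma$ as a boundary (namely $-\partial(n,u;\omega_5,\omega_0,\omega_1,\omega_0)$ in type (i) and $\partial(n,u;\omega_0,\omega_1,\omega_0,\omega_2)$ in type (ii), the latter not quite of your predicted shape), but in the special sub-cases $c = \omega_3$ the resolution is different from what you anticipate. Rather than finding candidates with $\eta(\pi(\gamma)) = 0$, the paper shows that \emph{no} 3-cycle exists: every candidate for $\gamma_4 + \gamma_5 + \gamma_6$ compatible with $f(\gamma_4+\gamma_5+\gamma_6) = -g(\gamma_1+\gamma_2+\gamma_3)$ fails the condition $g(\gamma_4+\gamma_5+\gamma_6) = 0$, or (in type (ii)) the six 2-terms of index $u$ in $g(\gamma_1+\gamma_2+\gamma_3)$ simply cannot be realized as $f$ of two 3-terms. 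So your fallback of direct $\eta$-evaluation via Lemma~\ref{lem:O6_weight} is never needed --- the constraint $g = 0$ already kills these sub-cases outright. Your outline would discover this upon execution, so the approach is sound.
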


\begin{proof}
Let $\gamma_{1}, \gamma_{2}, \dots, \gamma_{6}$ be 3-terms of $(O_{6}, S)$ and $\gamma = \gamma_{1} + \gamma_{2} + \dots + \gamma_{6}$.
Assume that $\gamma$ is a 3-cycle satisfying $l(\gamma) = 6$, $|T_{n}(\gamma)| = 3$, $|T_{n + 1}(\gamma)| = 3$ and $\eta(\pi(\gamma)) \neq 0$.
Then, in light of Lemmas \ref{lem:3-cycle_condition} (ii), \ref{lem:f-connected1}, \ref{lem:f-connected3} and \ref{lem:O6_f3_special}, we may assume that
\begin{itemize}
\item[(i)]
$\gamma_{1} + \gamma_{2} + \gamma_{3} =  + \> (n, u; \omega_{0}, \omega_{1}, \omega_{0}) - (n, u; c, \omega_{0}, \omega_{1}) - (n, u; c, \omega_{1}, \omega_{0})$ or
\item[(ii)]
$\gamma_{1} + \gamma_{2} + \gamma_{3} =  + \> (n, u; \omega_{0}, \omega_{1}, \omega_{0}) - (n, u; \omega_{0}, \omega_{1}, c) - (n, u; \omega_{1}, \omega_{0}, c)$
\end{itemize}
up to sign, with some $u, c \in O_{6}$ ($c \neq \omega_{0}, \omega_{1}$).
In light of Lemmas \ref{lem:3-cycle_condition} (iii), \ref{lem:f-/g-connectedness} (i) and \ref{lem:f-connected1}, $\gamma_{4}, \gamma_{5}$ and $\gamma_{6}$ are $g$-connected.
Therefore, in light of Lemmas \ref{lem:reverse} (ii), \ref{lem:f-/g-connectedness} (i) and \ref{lem:f-connected3}, one of $\gamma_{4}, \gamma_{5}$ and $\gamma_{6}$ is of type 0 or 2.

In the case (i), as we saw in the proof of Lemma \ref{lem:O6_f3_usual}, we have
\begin{itemize}
\item
$c = \omega_{5}$, or
\item
$c = \omega_{3}$ and $u = \omega_{0}$ or $\omega_{3}$.
\end{itemize}
In the former case, we have
\begin{align*}
 g(\gamma_{1} + \gamma_{2} + \gamma_{3})
 & = + \> (u^{\omega_{0}}; \omega_{1}, \omega_{0}) + (u^{\omega_{0}}; \omega_{0}, \omega_{2}) - (u^{\omega_{0}}; \omega_{1}, \omega_{2}) \\
 & \phantom{=} \ + (u^{\omega_{1}}; \omega_{3}, \omega_{0}) - (u^{\omega_{1}}; \omega_{5}, \omega_{0}) - (u^{\omega_{1}}; \omega_{3}, \omega_{5}) \\
 & \phantom{=} \ - (u^{\omega_{5}}; \omega_{0}, \omega_{1}) - (u^{\omega_{5}}; \omega_{1}, \omega_{0}).
\end{align*}
Since $g(\gamma_{1} + \gamma_{2} + \gamma_{3}) + f(\gamma_{4} + \gamma_{5} + \gamma_{6}) = 0$ by Lemma \ref{lem:3-cycle_condition} (i), and $u^{\omega_{0}}, u^{\omega_{1}}$ and $u^{\omega_{5}}$ are mutually different, we immediately have
\begin{align*}
 & \gamma_{4} + \gamma_{5} + \gamma_{6} = \\
 & + (n + 1, u^{\omega_{0}}; \omega_{1}, \omega_{0}, \omega_{2}) - (n + 1, u^{\omega_{1}}; \omega_{3}, \omega_{5}, \omega_{0}) - \langle n + 1, u^{\omega_{5}}; \omega_{0}, \omega_{1} \rangle.
\end{align*}
If $\langle n + 1, u^{\omega_{5}}; \omega_{0}, \omega_{1} \rangle = (n + 1, u^{\omega_{5}}; \omega_{1}, \omega_{0}, \omega_{1})$, we obviously have $g(\gamma_{4} + \gamma_{5} + \gamma_{6}) \neq 0$ contradicting to Lemma \ref{lem:3-cycle_condition} (iii).
If $\langle n + 1, u^{\omega_{5}}; \omega_{0}, \omega_{1} \rangle = (n + 1, u^{\omega_{5}}; \omega_{0}, \omega_{1}, \omega_{0})$, we have $\gamma = - \partial (n, u; \omega_{5}, \omega_{0}, \omega_{1}, \omega_{0})$ contradicting to our assumption.

In the latter case, since $g(\gamma_{1} + \gamma_{2} + \gamma_{3}) + f(\gamma_{4} + \gamma_{5} + \gamma_{6}) = 0$, in light of the formula (\ref{eq:O6_f3_usual_i_a}), we may assume that
\begin{align*}
 f(\gamma_{4} + \gamma_{5}) & = - \> (u; \omega_{0}, \omega_{2}) - (u; \omega_{3}, \omega_{1}) + (u; \omega_{0}, \omega_{1}) + (u; \omega_{3}, \omega_{2}), \\
 \gamma_{6} & = - \> (n + 1; u^{\omega_{1}}; \omega_{2}, \omega_{5}, \omega_{0}).
\end{align*}
We note that $\gamma_{6}$ is of type 3, and thus one of $\gamma_{4}$ and $\gamma_{5}$ of type 0 or 2.
Consider the following two 3-terms
\begin{align*}
 \delta_{4} & = + (n + 1, u; \omega_{0}, \omega_{2}, \omega_{1}), &
 \delta_{5} & = - (n + 1, u; \omega_{3}, \omega_{2}, \omega_{1}).
\end{align*}
Then, since $f(\delta_{4} + \delta_{5}) = f(\gamma_{4} + \gamma_{5})$, we have the following case:
\begin{itemize}
\item[(a)]
$\gamma_{4} + \gamma_{5} = \delta_{4} + \delta_{5}$,
\item[(b)]
$l(\gamma_{4} + \gamma_{5} - \delta_{4} - \delta_{5}) = l(\gamma_{i} - \delta_{j}) = 2$ and $\{ \gamma_{i}, - \delta_{j} \}$ is $f$-connected for some $4 \leq i, j \leq 5$, or
\item[(c)]
$\{ \gamma_{4}, \gamma_{5}, - \delta_{4}, - \delta_{5} \}$ is $f$-connected.
\end{itemize}
In the case (a), we have $g(\gamma_{4} + \gamma_{5} + \gamma_{6}) \neq 0$ contradicting to Lemma \ref{lem:3-cycle_condition} (iii).
In the case (b), since both of $\delta_{4}$ and $\delta_{5}$ are not of type 0 or 1, we have no candidates of $\gamma_{4} + \gamma_{5}$ by Lemma \ref{lem:f-connected2}.
In the case (c), in light of Lemma \ref{lem:f-connected4}, we have
\[
 \gamma_{4} + \gamma_{5} = + \> (n + 1, u; \omega_{3}, \omega_{1}, \omega_{2}) - (n + 1, u; \omega_{0}, \omega_{1}, \omega_{2}).
\]
We thus have $g(\gamma_{4} + \gamma_{5} + \gamma_{6}) \neq 0$ contradicting to Lemma \ref{lem:3-cycle_condition} (iii).

In the case (ii), as we saw in the proof of Lemma \ref{lem:O6_f3_usual}, we have
\begin{itemize}
\item
$c = \omega_{2}$, or
\item
$c = \omega_{3}$ and $u = \omega_{0}$ or $\omega_{3}$.
\end{itemize}
In the former case, we have
\begin{align*}
 g(\gamma_{1} + \gamma_{2} + \gamma_{3})
 & = + \> (u^{\omega_{0}}; \omega_{1}, \omega_{0}) + (u^{\omega_{0}}; \omega_{0}, \omega_{2}) - (u^{\omega_{0}}; \omega_{1}, \omega_{2}) \\
 & \phantom{=} \ + (u^{\omega_{1}}; \omega_{5}, \omega_{2}) - (u^{\omega_{1}}; \omega_{5}, \omega_{0}) - (u^{\omega_{1}}; \omega_{0}, \omega_{2}) \\
 & \phantom{=} \ - (u^{\omega_{2}}; \omega_{1}, \omega_{3}) - (u^{\omega_{2}}; \omega_{3}, \omega_{1}).
\end{align*}
Since $g(\gamma_{1} + \gamma_{2} + \gamma_{3}) + f(\gamma_{4} + \gamma_{5} + \gamma_{6}) = 0$, and $u^{\omega_{0}}, u^{\omega_{1}}$ and $u^{\omega_{2}}$ are mutually different, we immediately have
\begin{align*}
 & \gamma_{4} + \gamma_{5} + \gamma_{6} = \\
 & + (n + 1, u^{\omega_{0}}; \omega_{1}, \omega_{0}, \omega_{2}) - (n + 1, u^{\omega_{1}}; \omega_{5}, \omega_{0}, \omega_{2}) - \langle n + 1, u^{\omega_{2}}; \omega_{1}, \omega_{3} \rangle.
\end{align*}
If $\langle n + 1, u^{\omega_{2}}; \omega_{1}, \omega_{3} \rangle = (n + 1, u^{\omega_{2}}; \omega_{3}, \omega_{1}, \omega_{3})$, we obviously have $g(\gamma_{4} + \gamma_{5} + \gamma_{6}) \neq 0$ contradicting to Lemma \ref{lem:3-cycle_condition} (iii).
If $\langle n + 1, u^{\omega_{2}}; \omega_{1}, \omega_{3} \rangle = (n + 1, u^{\omega_{2}}; \omega_{1}, \omega_{3}, \omega_{1})$, we have $\gamma = \partial (n, u; \omega_{0}, \omega_{1}, \omega_{0}, \omega_{2})$ contradicting to our assumption.

In the latter case, since $g(\gamma_{1} + \gamma_{2} + \gamma_{3}) + f(\gamma_{4} + \gamma_{5} + \gamma_{6}) = 0$, in light of the formula (\ref{eq:O6_f3_usual_ii_a}), we may assume that
\begin{align*}
 f(\gamma_{4} + \gamma_{5})
 & = - \> (u; \omega_{1}, \omega_{0}) - (u; \omega_{0}, \omega_{2}) - (u; \omega_{2}, \omega_{3}) \\
 & \phantom{=} \ + (u; \omega_{1}, \omega_{3}) + (u; \omega_{0}, \omega_{5}) + (u; \omega_{5}, \omega_{0}).
\end{align*}
It is easy to see that we have no candidates of $\gamma_{4} + \gamma_{5}$ satisfying this formula.
\end{proof}

\begin{proposition}
\label{prop:O6_VIII}
There are no 3-cycles $\gamma$ of $(O_{6}, S)$ satisfying $l(\gamma) = 7$, $|T_{n}(\gamma)| = 3$, $|T_{n + 1}(\gamma)| = 4$ and $\eta(\pi(\gamma)) \neq 0$.
\end{proposition}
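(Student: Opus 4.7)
The plan is to mirror the approach used in the proof of Proposition~\ref{prop:R7_VIII}, adapting it to the octahedral quandle. First, by Lemma~\ref{lem:3-cycle_condition}~(ii), Lemmas~\ref{lem:f-connected1} and \ref{lem:f-connected3}, and Lemma~\ref{lem:O6_f3_special} (which handles the degenerate case $b = [a + 3]$), I would reduce $\gamma_{1} + \gamma_{2} + \gamma_{3}$ to one of the two forms
\[
 +(n, u; \omega_{0}, \omega_{1}, \omega_{0}) - (n, u; c, \omega_{0}, \omega_{1}) - (n, u; c, \omega_{1}, \omega_{0})
\]
or
\[
 +(n, u; \omega_{0}, \omega_{1}, \omega_{0}) - (n, u; \omega_{0}, \omega_{1}, c) - (n, u; \omega_{1}, \omega_{0}, c)
\]
up to sign, with $u \in O_{6}$ and $c \in O_{6} \setminus \{\omega_{0}, \omega_{1}\}$. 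This is the analog of the reduction carried out at the start of the proof of Proposition~\ref{prop:O6_VII}.

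Next, I would compute $g(\gamma_{1} + \gamma_{2} + \gamma_{3})$ using the formulas (\ref{eq:O6_f3_usual_i}), (\ref{eq:O6_f3_usual_ii}), together with their degenerate versions (\ref{eq:O6_f3_usual_i_a}), (\ref{eq:O6_f3_usual_ii_a}) when two of $u^{\omega_{0}}, u^{\omega_{1}}, u^{c}$ coincide. By Lemma~\ref{lem:3-cycle_condition}~(i) and (iii), the 3-terms $\gamma_{4}, \gamma_{5}, \gamma_{6}, \gamma_{7}$ must satisfy both $f(\gamma_{4} + \gamma_{5} + \gamma_{6} + \gamma_{7}) = -g(\gamma_{1} + \gamma_{2} + \gamma_{3})$ and $g(\gamma_{4} + \gamma_{5} + \gamma_{6} + \gamma_{7}) = 0$. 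I would split into subcases according to the value of $c \in \{\omega_{2}, \omega_{3}, \omega_{4}, \omega_{5}\}$. For the \emph{boundary-type} values ($c = \omega_{2} = \omega_{0}^{\omega_{1}}$ in the first form, or $c = \omega_{5} = \omega_{1}^{\omega_{0}}$ in the second form), the unique completion compatible with Lemmas~\ref{lem:f-connected1}--\ref{lem:f-connected4} should produce $\gamma = \pm \partial(n, u; c, \omega_{0}, \omega_{1}, \omega_{0})$ or $\gamma = \pm \partial(n, u; \omega_{0}, \omega_{1}, \omega_{0}, c)$; either way $\gamma$ is a 3-boundary and hence $\eta(\pi(\gamma)) = 0$, contradicting our assumption. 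For the \emph{opposite} values ($c = \omega_{5}$ in the first form or $c = \omega_{2}$ in the second), the constraints will force an inconsistency and there will be no candidate completion.

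The main obstacle is the case $c \in \{\omega_{3}, \omega_{4}\}$, together with the degenerate sub-cases (a) and (b) from Lemma~\ref{lem:O6_f3_usual} where two of the indices $u^{\omega_{0}}, u^{\omega_{1}}, u^{c}$ coincide: in these sub-cases reduced $g(\gamma_{1} + \gamma_{2} + \gamma_{3})$ carries significantly more 2-terms of a single index, so the enumeration of candidate 4-chains $\gamma_{4} + \gamma_{5} + \gamma_{6} + \gamma_{7}$ via Lemma~\ref{lem:f-connected4} becomes considerably more branched. Each candidate must be checked individually, confirming either an inconsistency with $g(\gamma_{4} + \gamma_{5} + \gamma_{6} + \gamma_{7}) = 0$ or that $\gamma$ coincides with the boundary of an explicit 4-chain (so that $\eta(\pi(\gamma)) = 0$ by the cocycle property). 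The bookkeeping is extensive but routine, and is the principal source of complication in the proof.
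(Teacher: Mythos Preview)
Your overall plan---reduce $\gamma_{1}+\gamma_{2}+\gamma_{3}$ to one of the two standard forms (i) or (ii), then branch on $c$---matches the paper. For the generic values of $c$ the paper does exactly what you describe: it writes down a canonical 4-term completion ($\delta_{4},\dots,\delta_{7}$ in case~(i), $\lambda_{4},\dots,\lambda_{7}$ in case~(ii)) whose sum with $\gamma_{1}+\gamma_{2}+\gamma_{3}$ is $\pm\partial(n,u;c,\omega_{0},\omega_{1},\omega_{0})$ or $\pm\partial(n,u;\omega_{0},\omega_{1},\omega_{0},c)$, and then enumerates the possible deviations $\gamma_{4}+\dots+\gamma_{7}-(\delta_{4}+\dots+\delta_{7})$ via Lemmas~\ref{lem:f-connected2}--\ref{lem:f-connected5}, checking in each branch that $g(\gamma_{4}+\dots+\gamma_{7})\neq0$.

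There is, however, a genuine gap. First, your identification of the special values of $c$ is miscalculated: in $O_{6}$ one has $\omega_{0}^{\omega_{1}}=\omega_{5}$ and $\omega_{1}^{\omega_{0}}=\omega_{2}$, so the values at which a term of $g(\gamma_{1}+\gamma_{2}+\gamma_{3})$ degenerates are $c=\omega_{5}$ in form~(i) and $c=\omega_{2}$ in form~(ii), the reverse of what you wrote. More importantly, for precisely these values the canonical 4-term completion becomes ill-defined (e.g.\ $\delta_{4}$ would have a repeated entry), and your assertion that ``the constraints will force an inconsistency and there will be no candidate completion'' is false: completions do exist. The paper does \emph{not} attempt to enumerate them. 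Instead it uses a subtraction trick: from Proposition~\ref{prop:O6_VII} one already knows an explicit 6-term boundary $\gamma'$ sharing the same degree-$n$ part $\gamma_{1}+\gamma_{2}+\gamma_{3}$; then $\gamma-\gamma'$ is a cycle supported entirely in degree $n+1$ with $l(\gamma-\gamma')\le 7$, so Propositions~\ref{prop:O6_I}--\ref{prop:O6_IV} give $\eta(\pi(\gamma-\gamma'))=0$, whence $\eta(\pi(\gamma))=\eta(\pi(\gamma'))=0$. This reduction to the single-degree case is the missing idea in your outline; without it the $c=\omega_{5}$ (resp.\ $c=\omega_{2}$) branch is not handled.
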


Although the proof of Proposition \ref{prop:O6_VIII} is straightforward, its argument is slightly complicated.
We thus defer the proof of this proposition to Section \ref{sec:proof_of_proposition_O6_VIII}.
We are now ready to prove Theorem \ref{thm:lower_bound_O6}.

\begin{proof}[Proof of Theorem \ref{thm:lower_bound_O6}]
It is routine to see that a 3-chain
\begin{align*}
 \gamma
 & = - \> (0, 0; 0, 2, 1) + (0, 0; 1, 5, 0) - (0, 0; 0, 1, 5) + (0, 0; 5, 4, 0) \\
 & \phantom{=} \ - (0, 0; 0, 5, 4) + (0, 0; 4, 2, 0) - (0, 0; 0, 4, 2) + (0, 0; 2, 1, 0)
\end{align*}
of $(O_{6}, \mathbb{Z} \times O_{6})$ is a 3-cycle satisfying $\eta(\pi(\gamma)) = 1$.
We thus have $l(\eta, \mathbb{Z} \times O_{6}) \neq 0$.
Then, in light of Propositions \ref{prop:O6_list}, \ref{prop:O6_I}--\ref{prop:O6_VIII} and Lemma \ref{lem:reverse} (iv), we immediately have the claim.
\end{proof}

\section{Proof of Lemma \ref{lem:O6_f2_special_2-5}}
\label{sec:proof_of_lemma_O6_f2_special_2-5}

The aim of this section is to prove Lemma \ref{lem:O6_f2_special_2-5}.
We start with preparing the following lemmas.

\begin{lemma}
\label{lem:O6_ab_ba_1}
Let $\gamma$ be a 3-term of $(O_{6}, S)$ having degree $n$ and index $u$.
Assume that $f(\gamma) = + \> (a, b) + (b, a)$ for some distinct $a, b \in O_{6}$.
Then, we have $\gamma = - \> \langle a, b \rangle$.
\end{lemma}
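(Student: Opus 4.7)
The plan is a direct computation from the definition of $f$. Writing $\gamma = \varepsilon(n, u; x, y, z)$ with $\varepsilon \in \{\pm 1\}$, $x \neq y$, and $y \neq z$, I would first record
\[
f(\gamma) = \varepsilon\bigl(-(y, z) + (x, z) - (x, y)\bigr),
\]
where the middle summand $\varepsilon(x, z)$ is deleted precisely when $x = z$, while the outer two 2-terms always survive because $x \neq y$ and $y \neq z$. A short preliminary check shows that the three unreduced 2-terms have pairwise distinct colors: equality of any two of $(y, z)$, $(x, z)$, $(x, y)$ would force a forbidden coincidence among $x, y, z$, so no accidental cancellation or merging occurs when passing to reduced form.

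Next I would rule out $\varepsilon = +1$. In that case $f(\gamma)$ contains at most one positive 2-term (namely $+(x, z)$, and only when $x \neq z$), whereas the target $+(a, b) + (b, a)$ has two positive summands. Hence $\varepsilon = -1$, and $f(\gamma) = +(y, z) - (x, z) + (x, y)$ with the middle term present iff $x \neq z$. If $x \neq z$, this is a reduced sum of three distinct 2-terms and cannot equal a two-summand expression, so we must have $x = z$, yielding $f(\gamma) = +(y, x) + (x, y)$.

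Comparing with $+(a, b) + (b, a)$ forces $\{x, y\} = \{a, b\}$, so $\gamma$ is either $-(n, u; a, b, a)$ or $-(n, u; b, a, b)$; by the definition of $\langle n, u; a, b \rangle$ both possibilities coincide with $-\langle n, u; a, b \rangle$, which is the claim. There is no real obstacle here — the only point that needs attention is the bookkeeping that no two summands of the unreduced $f(\gamma)$ collapse together, and this is settled by the distinct-colors observation at the start.
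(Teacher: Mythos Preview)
Your proof is correct and is precisely the direct computation the paper has in mind; the paper's own proof is simply ``We obviously have the claim,'' and you have unpacked that obviousness carefully and without error.
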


\begin{proof}
We obviously have the claim.
\end{proof}

\begin{lemma}
\label{lem:O6_ab_ba_2}
Let $\gamma_{1}$ and $\gamma_{2}$ be 3-terms of $(O_{6}, S)$ having the same degree $n$ and the same index $u$, and $\gamma = \gamma_{1} + \gamma_{2}$.
Assume that $f(\gamma) = + \> (a, b) + (b, a)$ for some distinct $a, b \in O_{6}$.
Then, we have the following two cases with some $c \in O_{6}$ {\upshape (}$c \neq a, b${\upshape )}.
\begin{itemize}
\item[(i)]
$\gamma = - \> (c, a, b) - (c, b, a)$.
\item[(ii)]
$\gamma = - \> (a, b, c) - (b, a, c)$.
\end{itemize}
\end{lemma}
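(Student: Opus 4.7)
The strategy is to reduce the problem to Lemma \ref{lem:f-connected3} by appending a bigon that closes up the residual $f$-boundary. Concretely, I would introduce $\gamma_{3} = +(n, u; a, b, a)$. This 3-term is of type 0 or 1, so $f(\gamma_{3}) = -(a, b) - (b, a)$, and therefore $f(\gamma_{1} + \gamma_{2} + \gamma_{3}) = 0$.

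The key technical step is to verify that $\gamma_{1}, \gamma_{2}, \gamma_{3}$ are $f$-connected. Singletons are excluded by Lemma \ref{lem:f-connected1}. The pair $\{\gamma_{1}, \gamma_{2}\}$ fails the $f$-closed condition by hypothesis, since $f(\gamma) = (a, b) + (b, a) \neq 0$. Any pair $\{\gamma_{i}, \gamma_{3}\}$ with $i \in \{1, 2\}$ having $f$-sum zero would force $f(\gamma_{i}) = +(a, b) + (b, a)$; Lemma \ref{lem:O6_ab_ba_1} then yields $\gamma_{i} = -\langle a, b\rangle$, which in turn forces $f(\gamma_{3 - i}) = 0$, again contradicting Lemma \ref{lem:f-connected1}.

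With $f$-connectedness secured, I apply Lemma \ref{lem:f-connected3} to obtain, up to an overall sign,
\[
 \gamma_{1} + \gamma_{2} + \gamma_{3} = +(a', b', a') - (c, a', b') - (c, b', a')
\]
or
\[
 \gamma_{1} + \gamma_{2} + \gamma_{3} = +(a', b', a') - (a', b', c) - (b', a', c),
\]
with $a', b', c \in O_{6}$ mutually distinct. In either case, only the first term on the right is a bigon, so it must match $\gamma_{3} = +(a, b, a)$; this forces the overall sign to be positive and $(a', b') = (a, b)$. Subtracting $\gamma_{3}$ from both sides then gives cases (i) and (ii) of the statement respectively. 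The only real obstacle is the $f$-connectedness check; once that is in place, the rest is a direct bookkeeping application of Lemma \ref{lem:f-connected3}.
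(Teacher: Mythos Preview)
Your proposal is correct and follows essentially the same approach as the paper: both add the bigon $+\langle n,u;a,b\rangle$ to $\gamma_{1}+\gamma_{2}$ to obtain three terms with vanishing $f$-sum, verify these three terms are $f$-connected, and then read off the conclusion from Lemma~\ref{lem:f-connected3} by identifying the unique bigon in each pattern with the added term. The only cosmetic difference is that the paper first argues efficiency (ruling out $\gamma_{2}=-\gamma_{1}$ and $\gamma_{i}=-\langle a,b\rangle$) and then deduces $f$-connectedness in one stroke from Lemma~\ref{lem:f-connected1}, whereas you check each proper subset directly; the content is the same.
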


\begin{proof}
If $\gamma_{2} = - \> \gamma_{1}$ or $\gamma_{i} = - \> \langle n, u; a, b \rangle$ for some $i$, then we have $f(\gamma_{1} + \gamma_{2}) \neq  + \> (a, b) + (b, a)$ contradicting to our assumption.
Therefore, $\gamma_{1}, \gamma_{2}$ and $+ \langle n, u; a, b \rangle$ are efficient.
Since they are also $f$-connected by Lemma \ref{lem:f-connected1}, in light of Lemma \ref{lem:f-connected3}, we have the claim.
\end{proof}

\begin{lemma}
\label{lem:O6_ab_ba_3}
Let $\gamma_{1}, \gamma_{2}$ and $\gamma_{3}$ be 3-terms of $(O_{6}, S)$ having the same degree $n$ and the same index $u$, and $\gamma = \gamma_{1} + \gamma_{2} + \gamma_{3}$.
Assume that $f(\gamma) = + \> (a, b) + (b, a)$ for some distinct $a, b \in O_{6}$, and $f \left( \sum_{i \in I} \gamma_{i} \right) \neq  + \> (a, b) + (b, a)$ for any non-empty proper subset $I$ of $\{ 1, 2, 3 \}$.
Then, we have the following four cases with some $c \in O_{6}$ {\upshape (}$c \neq a, b${\upshape )}.
\begin{itemize}
\item[(i)]
$\gamma = + \> (a, c, b) - (b, a, c) - \langle c, b \rangle$.
\item[(ii)]
$\gamma = + \> (b, c, a) - (a, b, c) - \langle c, a \rangle$.
\item[(iii)]
$\gamma = - \> (c, a, b) - \langle c, b \rangle + (b, c, a)$
\item[(iv)]
$\gamma = - \> (c, b, a) - \langle c, a \rangle + (a, c, b)$
\end{itemize}
\end{lemma}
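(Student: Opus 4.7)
The plan is to augment $\gamma$ by an auxiliary bigon-type 3-term and reduce to the classification provided by Lemma \ref{lem:f-connected4}. Concretely, set $\gamma_{0} = + (n, u; a, b, a)$, which realizes $+ \langle n, u; a, b \rangle$; a direct computation gives $f(\gamma_{0}) = - (a, b) - (b, a)$, so $f(\gamma_{0} + \gamma) = 0$.

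The first verification is that $\{ \gamma_{0}, \gamma_{1}, \gamma_{2}, \gamma_{3} \}$ is both efficient and $f$-connected. Efficiency follows from the minimality hypothesis: if $\gamma_{i} = - \gamma_{0}$ for some $i$, then $f(\gamma_{i}) = + (a, b) + (b, a)$, contradicting minimality on $I = \{ i \}$; and if $\gamma_{i} = - \gamma_{j}$ for distinct $i, j \in \{ 1, 2, 3 \}$, then the remaining third term has $f$-image $+ (a, b) + (b, a)$, again contradicting minimality. For $f$-connectedness, suppose some non-empty proper $J \subsetneq \{ 0, 1, 2, 3 \}$ satisfies $\sum_{i \in J} f(\gamma_{i}) = 0$. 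If $0 \notin J$, then $J \subsetneq \{ 1, 2, 3 \}$ and the non-empty proper complement $\{ 1, 2, 3 \} \setminus J$ has $f$-sum $f(\gamma) = + (a, b) + (b, a)$, violating minimality. If $0 \in J$, then $J \setminus \{ 0 \}$ is a non-empty (since $f(\gamma_{0}) \neq 0$) proper subset of $\{ 1, 2, 3 \}$ with $f$-sum $- f(\gamma_{0}) = + (a, b) + (b, a)$, again violating minimality.

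With the quartet efficient and $f$-connected, Lemma \ref{lem:f-connected4} applies. Cases (ii)--(v) of that lemma consist of 3-terms whose first and third entries are distinct (in (ii)--(iv) all four labels are mutually different, and in (v) the constraints $a' \neq b'$ and $d' \neq a', b'$ force distinct first and third coordinates throughout), so those cases contain no bigon-type summand $\pm \langle \cdot, \cdot \rangle$ capable of matching $\gamma_{0}$. Only case (i) remains, in which exactly one summand of each overall sign is of bigon type. Matching $\gamma_{0} = + \langle n, u; a, b \rangle$ against the unique $+ \langle \cdot, \cdot \rangle$ summand (either $+ \langle a', c' \rangle$ when the overall sign of case (i) is taken as written, or $+ \langle b', c' \rangle$ after flipping), and distinguishing the two possible orderings of $\{ a, b \}$ relative to the matched pair, produces four sub-cases that, after renaming the remaining free label as $c$, reproduce exactly the four conclusions (i)--(iv).

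The main obstacle I anticipate is the $f$-connectedness verification, which must carefully handle whether or not $\gamma_{0}$ belongs to a hypothetical $f$-null proper subset; once the quartet is seen to be $f$-connected, the subsequent matching with case (i) of Lemma \ref{lem:f-connected4} is a routine bookkeeping exercise.
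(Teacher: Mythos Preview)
Your argument is correct and follows essentially the same route as the paper: adjoin $+\langle n,u;a,b\rangle$ to $\gamma_{1},\gamma_{2},\gamma_{3}$, observe that the resulting four 3-terms are efficient and $f$-connected, and then invoke Lemma~\ref{lem:f-connected4}. The paper compresses this into a single sentence, while you spell out the efficiency and $f$-connectedness verifications and the case-matching with Lemma~\ref{lem:f-connected4}(i) explicitly; there is no substantive difference in method.
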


\begin{proof}
Since $\gamma_{1}, \gamma_{2}, \gamma_{3}$ and $+ \langle n, u; a, b \rangle$ are $f$-connected by the assumption, in light of Lemma \ref{lem:f-connected4}, we have the claim.
\end{proof}

\begin{lemma}
\label{lem:O6_ab_ba_4}
Let $\gamma_{1}, \gamma_{2}, \gamma_{3}$ and $\gamma_{4}$ be 3-terms of $(O_{6}, S)$ having the same degree $n$ and the same index $u$, and $\gamma = \gamma_{1} + \gamma_{2} + \gamma_{3} + \gamma_{4}$.
Assume that $f(\gamma) = + \> (a, b) + (b, a)$ for some distinct $a, b \in O_{6}$, and $f \left( \sum_{i \in I} \gamma_{i} \right) \neq  + \> (a, b) + (b, a)$ for any non-empty proper subset $I$ of $\{ 1, 2, 3, 4 \}$.
Then, we have the following twenty-one cases with some $c \in O_{6}$ {\upshape (}$c \neq a, b${\upshape )} and $d \in O_{6}$.
\begin{itemize}
\item[(i)]
$\gamma = - \> \langle a, c \rangle - \langle b, c \rangle + (a, c, b) + (b, c, a)$
\item[(ii)]
$\gamma = + \> \langle b, c \rangle - (a, b, c) - (a, c, a) - (c, b, a)$
\item[(iii)]
$\gamma = + \> \langle a, c \rangle - (b, a, c) - (b, c, b) - (c, a, b)$
\item[(iv)]
$\gamma = + \> (d, a, c) - (b, a, c) - (d, a, b) - (d, b, c)$ \enskip {\upshape (}$d \neq a, b, c${\upshape )}.
\item[(v)]
$\gamma = + \> (d, b, c) - (a, b, c) - (d, a, c) - (d, b, a)$ \enskip {\upshape (}$d \neq a, b, c${\upshape )}.
\item[(vi)]
$\gamma = + \> (a, d, c) - (a, b, c) - (b, a, d) - (b, d, c)$ \enskip {\upshape (}$d \neq a, b, c${\upshape )}.
\item[(vii)]
$\gamma = + \> (b, d, c) - (a, b, d) - (a, d, c) - (b, a, c)$ \enskip {\upshape (}$d \neq a, b, c${\upshape )}.
\item[(viii)]
$\gamma = + \> (c, d, a) - (c, b, a) - (c, d, b) - (d, a, b)$ \enskip {\upshape (}$d \neq a, b, c${\upshape )}.
\item[(ix)]
$\gamma = + \> (d, c, b) - (c, b, a) - (d, a, b) - (d, c, a)$ \enskip {\upshape (}$d \neq a, b, c${\upshape )}.
\item[(x)]
$\gamma = + \> (a, c, b) - (d, a, c) - (d, b, a) - (d, c, b)$ \enskip {\upshape (}$d \neq a, b, c${\upshape )}.
\item[(xi)]
$\gamma = + \> (b, c, a) - (d, a, b) - (d, b, c) - (d, c, a)$ \enskip {\upshape (}$d \neq a, b, c${\upshape )}.
\item[(xii)]
$\gamma = + \> (a, c, b) - (a, c, d) - (b, a, d) - (c, b, d)$ \enskip {\upshape (}$d \neq a, b, c${\upshape )}.
\item[(xiii)]
$\gamma = + \> (b, c, a) - (a, b, d) - (b, c, d) - (c, a, d)$ \enskip {\upshape (}$d \neq a, b, c${\upshape )}.
\item[(xiv)]
$\gamma = + \> (a, c, b) - (b, a, c) - (d, b, c) - (d, c, b)$ \enskip {\upshape (}$d \neq a, b, c${\upshape )}.
\item[(xv)]
$\gamma = + \> (b, c, a) - (a, b, c) - (d, a, c) - (d, c, a)$ \enskip {\upshape (}$d \neq a, b, c${\upshape )}.
\item[(xvi)]
$\gamma = + \> (a, c, b) - (a, c, d) - (c, a, d) - (c, b, a)$ \enskip {\upshape (}$d \neq a, b, c${\upshape )}.
\item[(xvii)]
$\gamma = + \> (b, c, a) - (b, c, d) - (c, a, b) - (c, b, d)$ \enskip {\upshape (}$d \neq a, b, c${\upshape )}.
\item[(xviii)]
$\gamma = + \> (a, c, b) - (c, b, a) - (d, a, c) - (d, c, a)$ \enskip {\upshape (}$d \neq a, c${\upshape )}.
\item[(xix)]
$\gamma = + \> (b, c, a) - (c, a, b) - (d, b, c) - (d, c, b)$ \enskip {\upshape (}$d \neq b, c${\upshape )}.
\item[(xx)]
$\gamma = + \> (b, c, a) - (a, b, c) - (a, c, d) - (c, a, d)$ \enskip {\upshape (}$d \neq a, c${\upshape )}.
\item[(xxi)]
$\gamma = + \> (a, c, b) - (b, a, c) - (b, c, d) - (c, b, d)$ \enskip {\upshape (}$d \neq b, c${\upshape )}.
\end{itemize}
\end{lemma}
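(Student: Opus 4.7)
The plan is to reduce the claim to Lemma \ref{lem:f-connected5} by appending a fifth auxiliary $3$-term to the collection. Set $\gamma_5 = +\langle n, u; a, b\rangle$; since $f(\gamma_5) = -(a, b) - (b, a)$, the five $f$-images sum to zero. I would first verify that $\gamma_1, \gamma_2, \gamma_3, \gamma_4, \gamma_5$ are efficient and $f$-connected. Efficiency fails only if $\gamma_i = -\gamma_5$ for some $i \leq 4$, which would give $f(\gamma_i) = +(a, b) + (b, a)$ and contradict the minimality hypothesis via the singleton $\{i\}$. For $f$-connectedness, any proper non-empty $J \subset \{1, \ldots, 5\}$ with $\sum_{j \in J} f(\gamma_j) = 0$ can be translated (via intersection or complement with $\{1, \ldots, 4\}$) to a proper non-empty subset of $\{1, \ldots, 4\}$ whose $f$-image equals $+(a, b) + (b, a)$, once Lemma \ref{lem:f-connected1} is invoked to rule out singletons with vanishing $f$-image; this again violates minimality.

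Having established these properties, apply Lemma \ref{lem:f-connected5} to express $\gamma + \gamma_5$, up to sign, as one of the ten tabulated forms. In each form, $\gamma_5$ is a bigon term with colors $\{a, b\}$, so it must coincide with one of the bigon summands (a term of shape $\pm \langle \cdot, \cdot\rangle$ or $\pm (x, y, x)$) under a suitable identification of the local parameters $a, b, c, d$ in Lemma \ref{lem:f-connected5} with our global data. Each valid identification pins down $\gamma_5$, and reading off the remaining four summands yields $\gamma$. Enumerating every admissible identification across the ten cases is what should produce the $21$ sub-cases.

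Concretely, case (i) of Lemma \ref{lem:f-connected5} contains three distinct bigons and contributes the three cases (i), (ii), (iii) of the present lemma; cases (ii), (iii), (iv) of Lemma \ref{lem:f-connected5}, each having a single bigon $\langle \cdot, \cdot\rangle$ that meets $\{a, b\}$ through one parameter, account for the sub-cases (iv)--(xiii); and cases (v)--(x) of Lemma \ref{lem:f-connected5}, whose single bigon $\langle a, c\rangle$ involves \emph{two} local parameters that must jointly be $\{a, b\}$, yield the remaining sub-cases (xiv)--(xxi), with the peripheral parameter $d$ occasionally coinciding with our global $c$ (explaining the looser constraint $d \neq a, b$ in cases (xviii)--(xxi)). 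The main obstacle is the combinatorial bookkeeping: each of the ten Lemma \ref{lem:f-connected5} forms must be examined against every assignment of $\{a, b\}$ to the appropriate slots, verifying both that no sub-case is missed and that none is duplicated, so that the final list has exactly $21$ entries.
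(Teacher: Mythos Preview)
Your proposal is correct and takes essentially the same approach as the paper: the paper's proof is the single sentence ``Since $\gamma_{1}, \gamma_{2}, \gamma_{3}, \gamma_{4}$ and $+ \langle n, u; a, b \rangle$ are $f$-connected by the assumption, in light of Lemma~\ref{lem:f-connected5}, we have the claim.'' You supply precisely the details the paper suppresses---the efficiency and $f$-connectedness checks and the bookkeeping that extracts the twenty-one cases from the ten forms in Lemma~\ref{lem:f-connected5}.
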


\begin{proof}
Since $\gamma_{1}, \gamma_{2}, \gamma_{3}, \gamma_{4}$ and $+ \langle n, u; a, b \rangle$ are $f$-connected by the assumption, in light of Lemma \ref{lem:f-connected5}, we have the claim.
\end{proof}

Let $\gamma$ be the 3-chain in Lemma \ref{lem:f-connected2} with $X = O_{6}$.
Assume that $b = [a + 3]$ and $u \neq a, b$.
Then, as mentioned in the proof of Lemma \ref{lem:O6_f2_special}, we may assume that $a = \omega_{0}$ and $u = \omega_{1}$, and have $b = \omega_{3}$.
To achieve our goal, we show the following lemmas.

\begin{lemma}
\label{lem:O6_f2_special_geq2}
Let $\delta_{3}, \delta_{4}, \dots, \delta_{k + 2}$ be 3-terms of $(O_{6}, S)$ whose degrees are $n + 1$, and $\delta = \sum_{i = 3}^{k + 2} \delta_{i}$.
Assume that $g(\gamma) + f(\delta) = 0$.
Then, we have $k \geq 2$.
\end{lemma}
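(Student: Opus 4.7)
The plan is to combine the explicit formula for $g(\gamma)$ already computed in equation~(\ref{eq:O6_f2_special}) with a trivial bound on how many 2-terms can appear in $f$ of a single 3-term.

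After the normalisation $a = \omega_0$, $b = \omega_3$, $u = \omega_1$ used in the proof of Lemma~\ref{lem:O6_f2_special}, I would invoke equation~(\ref{eq:O6_f2_special}) to record
\[
g(\gamma) = + (n + 1, \omega_{2}; \omega_{3}, \omega_{0}) + (n + 1, \omega_{2}; \omega_{0}, \omega_{3}) - (n + 1, \omega_{5}; \omega_{0}, \omega_{3}) - (n + 1, \omega_{5}; \omega_{3}, \omega_{0}).
\]
These four 2-terms have pairwise distinct (index, colour) data and no two are negatives of each other, so the reduced form of $g(\gamma)$ has length exactly four.

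Next, for any single 3-term $\delta_3 = \varepsilon (n+1, u'; a_1, a_2, a_3)$, the definition of $f$ yields
\[
f(\delta_3) = -\varepsilon (n+1, u'; a_2, a_3) + \varepsilon (n+1, u'; a_1, a_3) - \varepsilon (n+1, u'; a_1, a_2),
\]
with the middle summand removed precisely when $a_1 = a_3$. All three summands share the index $u'$, and since $a_1 \neq a_2$ and $a_2 \neq a_3$ their colours are pairwise distinct, so no cancellation occurs. Hence $f(\delta_3)$ has length at most three in reduced form.

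To finish, assuming $k = 1$ would force $f(\delta_3) = -g(\gamma)$; but the left-hand side has length at most three while the right-hand side has length four, a contradiction, so $k \geq 2$. There is no real obstacle here: the argument is purely a length count, and the only thing needing verification is that the four 2-terms of $g(\gamma)$ are genuinely efficient, which is immediate from their indices $\omega_2$ and $\omega_5$.
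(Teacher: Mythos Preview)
Your argument is correct. It differs slightly from the paper's route: the paper observes that the four 2-terms of $g(\gamma)$ in formula~(\ref{eq:O6_f2_special}) split into two pairs with distinct indices $\omega_2$ and $\omega_5$, and since every 2-term of $f(\delta_i)$ inherits the single index of $\delta_i$, at least one $\delta_i$ is needed for each index, forcing $k \geq 2$. You instead compare lengths directly: $g(\gamma)$ has reduced length four, while $f$ of a single 3-term has reduced length at most three. Both arguments are elementary and immediate; the paper's index-based version is the one that scales to the later lemmas (e.g.\ Lemmas~\ref{lem:O6_f2_usual} and~\ref{lem:O6_f3_usual}), where one must track how many $\delta_i$ land at each index rather than just bound the total length. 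One cosmetic point: your final paragraph only explicitly rules out $k=1$, but $k=0$ is excluded by the same count (or simply by $g(\gamma)\neq 0$).
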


\begin{proof}
Since $g(\gamma) + f(\delta) = 0$ and reduced $g(\gamma)$ has two 2-terms of index $\omega_{2}$ (or $\omega_{5}$) (see the formula (\ref{eq:O6_f2_special})), at least one of $\delta_{3}, \delta_{4}, \dots, \delta_{k + 2}$ has index $\omega_{2}$ (or $\omega_{5}$).
We thus have $k \geq 2$.
\end{proof}

\begin{lemma}
\label{lem:O6_f2_special_2}
Let $\delta_{3}$ and $\delta_{4}$ be 3-terms of $(O_{6}, S)$ whose degrees are $n + 1$, and $\delta = \delta_{3} + \delta_{4}$.
Assume that $g(\gamma) + f(\delta) = 0$.
Then, we have the following cases.
\begin{itemize}
\item[(i)]
$\delta = + \> (n + 1, \omega_{2}; \omega_{0}, \omega_{3}, \omega_{0}) - (n + 1, \omega_{5}; \omega_{0}, \omega_{3}, \omega_{0})$.
\item[(ii)]
$\delta = + \> (n + 1, \omega_{2}; \omega_{0}, \omega_{3}, \omega_{0}) - (n + 1, \omega_{5}; \omega_{3}, \omega_{0}, \omega_{3})$.
\item[(iii)]
$\delta = + \> (n + 1, \omega_{2}; \omega_{3}, \omega_{0}, \omega_{3}) - (n + 1, \omega_{5}; \omega_{0}, \omega_{3}, \omega_{0})$.
\item[(iv)]
$\delta = + \> (n + 1, \omega_{2}; \omega_{3}, \omega_{0}, \omega_{3}) - (n + 1, \omega_{5}; \omega_{3}, \omega_{0}, \omega_{3})$.
\end{itemize}
In the case {\upshape (i)} or {\upshape (iv)}, we have $g(\delta) \neq 0$.
In the case {\upshape (ii)}, $\gamma + \delta$ is a 3-cycle of length four satisfying $\eta(\pi(\gamma + \delta)) = 0$.
In the case {\upshape (iii)}, $\gamma + \delta$ is a 3-boundary of length four.
\end{lemma}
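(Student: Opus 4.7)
The plan is to first pin down $\delta$ by splitting $f(\delta) = -g(\gamma)$ according to the indices appearing in formula (\ref{eq:O6_f2_special}), and then verify each of the four possibilities by direct computation. Since reduced $g(\gamma)$ consists of two 2-terms of index $\omega_2$ and two of index $\omega_5$, and $f$ preserves both degree and index, Lemma \ref{lem:O6_f2_special_geq2} already forces $\delta_3$ and $\delta_4$ to carry these two indices. After relabeling, we may assume $\delta_3$ has index $\omega_2$ with $f(\delta_3) = -(n+1, \omega_2; \omega_0, \omega_3) - (n+1, \omega_2; \omega_3, \omega_0)$ and $\delta_4$ has index $\omega_5$ with $f(\delta_4) = +(n+1, \omega_5; \omega_0, \omega_3) + (n+1, \omega_5; \omega_3, \omega_0)$. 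Applying Lemma \ref{lem:O6_ab_ba_1} to each then forces $\delta_3 = +\langle n+1, \omega_2; \omega_0, \omega_3 \rangle$ and $\delta_4 = -\langle n+1, \omega_5; \omega_0, \omega_3 \rangle$, each of which admits two expansions; the four resulting combinations are exactly the cases (i)--(iv) stated.

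The next step is to compute $g(\delta)$ in each case. Using $\omega_0^{\omega_3} = \omega_0$, $\omega_3^{\omega_0} = \omega_3$, $\omega_2^{\omega_0} = \omega_4 = \omega_5^{\omega_3}$, and $\omega_5^{\omega_0} = \omega_1 = \omega_2^{\omega_3}$, the middle term in each $g$-expansion vanishes (its two consecutive color entries coincide), leaving two surviving 2-terms. One finds that $g(+(n+1, \omega_2; \omega_0, \omega_3, \omega_0))$ and $g(-(n+1, \omega_5; \omega_3, \omega_0, \omega_3))$ are both $\omega_4$-indexed and cancel exactly; that $g(+(n+1, \omega_2; \omega_3, \omega_0, \omega_3))$ and $g(-(n+1, \omega_5; \omega_0, \omega_3, \omega_0))$ are both $\omega_1$-indexed and cancel exactly; and that the other two combinations mix $\omega_1$ with $\omega_4$ and therefore do not cancel. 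This shows $g(\delta) \neq 0$ in cases (i) and (iv), while $\gamma + \delta$ is a 3-cycle in cases (ii) and (iii) by Lemma \ref{lem:3-cycle_condition}(i).

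It remains to analyze the two 3-cycle cases. In case (ii), $\pi(\gamma + \delta) = 2(0, 3, 0) - 2(3, 0, 3)$; since every color listed in the definition of $\eta$ has first and third entries distinct, both $\eta(0, 3, 0)$ and $\eta(3, 0, 3)$ vanish, and Lemma \ref{lem:O6_weight} gives $\eta(\pi(\gamma + \delta)) = 0$. In case (iii), I would exhibit $\gamma + \delta = \partial(n, \omega_1; \omega_0, \omega_3, \omega_0, \omega_3)$ directly: the $f$-part of this 4-term reproduces $\gamma$ (the two middle summands vanish because a repeated $\omega_0$ or $\omega_3$ appears at consecutive positions), while the $g$-part reproduces $\delta$ in its case-(iii) form (again after the two middle summands vanish). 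The only real obstacle is the careful bookkeeping of which terms vanish and which cancel in the $g$-computations, all of which ultimately hinges on the small table of values $\omega_a^{\omega_b}$ used above.
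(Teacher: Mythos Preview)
Your approach is correct and essentially identical to the paper's: split $-g(\gamma)$ by index using formula~(\ref{eq:O6_f2_special}), apply Lemma~\ref{lem:O6_ab_ba_1} to each piece to obtain cases (i)--(iv), compute $g(\delta)$ directly, and in case~(iii) exhibit the explicit 4-chain $(n,\omega_1;\omega_0,\omega_3,\omega_0,\omega_3)$ whose boundary is $\gamma+\delta$.

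One small correction is needed in your treatment of case~(ii). Your claim that ``every color listed in the definition of $\eta$ has first and third entries distinct'' is false: the triples $(1^h,0^h,1^h)$, $(1^h,2^h,1^h)$, $(1^h,3^h,1^h)$, $(1^h,5^h,1^h)$ all appear in the value-$2$ list. The conclusion $\eta(0,3,0)=\eta(3,0,3)=0$ is nonetheless correct, since the first entry of any such triple lies in $\{1,2,4,5\}$ (the $H_0$-orbit of $1$), never $0$ or $3$; you should replace your justification with this observation or with a direct check against the list. The paper simply invokes Lemma~\ref{lem:O6_weight} without further comment.
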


\begin{proof}
Since $g(\gamma) + f(\delta) = 0$, in light of the formula (\ref{eq:O6_f2_special}), we may assume that
\begin{align*}
 f(\delta_{3}) & = - \> (n + 1, \omega_{2}; \omega_{0}, \omega_{3}) - (n + 1, \omega_{2}; \omega_{3}, \omega_{0}), \\
 f(\delta_{4}) & = + \> (n + 1, \omega_{5}; \omega_{0}, \omega_{3}) + (n + 1, \omega_{5}; \omega_{3}, \omega_{0}).
\end{align*}
Therefore, in light of Lemma \ref{lem:O6_ab_ba_1}, we have the cases (i)--(iv).
Obviously, we have $l(\gamma + \delta) = 4$.

In the case (i) or (iv), we have
\begin{align}
 g(\delta)
 & = \mp \> (n + 2, \omega_{1}; \omega_{0}, \omega_{3}) \mp (n + 2, \omega_{1}; \omega_{3}, \omega_{0}) \notag \\
 & \phantom{=} \ \pm (n + 2, \omega_{4}; \omega_{0}, \omega_{3}) \pm (n + 2, \omega_{4}; \omega_{3}, \omega_{0}), \label{eq:O6_f2_special_2}
\end{align}
where the upper one occurs in the case (i) and the lower one in the case (iv).
Obviously, we have $g(\delta) \neq 0$.

In the case (ii), since $g(\delta) = 0$, $\gamma + \delta$ is a 3-cycle.
Then, in light of Lemma \ref{lem:O6_weight}, we have $\eta(\pi(\gamma + \delta)) = 0$.

In the case (iii), we have $\gamma + \delta = \partial (n, \omega_{1}; \omega_{0}, \omega_{3}, \omega_{0}, \omega_{3})$.
\end{proof}

\begin{lemma}
\label{lem:O6_f2_special_2_geq2}
Let $\delta$ be the 3-chain {\upshape (i)} or {\upshape (iv)} in Lemma \ref{lem:O6_f2_special_2}.
Furthermore, let $\lambda_{5}, \lambda_{6}, \dots, \lambda_{k + 4}$ be 3-terms of $(O_{6}, S)$ whose degrees are $n + 2$, and $\lambda = \sum_{i = 5}^{k + 4} \lambda_{i}$.
Assume that $g(\delta) + f(\lambda) = 0$.
Then, we have $k \geq 2$.
\end{lemma}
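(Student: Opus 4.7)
The plan is to exploit the explicit description of $g(\delta)$ already supplied by formula (\ref{eq:O6_f2_special_2}) in Lemma \ref{lem:O6_f2_special_2}. In either of the two possible forms of $\delta$ (cases (i) and (iv) of that lemma), the reduced expansion of $g(\delta)$ consists of exactly four 2-terms of degree $n+2$: two of index $\omega_{1}$, with colors $(\omega_{0},\omega_{3})$ and $(\omega_{3},\omega_{0})$, and two of index $\omega_{4}$, with the same two colors; only the global signs differ between the two cases, while the index/color distribution is identical. In particular no internal cancellation is possible, and $g(\delta)$ genuinely contributes at both indices $\omega_{1}$ and $\omega_{4}$.

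The essential observation is that the homomorphism $f$ preserves the index: for any 3-term $\lambda_{i}$ of index $w$, every 2-term appearing in $f(\lambda_{i})$ also has index $w$. Consequently, if $k = 1$, all 2-terms of $f(\lambda_{5})$ share the single index of $\lambda_{5}$, and so $f(\lambda_{5})$ cannot equal $-g(\delta)$, whose reduced form involves two distinct indices $\omega_{1}$ and $\omega_{4}$. The case $k = 0$ is also ruled out, since then $\lambda = 0$ and $g(\delta) + f(\lambda) = 0$ would force $g(\delta) = 0$, contradicting Lemma \ref{lem:O6_f2_special_2}. Hence $k \geq 2$, as claimed.

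There is no substantive obstacle here: the lemma is essentially a bookkeeping corollary of the formula (\ref{eq:O6_f2_special_2}) obtained in the preceding lemma, combined with the index-invariance of $f$. The only minor point worth checking is that the four 2-terms in (\ref{eq:O6_f2_special_2}) do indeed survive in reduced form, which is immediate because within each of the two index blocks the two 2-terms have distinct colors.
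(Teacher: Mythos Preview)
Your proof is correct and takes essentially the same approach as the paper: both arguments use formula (\ref{eq:O6_f2_special_2}) to see that the reduced $g(\delta)$ has 2-terms at the two distinct indices $\omega_{1}$ and $\omega_{4}$, and then invoke the fact that $f$ preserves indices to conclude that at least one $\lambda_{i}$ is needed for each index. The paper simply refers back to the proof of Lemma~\ref{lem:O6_f2_special_geq2} rather than spelling this out.
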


\begin{proof}
In a similar way to the proof of Lemma \ref{lem:O6_f2_special_geq2}, in light of the formula (\ref{eq:O6_f2_special_2}), we have the claim.
\end{proof}

\begin{lemma}
\label{lem:O6_f2_special_2_2}
Let $\delta$ be the 3-chain {\upshape (i)} or {\upshape (iv)} in Lemma \ref{lem:O6_f2_special_2}.
Furthermore, let $\lambda_{5}$ and $\lambda_{6}$ be 3-terms of $(O_{6}, S)$ whose degrees are $n + 2$, and $\lambda = \lambda_{5} + \lambda_{6}$.
Assume that $g(\delta) + f(\lambda) = 0$.
Then, we have the following cases.
\begin{itemize}
\item[(I)]
$\lambda = \mp \> (n + 2, \omega_{1}; \omega_{0}, \omega_{3}, \omega_{0}) \pm (n + 2, \omega_{4}; \omega_{0}, \omega_{3}, \omega_{0})$.
\item[(II)]
$\lambda = \mp \> (n + 2, \omega_{1}; \omega_{0}, \omega_{3}, \omega_{0}) \pm (n + 2, \omega_{4}; \omega_{3}, \omega_{0}, \omega_{3})$.
\item[(III)]
$\lambda = \mp \> (n + 2, \omega_{1}; \omega_{3}, \omega_{0}, \omega_{3}) \pm (n + 2, \omega_{4}; \omega_{0}, \omega_{3}, \omega_{0})$.
\item[(IV)]
$\lambda = \mp \> (n + 2, \omega_{1}; \omega_{3}, \omega_{0}, \omega_{3}) \pm (n + 2, \omega_{4}; \omega_{3}, \omega_{0}, \omega_{3})$.
\end{itemize}
Here, the upper ones are for the 3-chain {\upshape (i)} and the lower ones for the 3-chain {\upshape (iv)}.
In the case {\upshape (I)} or {\upshape (IV)}, we have $g(\lambda) \neq 0$.
In the case {\upshape (II)} or {\upshape (III)}, $\gamma + \delta + \lambda$ is a 3-cycle of length six satisfying $\eta(\pi(\gamma + \delta + \lambda)) = 0$.
\end{lemma}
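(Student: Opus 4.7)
The plan is to compare $f(\lambda)$ with $-g(\delta)$ as given by (\ref{eq:O6_f2_special_2}). The right-hand side is a sum of four efficient 2-terms, two of index $\omega_{1}$ and two of index $\omega_{4}$, so by Lemma \ref{lem:f-/g-connectedness}(ii) one of $\lambda_{5},\lambda_{6}$ carries index $\omega_{1}$ and the other $\omega_{4}$, and their $f$-images reproduce the two respective pairs of the form $\pm[(\omega_{0},\omega_{3})+(\omega_{3},\omega_{0})]$. Applying Lemma \ref{lem:O6_ab_ba_1} to each pair then forces $\lambda_{5} = \mp\langle n+2,\omega_{1};\omega_{0},\omega_{3}\rangle$ and $\lambda_{6} = \pm\langle n+2,\omega_{4};\omega_{0},\omega_{3}\rangle$. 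Since each angle bracket admits two expansions, this produces exactly the four cases (I)--(IV).

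The next step is a direct computation of $g$ on each of the four 3-terms $\langle n+2,w;\omega_{0},\omega_{3}\rangle$ for $w\in\{\omega_{1},\omega_{4}\}$. Because the axes through $\omega_{0}$ and $\omega_{3}$ both fix precisely the two poles $\omega_{0},\omega_{3}$ of the octahedron, the middle summand of $g$ is degenerate and drops, and the two surviving summands share a common index, equal to $\omega_{1}^{\omega_{0}}=\omega_{2}$, $\omega_{1}^{\omega_{3}}=\omega_{5}$, $\omega_{4}^{\omega_{0}}=\omega_{5}$, or $\omega_{4}^{\omega_{3}}=\omega_{2}$ according to $w$ and the twist base. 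In the ``mismatched'' cases (I) and (IV), the two contributions to $g(\lambda)$ land at distinct indices (one at $\omega_{2}$ and the other at $\omega_{5}$) and so cannot cancel, giving $g(\lambda)\neq 0$. In the ``matched'' cases (II) and (III), they share a common index ($\omega_{2}$ in (II) and $\omega_{5}$ in (III)) and cancel identically, yielding $g(\lambda)=0$.

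For cases (II) and (III), Lemma \ref{lem:3-cycle_condition}(i) combined with $f(\gamma)=0$ and the hypotheses $g(\gamma)+f(\delta)=0$ and $g(\delta)+f(\lambda)=0$ then immediately shows that $\gamma+\delta+\lambda$ is a 3-cycle; its six 3-terms are efficient because they are distinguished by degree, index, or color, so its length is six. Finally, every 3-term of $\gamma+\delta+\lambda$ has color $(\omega_{0},\omega_{3},\omega_{0})$ or $(\omega_{3},\omega_{0},\omega_{3})$, and a glance at the definition of $\eta$ in Section \ref{sec:preliminaries} shows that neither of these triples lies in the support of $\eta$ (every nonzero pattern there has mutually distinct first and third entries or involves $\omega_{1}$ or $\omega_{2}$ in the middle). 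Hence $\eta(\pi(\gamma+\delta+\lambda))=0$.

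The only place requiring genuine care is the explicit computation of $g$ on the angle-bracket 3-terms: one must correctly identify which of the four sign-and-form combinations produce matching indices ($\omega_{2}$ or $\omega_{5}$) on $g(\lambda_{5})$ and $g(\lambda_{6})$ and which produce clashing indices. Once this bookkeeping is in place, the division into the four cases and their stated consequences follows without further effort.
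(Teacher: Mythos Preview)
Your argument follows the paper's proof essentially line for line: split $-g(\delta)$ by index using (\ref{eq:O6_f2_special_2}), apply Lemma~\ref{lem:O6_ab_ba_1} to each half to obtain the four cases, compute $g(\lambda)$ and separate (I),(IV) from (II),(III) by whether the two resulting indices coincide, and conclude with the cycle and weight statements. The computations are correct.

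There is one point to tighten. Your justification that $\eta(\pi(\gamma+\delta+\lambda))=0$ appeals directly to the table defining $\eta$, asserting that ``every nonzero pattern there has mutually distinct first and third entries or involves $\omega_{1}$ or $\omega_{2}$ in the middle.'' This is not accurate as stated: for instance $(1^{h},3^{h},1^{h})$ with $h=e$ gives $(1,3,1)$, which lies in the support of $\eta$ yet has equal first and third entries and middle entry $3$. More fundamentally, the symbols $\omega_{0},\omega_{3}$ denote a general antipodal pair of vertices, not the literal elements $0,3$, so you cannot match $(\omega_{0},\omega_{3},\omega_{0})$ against the table without further work. The paper handles this by invoking Lemma~\ref{lem:O6_weight}: since $\gamma+\delta+\lambda$ is a $3$-cycle, one may replace each $\omega_{i}$ by $i$ when evaluating $\eta$, and then $\eta(0,3,0)=\eta(3,0,3)=0$ is immediate from the table. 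You should do the same; alternatively, you could verify directly that $\eta(v,[v+3],v)=0$ for every $v\in O_{6}$, but Lemma~\ref{lem:O6_weight} is the intended shortcut.
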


\begin{proof}
Since $g(\delta) + f(\lambda) = 0$, in light of the formula (\ref{eq:O6_f2_special_2}), we may assume that
\begin{align*}
 f(\lambda_{5}) & = \pm \> (n + 2, \omega_{1}; \omega_{0}, \omega_{3}) \pm (n + 2, \omega_{1}; \omega_{3}, \omega_{0}), \\
 f(\lambda_{6}) & = \mp (n + 2, \omega_{4}; \omega_{0}, \omega_{3}) \mp (n + 2, \omega_{4}; \omega_{3}, \omega_{0}).
\end{align*}
Therefore, in light of Lemma \ref{lem:O6_ab_ba_1}, we have the cases (I)--(IV).
Obviously, we have $l(\gamma + \delta + \lambda) = 6$.

In the case (I) or (IV), we have
\begin{align}
 g(\lambda)
 & = \mp \> (n + 3, \omega_{2}; \omega_{0}, \omega_{3}) \mp (n + 3, \omega_{2}; \omega_{3}, \omega_{0}) \notag \\
 & \phantom{=} \ \pm (n + 3, \omega_{5}; \omega_{0}, \omega_{3}) \pm (n + 3, \omega_{5}; \omega_{3}, \omega_{0}), \label{eq:O6_f2_special_2_2}
\end{align}
where the upper one occurs in the case (i)-(I) or (iv)-(IV) and the lower one in the case (i)-(IV) or (iv)-(I).
Obviously, we have $g(\lambda) \neq 0$.

In the case (II) or (III), since $g(\lambda) = 0$, $\gamma + \delta + \lambda$ is a 3-cycle.
Then, in light of Lemma \ref{lem:O6_weight}, we have $\eta(\pi(\gamma + \delta + \lambda)) = 0$.
\end{proof}

\begin{lemma}
\label{lem:O6_f2_special_2_2_geq2}
Let $\lambda$ be the 3-chain {\upshape (I)} or {\upshape (IV)} in Lemma \ref{lem:O6_f2_special_2_2}.
Furthermore, let $\mu_{7}, \mu_{8}, \dots, \mu_{k + 6}$ be 3-terms of $(O_{6}, S)$ whose degrees are $n + 3$, and $\mu = \sum_{i = 7}^{k + 6} \mu_{i}$.
Assume that $g(\lambda) + f(\mu) = 0$.
Then, we have $k \geq 2$.
\end{lemma}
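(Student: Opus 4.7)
The plan is to mimic verbatim the argument of Lemmas \ref{lem:O6_f2_special_geq2} and \ref{lem:O6_f2_special_2_geq2}, since the present lemma is the obvious next step in the same inductive pattern: at each stage we have a 3-chain whose $g$-image splits into four 2-terms with two distinct indices, and the assumption $g(\lambda)+f(\mu)=0$ forces $f(\mu)$ to absorb both groups.

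Concretely, I would start from the explicit formula (\ref{eq:O6_f2_special_2_2}) for $g(\lambda)$, which writes $g(\lambda)$ as a sum of two 2-terms of index $\omega_{2}$ and two 2-terms of index $\omega_{5}$, all of which are already reduced (they are pairwise non-cancelling since the pairs $(\omega_{0},\omega_{3})$ and $(\omega_{3},\omega_{0})$ are distinct and the signs pattern ensures no internal cancellation). Then I would invoke Lemma \ref{lem:f-/g-connectedness}(ii): since $f$ preserves the index, every 2-term appearing in $f(\mu_{i})$ has the same index as $\mu_{i}$. Hence cancelling the $\omega_{2}$-indexed 2-terms of $g(\lambda)$ requires at least one $\mu_{i}$ of index $\omega_{2}$, and cancelling the $\omega_{5}$-indexed 2-terms requires at least one $\mu_{j}$ of index $\omega_{5}$.

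Since $\omega_{2}\neq\omega_{5}$, these indices are distinct, so $\mu_{i}\neq\mu_{j}$, giving $k\geq 2$. There is no real obstacle: the only thing to double-check is that the four 2-terms in (\ref{eq:O6_f2_special_2_2}) are indeed efficient (so that none cancels before we appeal to $f(\mu)$), and this is immediate from inspecting the signs and colors, exactly as in the analogous step inside the proof of Lemma \ref{lem:O6_f2_special_2_geq2}. Accordingly the proof can be stated in a single short paragraph referring back to formula (\ref{eq:O6_f2_special_2_2}) and to Lemma \ref{lem:f-/g-connectedness}(ii).
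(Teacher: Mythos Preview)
Your proposal is correct and matches the paper's own approach: the paper's proof is a one-line reference to Lemma~\ref{lem:O6_f2_special_geq2} together with formula~(\ref{eq:O6_f2_special_2_2}), exactly the index-splitting argument you describe. The only cosmetic point is that the fact ``$f$ preserves the index'' is immediate from the definition of $f$ rather than a consequence of Lemma~\ref{lem:f-/g-connectedness}(ii), so you need not cite that lemma.
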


\begin{proof}
In a similar way to the proof of Lemma \ref{lem:O6_f2_special_geq2}, in light of the formula (\ref{eq:O6_f2_special_2_2}), we have the claim.
\end{proof}

\begin{lemma}
\label{lem:O6_f2_special_2_3}
Let $\delta$ be the 3-chain {\upshape (i)} or {\upshape (iv)} in Lemma \ref{lem:O6_f2_special_2}.
Furthermore, let $\lambda_{5}, \lambda_{6}, \lambda_{7}$ be 3-terms of $(O_{6}, S)$ whose degrees are $n + 2$, and $\lambda = \lambda_{5} + \lambda_{6} + \lambda_{7}$.
Assume that $g(\delta) + f(\lambda) = 0$.
Then, we have $g(\lambda) \neq 0$.
\end{lemma}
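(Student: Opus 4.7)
The plan is to mimic the strategy of Lemmas \ref{lem:O6_f2_special_2_geq2} and \ref{lem:O6_f2_special_2_2}, exploiting the very restricted form of $g(\delta)$ given by formula (\ref{eq:O6_f2_special_2}). That formula expresses $-g(\delta)$ at degree $n+2$ as four 2-terms: two of index $\omega_{1}$ and colors $(\omega_{0},\omega_{3}),(\omega_{3},\omega_{0})$, and two of index $\omega_{4}$ with the opposite signs. Because $f$ preserves the index of a 3-term, the identity $f(\lambda)=-g(\delta)$ forces each of $\lambda_{5},\lambda_{6},\lambda_{7}$ to have index in $\{\omega_{1},\omega_{4}\}$, and to partition by index into the pattern $(1,2)$ or $(2,1)$ (any $(3,0)$ or $(0,3)$ split is ruled out because both indices carry nonzero 2-terms, and by Lemma \ref{lem:f-connected1} no nonzero 3-term has $f$-image $0$).

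In each partition, the singleton piece satisfies the hypothesis of Lemma \ref{lem:O6_ab_ba_1} and is therefore a bigon $\mp\langle n+2,\omega_{i};\omega_{0},\omega_{3}\rangle$, where $i\in\{1,4\}$, with two possible shapes. The two-term piece satisfies the hypothesis of Lemma \ref{lem:O6_ab_ba_2}, so it equals either $\pm(n+2,\omega_{j};c,\omega_{0},\omega_{3})\pm(n+2,\omega_{j};c,\omega_{3},\omega_{0})$ or $\pm(n+2,\omega_{j};\omega_{0},\omega_{3},c)\pm(n+2,\omega_{j};\omega_{3},\omega_{0},c)$ with $j$ the other element of $\{1,4\}$ and $c\in\{\omega_{1},\omega_{2},\omega_{4},\omega_{5}\}$. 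Doing this for $\delta$ in case (i) and then in case (iv) of Lemma \ref{lem:O6_f2_special_2}, combined with the two shape choices for the bigon and the two forms and four values of $c$ for the pair, gives a completely finite list of candidate chains $\lambda$ to analyze.

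For each candidate I will compute $g(\lambda)$ directly using the formula for $g$ together with the antipodal identities $\omega_{0}^{\omega_{3}}=\omega_{0}$ and $\omega_{3}^{\omega_{0}}=\omega_{3}$. The bigon singleton contributes $\mp(n+3,\omega_{i}^{\omega_{0}};\omega_{3},\omega_{0})\mp(n+3,\omega_{i}^{\omega_{0}};\omega_{0},\omega_{3})$ or the analogous expression with $\omega_{i}^{\omega_{3}}$, while the two-term piece exhibits a two-by-two cancellation of the mixed 2-terms and leaves either $\pm(n+3,\omega_{j}^{c};\omega_{0},\omega_{3})\pm(n+3,\omega_{j}^{c};\omega_{3},\omega_{0})$ or $\pm(n+3,\omega_{j}^{c};\omega_{0}^{c},\omega_{3}^{c})\pm(n+3,\omega_{j}^{c};\omega_{3}^{c},\omega_{0}^{c})$. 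Nonvanishing of $g(\lambda)$ therefore reduces to a check of the octahedral rotation table (Table \ref{tab:O6_f_and_g} or, equivalently, the standard coordinate realization): the index of the singleton contribution is $\omega_{2}$ or $\omega_{5}$, whereas for $c\in\{\omega_{1},\omega_{2},\omega_{4},\omega_{5}\}$ one has $\omega_{j}^{c}\notin\{\omega_{2},\omega_{5}\}$ in the first form, and in the second form $\{\omega_{0}^{c},\omega_{3}^{c}\}\ne\{\omega_{0},\omega_{3}\}$ since $\omega_{0}^{c}\ne\omega_{0},\omega_{3}$. In either situation, a 2-term of $g(\lambda)$ survives.

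The main obstacle is entirely bookkeeping: tracking signs across the four choices for $\delta$'s type and the singleton/pair bigon shape, and verifying the non-coincidence of the octahedral rotations in each sub-case. All of this is mechanical once the partition analysis in the first paragraph is in place; no new geometric input beyond the rotation table and the antipodal identities is required.
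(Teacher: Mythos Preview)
Your proposal is correct and follows essentially the same approach as the paper's proof: both split $\lambda$ by index into a singleton and a pair using formula~(\ref{eq:O6_f2_special_2}), apply Lemmas~\ref{lem:O6_ab_ba_1} and~\ref{lem:O6_ab_ba_2} to enumerate the finitely many candidates, and then check $g(\lambda)\neq 0$ case by case. Your write-up is in fact more explicit than the paper's, which simply states ``it is routine to see that $g(\lambda)\neq 0$ in each case''; your index/color mismatch argument (the singleton's $g$-contribution lives at index $\omega_{2}$ or $\omega_{5}$ while the pair's lives at $\omega_{j}^{c}\notin\{\omega_{2},\omega_{5}\}$, and in the second form the colors $\{\omega_{0}^{c},\omega_{3}^{c}\}$ differ from $\{\omega_{0},\omega_{3}\}$) makes that routine check transparent.
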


\begin{proof}
Let $\varepsilon = \pm 1$ (or $\mp 1$), $\omega_{p} = \omega_{1}$ (or $\omega_{4}$), and $\omega_{q} = \omega_{4}$ (or $\omega_{1}$).
Then, since $g(\delta) + f(\lambda) = 0$, in light of the formula (\ref{eq:O6_f2_special_2}), we may assume that
\begin{align*}
 f(\lambda_{5}) & = + \> \varepsilon (n + 2, \omega_{p}; \omega_{0}, \omega_{3}) + \varepsilon (n + 2, \omega_{p}; \omega_{3}, \omega_{0}), \\
 f(\lambda_{6} + \lambda_{7}) & = - \> \varepsilon (n + 2, \omega_{q}; \omega_{0}, \omega_{3}) - \varepsilon (n + 2, \omega_{q}; \omega_{3}, \omega_{0}).
\end{align*}
Therefore, in light of Lemmas \ref{lem:O6_ab_ba_1} and \ref{lem:O6_ab_ba_2}, we have the following cases with some $a \in O_{6}$ ($a \neq \omega_{0}, \omega_{3}$).
\begin{itemize}
\item
$\lambda = \mp \> \langle n + 2, \omega_{1}; \omega_{0}, \omega_{3} \rangle \pm (n + 2, \omega_{4}; a, \omega_{0}, \omega_{3}) \pm (n + 2, \omega_{4}; a, \omega_{3}, \omega_{0})$.
\item
$\lambda = \mp \> \langle n + 2, \omega_{1}; \omega_{0}, \omega_{3} \rangle \pm (n + 2, \omega_{4}; \omega_{0}, \omega_{3}, a) \pm (n + 2, \omega_{4}; \omega_{3}, \omega_{0}, a)$.
\item
$\lambda = \pm \> \langle n + 2, \omega_{4}; \omega_{0}, \omega_{3} \rangle \mp (n + 2, \omega_{1}; a, \omega_{0}, \omega_{3}) \mp (n + 2, \omega_{1}; a, \omega_{3}, \omega_{0})$.
\item
$\lambda = \pm \> \langle n + 2, \omega_{4}; \omega_{0}, \omega_{3} \rangle \mp (n + 2, \omega_{1}; \omega_{0}, \omega_{3}, a) \mp (n + 2, \omega_{1}; \omega_{3}, \omega_{0}, a)$.
\end{itemize}
It is routine to see that we have $g(\lambda) \neq 0$ in each case.
\end{proof}

\begin{lemma}
\label{lem:O6_f2_special_2+}
Let $\delta$ be the 3-chain {\upshape (i)} or {\upshape (iv)} in Lemma \ref{lem:O6_f2_special_2}.
Furthermore, let $\lambda_{5}, \lambda_{6}, \dots, \lambda_{k + 4}$ be 3-terms of $(O_{6}, S)$ whose degrees are $n + 1$, and $\lambda = \sum_{i = 5}^{k + 4} \lambda_{i}$.
Assume that $\gamma + \delta + \lambda$ is a 3-cycle of length $k + 4$.
Then, we have $k \geq 4$.
\end{lemma}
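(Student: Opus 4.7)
The plan is to convert the 3-cycle condition on $\gamma + \delta + \lambda$ into explicit equations for $\lambda$ and then rule out $k \leq 3$ by a short case analysis.

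As in the proof of Lemma \ref{lem:O6_f2_special}, I take $a = \omega_0$, $b = \omega_3$, $u = \omega_1$ so that $\gamma$ and $\delta$ are both explicit. Since the degrees of $\gamma + \delta + \lambda$ lie in $\{n, n+1\}$, Lemma \ref{lem:3-cycle_condition} gives $f(\gamma) = 0$, $g(\gamma) + f(\delta + \lambda) = 0$, and $g(\delta + \lambda) = 0$. A direct calculation with formula (\ref{eq:O6_f2_special}) and the explicit $\delta$ shows $g(\gamma) + f(\delta) = 0$ in both of cases (i) and (iv), so the conditions on $\lambda$ collapse to $f(\lambda) = 0$ together with $g(\lambda) = -g(\delta)$, the right-hand side being the non-zero 4-term expression (\ref{eq:O6_f2_special_2}) supported at indices $\omega_1$ and $\omega_4$.

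With this setup I exclude $k \in \{0, 1, 2, 3\}$ in turn. The cases $k = 0$ and $k = 1$ fail at once: $g(\lambda) = 0 \neq -g(\delta)$ in the first, and $f(\lambda) \neq 0$ by Lemma \ref{lem:f-connected1} in the second. For $k = 2$, Lemmas \ref{lem:f-connected1} and \ref{lem:f-connected2} force $\lambda = +(n+1, v; c, d, c) - (n+1, v; d, c, d)$ for some $v$ and distinct $c, d$. From Lemmas \ref{lem:O6_f2_usual} and \ref{lem:O6_f2_special}, $g(\lambda)$ is a 6-term block if $d \neq [c+3]$, vanishes if $d = [c+3]$ and $v \in \{c, d\}$, and is a 4-term block analogous to (\ref{eq:O6_f2_special}) if $d = [c+3]$ and $v \notin \{c, d\}$. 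Matching the 4-term target $-g(\delta)$ forces the last case, $\{c, d\} = \{\omega_0, \omega_3\}$, and inspecting the two resulting choices of $v$ shows that $\lambda$ shares a summand with $\delta$, shrinking the length of $\gamma + \delta + \lambda$ strictly below $k + 4$, a contradiction.

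The main obstacle is the case $k = 3$. Here no size-1 piece of $\lambda$ can be $f$-connected, so $\lambda$ must itself be a single $f$-connected triple, one of the two shapes in Lemma \ref{lem:f-connected3}. Using the formulae (\ref{eq:O6_f3_special_i})--(\ref{eq:O6_f3_usual_ii}) for $g$ of such triples, the requirement that $g(\lambda)$ reproduce the 4-term $-g(\delta)$ at indices $\{\omega_1, \omega_4\}$ forces $b = [a+3]$, and then $\{u^a, u^c\} = \{\omega_1, \omega_4\}$. The fixed-point rule that $a^b = a$ iff $b \in \{a, [a+3]\}$, combined with the explicit $\pi/2$-rotation action on each equator, leaves only a small handful of triples $(u, a, c)$ admissible, and for each surviving candidate I verify that either $\lambda$ shares a summand with $\delta$ (hence a length defect) or $g(\lambda) = -g(\delta)$ fails outright. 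This combinatorial bookkeeping on the specific quandle structure of $O_6$ is where the real work sits.
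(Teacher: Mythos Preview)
Your proposal is correct and follows essentially the same route as the paper: reduce to $f(\lambda)=0$ and $g(\lambda)=-g(\delta)$, dispose of $k\le 2$ via Lemmas~\ref{lem:f-connected1}--\ref{lem:f-connected2} and a length defect, and for $k=3$ force $b=[a+3]$ so that $g(\lambda)$ is a four-term block at indices $v^{a},v^{c}$. The only difference is the endgame at $k=3$: where you propose to enumerate the admissible triples $(v,a,c)$ with $\{v^{a},v^{c}\}=\{\omega_{1},\omega_{4}\}$ and check each, the paper observes directly that $c\neq[a+3]$ (since $c\neq b$) already forces $v^{c}\neq[v^{a}+3]$ in $O_{6}$, so the set of candidates is empty and no case-check is needed.
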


\begin{proof}
Since $f(\lambda) = 0$ by Lemma \ref{lem:3-cycle_condition} (i), we have $k \geq 2$ by Lemma \ref{lem:f-connected1}.

Assume that $k = 2$.
Then, in light of Lemma \ref{lem:f-connected2}, we have
\[
 \lambda = \pm \> (v; a, b, a) \mp (v; b, a, b)
\]
with some $v \in O_{6}$ and distinct $a, b \in O_{6}$.
If $b = [a + 3]$ and $v = a$ or $b$, in light of Lemma \ref{lem:O6_f2_special}, we have $g(\lambda) = 0$.
Therefore, we have $g(\delta + \lambda) \neq 0$ contradicting to Lemma \ref{lem:3-cycle_condition} (iii).
Thus, we assume that $b \neq [a + 3]$ or $v \neq a, b$.
Immediately, we have
\begin{align*}
 g(\lambda)
 & = \pm \> (n + 2, v^{a}; b, a) \> \uline{\mp \> (n + 2, v^{b}; a^{b}, a)} \pm (n + 2, v^{a}; a, b^{a}) \\
 & \phantom{=} \ \mp (n + 2, v^{b}; a, b) \> \uline{\pm \> (n + 2, v^{a}; b^{a}, b)} \mp (n + 2, v^{b}; b, a^{b}).
\end{align*}
In the formula, the underlined 2-terms vanish if and only if $b = [a + 3]$, and the other 2-terms never vanish.
Therefore, since $g(\delta + \lambda) = 0$ by Lemma \ref{lem:3-cycle_condition} (iii), in light of the formula (\ref{eq:O6_f2_special_2}), we have the following cases.
\begin{itemize}
\item
$v = \omega_{2}$, $a = \omega_{3}$ and $b = \omega_{0}$.
\item
$v = \omega_{5}$, $a = \omega_{0}$ and $b = \omega_{3}$.
\end{itemize}
In each case, we have
\[
 \delta + \lambda =
 \begin{cases}
  + \> (\omega_{2}; \omega_{0}, \omega_{3}, \omega_{0}) - (\omega_{5}; \omega_{3}, \omega_{0}, \omega_{3}) \ \text{or} \\
  + \> (\omega_{2}; \omega_{3}, \omega_{0}, \omega_{3}) - (\omega_{5}; \omega_{0}, \omega_{3}, \omega_{0}).
 \end{cases}
\]
It contradicts to our assumption $l(\gamma + \delta + \lambda) = 6$.

Assume that $k = 3$.
Then, in light of Lemma \ref{lem:f-connected3}, we have the following cases with some $\varepsilon \in \{ + 1, - 1 \}$, $v \in O_{6}$, and mutually distinct $a, b, c \in O_{6}$:
\begin{itemize}
\item[(i)]
$\lambda = + \> \varepsilon (n + 1, v; a, b, a) - \varepsilon (n + 1, v; c, a, b) - \varepsilon (n + 1, v; c, b, a)$, and
\item[(ii)]
$\lambda = + \> \varepsilon (n + 1, v; a, b, a) - \varepsilon (n + 1, v; a, b, c) - \varepsilon (n + 1, v; b, a, c)$.
\end{itemize}
Since $g(\delta + \lambda) = 0$, in light of the formula (\ref{eq:O6_f2_special_2}), reduced $g(\lambda)$ has four 2-terms.
Therefore, in light of the argument in the proof of Lemma \ref{lem:O6_f3_usual}, we have $b = [a + 3]$.\footnote{We note that Lemma \ref{lem:O6_f3_usual} was proved independent from the claim of Lemma \ref{lem:O6_f2_special_2-5}.}
Obviously, we respectively have the following formulae:
\begin{itemize}
\item[(i')]
$g(\lambda) = + \> \varepsilon (n + 2, v^{a}; b, a) + \> \varepsilon (n + 2, v^{a}; a, b)$ \\[0.5ex]
$\phantom{g(\lambda) =} - \varepsilon (n + 2, v^{c}; a, b) - \varepsilon (n + 2, v^{c}; b, a)$, and \\[-2ex]
\item[(ii')]
$g(\lambda) = + \> \varepsilon (n + 2, v^{a}; b, a) + \> \varepsilon (n + 2, v^{a}; a, b)$ \\[0.5ex]
$\phantom{g(\lambda) =} - \varepsilon (n + 2, v^{c}; a^{c}, b^{c}) - \varepsilon (n + 2, v^{c}; b^{c}, a^{c})$.
\end{itemize}
Since $g(\delta + \lambda) = 0$, we have $v^{a} = \omega_{1}$ (or $\omega_{4}$) by the formula (\ref{eq:O6_f2_special_2}).
On the other hand, since $c \neq [a + 3]$, we have $v^{c} \neq \omega_{4}$ (or $\omega_{1}$).
Therefore, we have $g(\delta + \lambda) \neq 0$ contradicting to Lemma \ref{lem:3-cycle_condition} (iii).
\end{proof}

\begin{lemma}
\label{lem:O6_f2_special_2++}
Let $\delta$ be the 3-chain {\upshape (i)} or {\upshape (iv)} in Lemma \ref{lem:O6_f2_special_2}.
Furthermore, let $\lambda_{5}, \lambda_{6}, \dots, \lambda_{j + 4}$ be 3-terms of $(O_{6}, S)$ whose degrees are $n + 1$, $\lambda = \sum_{i = 5}^{j + 4} \lambda_{i}$, $\mu_{j + 5}, \mu_{j + 6}, \dots, \mu_{j + k + 4}$ 3-terms of $(O_{6}, S)$ whose degrees are at least $n + 2$, and $\mu = \sum_{i = j + 5}^{j + k + 4} \mu_{i}$.
Assume that $\gamma + \delta + \lambda + \mu$ is a 3-cycle of length $j + k + 4$.
Then, we have $j \geq 2$ and $k \geq 2$.
\end{lemma}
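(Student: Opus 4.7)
The plan is to extract $j \geq 2$ from the 3-cycle condition at degree $n+1$ and $k \geq 2$ from the 3-cycle condition at the maximum degree of $\mu$, using Lemma \ref{lem:f-connected1} together with the $g$-analog of its non-vanishing conclusion.

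First, I would apply Lemma \ref{lem:3-cycle_condition} (i) at degree $n+1$. Since $\gamma$ is concentrated at degree $n$ and $\mu$ sits entirely at degrees $\geq n+2$, only $g(\gamma)$, $f(\delta)$ and $f(\lambda)$ contribute, giving $g(\gamma) + f(\delta) + f(\lambda) = 0$. Because $\delta$ is taken to be one of cases (i) or (iv) in Lemma \ref{lem:O6_f2_special_2}, the relation $g(\gamma) + f(\delta) = 0$ is built into the very definition of those two cases, and the equation collapses to $f(\lambda) = 0$. Lemma \ref{lem:f-connected1} forbids $f$ from vanishing on a single 3-term, so $j \neq 1$; since the list $\lambda_{5}, \dots, \lambda_{j+4}$ is nonempty, we conclude $j \geq 2$.

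Second, let $m$ denote the maximum degree appearing in $\mu$, which by assumption satisfies $m \geq n+2$ and therefore strictly exceeds the degrees of every term in $\gamma + \delta + \lambda$. Hence $m$ is the maximum degree of the entire 3-cycle, and Lemma \ref{lem:3-cycle_condition} (iii) yields $\sum_{\mu_{i} \in T_{m}(\mu)} g(\mu_{i}) = 0$. By Lemma \ref{lem:f-/g-connectedness} (i) applied to a single 3-term $\mu_{i}$, the condition $g(\mu_{i}) = 0$ is equivalent to $f(\overline{\mu_{i}}) = 0$, which is ruled out by Lemma \ref{lem:f-connected1}; thus $g$ never vanishes on a single 3-term. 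It follows that $|T_{m}(\mu)| \geq 2$, which in particular gives $k \geq 2$.

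I do not anticipate any substantive obstacle beyond cleanly tracking which terms live at which degree. The one point deserving care is the identity $g(\gamma) + f(\delta) = 0$ baked into cases (i) and (iv) of Lemma \ref{lem:O6_f2_special_2}, since this is precisely what isolates the equation $f(\lambda) = 0$ at degree $n+1$ and lets the non-vanishing statements of Lemma \ref{lem:f-connected1} do the rest, both directly for $f$ and, via duality, for $g$.
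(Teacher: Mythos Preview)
Your proof is correct and follows essentially the same approach as the paper: for $j \geq 2$ you isolate $f(\lambda) = 0$ at degree $n+1$ using $g(\gamma) + f(\delta) = 0$ and apply Lemma~\ref{lem:f-connected1}, and for $k \geq 2$ you use Lemma~\ref{lem:3-cycle_condition}~(iii) at the maximum degree together with Lemmas~\ref{lem:f-/g-connectedness}~(i) and~\ref{lem:f-connected1}, exactly as the paper does.
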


\begin{proof}
As mentioned in the proof of Lemma \ref{lem:O6_f2_special_2+}, we have $j \geq 2$.
Furthermore, in light of Lemmas \ref{lem:3-cycle_condition} (iii), \ref{lem:f-/g-connectedness} (i) and \ref{lem:f-connected1}, at least two of $\mu_{j + 5}, \mu_{j + 6}, \dots, \mu_{j + k + 4}$ have the maximal degree of $\gamma + \delta + \lambda + \mu$.
We thus have $k \geq 2$.
\end{proof}

\begin{lemma}
\label{lem:O6_f2_special_2_more}
Let $\delta$ be the 3-chain {\upshape (ii)} or {\upshape (iii)} in Lemma \ref{lem:O6_f2_special_2}.
Furthermore, let $\lambda_{5}, \lambda_{6}, \dots, \lambda_{k + 4}$ be 3-terms of $(O_{6}, S)$ whose degrees are at least $n + 1$, and $\lambda = \sum_{i = 5}^{k + 4} \lambda_{i}$.
Assume that $\gamma + \delta + \lambda$ is a 3-cycle of length $k + 4$ satisfying $\eta(\pi(\gamma + \delta + \lambda)) \neq 0$.
Then, we have $k \geq 4$.
\end{lemma}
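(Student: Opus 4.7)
The plan is to exploit the observation that in both cases (ii) and (iii) of Lemma \ref{lem:O6_f2_special_2}, the 3-chain $\gamma + \delta$ is already a 3-cycle with $\eta(\pi(\gamma + \delta)) = 0$. This reduces the lemma to a lower-bound on the length of a 3-cycle $\lambda$ whose $\eta$-weight is non-zero. Proposition \ref{prop:O6_list} is not available at this stage, because its proof relies on Lemma \ref{lem:O6_f2_special_2-5} which we are in the middle of proving, so the case analysis must be done by hand using only lemmas from Sections \ref{sec:tools}--\ref{sec:lower_bound_of_eta}.

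First I would verify $\eta(\pi(\gamma + \delta)) = 0$ directly. In case (iii) the colors cancel pairwise and $\pi(\gamma + \delta) = 0$; in case (ii) one obtains $\pi(\gamma + \delta) = 2(\omega_{0}, \omega_{3}, \omega_{0}) - 2(\omega_{3}, \omega_{0}, \omega_{3})$, and each of these triples lies in the ``otherwise'' clause of the definition of $\eta$. Combined with $l(\gamma + \delta) = 4$ and the hypothesis $l(\gamma + \delta + \lambda) = k + 4$, this implies that $\lambda$ itself is a 3-cycle with $l(\lambda) = k$ and $\eta(\pi(\lambda)) \neq 0$. For later use I also note that, applying Lemma \ref{lem:f-/g-connectedness} (i) to the reverse of a single 3-term and invoking Lemma \ref{lem:f-connected1}, one obtains $g(\nu) \neq 0$ for every single 3-term $\nu$ of $(O_{6}, S)$.

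It then remains to rule out $k \in \{1, 2, 3\}$ for such a $\lambda$. The case $k = 1$ contradicts Lemma \ref{lem:f-connected1}. For $k = 2$, Lemmas \ref{lem:3-cycle_condition} (ii), \ref{lem:f-connected1} and \ref{lem:f-connected2} put $\lambda$ in the form $+(a, b, a) - (b, a, b)$ at a common degree, and Lemmas \ref{lem:O6_f2_usual}, \ref{lem:O6_f2_special}, and \ref{lem:O6_weight} then either break the 3-cycle condition or force $\eta(\pi(\lambda)) = 0$. For $k = 3$ I split by the degree multiset of $\lambda_{5}, \lambda_{6}, \lambda_{7}$: since a single term at the minimum degree would violate Lemma \ref{lem:f-connected1}, the minimum degree must carry either all three terms or exactly two. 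In the all-at-one-degree pattern, Lemmas \ref{lem:f-connected3} and \ref{lem:O6_f3} yield $g(\lambda) \neq 0$. In the two-plus-one pattern, Lemma \ref{lem:f-connected2} pins down the bottom pair, and then either the single upper term lies at the degree directly above -- whence the maximal-degree condition of Lemma \ref{lem:3-cycle_condition} (iii) would require $g$ of a single 3-term to vanish, contradicting the observation above -- or it lies two or more degrees higher, in which case the bottom pair must itself be a 3-cycle and the upper term a 3-cycle on its own, again impossible. The only real care is in distinguishing these two gap sub-cases; the rest is a routine degree-by-degree unpacking once one recognizes that $\gamma + \delta$ being a 3-cycle isolates $\lambda$ from the remainder of the argument.
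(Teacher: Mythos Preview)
Your proposal is correct and follows essentially the same approach as the paper's proof: reduce to $\lambda$ being a 3-cycle of length $k$ with $\eta(\pi(\lambda))\neq 0$ by citing that $\gamma+\delta$ is already a 3-cycle with vanishing $\eta$-weight (Lemma~\ref{lem:O6_f2_special_2}), then eliminate $k\le 3$ using Lemmas~\ref{lem:3-cycle_condition}, \ref{lem:f-connected1}, \ref{lem:f-connected2}, \ref{lem:O6_f2_usual}, \ref{lem:O6_f2_special}, and \ref{lem:O6_f3}. The only cosmetic differences are that you re-derive $\eta(\pi(\gamma+\delta))=0$ by hand (the paper simply cites Lemma~\ref{lem:O6_f2_special_2}, which already records this; note your direct check should invoke Lemma~\ref{lem:O6_weight} since the $\omega_i$ are not literally the integers $i$), and for $k=3$ you do an explicit degree-pattern case split where the paper compresses the same reasoning into the observation that both the minimal and maximal degrees must carry at least two terms (via Lemmas~\ref{lem:3-cycle_condition}, \ref{lem:f-/g-connectedness}, \ref{lem:f-connected1}), forcing all three to the same degree.
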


\begin{proof}
Since $\gamma + \delta$ is a 3-cycle satisfying $\eta(\pi(\gamma + \delta)) = 0$ by Lemma \ref{lem:O6_f2_special_2}, $\lambda$ is a 3-cycle of length $k$ satisfying $\eta(\pi(\lambda)) \neq 0$.
In light of Lemmas \ref{lem:3-cycle_condition} (ii) and \ref{lem:f-connected1}, at least two of $\lambda_{5}, \lambda_{6}, \dots, \lambda_{k + 4}$ have the minimal degree of $\lambda$.
Therefore, we have $k \geq 2$.

If $k = 2$, in light of Lemmas \ref{lem:3-cycle_condition} (ii) and \ref{lem:f-connected1}, $\lambda_{5}$ and $\lambda_{6}$ have the same degree.
Therefore, in light of Lemmas \ref{lem:O6_f2_usual} and \ref{lem:O6_f2_special}, we have $g(\lambda) \neq 0$ contradicting to Lemma \ref{lem:3-cycle_condition} (iii), or $\eta(\pi(\lambda)) = 0$ contradicting to our assumption.

If $k = 3$, in light of Lemmas \ref{lem:3-cycle_condition}, \ref{lem:f-/g-connectedness} and \ref{lem:f-connected1}, $\lambda_{5}$, $\lambda_{6}$ and $\lambda_{7}$ have the same degree.
Therefore, in light of Lemma \ref{lem:O6_f3}, we have $g(\lambda) \neq 0$ contradicting to Lemma \ref{lem:3-cycle_condition} (iii).\footnote{We note that Lemma \ref{lem:O6_f3} was also proved independent from the claim of Lemma \ref{lem:O6_f2_special_2-5}.}
\end{proof}

\begin{lemma}
\label{lem:O6_f2_special_3}
Let $\delta_{3}, \delta_{4}$ and $\delta_{5}$ be 3-terms of $(O_{6}, S)$ whose degrees are $n + 1$, and $\delta = \delta_{3} + \delta_{4} + \delta_{5}$.
Assume that $g(\gamma) + f(\delta) = 0$.
Then, we have the following cases with some $a \in O_{6}$ {\upshape (}$a \neq \omega_{0}, \omega_{3}${\upshape )}.
\begin{itemize}
\item[(1)]
$\delta = + \> \langle n + 1, \omega_{2}; \omega_{0}, \omega_{3} \rangle - (n + 1, \omega_{5}; a, \omega_{0}, \omega_{3}) - (n + 1, \omega_{5}; a, \omega_{3}, \omega_{0})$.
\item[(2)]
$\delta = + \> \langle n + 1, \omega_{2}; \omega_{0}, \omega_{3} \rangle - (n + 1, \omega_{5}; \omega_{0}, \omega_{3}, a) - (n + 1, \omega_{5}; \omega_{3}, \omega_{0}, a)$.
\item[(3)]
$\delta = - \> \langle n + 1, \omega_{5}; \omega_{0}, \omega_{3} \rangle + (n + 1, \omega_{2}; a, \omega_{0}, \omega_{3}) + (n + 1, \omega_{2}; a, \omega_{3}, \omega_{0})$.
\item[(4)]
$\delta = - \> \langle n + 1, \omega_{5}; \omega_{0}, \omega_{3} \rangle + (n + 1, \omega_{2}; \omega_{0}, \omega_{3}, a) + (n + 1, \omega_{2}; \omega_{3}, \omega_{0}, a)$.
\end{itemize}
In each case, we have $g(\delta) \neq 0$.
\end{lemma}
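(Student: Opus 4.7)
The plan is to proceed in two steps: first enumerate all $\delta$ of length three satisfying $g(\gamma)+f(\delta)=0$, and then verify $g(\delta)\neq 0$ in each resulting case.

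For the enumeration, I read off from formula (\ref{eq:O6_f2_special}) that $-g(\gamma)$ consists of two negative 2-terms of index $\omega_2$ and two positive 2-terms of index $\omega_5$, all with colors drawn from the pair $\{\omega_0,\omega_3\}$. Since $f$ preserves both degree and index, the three summands of $\delta$ partition by index into either a $(1,2)$ or $(2,1)$ split between $\omega_2$ and $\omega_5$. In each split, Lemma \ref{lem:O6_ab_ba_1} pins down the single 3-term on the singleton side as a bigon $\pm\langle n+1,\cdot;\omega_0,\omega_3\rangle$, and Lemma \ref{lem:O6_ab_ba_2} produces exactly two possible shapes for the pair on the other side; the $(1,2)$ split yields cases (1) and (2), and the $(2,1)$ split yields cases (3) and (4).

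For the nonvanishing of $g(\delta)$, I expand $g$ directly on each displayed $\delta$, using the antipodal identities $\omega_0^{\omega_3}=\omega_0$ and $\omega_3^{\omega_0}=\omega_3$. The bigon $\langle n+1,\omega_2;\omega_0,\omega_3\rangle$ contributes two 2-terms at a single index $\omega_2^{\omega_0}$ (or $\omega_2^{\omega_3}$ depending on which representative of the bigon is chosen), since the "middle" 2-term from the $g$-expansion is killed by repeated entries. For the pair of triangular $\omega_5$-terms, a cross-cancellation eliminates all 2-terms at the middle indices $\omega_5^{\omega_0}$ and $\omega_5^{\omega_3}$, leaving only two 2-terms at index $\omega_5^{a}$ (with colors $(\omega_0,\omega_3),(\omega_3,\omega_0)$ in case (1), or $(\omega_0^{a},\omega_3^{a}),(\omega_3^{a},\omega_0^{a})$ in case (2)). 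Cases (3) and (4) are handled by the same computation with the roles of $\omega_2$ and $\omega_5$ interchanged.

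The remaining point is to rule out cancellation between these two surviving batches. The key observation is a disjointness of orbits: $\omega_2^{\omega_0}$ and $\omega_2^{\omega_3}$ both lie in $\{\omega_1,\omega_4\}$ (the two equatorial vertices about the $\omega_0\omega_3$-axis distinct from $\omega_2,\omega_5$), whereas $\omega_5^{a}$ for $a\in O_6\setminus\{\omega_0,\omega_3\}$ always lies in $\{\omega_0,\omega_3,\omega_5\}$ (fixed whenever $a\in\{\omega_2,\omega_5\}$, and equal to $\omega_0$ or $\omega_3$ when $a\in\{\omega_1,\omega_4\}$ since these rotations act on the equator about the $\omega_1\omega_4$-axis). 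These two index sets are disjoint, so no cancellation is possible and $g(\delta)\neq 0$; cases (3) and (4) follow by the symmetric statement with $\omega_2$ and $\omega_5$ swapped. The only real obstacle in the proof is the careful bookkeeping of indices when expanding $g$, which becomes routine once the antipodal identities are recognized as the trigger for the middle-position cancellations.
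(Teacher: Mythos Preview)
Your proof is correct and follows essentially the same approach as the paper. The paper's proof is extremely terse (``In a similar way to the proof of Lemma~\ref{lem:O6_f2_special_2_3}, in light of the formula~(\ref{eq:O6_f2_special}), we have the cases (1)--(4). It is routine to see that we have $g(\delta)\neq 0$ in each case.''), and your argument unpacks both halves: the index-based $(1,2)$/$(2,1)$ split together with Lemmas~\ref{lem:O6_ab_ba_1} and~\ref{lem:O6_ab_ba_2} for the enumeration, and the explicit index-disjointness observation $\{\omega_1,\omega_4\}\cap\{\omega_0,\omega_3,\omega_5\}=\emptyset$ for the nonvanishing of $g(\delta)$, which is a clean way to organize what the paper leaves as routine.
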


\begin{proof}
In a similar way to the proof of Lemma \ref{lem:O6_f2_special_2_3}, in light of the formula (\ref{eq:O6_f2_special}), we have the cases (1)--(4).
It is routine to see that we have $g(\delta) \neq 0$ in each case.
\end{proof}

\begin{lemma}
\label{lem:O6_f2_special_3+}
Let $\delta$ be one of the 3-chains in Lemma \ref{lem:O6_f2_special_3}.
Furthermore, let $\lambda_{6}, \lambda_{7}, \dots, \lambda_{k + 5}$ be 3-terms of $(O_{6}, S)$ whose degrees are at least $n + 1$, and $\lambda = \sum_{i = 6}^{k + 5} \lambda_{i}$.
Assume that $\gamma + \delta + \lambda$ is a 3-cycle of length $k + 5$.
Then, we have $k \geq 3$.
\end{lemma}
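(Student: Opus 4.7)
The plan is to rule out $k = 0, 1, 2$ in turn by analyzing the 3-cycle condition degree by degree. First I would decompose $\lambda$ into its degree strata: let $\lambda^{(i)}$ denote the sum of 3-terms of $\lambda$ of degree $n + i$ and put $j_{i} = l(\lambda^{(i)})$, so $k = \sum_{i} j_{i}$. Since $g(\gamma) + f(\delta) = 0$ from the hypothesis of Lemma \ref{lem:O6_f2_special_3}, Lemma \ref{lem:3-cycle_condition}(i) applied at degree $n+1$ gives $f(\lambda^{(1)}) = 0$, and then Lemma \ref{lem:f-connected1} forces $j_{1} = 0$ or $j_{1} \geq 2$. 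At degree $n+2$ the 3-cycle condition reads
\[
 g(\delta) + g(\lambda^{(1)}) + f(\lambda^{(2)}) = 0,
\]
and since $g(\delta) \neq 0$ by Lemma \ref{lem:O6_f2_special_3}, $\lambda$ must be nonzero. At the maximal degree $m$ of $\gamma + \delta + \lambda$, the maximal-degree condition (Lemma \ref{lem:3-cycle_condition}(iii)) together with Lemma \ref{lem:f-/g-connectedness}(i) and Lemma \ref{lem:f-connected1} forces at least two 3-terms of degree $m$.

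The cases $k = 0$ and $k = 1$ should fall quickly. If $k = 0$, then $g(\delta) \neq 0$ contradicts the degree-$(n+2)$ equation. If $k = 1$, the constraint $j_{1} \neq 1$ forces $j_{1} = 0$, so the single term $\lambda_{6}$ is at degree $\geq n + 2$ and is itself the unique top-degree term; but then $g(\lambda_{6}) = 0$ contradicts Lemma \ref{lem:f-connected1} via Lemma \ref{lem:f-/g-connectedness}(i).

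For $k = 2$ the same two constraints leave only $(j_{1}, j_{2}) = (2, 0)$ or $(0, 2)$. In the first subcase, Lemma \ref{lem:f-connected2} gives $\lambda = +(n+1, v; a, b, a) - (n+1, v; b, a, b)$, and the top-degree condition becomes $g(\delta) + g(\lambda) = 0$. The split $b = [a+3]$ with $v \in \{a, b\}$ yields $g(\lambda) = 0$ by Lemma \ref{lem:O6_f2_special}, which is impossible since $g(\delta) \neq 0$; otherwise I would use the explicit formulas (\ref{eq:O6_f2_usual}) or (\ref{eq:O6_f2_special}) for $g(\lambda)$ and check, in each of the four cases of Lemma \ref{lem:O6_f2_special_3}, that the indices appearing in $g(\delta)$ cannot all be cancelled by $g(\lambda)$. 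In the second subcase, the degree-$(n+2)$ equation becomes $f(\lambda) = -g(\delta)$ and the top-degree condition is $g(\lambda) = 0$; applying Lemma \ref{lem:f-/g-connectedness}(i) and Lemma \ref{lem:f-connected2} to the reverses shows $\lambda$ has an analogous two-term form after reversal, so $f(\lambda)$ is supported at a single index whereas $g(\delta)$ is spread across several, again producing a contradiction.

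The main obstacle will be subcase $(2, 0)$ of $k = 2$: although the argument is explicit, it requires checking four versions of $\delta$ against the possible choices of $v, a, b$. The key simplification I expect to lean on is that in each case of Lemma \ref{lem:O6_f2_special_3} the free parameter $a \neq \omega_{0}, \omega_{3}$ produces a 2-term in $g(\delta)$ whose index ($\omega_{5}^{a}$ or $\omega_{2}^{a}$) differs from those forced by $\omega_{0}, \omega_{3}$, sharply constraining any candidate $v$ and producing unmatched 2-terms elsewhere once that index is matched.
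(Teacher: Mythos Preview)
Your overall strategy matches the paper's: stratify $\lambda$ by degree, rule out $k \leq 1$ using Lemmas \ref{lem:3-cycle_condition} and \ref{lem:f-connected1}, and for $k = 2$ reduce to the two subcases $(j_{1}, j_{2}) = (2,0)$ and $(0,2)$. Your treatment of $(2,0)$ is essentially what the paper does: force $\{b,c\} = \{\omega_{0}, \omega_{3}\}$ by counting terms in $g(\delta)$ versus $g(\lambda)$, then observe that the pair of indices $\{u^{\omega_{0}}, u^{\omega_{3}}\}$ can never equal $\{\omega_{p}, \omega_{5}^{a}\}$ (or $\{\omega_{p}, \omega_{2}^{a}\}$) because $\omega_{5}^{a} \notin \{\omega_{1}, \omega_{4}\}$.

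The gap is in your $(0,2)$ subcase. Your claim that ``$f(\lambda)$ is supported at a single index'' is false. From $g(\lambda) = 0$ you correctly deduce via Lemmas \ref{lem:f-/g-connectedness}(i) and \ref{lem:f-connected2} that $\overline{\lambda_{6}} + \overline{\lambda_{7}} = +(v; p, q, p) - (v; q, p, q)$, but this only says the \emph{reverses} share an index, i.e.\ $u_{6}^{a_{6}b_{6}c_{6}} = u_{7}^{a_{7}b_{7}c_{7}}$; it does \emph{not} force $u_{6} = u_{7}$. Concretely, take $p = \omega_{0}$, $q = \omega_{3}$, $v = \omega_{2}$: then $\lambda_{6} = +(n+2, \omega_{1}; \omega_{0}, \omega_{3}, \omega_{0})$ and $\lambda_{7} = -(n+2, \omega_{4}; \omega_{3}, \omega_{0}, \omega_{3})$ are $g$-connected, yet $f(\lambda)$ lives at the two distinct indices $\omega_{1}$ and $\omega_{4}$.

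The paper avoids this by reversing the order of the two constraints in the $(0,2)$ subcase. Rather than starting from $g(\lambda) = 0$, it uses $f(\lambda) = -g(\delta)$ first: since $g(\delta)$ has exactly two $2$-terms at each of two distinct indices, Lemma \ref{lem:O6_ab_ba_1} pins down $\lambda$ explicitly as $+\langle n+2, \omega_{p}; \omega_{0}, \omega_{3}\rangle - \langle n+2, \omega_{5}^{a}; \omega_{0}, \omega_{3}\rangle$ (or the case-(2) analogue). One then checks directly that $g(\lambda) \neq 0$, contradicting Lemma \ref{lem:3-cycle_condition}(iii). You should swap to this order of argument.
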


\begin{proof}
We only prove the cases that $\delta$ is the 3-chain (1) or (2) in Lemma \ref{lem:O6_f2_special_3}, because the other cases are proved in similar ways.
Let $\delta$ be the 3-chain (1) or (2) in Lemma \ref{lem:O6_f2_special_3}.
Then, we respectively have the following formula:
\begin{itemize}
\item[(1')]
$g(\delta) = + \> (n + 2, \omega_{p}; \omega_{0}, \omega_{3}) + (n + 2, \omega_{p}; \omega_{3}, \omega_{0})$ \\[0.5ex]
$\phantom{g(\delta) =} - (n + 2, \omega_{5}^{a}; \omega_{0}, \omega_{3}) - (n + 2, \omega_{5}^{a}; \omega_{3}, \omega_{0})$, or \\[-2ex]
\item[(2')]
$g(\delta) = + \> (n + 2, \omega_{p}; \omega_{0}, \omega_{3}) + (n + 2, \omega_{p}; \omega_{3}, \omega_{0})$ \\[0.5ex]
$\phantom{g(\delta) =} - (n + 2, \omega_{5}^{a}; \omega_{0}^{a}, \omega_{3}^{a}) - (n + 2, \omega_{5}^{a}; \omega_{3}^{a}, \omega_{0}^{a})$,
\end{itemize}
where $p = 1$ or $4$.

Assume that at least one of $\lambda_{6}, \lambda_{7}, \dots, \lambda_{k + 5}$ has degree $n + 1$.
Then, since $g(\gamma) + f(\delta) = 0$, in light of Lemmas \ref{lem:3-cycle_condition} (i) and \ref{lem:f-connected1}, at least two of $\lambda_{6}, \lambda_{7}, \dots, \lambda_{k + 5}$ have degree $n + 1$.
We thus have $k \geq 2$.
Assume that $k = 2$.
Then, in light of Lemma \ref{lem:f-connected2}, we have
\[
 \lambda = + \> (n + 1, u; b, c, b) - (n + 1, u; c, b, c)
\]
with some $u \in O_{6}$ and distinct $b, c \in O_{6}$.
Since $g(\delta + \lambda) = 0$ by Lemma \ref{lem:3-cycle_condition} (iii), we have $\{ b, c \} = \{ \omega_{0}, \omega_{3} \}$.
On the other hand, since $\omega_{5}^{a} \neq \omega_{1}, \omega_{4}$, we have $\{ u^{\omega_{0}}, u^{\omega_{3}} \} \neq \{ \omega_{p}, \omega_{5}^{a} \}$.
It leads to a contradiction.

Assume that none of $\lambda_{6}, \lambda_{7}, \dots, \lambda_{k + 5}$ have degree $n + 1$.
Then, since $\omega_{p} \neq \omega_{5}^{a}$, in light of Lemma \ref{lem:3-cycle_condition} (i) and the formula (1') or (2'), at least two of $\lambda_{6}, \lambda_{7}, \dots, \lambda_{k + 5}$ have degree $n + 2$.
We thus have $k \geq 2$.
Assume that $k = 2$.
Then, in light of Lemma \ref{lem:O6_ab_ba_1}, we respectively have
\begin{align*}
 \lambda =
 \begin{cases}
  + \> \langle n + 2, \omega_{p}; \omega_{0}, \omega_{3} \rangle - \langle n + 2, \omega_{5}^{a}; \omega_{0}, \omega_{3} \rangle \ \text{or} \\
  + \> \langle n + 2, \omega_{p}; \omega_{0}, \omega_{3} \rangle - \langle n + 2, \omega_{5}^{a}; \omega_{0}^{a}, \omega_{3}^{a} \rangle.
 \end{cases}
\end{align*}
In each case, it is easy to see that we have $g(\lambda) \neq 0$ contradicting to Lemma \ref{lem:3-cycle_condition} (iii).
\end{proof}

\begin{lemma}
\label{lem:O6_f2_special_4}
Let $\delta_{3}, \delta_{4}, \delta_{5}$ and $\delta_{6}$ be 3-terms of $(O_{6}, S)$ whose degrees are $n + 1$, and $\delta = \delta_{3} + \delta_{4} + \delta_{5} + \delta_{6}$.
Assume that $g(\gamma) + f(\delta) = 0$ and $g(\gamma) + f \left( \sum_{i \in I} \delta_{i} \right) \neq 0$ for any non-empty proper subset $I$ of $\{ 3, 4, 5, 6 \}$.
Then, $\gamma + \delta$ is a 3-cycle satisfying $\eta(\pi(\gamma + \delta)) = 0$, or we have $g(\delta) \neq 0$.
\end{lemma}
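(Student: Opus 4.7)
The approach is a finite case analysis driven by the explicit formula (\ref{eq:O6_f2_special}) for $g(\gamma)$, whose four 2-terms carry only the indices $\omega_{2}$ and $\omega_{5}$. Since $f$ preserves indices, any 3-term $\delta_{i}$ of index $v \notin \{\omega_{2}, \omega_{5}\}$ would contribute 2-terms of index $v$ to $f(\delta)$ that must cancel among themselves. Let $k$ be the number of $\delta_{i}$ of index $v$. Then $k = 1$ is impossible by Lemma \ref{lem:f-connected1}, $k = 4$ would force $f(\delta) = 0 \neq -g(\gamma)$, and $k \in \{2, 3\}$ provides a proper subset with $f$-image $0$, whose complement in $\{3, 4, 5, 6\}$ is a proper subset with $f$-image $-g(\gamma)$ (non-empty by Lemma \ref{lem:f-connected1}), contradicting the minimality hypothesis. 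Hence every $\delta_{i}$ has index in $\{\omega_{2}, \omega_{5}\}$, and writing $\delta = \delta^{(2)} + \delta^{(5)}$ according to the index yields
\begin{align*}
 f(\delta^{(2)}) & = -(n + 1, \omega_{2}; \omega_{0}, \omega_{3}) - (n + 1, \omega_{2}; \omega_{3}, \omega_{0}), \\
 f(\delta^{(5)}) & = +(n + 1, \omega_{5}; \omega_{0}, \omega_{3}) + (n + 1, \omega_{5}; \omega_{3}, \omega_{0}),
\end{align*}
with both pieces non-empty.

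Next I would split on $(m, 4 - m) := (|\delta^{(2)}|, |\delta^{(5)}|)$ with $m \in \{1, 2, 3\}$. The minimality of $\delta$ transfers to the minimality hypotheses of Lemmas \ref{lem:O6_ab_ba_1}, \ref{lem:O6_ab_ba_2}, and \ref{lem:O6_ab_ba_3} on each piece: if a proper subset of $\delta^{(2)}$ had the same $f$-image as $\delta^{(2)}$, then adjoining all of $\delta^{(5)}$ would produce a proper subset of $\delta$ violating our hypothesis, and symmetrically for $\delta^{(5)}$. Applying the appropriate lemma to each piece (working with $-\delta^{(2)}$ to match the sign convention) then enumerates all candidates: one shape when the cardinality is $1$, two shapes parameterized by $c \in O_{6} \setminus \{\omega_{0}, \omega_{3}\}$ when it is $2$, and four shapes parameterized by $c$ when it is $3$.

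For each shape-parameter combination I would compute $g(\delta) = g(\delta^{(2)}) + g(\delta^{(5)})$ directly. In the $(1, 3)$ and $(3, 1)$ cases, the singleton piece contributes 2-terms only of indices $\omega_{2}^{\omega_{0}}, \omega_{2}^{\omega_{3}}$ (respectively $\omega_{5}^{\omega_{0}}, \omega_{5}^{\omega_{3}}$), whereas the cardinality-$3$ piece typically contributes additional 2-terms of index $\omega_{5}^{c}$ (respectively $\omega_{2}^{c}$) for some $c \neq \omega_{0}, \omega_{3}$; matching 2-terms by index shows that uncancellable 2-terms remain, so $g(\delta) \neq 0$ in every instance. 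In the $(2, 2)$ case I would carry out a parallel calculation for each of the four shape pairs and each admissible pair $(c, c')$. In most combinations $g(\delta) \neq 0$ follows by a similar index-matching argument; in the remaining combinations where $g(\delta) = 0$, the relation $f(\gamma) = 0$ from Lemma \ref{lem:f-connected2} and our hypothesis $g(\gamma) + f(\delta) = 0$ together give $\partial(\gamma + \delta) = g(\delta) = 0$, so $\gamma + \delta$ is a 3-cycle, and I would verify $\eta(\pi(\gamma + \delta)) = 0$ via Lemma \ref{lem:O6_weight}, after rewriting the underlying triples through the $G(O_{6})$-action.

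The main obstacle is the bookkeeping in the $(2, 2)$ case: several shape-parameter combinations survive the $g(\delta) \neq 0$ test, and for each such surviving combination one must evaluate $\eta$ against the explicit definition in Section \ref{sec:preliminaries}. Because a typical triple $(a, b, c)$ appearing in $\pi(\gamma + \delta)$ need not lie in the base list used to define $\eta$, one must first translate it into such a triple via a rotation (justified by Lemma \ref{lem:O6_weight}) and only then read off $\eta$, finally summing all contributions and verifying cancellation in $\mathbb{Z}/3\mathbb{Z}$.
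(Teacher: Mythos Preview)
Your proposal is correct and follows essentially the same route as the paper's proof. The paper also splits by index into the $\omega_2$-part and the $\omega_5$-part of $\delta$, handles the $(2,2)$ distribution (its case (a)) via Lemma~\ref{lem:O6_ab_ba_2} and finds the same exceptional combinations where $g(\delta)=0$ but $\eta(\pi(\gamma+\delta))=0$, and handles the $(1,3)$ and $(3,1)$ distributions (its case (b)) via Lemmas~\ref{lem:O6_ab_ba_1} and~\ref{lem:O6_ab_ba_3}, concluding $g(\delta)\neq 0$ throughout; your explicit argument that every $\delta_i$ must carry index $\omega_2$ or $\omega_5$ is a welcome justification of what the paper simply asserts.
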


\begin{proof}
Let $\varepsilon = + 1$ (or $- 1$), $\omega_{p} = \omega_{2}$ (or $\omega_{5}$), and $\omega_{q} = \omega_{5}$ (or $\omega_{2}$).
Then, since $g(\gamma) + f(\delta) = 0$, in light of the formula (\ref{eq:O6_f2_special}), we may assume that
\begin{itemize}
\item[(a)]
$f(\delta_{3} + \delta_{4}) = - \> (n + 1, \omega_{2}; \omega_{0}, \omega_{3}) - (n + 1, \omega_{2}; \omega_{3}, \omega_{0})$ and \\
$f(\delta_{5} + \delta_{6}) = + \> (n + 1, \omega_{5}; \omega_{0}, \omega_{3}) + (n + 1, \omega_{5}; \omega_{3}, \omega_{0})$, or
\item[(b)]
$f(\delta_{3}) = - \> \varepsilon (n + 1, \omega_{p}; \omega_{0}, \omega_{3}) - \varepsilon (n + 1, \omega_{p}; \omega_{3}, \omega_{0})$ and \\
$f(\delta_{4} + \delta_{5} + \delta_{6}) = + \> \varepsilon (n + 1, \omega_{q}; \omega_{0}, \omega_{3}) + \varepsilon (n + 1, \omega_{q}; \omega_{3}, \omega_{0})$.
\end{itemize}

In the case (a), we have the following case with some $a, b \in O_{8}$ ($a \neq \omega_{0}, \omega_{3}$, $b \neq \omega_{0}, \omega_{3}$) by Lemma \ref{lem:O6_ab_ba_2}:
\begin{itemize}
\item[(i)]
$\delta_{3} + \delta_{4} = + \> (\omega_{2}; a, \omega_{0}, \omega_{3}) + (\omega_{2}; a, \omega_{3}, \omega_{0})$ or
\item[(ii)]
$\delta_{3} + \delta_{4} = + \> (\omega_{2}; \omega_{0}, \omega_{3}, a) + (\omega_{2}; \omega_{3}, \omega_{0}, a)$,
\end{itemize}
and
\begin{itemize}
\item[(iii)]
$\delta_{5} + \delta_{6} = - \> (\omega_{5}; b, \omega_{0}, \omega_{3}) - (\omega_{5}; b, \omega_{3}, \omega_{0})$ or
\item[(iv)]
$\delta_{5} + \delta_{6} = - \> (\omega_{5}; \omega_{0}, \omega_{3}, b) - (\omega_{5}; \omega_{3}, \omega_{0}, b)$.
\end{itemize}
It is routine to check that we have $g(\delta) \neq 0$ except the cases (i)-(iii) and (ii)-(iv) with $\{ a, b \} = \{ \omega_{1}, \omega_{4} \}$.
In the cases (i)-(iii) and (ii)-(iv) with $\{ a, b \} = \{ \omega_{1}, \omega_{4} \}$, $\gamma + \delta$ is a 3-cycle.
Then, in light of Lemma \ref{lem:O6_weight}, we have $\eta(\pi(\gamma + \delta)) = 0$.

In the case (b), we have
\[
 \delta_{3} = + \> \varepsilon \langle \omega_{p}; \omega_{0}, \omega_{3} \rangle
\]
by Lemma \ref{lem:O6_ab_ba_1}.
Furthermore, by the assumption that $g(\gamma) + f \left( \sum_{i \in I} \delta_{i} \right) \neq 0$ for any non-empty proper subset $I$ of $\{ 3, 4, 5, 6 \}$, we have
\[
 f \Big( \sum_{j \in J} \delta_{j} \Big) \neq + \> \varepsilon (n + 1, \omega_{q}; \omega_{0}, \omega_{3}) + \varepsilon (n + 1, \omega_{q}; \omega_{3}, \omega_{0})
\]
for any non-empty proper subset $J$ of $\{ 4, 5, 6 \}$.
Therefore, in light of Lemma \ref{lem:O6_ab_ba_3}, we have the following cases with some $a \in O_{6}$ ($a \neq \omega_{0}, \omega_{3}$).
\begin{itemize}
\item
$\delta_{4} + \delta_{5} + \delta_{6} = - \> \varepsilon (\omega_{q}; a, \omega_{0}, \omega_{3}) + \varepsilon (\omega_{q}; \omega_{3}, a, \omega_{0}) - \varepsilon \langle \omega_{q}; \omega_{3}, a \rangle$.
\item
$\delta_{4} + \delta_{5} + \delta_{6} = + \> \varepsilon (\omega_{q}; \omega_{0}, a, \omega_{3}) - \varepsilon (\omega_{q}; a, \omega_{3}, \omega_{0}) - \varepsilon \langle \omega_{q}; \omega_{0}, a \rangle$.
\item
$\delta_{4} + \delta_{5} + \delta_{6} = + \> \varepsilon (\omega_{q}; \omega_{0}, a, \omega_{3}) - \varepsilon (\omega_{q}; \omega_{3}, \omega_{0}, a) - \varepsilon \langle \omega_{q}; \omega_{3}, a \rangle$.
\item
$\delta_{4} + \delta_{5} + \delta_{6} = - \> \varepsilon (\omega_{q}; \omega_{0}, \omega_{3}, a) + \varepsilon (\omega_{q}; \omega_{3}, a, \omega_{0}) - \varepsilon \langle \omega_{q}; \omega_{0}, a \rangle$.
\end{itemize}
It is routine to see that we have $g(\delta) \neq 0$ in each case.
\end{proof}

\begin{lemma}
\label{lem:O6_f2_special_4+}
Let $\delta_{3}, \delta_{4}, \delta_{5}$ and $\delta_{6}$ be 3-terms of $(O_{6}, S)$ whose degrees are $n + 1$, and $\delta = \delta_{3} + \delta_{4} + \delta_{5} + \delta_{6}$.
Furthermore, let $\lambda_{7}, \lambda_{8}, \dots, \lambda_{k + 6}$ be 3-terms of $(O_{6}, S)$ whose degrees are at least $n + 1$, and $\lambda = \sum_{i = 7}^{k + 6} \lambda_{i}$.
Assume that $g(\gamma) + f(\delta) = 0$ and $\gamma + \delta + \lambda$ is a 3-cycle.
Then, we have $k \geq 2$.
\end{lemma}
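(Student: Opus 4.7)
The plan is to partition $\{\delta_{3}, \delta_{4}, \delta_{5}, \delta_{6}\}$ into its maximal $f$-connected subcollections---each of single index by Lemma~\ref{lem:f-/g-connectedness}~(ii)---and reduce each resulting configuration to one of the earlier closing-up lemmas. Formula \eqref{eq:O6_f2_special} shows that $g(\gamma)$ has exactly two $2$-terms of index $\omega_{2}$ and two of index $\omega_{5}$, so the four terms of $\delta$ must split into an $\omega_{2}$-group realizing $-g(\gamma)|_{\omega_{2}}$, an $\omega_{5}$-group realizing $-g(\gamma)|_{\omega_{5}}$, and possibly further $f$-connected groups with $f$-sum zero (each necessarily of size at least two by Lemma~\ref{lem:f-connected1}). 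Using Lemmas~\ref{lem:O6_ab_ba_1}--\ref{lem:O6_ab_ba_3} to bound the minimum sizes of the $\omega_{i}$-groups, this leaves three admissible distributions: (a) sizes $2+2$; (b) singletons $-\langle n+1, \omega_{2}; \omega_{0}, \omega_{3}\rangle$ and $+\langle n+1, \omega_{5}; \omega_{0}, \omega_{3}\rangle$ together with a trailing $f$-connected pair with vanishing $f$-sum; (c) sizes $3+1$ or $1+3$.

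In cases (a) and (c) the minimality hypothesis of Lemma~\ref{lem:O6_f2_special_4} is automatic: any proper subset lies within one $\omega_{i}$-group and thus cannot realize the two-index chain $-g(\gamma)$. That lemma then yields either $g(\delta) \neq 0$---in which case $\lambda$ must contain degree-$(n+2)$ terms canceling $g(\delta)$, and because $g(\delta)$ has $2$-terms of two distinct indices (among $\omega_{1}$ and $\omega_{4}$), at least two such terms are required---or that $\gamma + \delta$ is itself a 3-cycle with $\eta(\pi(\gamma+\delta)) = 0$. In the latter branch the 3-cycle condition forces $\lambda$ to be independently a 3-cycle; lengths $1$ and $3$ are excluded by Lemmas~\ref{lem:f-connected1} and \ref{lem:O6_f3}, while lengths $0$ and $2$ would make $\gamma+\delta+\lambda$ carry trivial $\eta$ (using Lemma~\ref{lem:O6_f2_special} for length $2$), in conflict with the $\eta$-nonvanishing context inherited from the calling proof of Lemma~\ref{lem:O6_f2_special_2-5}. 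Either way $k \geq 2$.

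In case (b), decompose $\delta = \delta' + \delta''$ with $\delta'=\delta_{3}+\delta_{4}$ matching one of configurations \textup{(i)--(iv)} of Lemma~\ref{lem:O6_f2_special_2} and $\delta''=\delta_{5}+\delta_{6}$ an $f$-connected pair supplied by Lemma~\ref{lem:f-connected2}. When $\delta'$ is of type \textup{(i)} or \textup{(iv)}, apply Lemma~\ref{lem:O6_f2_special_2++} to $\gamma+\delta'+(\delta''+\lambda)$, taking $\delta''$ together with the degree-$(n+1)$ portion of $\lambda$ as its inner ``$\lambda$'' and the remaining higher-degree portion of $\lambda$ as its ``$\mu$''; the conclusions $j \geq 2$ and $k_{\mathrm{inner}} \geq 2$ then yield $k \geq 2$, since $\delta''$ already contributes $2$ to $j$ and the remaining $k_{\mathrm{inner}}$ terms lie in our $\lambda$. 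When $\delta'$ is of type \textup{(ii)} or \textup{(iii)}, $\gamma+\delta'$ is itself a 3-cycle, and Lemma~\ref{lem:O6_f2_special_2_more} applies directly to $\gamma+\delta'+(\delta''+\lambda)$, giving $k_{\mathrm{inner}} \geq 4$ and hence $k \geq 2$ after subtracting the two terms contributed by $\delta''$. The main obstacle throughout is the case-by-case bookkeeping---identifying the correct partition shape, matching it to each sublemma's parametrized configurations, and translating length and degree counts consistently---rather than any single computation.
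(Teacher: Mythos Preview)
Your approach is far more elaborate than necessary and contains a genuine gap. The paper's proof is a two-sentence degree argument that makes no use whatsoever of the internal structure of $\delta$: if some $\lambda_i$ has degree $n+1$, then because $g(\gamma)+f(\delta)=0$ already holds, Lemma~\ref{lem:3-cycle_condition}~(i) forces the degree-$(n{+}1)$ part of $\lambda$ alone to have vanishing $f$-sum, and Lemma~\ref{lem:f-connected1} then gives at least two such terms; if no $\lambda_i$ has degree $n+1$, then the terms at the maximal degree of $\gamma+\delta+\lambda$ all lie in $\lambda$ and have vanishing $g$-sum by Lemma~\ref{lem:3-cycle_condition}~(iii), whence again at least two by Lemmas~\ref{lem:f-/g-connectedness}~(i) and~\ref{lem:f-connected1}. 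No partition of $\delta$, no appeal to Lemmas~\ref{lem:O6_ab_ba_1}--\ref{lem:O6_ab_ba_3} or~\ref{lem:O6_f2_special_4}, and no $\eta$ are needed.

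The gap in your argument is the repeated reliance on an $\eta\neq 0$ hypothesis that the lemma does not carry. In your cases~(a)/(c), when $\gamma+\delta$ is itself a $3$-cycle, you rule out $k\in\{0,2\}$ only by invoking ``the $\eta$-nonvanishing context inherited from the calling proof''; this proves a strictly weaker statement than the one asserted. The same defect recurs in case~(b) for types~(ii)/(iii), where you invoke Lemma~\ref{lem:O6_f2_special_2_more}, whose hypotheses include both $\eta(\pi(\cdot))\neq 0$ and a length (efficiency) condition on the combined chain that you do not verify. Your use of Lemma~\ref{lem:O6_f2_special_2++} in case~(b) for types~(i)/(iv) likewise presupposes the ``length $j+k+4$'' efficiency hypothesis after merging $\delta''$ with portions of $\lambda$, which is not guaranteed. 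All of this machinery is avoidable once you notice the simple degree dichotomy above.
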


\begin{proof}
Assume that at least one of $\lambda_{7}, \lambda_{8}, \dots, \lambda_{k + 6}$ has degree $n + 1$.
Then, since $g(\gamma) + f(\delta) = 0$, in light of Lemmas \ref{lem:3-cycle_condition} (i) and \ref{lem:f-connected1}, at least two of $\lambda_{7}, \lambda_{8}, \dots, \lambda_{k + 6}$ have degree $n + 1$.
We thus have $k \geq 2$.

Assume that none of $\lambda_{7}, \lambda_{8}, \dots, \lambda_{k + 6}$ have degree $n + 1$.
Then, in light of Lemmas \ref{lem:3-cycle_condition} (iii), \ref{lem:f-/g-connectedness} (i) and \ref{lem:f-connected1}, at least two of $\lambda_{7}, \lambda_{8}, \dots, \lambda_{k + 6}$ have the maximal degree of $\gamma + \delta + \lambda$.
We thus have $k \geq 2$.
\end{proof}

\begin{lemma}
\label{lem:O6_f2_special_5}
Let $\delta_{3}, \delta_{4}, \delta_{5}, \delta_{6}$ and $\delta_{7}$ be 3-terms of $(O_{6}, S)$ whose degrees are $n + 1$, and $\delta = \delta_{3} + \delta_{4} + \delta_{5} + \delta_{6} + \delta_{7}$.
Assume that $g(\gamma) + f(\delta) = 0$ and $g(\gamma) + f \left( \sum_{i \in I} \delta_{i} \right) \neq 0$ for any non-empty proper subset $I$ of $\{ 3, 4, 5, 6, 7 \}$.
Then, we have $g(\delta) \neq 0$.
\end{lemma}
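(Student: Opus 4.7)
The proof strategy mirrors that of Lemma~\ref{lem:O6_f2_special_4}. From formula~(\ref{eq:O6_f2_special}) and Lemma~\ref{lem:f-/g-connectedness}(ii), the minimality hypothesis forces every $\delta_i$ to have index in $\{\omega_2,\omega_5\}$, with the $\omega_2$-indexed sub-sum producing $-(n+1,\omega_2;\omega_3,\omega_0)-(n+1,\omega_2;\omega_0,\omega_3)$ and the $\omega_5$-indexed sub-sum producing $+(n+1,\omega_5;\omega_0,\omega_3)+(n+1,\omega_5;\omega_3,\omega_0)$. Let $j$ denote the number of $\omega_2$-indexed 3-terms. Since neither half of $-g(\gamma)$ vanishes, one has $1\leq j\leq 4$. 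Introducing the abbreviation $(\varepsilon,\omega_p,\omega_q)\in\{(+1,\omega_2,\omega_5),\,(-1,\omega_5,\omega_2)\}$ in the same spirit as the proof of Lemma~\ref{lem:O6_f2_special_4}, it suffices to handle $j\in\{1,2\}$.

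For $j=1$, Lemma~\ref{lem:O6_ab_ba_1} pins the unique $\omega_p$-indexed 3-term to $+\varepsilon\langle n+1,\omega_p;\omega_0,\omega_3\rangle$, while the remaining four $\omega_q$-indexed terms must realize one of the $21$ forms enumerated in Lemma~\ref{lem:O6_ab_ba_4} (with $a=\omega_0$, $b=\omega_3$, $u=\omega_q$); the minimality hypothesis supplies exactly the proper-subset condition required to apply that lemma. For $j=2$, Lemma~\ref{lem:O6_ab_ba_2} supplies $2$ forms for the $\omega_p$-part and Lemma~\ref{lem:O6_ab_ba_3} supplies $4$ forms for the $\omega_q$-part, giving $8$ combinations. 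In each resulting case, one writes $g(\delta)=g(\delta^{(\omega_p)})+g(\delta^{(\omega_q)})$ and exhibits an uncancelled 2-term.

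The bookkeeping that makes each sub-case tractable is the following. The 2-terms of $g(\delta_i)$ for a 3-term of index $\omega_p$ and color $(\omega_{x_1},\omega_{x_2},\omega_{x_3})$ have indices in $\{\omega_p^{\omega_{x_1}},\omega_p^{\omega_{x_2}},\omega_p^{\omega_{x_3}}\}$. For $\omega_p\in\{\omega_2,\omega_5\}$ acting on $\omega_0$ or $\omega_3$, the outputs lie in $\{\omega_1,\omega_4\}$, so the possible index collisions between $g(\delta^{(\omega_p)})$ and $g(\delta^{(\omega_q)})$ are concentrated on these two indices. Any 2-term of $g(\delta)$ whose index lies outside $\{\omega_1,\omega_4\}$, in particular one arising from the auxiliary colors $\omega_c$ or $\omega_d$ of Lemmas~\ref{lem:O6_ab_ba_2}--\ref{lem:O6_ab_ba_4}, is therefore an immediate witness of $g(\delta)\neq 0$.

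The main obstacle is the size of the case analysis (about $21+8=29$ combinations after applying the $\varepsilon$-symmetry). Most are dispatched at once by the index argument above, but a handful of delicate sub-cases, where the auxiliary colors happen to line up so that every surviving index lies in $\{\omega_1,\omega_4\}$, require direct inspection of the explicit formulae from Lemmas~\ref{lem:O6_ab_ba_2}--\ref{lem:O6_ab_ba_4} to locate a specific 2-term that fails to cancel. These residual cases are structurally analogous to those treated at the end of the proof of Lemma~\ref{lem:O6_f2_special_4}, and the same sort of direct computation settles them here.
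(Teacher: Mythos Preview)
Your proposal is correct and follows essentially the same approach as the paper: split according to how many of the $\delta_i$ have index $\omega_2$ versus $\omega_5$, use the $(\varepsilon,\omega_p,\omega_q)$-symmetry to reduce to the $(1,4)$ and $(2,3)$ splits, enumerate the possible forms via Lemmas~\ref{lem:O6_ab_ba_1}--\ref{lem:O6_ab_ba_4}, and then check $g(\delta)\neq 0$ in every case. The paper simply declares the final verification ``routine'' without further comment, so your index-tracking heuristic for organizing that verification is a reasonable elaboration rather than a departure.
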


\begin{proof}
Let $\varepsilon = + 1$ (or $- 1$), $\omega_{p} = \omega_{2}$ (or $\omega_{5}$), and $\omega_{q} = \omega_{5}$ (or $\omega_{2}$).
Then, since $g(\gamma) + f(\delta) = 0$, in light of the formula (\ref{eq:O6_f2_special}), we may assume that
\begin{itemize}
\item[(a)]
$f(\delta_{3} + \delta_{4}) = - \> \varepsilon (\omega_{p}; \omega_{0}, \omega_{3}) - \varepsilon (\omega_{p}; \omega_{3}, \omega_{0})$ and \\
$f(\delta_{5} + \delta_{6} + \delta_{7}) = + \> \varepsilon (\omega_{q}; \omega_{0}, \omega_{3}) + \varepsilon (\omega_{q}; \omega_{3}, \omega_{0})$, or
\item[(b)]
$f(\delta_{3}) = - \> \varepsilon (\omega_{p}; \omega_{0}, \omega_{3}) - \varepsilon (\omega_{p}; \omega_{3}, \omega_{0})$ and \\
$f(\delta_{4} + \delta_{5} + \delta_{6} + \delta_{7}) = + \> \varepsilon (\omega_{q}; \omega_{0}, \omega_{3}) + \varepsilon (\omega_{q}; \omega_{3}, \omega_{0})$.
\end{itemize}

In the case (a), we have the following case with some $a, b \in O_{8}$ ($a \neq \omega_{0}, \omega_{3}$, $b \neq \omega_{0}, \omega_{3}$) in a similar way to the proof of Lemma \ref{lem:O6_f2_special_4}:
\begin{itemize}
\item
$\delta_{3} + \delta_{4} = + \> \varepsilon (\omega_{p}; a, \omega_{0}, \omega_{3}) + \varepsilon (\omega_{p}; a, \omega_{3}, \omega_{0})$ or
\item
$\delta_{3} + \delta_{4} = + \> \varepsilon (\omega_{p}; \omega_{0}, \omega_{3}, a) + \varepsilon (\omega_{p}; \omega_{3}, \omega_{0}, a)$,
\end{itemize}
and
\begin{itemize}
\item
$\delta_{5} + \delta_{6} + \delta_{7} = - \> \varepsilon (\omega_{q}; b, \omega_{0}, \omega_{3}) + \varepsilon (\omega_{q}; \omega_{3}, b, \omega_{0}) - \varepsilon \langle \omega_{q}; \omega_{3}, b \rangle$,
\item
$\delta_{5} + \delta_{6} + \delta_{7} = + \> \varepsilon (\omega_{q}; \omega_{0}, b, \omega_{3}) - \varepsilon (\omega_{q}; b, \omega_{3}, \omega_{0}) - \varepsilon \langle \omega_{q}; \omega_{0}, b \rangle$,
\item
$\delta_{5} + \delta_{6} + \delta_{7} = + \> \varepsilon (\omega_{q}; \omega_{0}, b, \omega_{3}) - \varepsilon (\omega_{q}; \omega_{3}, \omega_{0}, b) - \varepsilon \langle \omega_{q}; \omega_{3}, b \rangle$, or
\item
$\delta_{5} + \delta_{6} + \delta_{7} = - \> \varepsilon (\omega_{q}; \omega_{0}, \omega_{3}, b) + \varepsilon (\omega_{q}; \omega_{3}, b, \omega_{0}) - \varepsilon \langle \omega_{q}; \omega_{0}, b \rangle$.
\end{itemize}
It is routine to see that we have $g(\delta) \neq 0$ in each case.

In the case (b), we have
\[
 \delta_{3} = + \> \varepsilon \langle \omega_{p}; \omega_{0}, \omega_{3} \rangle
\]
by Lemma \ref{lem:O6_ab_ba_1}.
Furthermore, by the assumption that $g(\gamma) + f \left( \sum_{i \in I} \delta_{i} \right) \neq 0$ for any non-empty proper subset $I$ of $\{ 3, 4, 5, 6, 7 \}$, we have
\[
 f \Big( \sum_{j \in J} \delta_{j} \Big) \neq + \> \varepsilon (n + 1, \omega_{q}; \omega_{0}, \omega_{3}) + \varepsilon (n + 1, \omega_{q}; \omega_{3}, \omega_{0})
\]
for any non-empty proper subset $J$ of $\{ 4, 5, 6, 7 \}$.
Therefore, in light of Lemma \ref{lem:O6_ab_ba_4}, we have the following twenty-one cases.
\begin{itemize}
\item[(i)]
$\delta_{4} + \delta_{5} + \delta_{6} + \delta_{7} = - \> \varepsilon \langle \omega_{q}; \omega_{0}, a \rangle - \varepsilon \langle \omega_{q}; \omega_{3}, a \rangle + \varepsilon (\omega_{q}; \omega_{0}, a, \omega_{3}) + \varepsilon (\omega_{q}; \omega_{3}, a, \omega_{0})$ with some $a \in O_{6}$ ($a \neq \omega_{0}, \omega_{3}$).
\item[(ii)]
$\delta_{4} + \delta_{5} + \delta_{6} + \delta_{7} = - \> \varepsilon \langle \omega_{q}; \omega_{3}, a \rangle - \varepsilon (\omega_{q}; \omega_{0}, \omega_{3}, a) - \varepsilon (\omega_{q}; \omega_{0}, a, \omega_{0}) - \varepsilon (\omega_{q}; a, \omega_{3}, \omega_{0})$ with some $a \in O_{6}$ ($a \neq \omega_{0}, \omega_{3}$).
\item[]
\enskip $\vdots$
\item[(xxi)]
$\delta_{4} + \delta_{5} + \delta_{6} + \delta_{7} = + \> \varepsilon (\omega_{q}; \omega_{0}, a, \omega_{3}) - \varepsilon (\omega_{q}; \omega_{3}, \omega_{0}, a) - \varepsilon (\omega_{q}; \omega_{3}, a, b) - \varepsilon (\omega_{q}; a, \omega_{3}, b)$ with some $a, b \in O_{6}$ ($a \neq \omega_{0}, \omega_{3}$, $b \neq \omega_{3}, a$).
\end{itemize}
It is routine to see that we have $g(\delta) \neq 0$ in each case.
\end{proof}

We are now ready to prove Lemma \ref{lem:O6_f2_special_2-5}.

\begin{proof}[Proof of Lemma \ref{lem:O6_f2_special_2-5}]
By Lemmas \ref{lem:3-cycle_condition}, \ref{lem:O6_f2_special_geq2}--\ref{lem:O6_f2_special_5}, we immediately have the claim.
\end{proof}

\section{Proof of Lemma \ref{lem:O6_f3_special}}
\label{sec:proof_of_lemma_O6_f3_special}

The aim of this section is to prove Lemma \ref{lem:O6_f3_special}.
Let $\gamma$ be the 3-chain in Lemma \ref{lem:f-connected3} (i) (or (ii)) with $X = O_{6}$.
Assume that $b = [a + 3]$.
Then, we may assume that $a = \omega_{0}$ and $c = \omega_{1}$, and have $b = \omega_{3}$.
Let $\omega_{p} = \omega_{0}$ (or $\omega_{5}$) and $\omega_{q} = \omega_{3}$ (or $\omega_{2}$).
Then, in light of the formula (\ref{eq:O6_f3_special_i}) (or (\ref{eq:O6_f3_special_ii})), we have
\begin{align}
 g(\gamma)
 & = + \> (n + 1, u^{\omega_{0}}; \omega_{3}, \omega_{0}) + (n + 1, u^{\omega_{0}}; \omega_{0}, \omega_{3}) \notag \\
 & \phantom{=} \ - (n + 1, u^{\omega_{1}}; \omega_{p}, \omega_{q}) - (n + 1, u^{\omega_{1}}; \omega_{q}, \omega_{p}) \label{eq:O6_f3_special}
\end{align}
up to sign.
We note that we have $u^{\omega_{0}} \neq u^{\omega_{1}}$.
To achieve our goal, we show the following lemmas.

\begin{lemma}
\label{lem:O6_f3_special_geq2}
Let $\delta_{4}, \delta_{5}, \dots, \delta_{k + 3}$ be 3-terms of $(O_{6}, S)$ whose degrees are $n + 1$, and $\delta = \sum_{i = 4}^{k + 3} \delta_{i}$.
Assume that $g(\gamma) + f(\delta) = 0$.
Then, we have $k \geq 2$.
\end{lemma}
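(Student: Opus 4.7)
The plan is to exploit the index structure of the 2-terms appearing in $g(\gamma)$ as displayed in formula (\ref{eq:O6_f3_special}). First I would observe that, regardless of whether $\gamma$ is the 3-chain in Lemma \ref{lem:f-connected3} (i) or (ii), the reduced form of $g(\gamma)$ consists of exactly four 2-terms: two positive 2-terms of index $u^{\omega_{0}}$ and two negative 2-terms of index $u^{\omega_{1}}$. Since $\omega_{1} \neq [\omega_{0} + 3]$, we have $u^{\omega_{0}} \neq u^{\omega_{1}}$, so these two indices are genuinely distinct.

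Next I would use the fact that, by the definition of $f$, every 2-term appearing in $f(\delta_{i})$ carries the same index as $\delta_{i}$ itself. Consequently, the set of indices occurring in reduced $f(\delta)$ is a subset of $\{v_{4}, v_{5}, \dots, v_{k+3}\}$, where $v_{i}$ denotes the index of $\delta_{i}$. If $k = 1$, then every 2-term of $f(\delta) = f(\delta_{4})$ has the single index $v_{4}$, and so $f(\delta)$ cannot simultaneously cancel 2-terms of the two distinct indices $u^{\omega_{0}}$ and $u^{\omega_{1}}$ present in $-g(\gamma)$. This contradicts $g(\gamma) + f(\delta) = 0$, and thus forces $k \geq 2$. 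There is no real obstacle here; the statement is essentially a direct consequence of the explicit formula (\ref{eq:O6_f3_special}) together with the index-preserving property of $f$, and it closely parallels the last paragraph of the proof of Lemma \ref{lem:O6_f2_usual}.
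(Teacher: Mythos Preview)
Your argument is correct and coincides with the paper's own proof, which simply invokes the index-counting argument of Lemma~\ref{lem:O6_f2_special_geq2} applied to formula~(\ref{eq:O6_f3_special}). The only minor remark is that the implication ``$\omega_{1}\neq[\omega_{0}+3]\Rightarrow u^{\omega_{0}}\neq u^{\omega_{1}}$'' deserves a word of justification (it is a routine check in $O_{6}$, already noted right after~(\ref{eq:O6_f3_special})), but this does not affect the validity of your proof.
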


\begin{proof}
In a similar way to the proof of Lemma \ref{lem:O6_f2_special_geq2}, in light of the formula (\ref{eq:O6_f3_special}), we have the claim.
\end{proof}

\begin{lemma}
\label{lem:O6_f3_special_2}
Let $\delta_{4}$ and $\delta_{5}$ be 3-terms of $(O_{6}, S)$ whose degrees are $n + 1$, and $\delta = \delta_{4} + \delta_{5}$.
Assume that $g(\gamma) + f(\delta) = 0$.
Then, we have
\[
 \delta = + \> \langle n + 1, u^{\omega_{0}}; \omega_{0}, \omega_{3} \rangle - \langle n + 1, u^{\omega_{1}}; \omega_{p}, \omega_{q} \rangle
\]
and $g(\delta) \neq 0$.
\end{lemma}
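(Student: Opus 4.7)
Setting $v_{0} = u^{\omega_{0}}$ and $v_{1} = u^{\omega_{1}}$, the plan is first to pin down $\delta$ exactly and then to show that the resulting $g(\delta)$ cannot vanish. Formula~(\ref{eq:O6_f3_special}) expresses $-g(\gamma)$ as the sum of two $v_{0}$-indexed 2-terms $-(v_{0};\omega_{3},\omega_{0})$, $-(v_{0};\omega_{0},\omega_{3})$ together with two $v_{1}$-indexed 2-terms $+(v_{1};\omega_{p},\omega_{q})$, $+(v_{1};\omega_{q},\omega_{p})$. Since $v_{0}\neq v_{1}$ and every 3-term has a single index, the identity $f(\delta_{4})+f(\delta_{5})=-g(\gamma)$ forces, up to swapping the roles of $\delta_{4}$ and $\delta_{5}$, that $f(\delta_{4})=-(v_{0};\omega_{3},\omega_{0})-(v_{0};\omega_{0},\omega_{3})$ and $f(\delta_{5})=+(v_{1};\omega_{p},\omega_{q})+(v_{1};\omega_{q},\omega_{p})$. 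Applying Lemma~\ref{lem:O6_ab_ba_1} to $-\delta_{4}$ and to $\delta_{5}$ then yields $\delta_{4}=+\langle n+1,v_{0};\omega_{0},\omega_{3}\rangle$ and $\delta_{5}=-\langle n+1,v_{1};\omega_{p},\omega_{q}\rangle$, which is the asserted form.

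For the non-vanishing of $g(\delta)$, both $\delta_{4}$ and $\delta_{5}$ are of type~$0$, because $\omega_{3}=[\omega_{0}+3]$ (and, in case~(ii), $\omega_{2}=[\omega_{5}+3]$). A direct computation of $g$ on a type-$0$ 3-term $\pm(v;a,[a+3],a)$, whose middle 2-term is automatically removed, shows that $g(\delta_{4})$ consists of two positive 2-terms sharing a single index in $\{v_{0}^{\omega_{0}},v_{0}^{\omega_{3}}\}$ and having colors $(\omega_{0},\omega_{3})$ and $(\omega_{3},\omega_{0})$; similarly $g(\delta_{5})$ consists of two negative 2-terms sharing an index in $\{v_{1}^{\omega_{p}},v_{1}^{\omega_{q}}\}$ with colors $(\omega_{p},\omega_{q})$ and $(\omega_{q},\omega_{p})$.

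In case~(ii), where $(\omega_{p},\omega_{q})=(\omega_{5},\omega_{2})$, the color sets $\{(\omega_{0},\omega_{3}),(\omega_{3},\omega_{0})\}$ and $\{(\omega_{5},\omega_{2}),(\omega_{2},\omega_{5})\}$ are disjoint, so no 2-term of $g(\delta_{5})$ can cancel any 2-term of $g(\delta_{4})$ and we conclude $g(\delta)\neq 0$ immediately. In case~(i), where $(\omega_{p},\omega_{q})=(\omega_{0},\omega_{3})$, the color sets coincide and cancellation would reduce to an equality of indices $v_{0}^{\omega_{i}}=v_{1}^{\omega_{j}}$ for some $i,j\in\{0,3\}$. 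Taking $i=j$ forces $v_{0}=v_{1}$, which contradicts $u^{\omega_{0}}\neq u^{\omega_{1}}$; when $i\neq j$, using that the $\omega_{0}$- and $\omega_{3}$-rotations of the octahedron are mutually inverse, both resulting equalities collapse to the single condition that $u$ be a fixed point of the action of $\omega_{1}\omega_{0}$ on $O_{6}$. The main step, routine but unavoidable, is then to verify by direct inspection on the six vertices that $\omega_{1}\omega_{0}$ acts as the product of disjoint $3$-cycles $(\omega_{0}\,\omega_{1}\,\omega_{2})(\omega_{3}\,\omega_{4}\,\omega_{5})$ and hence has no fixed points on $O_{6}$. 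This rules out the remaining cancellations and establishes $g(\delta)\neq 0$.
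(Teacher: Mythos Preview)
Your proof is correct and follows essentially the same approach as the paper's: both determine $\delta$ via formula~(\ref{eq:O6_f3_special}) and Lemma~\ref{lem:O6_ab_ba_1}, then argue that the index $v$ of $g(\delta_{4})$ and the index $w$ of $g(\delta_{5})$ must differ. The paper simply states ``it is routine to check that $v\neq w$,'' whereas you supply those details, using the color-disjointness shortcut in case~(ii) and reducing case~(i) to the fixed-point-free action of $\omega_{1}\omega_{0}$. One small remark: for the subcase $i=0$, $j=3$ you implicitly need that the $\omega_{0}$-rotation has order four (so that $\rho_{0}^{3}=\rho_{0}^{-1}$), not merely that $\rho_{0}$ and $\rho_{3}$ are mutual inverses; this is of course true for a $\pi/2$-rotation, so the argument stands.
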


\begin{proof}
Since $g(\gamma) + f(\delta) = 0$, in light of the formula (\ref{eq:O6_f3_special}), we may assume that
\begin{align*}
 f(\delta_{4}) & = - \> (n + 1, u^{\omega_{0}}; \omega_{0}, \omega_{3}) - (n + 1, u^{\omega_{0}}; \omega_{3}, \omega_{0}), \\
 f(\delta_{5}) & = + \> (n + 1, u^{\omega_{1}}; \omega_{p}, \omega_{q}) + (n + 1, u^{\omega_{1}}; \omega_{q}, \omega_{p}).
\end{align*}
Therefore, in light of Lemma \ref{lem:O6_ab_ba_1}, we have the former claim.
Since $\omega_{q} = [\omega_{p} + 3]$, we have
\begin{align}
 g(\delta)
 & = + \> (n + 2, v; \omega_{0}, \omega_{3}) + (n + 2, v; \omega_{3}, \omega_{0}) \notag \\
 & \phantom{=} \ - (n + 2, w; \omega_{p}, \omega_{q}) - (n + 2, w; \omega_{q}, \omega_{p}), \label{eq:O6_f3_special_2}
\end{align}
where $v = u^{\omega_{0} \omega_{0}}$ or $u$, and $w = u^{\omega_{1} \omega_{p}}$ or $u^{\omega_{1} \omega_{q}}$.
It is routine to check that we have $v \neq w$.
We thus have $g(\delta) \neq 0$.
\end{proof}

\begin{lemma}
\label{lem:O6_f3_special_2_geq2}
Let $\delta$ be the 3-chain in Lemma \ref{lem:O6_f3_special_2}.
Furthermore, let $\lambda_{6}, \lambda_{7}, \dots, \lambda_{k + 5}$ be 3-terms of $(O_{6}, S)$ whose degrees are $n + 2$, and $\lambda = \sum_{i = 6}^{k + 5} \lambda_{i}$.
Assume that $g(\delta) + f(\lambda) = 0$.
Then, we have $k \geq 2$.
\end{lemma}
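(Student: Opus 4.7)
My plan is to mimic the short argument given for Lemma \ref{lem:O6_f3_special_geq2} (and the analogous Lemma \ref{lem:O6_f2_special_geq2}), using the explicit formula for $g(\delta)$ derived inside the proof of Lemma \ref{lem:O6_f3_special_2}, namely
\[
g(\delta) = +(n+2,v;\omega_0,\omega_3) + (n+2,v;\omega_3,\omega_0) - (n+2,w;\omega_p,\omega_q) - (n+2,w;\omega_q,\omega_p),
\]
where $v, w \in O_6$ and, crucially, $v \neq w$ as was already verified there.

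The key observation is that $f$ does not change the index of an $m$-term: if $\lambda_i$ has index $x$, then every 2-term appearing in $f(\lambda_i)$ also has index $x$. Therefore, in the reduced expression of $f(\lambda) = -g(\delta)$, every 2-term of index $v$ must come from some $\lambda_i$ whose index is $v$, and every 2-term of index $w$ must come from some $\lambda_j$ whose index is $w$. Since reduced $g(\delta)$ contains two 2-terms of index $v$ (coming with one sign) and two 2-terms of index $w$ (coming with the opposite sign), and since $v \neq w$, we cannot fulfill both requirements with a single 3-term. Hence at least one $\lambda_i$ has index $v$ and some different $\lambda_j$ has index $w$, forcing $k \geq 2$.

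Since the conclusion is simply the existence of two distinct indices among $\lambda_6, \ldots, \lambda_{k+5}$, there is no genuine obstacle; the statement follows directly from the structure of the formula and the index-preserving property of $f$. Concretely, I would just write: ``In a similar way to the proof of Lemma \ref{lem:O6_f2_special_geq2}, in light of the formula (\ref{eq:O6_f3_special_2}) and the fact that $v \neq w$, we have the claim.''
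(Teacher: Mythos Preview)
Your proposal is correct and takes essentially the same approach as the paper; indeed, the paper's proof is the single sentence ``In a similar way to the proof of Lemma \ref{lem:O6_f2_special_geq2}, in light of the formula (\ref{eq:O6_f3_special_2}), we have the claim,'' which is virtually what you wrote. Your added explanation of why $v \neq w$ forces two distinct indices among the $\lambda_i$ simply unpacks the argument that the paper leaves implicit.
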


\begin{proof}
In a similar way to the proof of Lemma \ref{lem:O6_f2_special_geq2}, in light of the formula (\ref{eq:O6_f3_special_2}), we have the claim.
\end{proof}

\begin{lemma}
\label{lem:O6_f3_special_2_2}
Let $\delta$ be the 3-chain in Lemma \ref{lem:O6_f3_special_2}.
Furthermore, let $\lambda_{6}$ and $\lambda_{7}$ be 3-terms of $(O_{6}, S)$ whose degrees are $n + 2$, and $\lambda = \lambda_{6} + \lambda_{7}$.
Assume that $g(\delta) + f(\lambda) = 0$.
Then, we have $g(\lambda) \neq 0$.
\end{lemma}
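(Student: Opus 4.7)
The plan is to reproduce the template of the proof of Lemma \ref{lem:O6_f3_special_2} at the next degree. By formula (\ref{eq:O6_f3_special_2}), $g(\delta)$ has the form $+(n+2, v; \omega_{0}, \omega_{3}) + (n+2, v; \omega_{3}, \omega_{0}) - (n+2, w; \omega_{p}, \omega_{q}) - (n+2, w; \omega_{q}, \omega_{p})$ with $v \neq w$. Since $f$ preserves the index of each 3-term and $v \neq w$, the condition $g(\delta) + f(\lambda) = 0$ forces $\lambda_{6}$ and $\lambda_{7}$ to carry the indices $v$ and $w$ separately. Applying Lemma \ref{lem:O6_ab_ba_1} to each of the resulting 2-term equations uniquely yields
\[
\lambda = + \langle n+2, v; \omega_{0}, \omega_{3} \rangle - \langle n+2, w; \omega_{p}, \omega_{q} \rangle,
\]
which has the same structural form as $\delta$.

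Since $\omega_{3} = [\omega_{0}+3]$ and $\omega_{q} = [\omega_{p}+3]$, the same cancellations that produced (\ref{eq:O6_f3_special_2}) apply verbatim to $\lambda$, giving
\begin{align*}
g(\lambda)
 & = + (n+3, v'; \omega_{0}, \omega_{3}) + (n+3, v'; \omega_{3}, \omega_{0}) \\
 & \phantom{=} \ - (n+3, w'; \omega_{p}, \omega_{q}) - (n+3, w'; \omega_{q}, \omega_{p}),
\end{align*}
where $v' \in \{v^{\omega_{0}}, v^{\omega_{3}}\}$ and $w' \in \{w^{\omega_{p}}, w^{\omega_{q}}\}$ depending on the choice of representative of each $\langle \cdot \rangle$.

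When $(\omega_{p}, \omega_{q}) = (\omega_{5}, \omega_{2})$ (i.e.\ when $\gamma$ comes from Lemma \ref{lem:f-connected3} (ii)), the four colors appearing above are mutually distinct, so no cancellation among the 2-terms is possible and $g(\lambda) \neq 0$ is immediate.

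The main obstacle is therefore the remaining case $(\omega_{p}, \omega_{q}) = (\omega_{0}, \omega_{3})$, in which all four 2-terms have colors in $\{(\omega_{0}, \omega_{3}), (\omega_{3}, \omega_{0})\}$ and vanishing of $g(\lambda)$ would force $v' = w'$. Unwinding $v \in \{u^{\omega_{0}\omega_{0}}, u\}$ and $w \in \{u^{\omega_{1}\omega_{0}}, u^{\omega_{1}\omega_{3}}\}$ in terms of the original index $u$ (with $u \neq \omega_{0}, \omega_{3}$), and using $u^{\omega_{0}\omega_{3}} = u$ together with $\omega_{0}^{2} = \omega_{3}^{2}$ as rotations of $O_{6}$, one finds $v' \in \{u^{\omega_{0}}, u^{\omega_{3}}\}$ and $w' \in \{u^{\omega_{1}}, u^{\omega_{1}\omega_{0}\omega_{0}}\}$. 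A direct tabulation over the four admissible values $u \in \{\omega_{1}, \omega_{2}, \omega_{4}, \omega_{5}\}$, using the explicit rotation cycles of $O_{6}$, then confirms $v' \neq w'$ in every case, completing the proof.
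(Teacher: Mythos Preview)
Your argument is correct in structure and matches the paper's approach; the paper's own proof simply derives the same form $\lambda = +\langle n+2, v; \omega_{0}, \omega_{3}\rangle - \langle n+2, w; \omega_{p}, \omega_{q}\rangle$ and then declares ``It is routine to check that we have $g(\lambda) \neq 0$,'' while you carry out that routine check in detail. Your reduction to $v' \in \{u^{\omega_{0}}, u^{\omega_{3}}\}$ and $w' \in \{u^{\omega_{1}}, u^{\omega_{1}\omega_{0}\omega_{0}}\}$ via the identities $u^{\omega_{0}\omega_{3}} = u$ and $(\square^{\omega_{0}})^{2} = (\square^{\omega_{3}})^{2}$ is clean and correct.

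There is, however, one small gap: you impose the restriction $u \neq \omega_{0}, \omega_{3}$ and tabulate only over $u \in \{\omega_{1}, \omega_{2}, \omega_{4}, \omega_{5}\}$. No such restriction is present in the setup of Section~\ref{sec:proof_of_lemma_O6_f3_special}; the index $u$ of the $f$-connected triple $\gamma$ is arbitrary. (You may be conflating this with the hypothesis $u \neq a, b$ in Lemma~\ref{lem:O6_f2_special_2-5}, which belongs to a different family of lemmas.) Fortunately the two omitted cases cause no trouble: for $u = \omega_{0}$ one gets $\{u^{\omega_{0}}, u^{\omega_{3}}\} = \{\omega_{0}\}$ while $\{u^{\omega_{1}}, u^{\omega_{1}\omega_{0}\omega_{0}}\} = \{\omega_{5}, \omega_{2}\}$, and for $u = \omega_{3}$ one gets $\{\omega_{3}\}$ versus $\{\omega_{2}, \omega_{5}\}$, so $v' \neq w'$ continues to hold. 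Once you extend the tabulation to all six values of $u$, the proof is complete.
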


\begin{proof}
Since $g(\delta) + f(\lambda) = 0$, in light of the formula (\ref{eq:O6_f3_special_2}), we may assume that
\begin{align*}
 f(\lambda_{6}) & = - \> (n + 2, v; \omega_{0}, \omega_{3}) - (n + 2, v; \omega_{3}, \omega_{0}), \\
 f(\lambda_{7}) & = + \> (n + 2, w; \omega_{p}, \omega_{q}) + (n + 2, w; \omega_{q}, \omega_{p}).
\end{align*}
Therefore, in light of Lemma \ref{lem:O6_ab_ba_1}, we have
\[
 \lambda = + \> \langle n + 2, v; \omega_{0}, \omega_{3} \rangle - \langle n + 2, w; \omega_{p}, \omega_{q} \rangle.
\]
It is routine to check that we have $g(\lambda) \neq 0$.
\end{proof}

\begin{lemma}
\label{lem:O6_f3_special_2+}
Let $\delta$ be the 3-chain in Lemma \ref{lem:O6_f3_special_2}.
Furthermore, let $\lambda_{6}, \lambda_{7}, \dots, \lambda_{k + 5}$ be 3-terms of $(O_{6}, S)$ whose degrees are $n + 1$, and $\lambda = \sum_{i = 6}^{k + 5} \lambda_{i}$.
Assume that $\gamma + \delta + \lambda$ is a 3-cycle of length $k + 5$.
Then, we have $k \geq 3$.
\end{lemma}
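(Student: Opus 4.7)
The plan is to combine the cycle-condition analysis of Lemma \ref{lem:3-cycle_condition} with the explicit formula for $g(\delta)$ recorded in Lemma \ref{lem:O6_f3_special_2}, and then rule out $k \leq 2$ by a direct case check. Since the minimal degree of $\gamma + \delta + \lambda$ is $n$ and its maximal degree is $n+1$, the cycle condition at degree $n+1$ gives $g(\gamma) + f(\delta + \lambda) = 0$; combined with $g(\gamma) + f(\delta) = 0$ from the construction of $\delta$, this yields $f(\lambda) = 0$. The cycle condition at the maximal degree $n+1$ further gives $g(\lambda) = -g(\delta)$, and since $g(\delta) \neq 0$ by Lemma \ref{lem:O6_f3_special_2}, we also have $g(\lambda) \neq 0$.

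The cases $k = 0$ and $k = 1$ fall immediately: $k = 0$ gives $\lambda = 0$, contradicting $g(\lambda) \neq 0$; and $k = 1$ gives $f(\lambda_6) \neq 0$ by Lemma \ref{lem:f-connected1}, contradicting $f(\lambda) = 0$. For $k = 2$, the two terms $\lambda_6$ and $\lambda_7$ are efficient, and since each $f(\lambda_i) \neq 0$ while $f(\lambda_6 + \lambda_7) = 0$, they are $f$-connected. Lemma \ref{lem:f-connected2} then gives $\lambda = \pm\bigl[(n+1, w; a, b, a) - (n+1, w; b, a, b)\bigr]$ for some $w \in O_6$ and distinct $a, b \in O_6$.

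To rule out $k = 2$ I would split on the structure of $\lambda$. If $b = [a + 3]$ with $w \in \{a, b\}$, Lemma \ref{lem:O6_f2_special} gives $g(\lambda) = 0$, contradicting $g(\lambda) \neq 0$. If $b \neq [a + 3]$, then $g(\lambda)$ has the six-term expansion analogous to formula \eqref{eq:O6_f2_usual}; inspection shows that none of these six colors can coincide with the colors $\{(\omega_0, \omega_3), (\omega_3, \omega_0), (\omega_p, \omega_q), (\omega_q, \omega_p)\}$ of $g(\delta)$ from formula \eqref{eq:O6_f3_special_2} (any such coincidence would force $b = [a+3]$), so $g(\lambda) + g(\delta) \neq 0$. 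If $b = [a + 3]$ with $w \notin \{a, b\}$, then $g(\lambda)$ takes the four-term form of formula \eqref{eq:O6_f2_special} with colors only in $\{(a, b), (b, a)\}$; matching to $g(\delta)$'s colors forces $\{a, b\} = \{\omega_0, \omega_3\}$, and when $\gamma$ is of type (ii) from Lemma \ref{lem:f-connected3} the remaining $(\omega_p, \omega_q)$-colored terms in $g(\delta)$ have no counterpart in $g(\lambda)$.

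The main obstacle is the last remaining subcase: $\gamma$ of type (i), $\{a, b\} = \{\omega_0, \omega_3\}$, and $w \in \{\omega_1, \omega_2, \omega_4, \omega_5\}$. Here matching of indices reduces $g(\lambda) = -g(\delta)$ to the requirement $\{w^{\omega_0}, w^{\omega_3}\} = \{v, w'\}$, where $w'$ denotes the second index appearing in \eqref{eq:O6_f3_special_2}. A direct geometric check shows that $w^{\omega_0}$ and $w^{\omega_3}$ are always antipodal in $O_6$, so $v$ and $w'$ would also have to be antipodes. Enumerating $u \in O_6$ and reading off $v \in \{u, u^{\omega_0 \omega_0}\}$ and $w' \in \{u^{\omega_1 \omega_0}, u^{\omega_1 \omega_3}\}$ from \eqref{eq:O6_f3_special_2} verifies that this antipodality never holds, which closes the argument and establishes $k \geq 3$.
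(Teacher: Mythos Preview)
Your argument is correct and reaches the same contradiction as the paper, but it is more case-heavy than necessary. The paper's proof bypasses your color-matching analysis for $b \neq [a+3]$ with a single counting step: since $g(\delta + \lambda) = 0$ and the reduced $g(\delta)$ in formula~\eqref{eq:O6_f3_special_2} has exactly four $2$-terms, the reduced $g(\lambda)$ must also have four $2$-terms, which immediately forces $b = [a+3]$ and the index $x \neq a,b$. From there the paper observes $x^{b} = [x^{a}+3]$ and $w \neq [v+3]$, giving the contradiction uniformly for both types (i) and (ii) of $\gamma$ without ever splitting on the value of $(\omega_p,\omega_q)$. Your route instead handles $b \neq [a+3]$ by checking that the six colors of $g(\lambda)$ can never be of the form $(x,[x+3])$, and then splits on the type of $\gamma$; this works but costs you the extra case analysis.

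One small point to tighten: in the subcase $b = [a+3]$, $w \notin \{a,b\}$, you write that matching colors ``forces $\{a,b\} = \{\omega_0,\omega_3\}$'' before splitting on type. In type~(ii) one could a priori also have $\{a,b\} = \{\omega_2,\omega_5\}$, since those colors also appear in $g(\delta)$. Your subsequent ``remaining colors have no counterpart'' argument handles that possibility symmetrically, so the logic is fine, but the sentence as written overstates what has been established at that moment. The paper's index-antipodality argument avoids this issue entirely by never comparing colors, only indices.
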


\begin{proof}
Since $f(\lambda) = 0$ by Lemma \ref{lem:3-cycle_condition} (i), we have $k \geq 2$ by Lemma \ref{lem:f-connected1}.
Assume that $k = 2$.
Then, in light of Lemma \ref{lem:f-connected2}, we have
\[
 \lambda = + \> (n + 1, x; a, b, a) - (n + 1, x; b, a, b)
\]
with some $x \in O_{6}$ and distinct $a, b \in O_{6}$.
Since reduced $g(\delta)$ has four 2-terms (see the formula (\ref{eq:O6_f3_special_2})), in light of Lemma \ref{lem:3-cycle_condition} (iii), reduced $g(\lambda)$ has four 2-terms.
It yields $b = [a + 3]$ and $x \neq a, b$.
We thus have $x^{b} = [x^{a} + 3]$.
On the other hand, we have $w \neq [v + 3]$.
Therefore, we have $g(\delta + \lambda) \neq 0$ contradicting to Lemma \ref{lem:3-cycle_condition} (iii).
\end{proof}

\begin{lemma}
\label{lem:O6_f3_special_2++}
Let $\delta$ be the 3-chain in Lemma \ref{lem:O6_f3_special_2}.
Furthermore, let $\lambda_{6}, \lambda_{7}, \dots, \lambda_{j + 5}$ be 3-terms of $(O_{6}, S)$ whose degrees are $n + 1$, $\lambda = \sum_{i = 6}^{j + 5} \lambda_{i}$, $\mu_{j + 6}, \mu_{j + 7}, \dots, \mu_{j + k + 5}$ 3-terms of $(O_{6}, S)$ whose degrees are at least $n + 2$, and $\mu = \sum_{i = j + 6}^{j + k + 5} \mu_{i}$.
Assume that $\gamma + \delta + \lambda + \mu$ is a 3-cycle of length $j + k + 5$.
Then, we have $j \geq 2$ and $k \geq 2$.
\end{lemma}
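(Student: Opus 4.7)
The plan is to derive the two bounds separately from the 3-cycle condition of Lemma \ref{lem:3-cycle_condition}, applied at two different degrees, following exactly the template of Lemma \ref{lem:O6_f2_special_2++}.

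For $j \geq 2$, I would apply Lemma \ref{lem:3-cycle_condition}~(i) at degree $n+1$. Since $\gamma$ supplies the only 3-terms of degree $n$ and $\delta + \lambda$ the only ones of degree $n+1$ (recall that $\mu$ lives in degrees $\geq n+2$), this condition reduces to $g(\gamma) + f(\delta + \lambda) = 0$. The choice of $\delta$ in Lemma \ref{lem:O6_f3_special_2} already ensures $g(\gamma) + f(\delta) = 0$, so $f(\lambda) = 0$ must hold. Then, exactly as in the opening step of the proof of Lemma \ref{lem:O6_f3_special_2+}, Lemma \ref{lem:f-connected1} (which forbids $f$ of a single 3-term from vanishing) rules out $j = 1$, giving $j \geq 2$.

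For $k \geq 2$, I would apply Lemma \ref{lem:3-cycle_condition}~(iii) at the maximal degree $m$ of the 3-cycle. Since $\gamma$, $\delta$, $\lambda$ sit in degrees $n$ and $n+1$ while $\mu$ is supported in degrees $\geq n+2$, the top degree $m$ is realized only by 3-terms of $\mu$; hence $\sum_{\mu_i \in T_m(\mu)} g(\mu_i) = 0$. By Lemma \ref{lem:f-/g-connectedness}~(i) the reverses of these top-degree $\mu_i$ satisfy the corresponding $f$-equation, and Lemma \ref{lem:f-connected1} again forbids a single-term solution, forcing at least two $\mu_i$'s at degree $m$ and hence $k \geq 2$.

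There is no serious obstacle: both halves are one-line consequences of previously established machinery, completely parallel to the proof of Lemma \ref{lem:O6_f2_special_2++}. The only point worth verifying is that the 3-cycle equations at degrees $n+1$ and $m$ decouple cleanly from those at other degrees, which is immediate from the disjoint degree ranges of $\gamma$, $\delta$, $\lambda$, and $\mu$.
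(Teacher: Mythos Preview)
Your proposal is correct and matches the paper's own proof essentially line for line: the paper obtains $j\geq 2$ by citing the opening step of Lemma~\ref{lem:O6_f3_special_2+} (i.e., $f(\lambda)=0$ from Lemma~\ref{lem:3-cycle_condition}(i) together with Lemma~\ref{lem:f-connected1}), and obtains $k\geq 2$ from Lemmas~\ref{lem:3-cycle_condition}(iii), \ref{lem:f-/g-connectedness}(i), and \ref{lem:f-connected1}, exactly as you outline.
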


\begin{proof}
As mentioned in the proof of Lemma \ref{lem:O6_f3_special_2+}, we have $j \geq 2$.
Furthermore, in light of Lemmas \ref{lem:3-cycle_condition} (iii), \ref{lem:f-/g-connectedness} (i) and \ref{lem:f-connected1}, at least two of $\mu_{j + 6}, \mu_{j + 7}, \dots, \mu_{j + k + 5}$ have the maximal degree of $\gamma + \delta + \lambda + \mu$.
We thus have $k \geq 2$.
\end{proof}

\begin{lemma}
\label{lem:O6_f3_special_3}
Let $\delta_{4}, \delta_{5}$ and $\delta_{6}$ be 3-terms of $(O_{6}, S)$ whose degrees are $n + 1$, and $\delta = \delta_{4} + \delta_{5} + \delta_{6}$.
Assume that $g(\gamma) + f(\delta) = 0$.
Then, $\gamma + \delta$ is a 3-cycle satisfying $\eta(\pi(\gamma + \delta)) = 0$, or we have $g(\delta) \neq 0$.
\end{lemma}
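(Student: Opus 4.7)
The plan is to partition $\{\delta_{4}, \delta_{5}, \delta_{6}\}$ by index and enumerate the resulting shapes, exactly in the spirit of the proofs of Lemmas~\ref{lem:O6_f3_special_2} and \ref{lem:O6_f2_special_4}. Formula~(\ref{eq:O6_f3_special}) presents $g(\gamma)$ as a sum of four 2-terms living in two distinct index blocks, $u^{\omega_{0}}$ and $u^{\omega_{1}}$. By Lemma~\ref{lem:f-/g-connectedness}(ii) and Lemma~\ref{lem:f-connected1}, each $\delta_{i}$ carries a single index that must match one of these two blocks. Both blocks being nonzero, neither the $(3,0)$ nor the $(0,3)$ partition is possible, so only the partitions $(2, 1)$ and $(1, 2)$ remain.

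In the $(2, 1)$ case, the unique index-$u^{\omega_{1}}$ term is forced by Lemma~\ref{lem:O6_ab_ba_1} to be $+\langle n + 1, u^{\omega_{1}}; \omega_{p}, \omega_{q} \rangle$ (two orientation sub-options), while the two index-$u^{\omega_{0}}$ terms are described by Lemma~\ref{lem:O6_ab_ba_2}, giving an ``outer'' and an ``inner'' shape parametrized by some $e \in O_{6} \setminus \{\omega_{0}, \omega_{3}\}$. The $(1, 2)$ case is handled symmetrically. For each resulting explicit family I would compute $g(\delta)$ and inspect the indices of its 2-terms: the bigon part is given in closed form by the formula~(\ref{eq:O6_f3_special_2}) derived in Lemma~\ref{lem:O6_f3_special_2}, using $\omega_{q} = [\omega_{p} + 3]$, with 2-terms of indices $u^{\omega_{1} \omega_{p}}$ and $u^{\omega_{1} \omega_{q}}$, whereas the pair part contributes 2-terms of either a common index $u^{\omega_{0} e}$ (outer) or the two indices $u$ and $u^{\omega_{0} \omega_{0}}$ (inner). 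An index-matching check then either forces a surviving 2-term in $g(\delta)$, yielding the alternative $g(\delta) \neq 0$, or pins $\delta$ to one of a small explicit list on which $g(\delta) = 0$.

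On that short list, $\gamma + \delta$ is a 3-cycle all of whose colors are triples $(\omega_{i}, \omega_{j}, \omega_{k})$ of vertices of the octahedron. Lemma~\ref{lem:O6_weight} therefore reduces $\eta(\pi(\gamma + \delta))$ to a finite sum of tabulated values of $\eta$, which I would evaluate directly from the definition in Section~\ref{sec:preliminaries} and verify to vanish.

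The main obstacle is the bookkeeping rather than any conceptual step. Each starting shape of $\gamma$ (cases (i) and (ii) of Lemma~\ref{lem:f-connected3}, giving $(\omega_{p}, \omega_{q}) = (\omega_{0}, \omega_{3})$ or $(\omega_{5}, \omega_{2})$) branches under the $(2, 1)$/$(1, 2)$ split, the two bigon orientations, the inner/outer choice in Lemma~\ref{lem:O6_ab_ba_2}, and the auxiliary parameter $e$. Although the enumeration is routine, the degenerate index-coincidences needed for $g(\delta) = 0$, such as $u^{\omega_{0} e} = u^{\omega_{1} \omega_{p}}$, must be identified individually, and each surviving sub-case calls for an explicit $\eta$-computation via Lemma~\ref{lem:O6_weight}.
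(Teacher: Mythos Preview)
Your approach is essentially identical to the paper's: split the three $\delta_{i}$ by index into a $(1,2)$ or $(2,1)$ configuration, apply Lemmas~\ref{lem:O6_ab_ba_1} and~\ref{lem:O6_ab_ba_2} to the singleton and the pair respectively, obtain the resulting eight explicit families (the paper's cases (i)--(viii)), compute $g(\delta)$ in each, isolate the finitely many degenerate parameter choices where $g(\delta)=0$, and then evaluate $\eta(\pi(\gamma+\delta))$ on that short list via Lemma~\ref{lem:O6_weight}.

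One small bookkeeping slip to correct when you carry this out: because $\omega_{q}=[\omega_{p}+3]$ and $\omega_{3}=[\omega_{0}+3]$, the ``inner'' pair $+(u^{\omega_{0}};\omega_{0},\omega_{3},e)+(u^{\omega_{0}};\omega_{3},\omega_{0},e)$ also has its $g$-image concentrated at the single index $u^{\omega_{0} e}$ (the four terms at $u^{\omega_{0}\omega_{0}}$ and $u^{\omega_{0}\omega_{3}}$ cancel in pairs), not at two distinct indices as you wrote. Likewise the bigon $\pm\langle u^{\omega_{1}};\omega_{p},\omega_{q}\rangle$ contributes $g$-terms at a single index, either $u^{\omega_{1}\omega_{p}}$ or $u^{\omega_{1}\omega_{q}}$ depending on orientation, not both simultaneously. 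These corrections only simplify the index-matching step and do not affect the validity of your plan.
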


\begin{proof}
Since $g(\gamma) + f(\delta) = 0$, in light of the formula (\ref{eq:O6_f3_special}), we may assume that
\begin{itemize}
\item
$f(\delta_{4}) = - \> (n + 1, u^{\omega_{0}}; \omega_{0}, \omega_{3}) - (n + 1, u^{\omega_{0}}; \omega_{3}, \omega_{0})$ and \\
$f(\delta_{5} + \delta_{6}) = + \> (n + 1, u^{\omega_{1}}; \omega_{p}, \omega_{q}) + (n + 1, u^{\omega_{1}}; \omega_{q}, \omega_{p})$, or
\item
$f(\delta_{4}) = + \> (n + 1, u^{\omega_{1}}; \omega_{p}, \omega_{q}) + (n + 1, u^{\omega_{1}}; \omega_{q}, \omega_{p})$ and \\
$f(\delta_{5} + \delta_{6}) = - \> (n + 1, u^{\omega_{0}}; \omega_{0}, \omega_{3}) - (n + 1, u^{\omega_{0}}; \omega_{3}, \omega_{0})$.
\end{itemize}
Therefore, in light of Lemmas \ref{lem:O6_ab_ba_1} and \ref{lem:O6_ab_ba_2}, we have the following cases with some $a, b \in O_{6}$ ($a \neq \omega_{p}, \omega_{q}$, $b \neq \omega_{0}, \omega_{3}$).
\begin{itemize}
\item[(i)]
$\delta = + \> (u^{\omega_{0}}; \omega_{0}, \omega_{3}, \omega_{0}) - (u^{\omega_{1}}; a, \omega_{p}, \omega_{q}) - (u^{\omega_{1}}; a, \omega_{q}, \omega_{p})$.
\item[(ii)]
$\delta = + \> (u^{\omega_{0}}; \omega_{3}, \omega_{0}, \omega_{3}) - (u^{\omega_{1}}; a, \omega_{p}, \omega_{q}) - (u^{\omega_{1}}; a, \omega_{q}, \omega_{p})$.
\item[(iii)]
$\delta = + \> (u^{\omega_{0}}; \omega_{0}, \omega_{3}, \omega_{0}) - (u^{\omega_{1}}; \omega_{p}, \omega_{q}, a) - (u^{\omega_{1}}; \omega_{q}, \omega_{p}, a)$.
\item[(iv)]
$\delta = + \> (u^{\omega_{0}}; \omega_{3}, \omega_{0}, \omega_{3}) - (u^{\omega_{1}}; \omega_{p}, \omega_{q}, a) - (u^{\omega_{1}}; \omega_{q}, \omega_{p}, a)$.
\item[(v)]
$\delta = - \> (u^{\omega_{1}}; \omega_{p}, \omega_{q}, \omega_{p}) + (u^{\omega_{0}}; b, \omega_{0}, \omega_{3}) - (u^{\omega_{0}}; b, \omega_{3}, \omega_{0})$.
\item[(vi)]
$\delta = - \> (u^{\omega_{1}}; \omega_{q}, \omega_{p}, \omega_{q}) + (u^{\omega_{0}}; b, \omega_{0}, \omega_{3}) - (u^{\omega_{0}}; b, \omega_{3}, \omega_{0})$.
\item[(vii)]
$\delta = - \> (u^{\omega_{1}}; \omega_{p}, \omega_{q}, \omega_{p}) + (u^{\omega_{0}}; \omega_{0}, \omega_{3}, b) - (u^{\omega_{0}}; \omega_{3}, \omega_{0}, b)$.
\item[(viii)]
$\delta = - \> (u^{\omega_{1}}; \omega_{q}, \omega_{p}, \omega_{q}) + (u^{\omega_{0}}; \omega_{0}, \omega_{3}, b) - (u^{\omega_{0}}; \omega_{3}, \omega_{0}, b)$.
\end{itemize}
It is routine to check that we have $g(\delta) \neq 0$ except the following cases.
\begin{itemize}
\item
(i) or (ii), $u = \omega_{0}$ or $\omega_{3}$, $a = \omega_{4}$, $\omega_{p} = \omega_{0}$, and thus $\omega_{q} = \omega_{3}$.
\item
(ii), $u = \omega_{1}$ or $\omega_{4}$, $a = \omega_{1}$ or $\omega_{4}$, $\omega_{p} = \omega_{0}$, and thus $\omega_{q} = \omega_{3}$.
\item
(i), $u = \omega_{2}$ or $\omega_{5}$, $a = \omega_{1}$, $\omega_{p} = \omega_{0}$, and thus $\omega_{q} = \omega_{3}$.
\item
(ii), $u = \omega_{2}$ or $\omega_{5}$, $a = \omega_{4}$, $\omega_{p} = \omega_{0}$, and thus $\omega_{q} = \omega_{3}$.
\item
(iii) or (iv), $u = \omega_{0}$ or $\omega_{3}$, $a = \omega_{4}$, $\omega_{p} = \omega_{5}$, and thus $\omega_{q} = \omega_{2}$.
\item
(iv), $u = \omega_{1}$ or $\omega_{4}$, $a = \omega_{1}$ or $\omega_{4}$, $\omega_{p} = \omega_{5}$, and thus $\omega_{q} = \omega_{2}$.
\item
(iii), $u = \omega_{2}$ or $\omega_{5}$, $a = \omega_{1}$, $\omega_{p} = \omega_{5}$, and thus $\omega_{q} = \omega_{2}$.
\item
(iv), $u = \omega_{2}$ or $\omega_{5}$, $a = \omega_{4}$, $\omega_{p} = \omega_{5}$, and thus $\omega_{q} = \omega_{2}$.
\item
(v), $u = \omega_{0}$ or $\omega_{3}$, $b = \omega_{2}$, $\omega_{p} = \omega_{0}$, and thus $\omega_{q} = \omega_{3}$.
\item
(vi), $u = \omega_{0}$ or $\omega_{3}$, $b = \omega_{5}$, $\omega_{p} = \omega_{0}$, and thus $\omega_{q} = \omega_{3}$.
\item
(v), $u = \omega_{1}$ or $\omega_{4}$, $b = \omega_{2}, \omega_{5}$, $\omega_{p} = \omega_{0}$, and thus $\omega_{q} = \omega_{3}$.
\item
(v) or (vi), $u = \omega_{2}$ or $\omega_{5}$, $b = \omega_{2}$, $\omega_{p} = \omega_{0}$, and thus $\omega_{q} = \omega_{3}$.
\item
(vii) or (viii), $u = \omega_{0}$ or $\omega_{3}$, $b = \omega_{1}$, $\omega_{p} = \omega_{5}$, and thus $\omega_{q} = \omega_{2}$.
\item
(vii), $u = \omega_{1}$ or $\omega_{4}$, $b = \omega_{1}$, $\omega_{p} = \omega_{5}$, and thus $\omega_{q} = \omega_{2}$.
\item
(viii), $u = \omega_{1}$ or $\omega_{4}$, $b = \omega_{4}$, $\omega_{p} = \omega_{5}$, and thus $\omega_{q} = \omega_{2}$.
\item
(vii), $u = \omega_{2}$ or $\omega_{5}$, $b = \omega_{1}, \omega_{4}$, $\omega_{p} = \omega_{5}$, and thus $\omega_{q} = \omega_{2}$.
\end{itemize}
Obviously, $\gamma + \delta$ are 3-cycles in the above cases.
Then, in light of Lemma \ref{lem:O6_weight}, we have $\eta(\pi(\gamma + \delta)) = 0$.
\end{proof}

\begin{lemma}
\label{lem:O6_f3_special_3+}
Let $\delta_{4}, \delta_{5}$ and $\delta_{6}$ be 3-terms of $(O_{6}, S)$ whose degrees are $n + 1$, and $\delta = \delta_{4} + \delta_{5} + \delta_{6}$.
Furthermore, let $\lambda_{7}, \lambda_{8}, \dots, \lambda_{k + 6}$ be 3-terms of $(O_{6}, S)$ whose degrees are at least $n + 1$, and $\lambda = \sum_{i = 7}^{k + 6} \lambda_{i}$.
Assume that $g(\gamma) + f(\delta) = 0$ and $\gamma + \delta + \lambda$ is a 3-cycle of length $k + 6$.
Then, we have $k \geq 2$.
\end{lemma}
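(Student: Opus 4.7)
The plan is to mimic the short degree-bookkeeping argument used in Lemma \ref{lem:O6_f2_special_4+} (and its close relative Lemma \ref{lem:O6_f3_special_2++}). The hypothesis that $\gamma + \delta + \lambda$ is a 3-cycle will be applied at two carefully chosen degrees: the minimal degree $n+1$ among the $\lambda_{i}$ if it occurs, otherwise the maximal degree. In each case, Lemma \ref{lem:f-connected1} will then immediately deliver $k \geq 2$.

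First I would split on whether at least one of $\lambda_{7}, \ldots, \lambda_{k+6}$ has degree $n+1$. Suppose so, and let $\lambda^{(n+1)}$ denote the partial sum over those $\lambda_{i}$ of degree $n+1$. Because $l(\gamma + \delta + \lambda) = k+6$ and the $\delta_{j}$ all have degree $n+1$, the reduced form satisfies $T_{n+1}(\gamma + \delta + \lambda) = \{\delta_{4},\delta_{5},\delta_{6}\} \cup \{\lambda_{i} : \deg \lambda_{i} = n+1\}$. Applying Lemma \ref{lem:3-cycle_condition} (i) at $k = n+1$ gives $g(\gamma) + f(\delta) + f(\lambda^{(n+1)}) = 0$, and the assumption $g(\gamma) + f(\delta) = 0$ forces $f(\lambda^{(n+1)}) = 0$. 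Since $\lambda^{(n+1)}$ is non-empty, Lemma \ref{lem:f-connected1} precludes it from consisting of a single 3-term, hence it has at least two summands and $k \geq 2$.

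Next, I would handle the remaining case in which every $\lambda_{i}$ has degree at least $n+2$. Let $M$ denote the maximal degree appearing in $\gamma + \delta + \lambda$; then $M \geq n+2$ and $T_{M}(\gamma + \delta + \lambda) \subseteq \{\lambda_{7},\ldots,\lambda_{k+6}\}$ is non-empty. Lemma \ref{lem:3-cycle_condition} (iii) gives $\sum_{\lambda_{i} \in T_{M}} g(\lambda_{i}) = 0$, so these terms decompose into $g$-connected groups. Reversing and invoking Lemma \ref{lem:f-/g-connectedness} (i) converts any such group into an $f$-connected collection, and Lemma \ref{lem:f-connected1} rules out singletons. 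Thus $T_{M}$ contains at least two $\lambda_{i}$, and again $k \geq 2$.

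I do not anticipate any real obstacle: no computation in $O_{6}$ or with the specific form of $\gamma$ is required, and nothing beyond the general tools of Section \ref{sec:tools} is needed. The only point requiring a little care is the identification of the reduced $T_{n+1}$-stratum in the first case, where one must use the length hypothesis $l(\gamma + \delta + \lambda) = k+6$ to ensure that $\delta_{4},\delta_{5},\delta_{6}$ together with the degree $n+1$ members of $\{\lambda_{i}\}$ genuinely appear as distinct terms of the reduced form.
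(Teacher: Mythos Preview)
Your proposal is correct and follows essentially the same approach as the paper, which simply remarks that the argument of Lemma~\ref{lem:O6_f2_special_4+} applies verbatim. Your two-case split (some $\lambda_{i}$ of degree $n+1$ versus all of degree $\geq n+2$) and the invocations of Lemmas~\ref{lem:3-cycle_condition}, \ref{lem:f-/g-connectedness}~(i), and~\ref{lem:f-connected1} match the paper's intended proof exactly.
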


\begin{proof}
In a similar way to the proof of Lemma \ref{lem:O6_f2_special_4+}, we have the claim.
\end{proof}

\begin{lemma}
\label{lem:O6_f3_special_4}
Let $\delta_{4}, \delta_{5}, \delta_{6}$ and $\delta_{7}$ be 3-terms of $(O_{6}, S)$ whose degrees are $n + 1$, and $\delta = \delta_{4} + \delta_{5} + \delta_{6} + \delta_{7}$.
Assume that $g(\gamma) + f(\delta) = 0$ and $g(\gamma) + f \left( \sum_{i \in I} \delta_{i} \right) \neq 0$ for any non-empty proper subset $I$ of $\{ 4, 5, 6, 7 \}$.
Then, $\gamma + \delta$ is a 3-cycle satisfying $\eta(\pi(\gamma + \delta)) = 0$, or we have $g(\delta) \neq 0$.
\end{lemma}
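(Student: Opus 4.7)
The plan is to mimic the structure of the proof of Lemma \ref{lem:O6_f2_special_4}, using formula (\ref{eq:O6_f3_special}) as our starting point in place of formula (\ref{eq:O6_f2_special}). Specifically, since $g(\gamma)+f(\delta)=0$ with $g(\gamma)$ given by the two pairs of 2-terms in (\ref{eq:O6_f3_special}) at the distinct indices $u^{\omega_{0}}$ and $u^{\omega_{1}}$, the minimality hypothesis that $g(\gamma)+f\left(\sum_{i\in I}\delta_i\right)\neq 0$ for every non-empty proper $I\subsetneq\{4,5,6,7\}$ forces each $\delta_i$ to contribute to cancelling only one of the two pairs, and the split of the four $\delta_i$'s across the two indices must be either $2+2$ or $1+3$ (up to permutation).

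First I would treat the $2+2$ case, where we may assume
\begin{align*}
 f(\delta_{4}+\delta_{5}) & = - (n+1,u^{\omega_{0}}; \omega_{0},\omega_{3}) - (n+1,u^{\omega_{0}}; \omega_{3},\omega_{0}), \\
 f(\delta_{6}+\delta_{7}) & = \pm (n+1,u^{\omega_{1}}; \omega_{p},\omega_{q}) \pm (n+1,u^{\omega_{1}}; \omega_{q},\omega_{p}),
\end{align*}
and Lemma \ref{lem:O6_ab_ba_2} gives four explicit forms for each of $\delta_{4}+\delta_{5}$ and $\delta_{6}+\delta_{7}$, parametrised by auxiliary elements $a,b\in O_{6}$. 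Substituting each of the resulting sixteen combinations into $g(\delta)$, I would compute whether $g(\delta)=0$; whenever it does, the combination in question yields $a,b\in\{\omega_{1},\omega_{4}\}$ paired with the symmetric choice on the other side (because $[\omega_{0}+3]=\omega_{3}$), and in that case $\gamma+\delta$ is a 3-cycle to which Lemma \ref{lem:O6_weight} applies, giving $\eta(\pi(\gamma+\delta))=0$; in all other combinations I would verify $g(\delta)\neq 0$ directly from the defining rotations.

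Next I would treat the $1+3$ case. Here Lemma \ref{lem:O6_ab_ba_1} pins the singleton to $+\langle n+1,u^{\omega_{0}};\omega_{0},\omega_{3}\rangle$ (or its counterpart $-\langle n+1,u^{\omega_{1}};\omega_{p},\omega_{q}\rangle$), and by the minimality hypothesis the remaining three terms satisfy the non-proper-subset condition of Lemma \ref{lem:O6_ab_ba_3}, so they fall into one of the four explicit forms listed there with an auxiliary element $a\in O_{6}$. I would then run through these eight configurations and confirm in each that $g(\delta)\neq 0$ by tracing the rotation $\square^{x}$ on the single ``$c$''-coordinate that is not cancelled by the inner pair of 2-terms, exactly as in the case-(b) analysis in the proof of Lemma \ref{lem:O6_f2_special_4}.

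The main obstacle is purely bookkeeping: with two pairs at different indices and the parameter $(\omega_{p},\omega_{q})\in\{(\omega_{0},\omega_{3}),(\omega_{5},\omega_{2})\}$ coming from the two cases of Lemma \ref{lem:f-connected3}, plus the parameters $a,b,u$, the number of subcases is moderately large. However, every subcase admits essentially the same short verification that either the two surviving 2-terms of $g(\delta)$ have mismatched indices (so $g(\delta)\neq 0$), or all four 2-terms cancel in antipodal pairs — which, by Lemma \ref{lem:O6_weight}, forces $\eta(\pi(\gamma+\delta))=0$ because each orbit of the $G(O_{6})$-action on the relevant triples $(a_{i},b_{i},c_{i})$ lies in the zero set of $\eta$. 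The bulk of the work is therefore to tabulate the surviving cases cleanly, in parallel to the tabulation carried out at the end of the proof of Lemma \ref{lem:O6_f3_special_3}.
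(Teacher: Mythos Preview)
Your approach matches the paper's proof exactly: split into the $2+2$ case (a) and the $1+3$ cases (b)/(c) by index, apply Lemmas \ref{lem:O6_ab_ba_1}--\ref{lem:O6_ab_ba_3} to enumerate the possible forms of $\delta$, and then verify case by case that either $g(\delta)\neq 0$ or (when $g(\delta)=0$) that $\eta(\pi(\gamma+\delta))=0$ via Lemma \ref{lem:O6_weight}. Two minor bookkeeping corrections: Lemma \ref{lem:O6_ab_ba_2} yields only two forms per pair (so four combinations in case (a), not sixteen), and the exceptional subcases with $g(\delta)=0$ arise only for forms (i) and (iv) with $(a,b)$ depending on $u$ and $(\omega_p,\omega_q)$ in a more varied way than ``$a,b\in\{\omega_1,\omega_4\}$'' --- the paper's explicit tabulation lists e.g.\ $(a,b)=(\omega_1,\omega_2),(\omega_1,\omega_5),(\omega_4,\omega_2)$ for (i) and $(a,b)=(\omega_2,\omega_0),(\omega_5,\omega_3)$ for (iv).
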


\begin{proof}
Since $g(\gamma) + f(\delta) = 0$, in light of the formula (\ref{eq:O6_f3_special}), we may assume that
\begin{itemize}
\item[(a)]
$f(\delta_{4} + \delta_{5}) = - \> (n + 1, u^{\omega_{0}}; \omega_{0}, \omega_{3}) - (n + 1, u^{\omega_{0}}; \omega_{3}, \omega_{0})$ and \\
$f(\delta_{6} + \delta_{7}) = + \> (n + 1, u^{\omega_{1}}; \omega_{p}, \omega_{q}) + (n + 1, u^{\omega_{1}}; \omega_{q}, \omega_{p})$,
\item[(b)]
$f(\delta_{4}) = - \> (n + 1, u^{\omega_{0}}; \omega_{0}, \omega_{3}) - (n + 1, u^{\omega_{0}}; \omega_{3}, \omega_{0})$ and \\
$f(\delta_{5} + \delta_{6} + \delta_{7}) = + \> (n + 1, u^{\omega_{1}}; \omega_{p}, \omega_{q}) + (n + 1, u^{\omega_{1}}; \omega_{q}, \omega_{p})$, or
\item[(c)]
$f(\delta_{4}) = + \> (n + 1, u^{\omega_{1}}; \omega_{p}, \omega_{q}) + (n + 1, u^{\omega_{1}}; \omega_{q}, \omega_{p})$ and \\
$f(\delta_{5} + \delta_{6} + \delta_{7}) = - \> (n + 1, u^{\omega_{0}}; \omega_{0}, \omega_{3}) - (n + 1, u^{\omega_{0}}; \omega_{3}, \omega_{0})$.
\end{itemize}

In the case (a), we have the following cases with some $a, b \in O_{6}$ ($a \neq \omega_{0}, \omega_{3}$, $b \neq \omega_{p}, \omega_{q}$) by Lemma \ref{lem:O6_ab_ba_2}.
\begin{itemize}
\item[(i)]
$\delta = + \> (u^{\omega_{0}}; a, \omega_{0}, \omega_{3}) + (u^{\omega_{0}}; a, \omega_{3}, \omega_{0}) - (u^{\omega_{1}}; b, \omega_{p}, \omega_{q}) - (u^{\omega_{1}}; b, \omega_{q}, \omega_{p})$.
\item[(ii)]
$\delta = + \> (u^{\omega_{0}}; a, \omega_{0}, \omega_{3}) + (u^{\omega_{0}}; a, \omega_{3}, \omega_{0}) - (u^{\omega_{1}}; \omega_{p}, \omega_{q}, b) - (u^{\omega_{1}}; \omega_{q}, \omega_{p}, b)$.
\item[(iii)]
$\delta = + \> (u^{\omega_{0}}; \omega_{0}, \omega_{3}, a) + (u^{\omega_{0}}; \omega_{3}, \omega_{0}, a) - (u^{\omega_{1}}; b, \omega_{p}, \omega_{q}) - (u^{\omega_{1}}; b, \omega_{q}, \omega_{p})$.
\item[(iv)]
$\delta = + \> (u^{\omega_{0}}; \omega_{0}, \omega_{3}, a) + (u^{\omega_{0}}; \omega_{3}, \omega_{0}, a) - (u^{\omega_{1}}; \omega_{p}, \omega_{q}, b) - (u^{\omega_{1}}; \omega_{q}, \omega_{p}, b)$.
\end{itemize}
It is routine to check that we have $g(\delta) \neq 0$ except the following cases.
\begin{itemize}
\item
(i), $u = \omega_{0}$ or $\omega_{3}$, $(a, b) = (\omega_{1}, \omega_{2})$ or $(\omega_{1}, \omega_{5})$, $\omega_{p} = \omega_{0}$, and thus $\omega_{q} = \omega_{3}$.
\item
(i), $u = \omega_{1}$ or $\omega_{4}$, $(a, b) = (\omega_{1}, \omega_{5})$ or $(\omega_{4}, \omega_{2})$, $\omega_{p} = \omega_{0}$, and thus $\omega_{q} = \omega_{3}$.
\item
(i), $u = \omega_{2}$ or $\omega_{5}$, $(a, b) = (\omega_{1}, \omega_{5})$ or $(\omega_{4}, \omega_{5})$, $\omega_{p} = \omega_{0}$, and thus $\omega_{q} = \omega_{3}$.
\item
(iv), $u = \omega_{0}$ or $\omega_{3}$, $(a, b) = (\omega_{2}, \omega_{0})$ or $(\omega_{5}, \omega_{3})$, $\omega_{p} = \omega_{5}$, and thus $\omega_{q} = \omega_{2}$.
\item
(iv), $u = \omega_{1}$ or $\omega_{4}$, $(a, b) = (\omega_{2}, \omega_{0})$ or $(\omega_{5}, \omega_{0})$, $\omega_{p} = \omega_{5}$, and thus $\omega_{q} = \omega_{2}$.
\item
(iv), $u = \omega_{2}$ or $\omega_{5}$, $(a, b) = (\omega_{2}, \omega_{0})$ or $(\omega_{2}, \omega_{3})$, $\omega_{p} = \omega_{5}$, and thus $\omega_{q} = \omega_{2}$.
\end{itemize}
Obviously, $\gamma + \delta$ are 3-cycles in the above cases.
Then, in light of Lemma \ref{lem:O6_weight}, we have $\eta(\pi(\gamma + \delta)) = 0$.

In the case (b) or (c), we have $f \big( \sum_{j \in J} \delta_{j} \big) \neq f(\delta_{5} + \delta_{6} + \delta_{7})$ for any non-empty proper subset $J$ of $\{ 5, 6, 7 \}$ by the assumption that $g(\gamma) + f \left( \sum_{i \in I} \delta_{i} \right) \neq 0$ for any non-empty proper subset $I$ of $\{ 4, 5, 6, 7 \}$.
Therefore, in light of Lemmas \ref{lem:O6_ab_ba_1} and \ref{lem:O6_ab_ba_3}, we have the following cases with some $a, b \in O_{6}$ ($a \neq \omega_{p}, \omega_{q}$, $b \neq \omega_{0}, \omega_{3}$).
\begin{itemize}
\item
$\delta = + \> \langle u^{\omega_{0}}; \omega_{0}, \omega_{3} \rangle + (u^{\omega_{1}}; \omega_{p}, a, \omega_{q}) - (u^{\omega_{1}}; \omega_{q}, \omega_{p}, a) - \langle u^{\omega_{1}}; a, \omega_{q} \rangle$.
\item
$\delta = + \> \langle u^{\omega_{0}}; \omega_{0}, \omega_{3} \rangle + (u^{\omega_{1}}; \omega_{q}, a, \omega_{p}) - (u^{\omega_{1}}; \omega_{p}, \omega_{q}, a) - \langle u^{\omega_{1}}; a, \omega_{p} \rangle$.
\item
$\delta = + \> \langle u^{\omega_{0}}; \omega_{0}, \omega_{3} \rangle - (u^{\omega_{1}}; a, \omega_{p}, \omega_{q}) - \langle u^{\omega_{1}}; a, \omega_{q} \rangle + (u^{\omega_{1}}; \omega_{q}, a, \omega_{p})$.
\item
$\delta = + \> \langle u^{\omega_{0}}; \omega_{0}, \omega_{3} \rangle - (u^{\omega_{1}}; a, \omega_{q}, \omega_{p}) - \langle u^{\omega_{1}}; a, \omega_{p} \rangle + (u^{\omega_{1}}; \omega_{p}, a, \omega_{q})$.
\item
$\delta = - \> \langle u^{\omega_{1}}; \omega_{p}, \omega_{q} \rangle - (u^{\omega_{0}}; \omega_{0}, b, \omega_{3}) + (u^{\omega_{0}}; \omega_{3}, \omega_{0}, b) + \langle u^{\omega_{0}}; b, \omega_{3} \rangle$.
\item
$\delta = - \> \langle u^{\omega_{1}}; \omega_{p}, \omega_{q} \rangle - (u^{\omega_{0}}; \omega_{3}, b, \omega_{0}) + (u^{\omega_{0}}; \omega_{0}, \omega_{3}, b) + \langle u^{\omega_{0}}; b, \omega_{0} \rangle$.
\item
$\delta = - \> \langle u^{\omega_{1}}; \omega_{p}, \omega_{q} \rangle + (u^{\omega_{0}}; b, \omega_{0}, \omega_{3}) + \langle u^{\omega_{0}}; b, \omega_{3} \rangle - (u^{\omega_{0}}; \omega_{3}, b, \omega_{0})$.
\item
$\delta = - \> \langle u^{\omega_{1}}; \omega_{p}, \omega_{q} \rangle + (u^{\omega_{0}}; b, \omega_{3}, \omega_{0}) + \langle u^{\omega_{0}}; b, \omega_{0} \rangle - (u^{\omega_{0}}; \omega_{0}, b, \omega_{3})$.
\end{itemize}
It is routine to see that we have $g(\delta) \neq 0$ in each case.
\end{proof}

We are now ready to prove Lemma \ref{lem:O6_f3_special}.

\begin{proof}[Proof of Lemma \ref{lem:O6_f3_special}]
By Lemmas \ref{lem:3-cycle_condition}, \ref{lem:O6_f3_special_geq2}--\ref{lem:O6_f3_special_4}, we immediately have the claim.
\end{proof}

\section{Proof of Proposition \ref{prop:O6_III}}
\label{sec:proof_of_proposition_O6_III}

We devote this section to prove Proposition \ref{prop:O6_III}.
Let $\gamma_{1}, \gamma_{2}, \dots, \gamma_{6}$ be 3-terms of $(O_{6}, S)$ and $\gamma = \gamma_{1} + \gamma_{2} + \dots + \gamma_{6}$.
Assume that $\gamma$ is a 3-cycle satisfying $l(\gamma) = |T_{n}(\gamma)| = 6$ and $\eta(\pi(\gamma)) \neq 0$.
Then, for the same reason as in the proof of Proposition \ref{prop:O6_I}, we may assume that $\gamma_{1}, \gamma_{2}, \dots, \gamma_{6}$ are not $f$-connected taking reverse if necessary.
Therefore, in light of Lemmas \ref{lem:3-cycle_condition} (ii) and \ref{lem:f-connected1}, we have the following cases.
\begin{itemize}
\item[(i)]
$\gamma_{1}, \gamma_{2}, \dots, \gamma_{6}$ are divided into three sets, each of which consists of two $f$-connected 3-terms.
\item[(ii)]
$\gamma_{1}, \gamma_{2}, \dots, \gamma_{6}$ are divided into two sets, each of which consists of three $f$-connected 3-terms.
\item[(iii)]
$\gamma_{1}, \gamma_{2}, \dots, \gamma_{6}$ are divided into two sets, which respectively consist of four and two $f$-connected 3-terms.
\end{itemize}

\subsection{Case (i)}
\label{subsec:proof_of_proposition_O6_III_i}

In light of Lemma \ref{lem:f-connected2}, we may assume that
\[
 \gamma_{2 i - 1} + \gamma_{2 i}
 = + \> (u_{i}; a_{i}, b_{i}, a_{i}) - (u_{i}; b_{i}, a_{i}, b_{i})
\]
with some $u_{i} \in O_{6}$ and distinct $a_{i}, b_{i} \in O_{6}$ ($1 \leq i \leq 3$).
Let $\delta_{i}$ denote $\gamma_{2 i - 1} + \gamma_{2 i}$.
If $b_{j} = [a_{j} + 3]$ and $u_{j} = a_{j}$ or $b_{j}$ for some $j$, in light of Lemma \ref{lem:O6_f2_special}, $\delta_{j}$ is a 3-cycle satisfying $\eta(\pi(\delta_{j})) = 0$.
Therefore, $\gamma - \delta_{j}$ is a 3-cycle satisfying $l(\gamma - \delta_{j}) = |T_{n}(\gamma - \delta_{j})| = 4$ and $\eta(\pi(\gamma - \delta_{j})) \neq 0$.
It contradicts to Proposition \ref{prop:O6_I}.
We thus have $b_{i} \neq [a_{i} + 3]$ or $u_{i} \neq a_{i}, b_{i}$ for each $i$.

Assume that $b_{1} = [a_{1} + 3]$ and $u_{1} \neq a_{1}, b_{1}$.
If $b_{2} \neq [a_{2} + 3]$, it is easy to see that we have $l(g(\delta_{1} + \delta_{2})) = 10$.
We thus have $g(\gamma) \neq 0$ contradicting to Lemma \ref{lem:3-cycle_condition} (iii), because $l(g(\delta_{3})) \leq 6$.
Therefore, we have $b_{2} = [a_{2} + 3]$ (and $u_{2} \neq a_{2}, b_{2}$), and $b_{3} = [a_{3} + 3]$ (and $u_{3} \neq a_{3}, b_{3}$) for the same reason.
Then, we have $\eta(\pi(\gamma)) = 0$ contradicting to our assumption.
Obviously, we are faced with the same situation if $b_{2} = [a_{2} + 3]$ and $u_{2} \neq a_{2}, b_{2}$, or $b_{3} = [a_{3} + 3]$ and $u_{3} \neq a_{3}, b_{3}$.

Assume that $b_{i} \neq [a_{i} + 3]$ for each $i$.
Then, we may assume that
\[
 \delta_{1} = + \> (u_{1}; \omega_{0}, \omega_{1}, \omega_{0}) - (u_{1}; \omega_{1}, \omega_{0}, \omega_{1}).
\]
Obviously, we have
\begin{align*}
 g(\delta_{1})
 & = + \> (u_{1}^{\omega_{0}}; \omega_{1}, \omega_{0}) - (u_{1}^{\omega_{1}}; \omega_{5}, \omega_{0}) + (u_{1}^{\omega_{0}}; \omega_{0}, \omega_{2}) \\
 & \phantom{=} \ - (u_{1}^{\omega_{1}}; \omega_{0}, \omega_{1}) + (u_{1}^{\omega_{0}}; \omega_{2}, \omega_{1}) - (u_{1}^{\omega_{1}}; \omega_{1}, \omega_{5}), \\
 g(\delta_{2})
 & = + \> (u_{2}^{a_{2}}; b_{2}, a_{2}) - (u_{2}^{b_{2}}; a_{2}^{b_{2}}, a_{2}) + (u_{2}^{a_{2}}; a_{2}, b_{2}^{a_{2}}) \\
 & \phantom{=} \ - (u_{2}^{b_{2}}; a_{2}, b_{2}) + (u_{2}^{a_{2}}; b_{2}^{a_{2}}, b_{2}) - (u_{2}^{b_{2}}; b_{2}, a_{2}^{b_{2}}).
\end{align*}
Since the 2-term $+ (u_{1}^{\omega_{0}}; \omega_{1}, \omega_{0})$ survives in reduced $g(\delta_{1})$, in light of Lemma \ref{lem:3-cycle_condition} (iii), we have
\[
 (u_{1}^{\omega_{0}}; \omega_{1}, \omega_{0}) = (u_{2}^{b_{2}}; a_{2}^{b_{2}}, a_{2}), \, (u_{2}^{b_{2}}; a_{2}, b_{2}) \ \text{or} \ (u_{2}^{b_{2}}; b_{2}, a_{2}^{b_{2}}),
\]
swapping the roles of $\delta_{2}$ and $\delta_{3}$ if necessary.
We note that $u_{2}^{b_{2}} = u_{1}^{\omega_{0}}$ yields $u_{2} = u_{1}^{\omega_{0} \overline{b_{2}}} = u_{1}^{\omega_{0} [b_{2} + 3]}$.

If $(u_{1}^{\omega_{0}}; \omega_{1}, \omega_{0}) = (u_{2}^{b_{2}}; a_{2}^{b_{2}}, a_{2})$, since $a_{2} = \omega_{0}$ and $a_{2}^{b_{2}} = \omega_{1}$ yield $b_{2} = \omega_{2}$, we have
\begin{align*}
 g(\delta_{1} + \delta_{2})
 & = - \> (u_{1}^{\omega_{1}}; \omega_{5}, \omega_{0}) - (u_{1}^{\omega_{1}}; \omega_{0}, \omega_{1}) - (u_{1}^{\omega_{1}}; \omega_{1}, \omega_{5}) \\
 & \phantom{=} \ + (u_{1}^{\omega_{0} \omega_{5} \omega_{0}}; \omega_{2}, \omega_{0}) + (u_{1}^{\omega_{0} \omega_{5} \omega_{0}}; \omega_{0}, \omega_{4}) + (u_{1}^{\omega_{0} \omega_{5} \omega_{0}}; \omega_{4}, \omega_{2}).
\end{align*}
Since reduced $g(\delta_{3})$ could not have two 2-terms whose colors are $(\omega_{0}, \omega_{1})$ and $(\omega_{0}, \omega_{4})$ at the same moment, we have $g(\gamma) \neq 0$ contradicting to Lemma \ref{lem:3-cycle_condition} (iii).

If $(u_{1}^{\omega_{0}}; \omega_{1}, \omega_{0}) = (u_{2}^{b_{2}}; a_{2}, b_{2})$, since $u_{2} = u_{1}^{\omega_{0} \overline{\omega_{0}}} = u_{1}$, we have $\delta_{2} = - \delta_{1}$.
We thus have $l(\gamma) \leq 2$ contradicting to our assumption.

If $(u_{1}^{\omega_{0}}; \omega_{1}, \omega_{0}) = (u_{2}^{b_{2}}; b_{2}, a_{2}^{b_{2}})$, since $b_{2} = \omega_{1}$ and $a_{2}^{b_{2}} = \omega_{0}$ yield $a_{2} = \omega_{2}$, we have
\begin{align*}
 g(\delta_{1} + \delta_{2})
 & = - \> (u_{1}^{\omega_{1}}; \omega_{5}, \omega_{0}) - (u_{1}^{\omega_{1}}; \omega_{0}, \omega_{1}) - (u_{1}^{\omega_{1}}; \omega_{1}, \omega_{5}) \\
 & \phantom{=} \ + (u_{1}^{\omega_{0} \omega_{4} \omega_{2}}; \omega_{1}, \omega_{2}) + (u_{1}^{\omega_{0} \omega_{4} \omega_{2}}; \omega_{2}, \omega_{3}) + (u_{1}^{\omega_{0} \omega_{4} \omega_{2}}; \omega_{3}, \omega_{1}).
\end{align*}
Since reduced $g(\gamma_{3})$ could not have two 2-terms whose colors are $(\omega_{0}, \omega_{1})$ and $(\omega_{3}, \omega_{1})$ at the same moment, we have $g(\gamma) \neq 0$ contradicting to Lemma \ref{lem:3-cycle_condition} (iii).

\subsection{Case (ii)}
\label{subsec:proof_of_proposition_O6_III_ii}

In light of Lemma \ref{lem:f-connected3}, we may assume that
\begin{align*}
 \gamma_{1} & = \pm (u; a, b, a), \\
 \gamma_{2} & = \mp (u; c, a, b) \ \text{(or $\mp (u; a, b, c)$)}, \\
 \gamma_{3} & = \mp (u; c, b, a) \ \text{(or $\mp (u; b, a, c)$)}
\end{align*}
with some $u \in O_{6}$ and mutually distinct $a, b, c \in O_{6}$.
For the subsequent arguments, we rewrite $\gamma_{i}$ as $\varepsilon_{i} (u_{i}; a_{i}, b_{i}, c_{i})$ for $4 \leq i \leq 6$.

Assume that $b \neq [a + 3]$.
Then, we may assume that $a = \omega_{0}$ and $b = \omega_{1}$.
We note that $\gamma_{1}$ is of type 1.
If $c = \omega_{2}$ or $\omega_{5}$, $\gamma_{2}$ is of type 3 or 2 and $\gamma_{3}$ of type 2 or 3, respectively.
Furthermore, $u^{a b a}$, $u^{c a b}$ (or $u^{a b c}$) and $u^{c b a}$ (or $u^{b a c}$) are mutually different.
Therefore, in light of Lemmas \ref{lem:3-cycle_condition} (iii), \ref{lem:reverse} (ii), \ref{lem:f-/g-connectedness} and \ref{lem:f-connected1}--\ref{lem:f-connected3},
\begin{itemize}
\item
at least two of $\gamma_{4}, \gamma_{5}$ and $\gamma_{6}$ satisfy $u_{i}^{a_{i} b_{i} c_{i}} = u^{a b a}$,
\item
at least two of $\gamma_{4}, \gamma_{5}$ and $\gamma_{6}$ satisfy $u_{i}^{a_{i} b_{i} c_{i}} = u^{c a b}$ (or $u^{a b c}$) if $c = \omega_{2}$, otherwise $u_{i}^{a_{i} b_{i} c_{i}} = u^{c b a}$ (or $u^{b a c}$), and
\item
at least one of $\gamma_{4}, \gamma_{5}$ and $\gamma_{6}$ satisfies $u_{i}^{a_{i} b_{i} c_{i}} = u^{c b a}$ (or $u^{b a c}$) if $c = \omega_{2}$, otherwise $u_{i}^{a_{i} b_{i} c_{i}} = u^{c a b}$ (or $u^{a b c}$).
\end{itemize}
There are no $\gamma_{4}, \gamma_{5}$ and $\gamma_{6}$ satisfying those conditions simultaneously.

If $c = \omega_{3}$ or $\omega_{4}$, both of $\gamma_{2}$ and $\gamma_{3}$ are of type 3.
Furthermore, $u^{c a b}$ (or $u^{a b c}$) and $u^{c b a}$ (or $u^{b a c}$) are different, although we have $u^{a b a} = u^{c a b}$ (or $u^{a b c}$) or $u^{a b a} = u^{c b a}$ (or $u^{b a c}$) in some cases.
If $u^{a b a}$, $u^{c a b}$ (or $u^{a b c}$) and $u^{c b a}$ (or $u^{b a c}$) are mutually different, in light of Lemmas \ref{lem:3-cycle_condition}--\ref{lem:f-/g-connectedness} and \ref{lem:f-connected1}--\ref{lem:f-connected3},
\begin{itemize}
\item
at least two of $\gamma_{4}, \gamma_{5}$ and $\gamma_{6}$ satisfy $u_{i}^{a_{i} b_{i} c_{i}} = u^{a b a}$,
\item
at least two of $\gamma_{4}, \gamma_{5}$ and $\gamma_{6}$ satisfy $u_{i}^{a_{i} b_{i} c_{i}} = u^{c a b}$ (or $u^{a b c}$) and
\item
at least two of $\gamma_{4}, \gamma_{5}$ and $\gamma_{6}$ satisfy $u_{i}^{a_{i} b_{i} c_{i}} = u^{c b a}$ (or $u^{b a c}$).
\end{itemize}
Otherwise, in light of Lemmas \ref{lem:3-cycle_condition}--\ref{lem:f-/g-connectedness} and \ref{lem:f-connected1}--\ref{lem:f-connected4},
\begin{itemize}
\item
at least two of $\gamma_{4}, \gamma_{5}$ and $\gamma_{6}$ satisfy $u_{i}^{a_{i} b_{i} c_{i}} = u^{c a b}$ (or $u^{a b c}$) and
\item
at least two of $\gamma_{4}, \gamma_{5}$ and $\gamma_{6}$ satisfy $u_{i}^{a_{i} b_{i} c_{i}} = u^{c b a}$ (or $u^{b a c}$),
\end{itemize}
because $\gamma_{1}$ has the opposite sign to $\gamma_{2}$ and $\gamma_{3}$.
At any rate, there are no $\gamma_{4}, \gamma_{5}$ and $\gamma_{6}$ satisfying those conditions simultaneously.

Assume that $b = [a + 3]$.
Then, we may assume that $a = \omega_{0}$, and have $b = \omega_{3}$.
In light of Lemma \ref{lem:f-connected3}, we may assume that
\[
 \gamma_{4} + \gamma_{5} + \gamma_{6} =
 \begin{cases}
  + \> \varepsilon (v; p, q, p) - \varepsilon (v; r, p, q) - \varepsilon (v; r, q, p) \ \text{or} \\
  + \> \varepsilon (v; p, q, p) - \varepsilon (v; p, q, r) - \varepsilon (v; q, p, r)
 \end{cases}
\]
with some $\varepsilon \in \{ + 1, - 1 \}$, $v \in O_{6}$, and mutually distinct $p, q, r \in O_{6}$.
If $q \neq [p + 3]$, in a similar manner as above, we have no candidates of $\gamma$.
Therefore, we have $q = [p + 3]$.
Then, it is routine to see that we have $\eta(\pi(\gamma)) = 0$ contradicting to our assumption.

\subsection{Case (iii)}
\label{subsec:proof_of_proposition_O6_III_iii}

We may assume that $\gamma_{1} + \gamma_{2} + \gamma_{3} + \gamma_{4}$ is one of the 3-chains in Lemma \ref{lem:f-connected4} up to sign, with $(a, b) = (\omega_{0}, \omega_{1})$ or $(a, b, c) = (\omega_{0}, \omega_{3}, \omega_{1})$, and
\[
 \gamma_{5} + \gamma_{6} = + \> (v; p, q, p) - (v; q, p, q)
\]
with some $v \in O_{6}$ and distinct $p, q \in O_{6}$.
If $q = [p + 3]$ and $v = p$ or $q$, in light of Lemma \ref{lem:O6_f2_special}, $\gamma_{5} + \gamma_{6}$ is a 3-cycle satisfying $\eta(\pi(\gamma_{5} + \gamma_{6})) = 0$.
Therefore, $\gamma^{\prime} = \gamma_{1} + \gamma_{2} + \gamma_{3} + \gamma_{4}$ is a 3-cycle satisfying $l(\gamma^{\prime}) = |T_{n}(\gamma^{\prime})| = 4$ and $\eta(\pi(\gamma^{\prime})) \neq 0$.
It contradicts to Proposition \ref{prop:O6_I}.
We thus have $q \neq [p + 3]$ or $v \neq p, q$.

Assume that $q \neq [p + 3]$.
Then, we have $v^{p q p} = v^{q p q}$.
Since reduced $g(\gamma_{5} + \gamma_{6})$ has six 2-terms, we have $g(\gamma_{i} + \gamma_{5} + \gamma_{6}) \neq 0$ for any $i$ ($1 \leq i \leq 4$).
Therefore, in light of Lemmas \ref{lem:3-cycle_condition} (iii), \ref{lem:f-/g-connectedness} (i) and \ref{lem:f-connected1}, we have the following cases rearranging the roles of $\gamma_{1}, \gamma_{2}, \gamma_{3}$ and $\gamma_{4}$ if necessary.
\begin{itemize}
\item[(a)]
$\{ \gamma_{1}, \gamma_{2} \}$ and $\{ \gamma_{3}, \gamma_{4}, \gamma_{5}, \gamma_{6} \}$ are respectively $g$-connected.
\item[(b)]
$\{ \gamma_{1}, \gamma_{2}, \dots, \gamma_{6} \}$ is $g$-connected.
\end{itemize}
For the subsequent arguments, we rewrite $\gamma_{i}$ as $\varepsilon_{i} (u; a_{i}, b_{i}, c_{i})$ for $1 \leq i \leq 4$, and let $U$ denote the set $\{ u^{a_{1} b_{1} c_{1}}, u^{a_{2} b_{2} c_{2}}, u^{a_{3} b_{3} c_{3}}, u^{a_{4} b_{4} c_{4}} \}$.

In the case (a), we have $|U| \leq 2$ by Lemma \ref{lem:f-/g-connectedness} (iii).
Then, we have the cases listed in Tables \ref{tab:O6_4_a}--\ref{tab:O6_4_c}.\footnote{The author enumerated the candidates of $\gamma_{1} + \gamma_{2} + \gamma_{3} + \gamma_{4}$ with the aid of a computer. The C source code for the task is available at \url{https://github.com/ayminoue/LQC/blob/main/EC4.c}.}
Since $\gamma_{1}$ and $\gamma_{2}$ are $g$-connected, in light of Lemmas \ref{lem:reverse} (ii), \ref{lem:f-/g-connectedness} and \ref{lem:f-connected2}, $\gamma_{1}$ and $\gamma_{2}$ are of type 0 or 2 and $u^{a_{1} b_{1} c_{1}} = u^{a_{2} b_{2} c_{2}}$.
This condition is satisfied only in the cases 11 and 14.
Furthermore, since reduced $g(\gamma_{5} + \gamma_{6})$ has six 2-terms and $g(\gamma_{3} + \gamma_{4} + \gamma_{5} + \gamma_{6}) = 0$, reduced $g(\gamma_{3} + \gamma_{4})$ has six 2-terms.
This condition is not satisfied in both cases (11 and 14).
Therefore, we have no candidates of $\gamma$.
\begin{table}[htbp]
 \centering
 \caption{The list of $\gamma_{1} + \gamma_{2} + \gamma_{3} + \gamma_{4}$ satisfying $u^{a_{i} b_{i} c_{i}} = u^{a_{j} b_{j} c_{j}} \neq u^{a_{k} b_{k} c_{k}} = u^{a_{l} b_{l} c_{l}}$ ($\{ i, j, k, l \} = \{ 1, 2, 3, 4 \}$). Here, values in the second column come from Lemma \ref{lem:f-connected4}.}
 \label{tab:O6_4_a}
 \begin{tabular}{|c|c|c|c|c|c|l|} \hline
  no.\ & case & $u$ & $b$ & $c$ & $d$ & \multicolumn{1}{c|}{result} \\ \hline \hline
  1 & (i) & any & $\omega_{1}$ & $\omega_{5}$ & -- & \hspace{-0.8em} \begin{tabular}{l} $u^{a b c} = u^{a c a} = u^{c a c}$, \\ $u^{c a b} = u^{b c b} = u^{c b c}$ \end{tabular} \\ \hline
  2 & (i) & $\omega_{2}$ or $\omega_{5}$ & $\omega_{3}$ & $\omega_{1}$ & -- & \hspace{-0.8em} \begin{tabular}{l} $u^{a b c} = u^{c a b}$, \\ $u^{a c a} = u^{c a c} = u^{b c b} = u^{c b c}$ \end{tabular} \hspace{-0.8em} \\ \hline
  3 & (ii) & $\omega_{0}$ or $\omega_{3}$ & $\omega_{1}$ & $\omega_{2}$ & $\omega_{3}$ & $u^{a b c} = u^{a c d}$, $u^{a b d} = u^{b c d}$ \\ \hline
  4 & (ii) & $\omega_{2}$ or $\omega_{5}$ & $\omega_{1}$ & $\omega_{5}$ & $\omega_{3}$ & $u^{a b c} = u^{a c d}$, $u^{a b d} = u^{b c d}$ \\ \hline
  5 & (iii) & any & $\omega_{3}$ & $\omega_{1}$ & $\omega_{2}$ or $\omega_{5}$ & $u^{a b c} = u^{b a c}$, $u^{a b d} = u^{b a d}$ \\ \hline
  6 & (iii) & $\omega_{0}, \omega_{2}, \omega_{3}$ or $\omega_{5}$ & $\omega_{3}$ & $\omega_{1}$ & $\omega_{4}$ & $u^{a b c} = u^{b a c}$, $u^{a b d} = u^{b a d}$ \\ \hline
  7 & (iv) & $\omega_{2}$ or $\omega_{5}$ & $\omega_{1}$ & $\omega_{2}$ & $\omega_{5}$ & $u^{c a b} = u^{d a b}$, $u^{c b a} = u^{d b a}$ \\ \hline
  8 & (iv) & $\omega_{2}$ or $\omega_{5}$ & $\omega_{1}$ & $\omega_{5}$ & $\omega_{2}$ & $u^{c a b} = u^{d a b}$, $u^{c b a} = u^{d b a}$ \\ \hline
  9 & (iv) & any & $\omega_{3}$ & $\omega_{1}$ & $\omega_{2}$ or $\omega_{5}$ & $u^{c a b} = u^{c b a}$, $u^{d a b} = u^{d b a}$ \\ \hline
  10 & (iv) & $\omega_{0}, \omega_{2}, \omega_{3}$ or $\omega_{5}$ & $\omega_{3}$ & $\omega_{1}$ & $\omega_{4}$ & $u^{c a b} = u^{c b a}$, $u^{d a b} = u^{d b a}$ \\ \hline
  11 & (v) & $\omega_{1}$ or $\omega_{4}$ & $\omega_{1}$ & $\omega_{2}$ & $\omega_{5}$ & $u^{c a b} = u^{b a d}$, $u^{c b a} = u^{a b d}$ \\ \hline
  12 & (v) & $\omega_{2}$ or $\omega_{5}$ & $\omega_{1}$ & $\omega_{3}$ & $\omega_{3}$ & $u^{c a b} = u^{b a d}$, $u^{c b a} = u^{a b d}$ \\ \hline
  13 & (v) & $\omega_{2}$ or $\omega_{5}$ & $\omega_{1}$ & $\omega_{4}$ & $\omega_{4}$ & $u^{c a b} = u^{b a d}$, $u^{c b a} = u^{a b d}$ \\ \hline
  14 & (v) & $\omega_{0}$ or $\omega_{3}$ & $\omega_{1}$ & $\omega_{5}$ & $\omega_{2}$ & $u^{c a b} = u^{b a d}$, $u^{c b a} = u^{a b d}$ \\ \hline
  15 & (v) & any & $\omega_{3}$ & $\omega_{1}$ & $\omega_{2}$ or $\omega_{5}$ & $u^{c a b} = u^{c b a}$, $u^{a b d} = u^{b a d}$ \\ \hline
  16 & (v) & $\omega_{0}, \omega_{2}, \omega_{3}$ or $\omega_{5}$ & $\omega_{3}$ & $\omega_{1}$ & $\omega_{4}$ & $u^{c a b} = u^{c b a}$, $u^{a b d} = u^{b a d}$ \\ \hline
 \end{tabular}
\end{table}
\begin{table}[htbp]
 \centering
 \caption{The list of $\gamma_{1} + \gamma_{2} + \gamma_{3} + \gamma_{4}$ satisfying $u^{a_{i} b_{i} c_{i}} = u^{a_{j} b_{j} c_{j}} = u^{a_{k} b_{k} c_{k}} \neq u^{a_{l} b_{l} c_{l}}$ ($\{ i, j, k, l \} = \{ 1, 2, 3, 4 \}$). Here, values in the second column come from Lemma \ref{lem:f-connected4}.}
 \label{tab:O6_4_b}
 \begin{tabular}{|c|c|c|c|c|c|l|} \hline
  no.\ & case & $u$ & $b$ & $c$ & $d$ & \multicolumn{1}{c|}{result} \\ \hline \hline
  17 & (ii) & $\omega_{0}$ or $\omega_{3}$ & $\omega_{1}$ & $\omega_{2}$ & $\omega_{5}$ & $u^{a b c} = u^{a b d} = u^{b c d}$ \\ \hline
  18 & (ii) & $\omega_{0}$ or $\omega_{3}$ & $\omega_{1}$ & $\omega_{5}$ & $\omega_{2}$ & $u^{a b c} = u^{a b d} = u^{b c d}$ \\ \hline
  19 & (ii) & $\omega_{0}$ or $\omega_{3}$ & $\omega_{3}$ & $\omega_{1}$ & $\omega_{2}$ or $\omega_{5}$ & $u^{a b c} = u^{a c d} = u^{b c d}$ \\ \hline
 \end{tabular}
\end{table}
\begin{table}[htbp]
 \centering
 \caption{The list of $\gamma_{1} + \gamma_{2} + \gamma_{3} + \gamma_{4}$ satisfying $u^{a_{1} b_{1} c_{1}} = u^{a_{2} b_{2} c_{2}} = u^{a_{3} b_{3} c_{3}} = u^{a_{4} b_{4} c_{4}}$. Here, values in the second column come from Lemma \ref{lem:f-connected4}.}
 \label{tab:O6_4_c}
 \begin{tabular}{|c|c|c|c|c|c|} \hline
  no.\ & case & $u$ & $b$ & $c$ & $d$ \\ \hline \hline
  20 & (iii) & $\omega_{1}$ or $\omega_{4}$ & $\omega_{3}$ & $\omega_{1}$ & $\omega_{4}$ \\ \hline
  21 & (iv) & $\omega_{1}$ or $\omega_{4}$ & $\omega_{3}$ & $\omega_{1}$ & $\omega_{4}$ \\ \hline
  22 & (v) & any & $\omega_{3}$ & $\omega_{1}$ & $\omega_{1}$ \\ \hline
  23 & (v) & $\omega_{1}$ or $\omega_{4}$ & $\omega_{3}$ & $\omega_{1}$ & $\omega_{4}$ \\ \hline
 \end{tabular}
\end{table}

In the case (b), we have $|U| = 1$ by Lemma \ref{lem:f-/g-connectedness} (iii).
Therefore, we have the cases listed in Table \ref{tab:O6_4_c}.
Since reduced $g(\gamma_{5} + \gamma_{6})$ has six 2-terms and $g(\gamma) = 0$, reduced $g(\gamma_{1} + \gamma_{2} + \gamma_{3} + \gamma_{4})$ has six 2-terms.
This condition is not satisfied in each case.
We thus have no candidates of $\gamma$.

Assume that $q = [p + 3]$ and $v \neq p, q$.
Since $v^{p q p} \neq v^{q p q}$, in light of Lemmas \ref{lem:3-cycle_condition} (iii), \ref{lem:f-/g-connectedness} and \ref{lem:f-connected1}, we have the following cases rearranging the roles of $\gamma_{1}, \gamma_{2}, \gamma_{3}$ and $\gamma_{4}$ if necessary.
\begin{itemize}
\item
$\{ \gamma_{1}, \gamma_{5}\}$, $\{ \gamma_{2}, \gamma_{6} \}$ and $\{ \gamma_{3}, \gamma_{4} \}$ are respectively $g$-connected.
\item
$\{ \gamma_{1}, \gamma_{2}, \gamma_{5} \}$ and $\{ \gamma_{3}, \gamma_{4}, \gamma_{6} \}$ are respectively $g$-connected.
\item
$\{ \gamma_{1}, \gamma_{5} \}$ (or $\{ \gamma_{1}, \gamma_{6} \}$) and $\{ \gamma_{2}, \gamma_{3}, \gamma_{4}, \gamma_{6} \}$ (or $\{ \gamma_{2}, \gamma_{3}, \gamma_{4}, \gamma_{5} \}$) are respectively $g$-connected.
\end{itemize}
The first and second cases are respectively nothing less than the reverse of the cases (i) and (ii).
Therefore, we have no candidates of $\gamma$ in those cases.

In the third case, we have $|U| = 2$ by Lemma \ref{lem:f-/g-connectedness} (iii).
Therefore, we have the cases listed in Tables \ref{tab:O6_4_a} and \ref{tab:O6_4_b}.
Since both of $\gamma_{5}$ and $\gamma_{6}$ are of type 0 and we have $g(\gamma_{1} + \gamma_{5}) = 0$ (or $g(\gamma_{1} + \gamma_{6}) = 0$), $\gamma_{1}$ is of type 0.
This condition is not satisfied in each case.
We thus have no candidates of $\gamma$.

\section{Proof of Proposition \ref{prop:O6_IV}}
\label{sec:proof_of_proposition_O6_IV}

We devote this section to prove Proposition \ref{prop:O6_IV}.
Let $\gamma_{1}, \gamma_{2}, \dots, \gamma_{7}$ be 3-terms of $(O_{6}, S)$ and $\gamma = \gamma_{1} + \gamma_{2} + \dots + \gamma_{7}$.
Assume that $\gamma$ is a 3-cycle satisfying $l(\gamma) = |T_{n}(\gamma)| = 7$ and $\eta(\pi(\gamma)) \neq 0$.
Then, for the same reason as in the proof of Proposition \ref{prop:O6_I}, we may assume that $\gamma_{1}, \gamma_{2}, \dots, \gamma_{7}$ are not $f$-connected taking reverse if necessary.
Therefore, in light of Lemmas \ref{lem:3-cycle_condition} (ii) and \ref{lem:f-connected1}, we have the following cases.
\begin{itemize}
\item[(i)]
$\gamma_{1}, \gamma_{2}, \dots, \gamma_{7}$ are divided into three sets, which respectively consist of two, two and three $f$-connected 3-terms.
\item[(ii)]
$\gamma_{1}, \gamma_{2}, \dots, \gamma_{7}$ are divided into two sets, which respectively consist of four and three $f$-connected 3-terms.
\item[(iii)]
$\gamma_{1}, \gamma_{2}, \dots, \gamma_{7}$ are divided into two sets, which respectively consist of five and two $f$-connected 3-terms.
\end{itemize}

\subsection{Case (i)}
\label{subsec:proof_of_proposition_O6_IV_i}

We may assume that $\{ \gamma_{1}, \gamma_{2} \}$, $\{ \gamma_{3}, \gamma_{4} \}$ and $\{ \gamma_{5}, \gamma_{6}, \gamma_{7} \}$ are respectively $f$-connected.
Moreover, in light of Lemma \ref{lem:f-connected3}, we may assume that
\begin{align*}
 \gamma_{5} & = \pm (w; p, q, p), \\
 \gamma_{6} & = \mp (w; r, p, q) \ \text{(or $\mp (w; p, q, r)$)}, \\
 \gamma_{7} & = \mp (w; r, q, p) \ \text{(or $\mp (w; q, p, r)$)}
\end{align*}
with some $w \in O_{6}$ and mutually distinct $p, q, r \in O_{6}$.
We note that, in light of Lemma \ref{lem:f-connected2}, $\gamma_{1}, \gamma_{2}, \gamma_{3}$ and $\gamma_{4}$ are of type 0 or 1.
Moreover, $\gamma_{1}$ is of type 0 if and only if $\gamma_{2}$ is of type 0.
Obviously, $\gamma_{3}$ and $\gamma_{4}$ are faced with the same situation.

Assume that $q \neq [p + 3]$.
Then, we may assume that $p = \omega_{0}$ and $q = \omega_{1}$.
Since $|T_{n}(\gamma)| = 7$, in a similar way to Subsection \ref{subsec:proof_of_proposition_O6_III_ii}, we have
\begin{itemize}
\item
$r = \omega_{3}$ or $\omega_{4}$,
\item
$w^{p q p} = w^{r p q}$ (or $w^{p q r}$) or $w^{p q p} = w^{r q p}$ (or $w^{q p r}$), and
\item
$\{ \gamma_{i}, \gamma_{j}, \gamma_{7} \}$ and $\{ \gamma_{k}, \gamma_{l}, \gamma_{5}, \gamma_{6} \}$, or $\{ \gamma_{i}, \gamma_{j}, \gamma_{6} \}$ and $\{ \gamma_{k}, \gamma_{l}, \gamma_{5}, \gamma_{7} \}$ are $g$- \linebreak connected for some mutually distinct $i, j, k, l \in \{ 1, 2, 3, 4 \}$, if $w^{p q p} = w^{r p q}$ (or $w^{p q r}$) or $w^{p q p} = w^{r q p}$ (or $w^{q p r}$), respectively.
\end{itemize}
We note that both of $\gamma_{6}$ and $\gamma_{7}$ are of type 3 and $w^{r p q} \neq w^{r q p}$ (or $w^{p q r} \neq w^{q p r}$).
Furthermore, in light of Lemmas \ref{lem:reverse} (ii), \ref{lem:f-/g-connectedness} (i) and \ref{lem:f-connected3}, $\gamma_{i}$ is of sign $\mp$ and of type 1 or 3, and $\gamma_{j}$ of sign $\pm$ and of type 0 or 2.
Therefore, we may assume that $\gamma_{i} = \gamma_{1}$ which is of sign $\mp$ and of type 1, and $\gamma_{j} = \gamma_{3}$ which is of sign $\pm$ and of type 0.
Let us rewrite $\gamma_{1}$ as $\mp (u; a, b, a)$.
Then, we have $\gamma_{2} = \pm (u; b, a, b)$.
Since $\gamma_{1}$ is of type 1, we have $b \neq [a + 3]$ and thus $u^{b a b} = u^{a b a} = w^{p q p}$.
Therefore, both of $\{ \gamma_{2}, \gamma_{4}, \gamma_{5}, \gamma_{6} \}$ and $\{ \gamma_{2}, \gamma_{4}, \gamma_{5}, \gamma_{7} \}$ are not $g$-connected by Lemma \ref{lem:f-/g-connectedness} (iii).
It leads to a contradiction.

Assume that $q = [p + 3]$.
Then, we may assume that $p = \omega_{0}$ and $r = \omega_{1}$, and have $q = \omega_{3}$.
Obviously, $\gamma_{5}$ is of type 0, both of $\gamma_{6}$ and $\gamma_{7}$ of type 3,
\begin{equation}
 g(\gamma_{5}) = \pm \> (w^{\omega_{0}}; \omega_{3}, \omega_{0}) \pm (w^{\omega_{0}}; \omega_{0}, \omega_{3}), \label{eq:O6_IV_i_a}
\end{equation}
and
\begin{align}
 & g(\gamma_{6} + \gamma_{7}) = \notag \\
 & \mp (w^{\omega_{1}}; \omega_{0}, \omega_{3}) \mp (w^{\omega_{1}}; \omega_{3}, \omega_{0}) \enskip (\text{or} \mp (w^{\omega_{1}}; \omega_{5}, \omega_{2}) \mp (w^{\omega_{1}}; \omega_{2}, \omega_{5})). \label{eq:O6_IV_i_b}
\end{align}
Furthermore, we have
\begin{gather*}
 w^{r p q} = w^{\omega_{1} \omega_{0} \omega_{3}} = w^{\omega_{1}} = w^{\omega_{1} \omega_{3} \omega_{0}} = w^{r q p} \\
 (\text{or} \ w^{p q r} = w^{\omega_{0} \omega_{3} \omega_{1}} = w^{\omega_{1}} = w^{\omega_{3} \omega_{0} \omega_{1}} = w^{q p r})
\end{gather*}
and
\[
 w^{p q p} = w^{\omega_{0} \omega_{3} \omega_{0}} = w^{\omega_{0}} \neq w^{\omega_{1}} = w^{r p q} \ (\text{or} \ w^{p q r}).
\]
Therefore, in light of Lemmas \ref{lem:3-cycle_condition} (iii), \ref{lem:f-/g-connectedness} and \ref{lem:f-connected1}, and the formulae (\ref{eq:O6_IV_i_a}) and (\ref{eq:O6_IV_i_b}), we have the following cases rearranging the roles of $\gamma_{1}, \gamma_{2}, \gamma_{3}$ and $\gamma_{4}$ if necessary.
\begin{itemize}
\item[(a)]
$\{ \gamma_{1}, \gamma_{5} \}$ is $g$-connected.
\item[(b)]
$\{ \gamma_{1}, \gamma_{2}, \gamma_{5}\}$ is $g$-connected.
\item[(c)]
$\{ \gamma_{1}, \gamma_{3}, \gamma_{5} \}$ is $g$-connected.
\item[(d)]
$\{ \gamma_{1}, \gamma_{2}, \gamma_{3}, \gamma_{5} \}$ is $g$-connected.
\end{itemize}
On the other hand, since $\gamma_{1}$ has the opposite sign to $\gamma_{2}$, in light of Lemmas \ref{lem:f-/g-connectedness} (i) and \ref{lem:f-connected3}, the case (b) never occurs.

In the case (a), since $g(\gamma_{1} + \gamma_{5}) = 0$, we have
\[
 \gamma_{1} = \mp \> (w; \omega_{0}, \omega_{3}, \omega_{0}) \ \text{or} \, \mp (w^{\omega_{0} \omega_{0}}; \omega_{3}, \omega_{0}, \omega_{3})
\]
by the formula (\ref{eq:O6_IV_i_a}).
Therefore, we respectively have
\[
 \gamma_{2} = \pm \> (w; \omega_{3}, \omega_{0}, \omega_{3}) \ \text{or} \, \pm (w^{\omega_{0} \omega_{0}}; \omega_{0}, \omega_{3}, \omega_{0}).
\]
Moreover, in light of Lemmas \ref{lem:3-cycle_condition} (iii), \ref{lem:f-/g-connectedness} (i) and \ref{lem:f-connected1}, we have the following cases.
\begin{itemize}
\item
$\{ \gamma_{i}, \gamma_{6}, \gamma_{7} \}$ is $g$-connected for some $2 \leq i \leq 4$.
\item
$\{ \gamma_{2}, \gamma_{3}, \gamma_{4}, \gamma_{6}, \gamma_{7} \}$ is $g$-connected.
\end{itemize}

In the former case, if $\{ \gamma_{2}, \gamma_{6}, \gamma_{7} \}$ is $g$-connected, then $\gamma^{\prime} = \gamma_{1} + \gamma_{2} + \gamma_{5} + \gamma_{6} + \gamma_{7}$ is a 3-cycle satisfying $l(\gamma^{\prime}) = |T_{n}(\gamma^{\prime})| = 5$.
Therefore, in light of Proposition \ref{prop:O6_II}, we have $\eta(\pi(\gamma^{\prime})) = 0$.
Furthermore, in light of Lemmas \ref{lem:3-cycle_condition} (iii), \ref{lem:O6_f2_usual} and \ref{lem:O6_f2_special}, $\gamma_{3} + \gamma_{4}$ is a 3-cycle satisfying $\eta(\pi(\gamma_{3} + \gamma_{4})) = 0$.
We thus have $\eta(\pi(\gamma)) = 0$ contradicting to our assumption.
If $\{ \gamma_{3}, \gamma_{6}, \gamma_{7} \}$ (or $\{ \gamma_{4}, \gamma_{6}, \gamma_{7} \}$) is $g$-connected, in light of Lemmas \ref{lem:reverse} (ii), \ref{lem:f-/g-connectedness} (i) and \ref{lem:f-connected3}, $\gamma_{3}$ (or $\gamma_{4}$) is of type 0 or 2.
Therefore, $\gamma_{3}$ (or $\gamma_{4}$) is of type 0, and thus $\gamma_{4}$ (or $\gamma_{3}$) of type 0.
Then, it is routine to check that we have $\eta(\pi(\gamma)) = 0$ contradicting to our assumption.

In the latter case, since $g(\gamma_{2} + \gamma_{3} + \gamma_{4} + \gamma_{6} + \gamma_{7}) = 0$ and reduced $g(\gamma_{2} + \gamma_{6} + \gamma_{7})$ has four 2-terms by $w^{\omega_{0} \omega_{0} \omega_{0}} = w^{\omega_{3}} \neq w^{\omega_{1}}$, reduced $g(\gamma_{3} + \gamma_{4})$ has four 2-terms.
Therefore, both of $\gamma_{3}$ and $\gamma_{4}$ are of type 0.
Then, it is routine to check that we have $\eta(\pi(\gamma)) = 0$ contradicting to our assumption.

In the case (c), $\gamma_{1}$ is of sign $\mp$ and of type 1 or 3 by Lemmas \ref{lem:reverse} (ii), \ref{lem:f-/g-connectedness} (i) and \ref{lem:f-connected3}.
Therefore, $\gamma_{1}$ is of sign $\mp$ and of type 1.
Let us rewrite $\gamma_{1}$ as $\mp (u; a, b, a)$.
Then, we have $\gamma_{2} = \pm (u; b, a, b)$.
Since $\gamma_{1}$ is of type 1, we have $b \neq [a + 3]$.
We thus have $u^{b a b} = u^{a b a} \neq w^{r p q}$ (or $w^{p q r}$).
Therefore, $\{ \gamma_{2}, \gamma_{4}, \gamma_{6}, \gamma_{7} \}$ is not $g$-connected by Lemma \ref{lem:f-/g-connectedness} (iii).
On the other hand, in light of Lemmas \ref{lem:3-cycle_condition} (iii), \ref{lem:f-/g-connectedness} (i) and \ref{lem:f-connected1}, and the formula (\ref{eq:O6_IV_i_b}), $\{ \gamma_{2}, \gamma_{4}, \gamma_{6}, \gamma_{7} \}$ is $g$-connected.
It leads to a contradiction.

In the case (d), $\{ \gamma_{4}, \gamma_{6}, \gamma_{7} \}$ is $g$-connected by Lemmas \ref{lem:3-cycle_condition} (iii), \ref{lem:f-/g-connectedness} (i) and \ref{lem:f-connected1}, and the formula (\ref{eq:O6_IV_i_b}).
Furthermore, in light of Lemmas \ref{lem:reverse} (ii) and \ref{lem:f-connected3}, $\gamma_{4}$ is of sign $\pm$ and of type 0 or 2.
Therefore, $\gamma_{4}$ is of sign $\pm$ and of type 0, and thus $\gamma_{3}$ of sign $\mp$ and of type 0.
Since $g(\gamma_{1} + \gamma_{2} + \gamma_{3} + \gamma_{5}) = 0$ and reduced $g(\gamma_{3} + \gamma_{5})$ has at most four 2-terms, reduced $g(\gamma_{1} + \gamma_{2})$ has at most four 2-terms.
Therefore, both of $\gamma_{1}$ and $\gamma_{2}$ are of type 0.
Then, it is routine to check that we have $\eta(\pi(\gamma)) = 0$ contradicting to our assumption.

\subsection{Case (ii)}
\label{subsec:proof_of_proposition_O6_IV_ii}

We may assume that $\{ \gamma_{1}, \gamma_{2}, \gamma_{3}, \gamma_{4} \}$ and $\{ \gamma_{5}, \gamma_{6}, \gamma_{7} \}$ are respectively $f$-connected.
Moreover, we may assume that $\gamma_{5}, \gamma_{6}$ and $\gamma_{7}$ are given as in the case (i).
For the subsequent arguments, we rewrite $\gamma_{i}$ as $\varepsilon_{i} (u; a_{i}, b_{i}, c_{i})$ ($1 \leq i \leq 4$).

Assume that $q \neq [p + 3]$.
Then, we may assume that $p = \omega_{0}$ and $q = \omega_{1}$.
Since $|T_{n}(\gamma)| = 7$, in a similar way to Subsection \ref{subsec:proof_of_proposition_O6_III_ii}, we have
\begin{itemize}
\item
$r = \omega_{3}$ or $\omega_{4}$,
\item
$w^{p q p} = w^{r p q}$ (or $w^{p q r}$) or $w^{p q p} = w^{r q p}$ (or $w^{q p r}$), and
\item
$\{ \gamma_{1}, \gamma_{2}, \gamma_{7} \}$ and $\{ \gamma_{3}, \gamma_{4}, \gamma_{5}, \gamma_{6} \}$, or $\{ \gamma_{1}, \gamma_{2}, \gamma_{6} \}$ and $\{ \gamma_{3}, \gamma_{4}, \gamma_{5}, \gamma_{7} \}$ are $g$-connected if $w^{p q p} = w^{r p q}$ (or $w^{p q r}$) or $w^{p q p} = w^{r q p}$ (or $w^{q p r}$), respectively.
\end{itemize}
We note that both of $\gamma_{6}$ and $\gamma_{7}$ are of type 3 and $w^{r p q} \neq w^{r q p}$ (or $w^{p q r} \neq w^{q p r}$).
In light of Lemma \ref{lem:f-/g-connectedness} (iii), we have $u^{a_{1} b_{1} c_{1}} = u^{a_{2} b_{2} c_{2}}$ and $u^{a_{3} b_{3} c_{3}} = u^{a_{4} b_{4} c_{4}}$.
Therefore, since $u^{a_{1} b_{1} c_{1}} \neq u^{a_{3} b_{3} c_{3}}$ by $w^{r p q} \neq w^{r q p}$ (or $w^{p q r} \neq w^{q p r}$), we have the cases listed in Table \ref{tab:O6_4_a}.
Since $\{ \gamma_{1}, \gamma_{2}, \gamma_{7} \}$ or $\{ \gamma_{1}, \gamma_{2}, \gamma_{6} \}$ is $g$-connected, and both of $\gamma_{6}$ and $\gamma_{7}$ are of sign $\mp$ and of type 3, in light of Lemmas \ref{lem:reverse} (ii), \ref{lem:f-/g-connectedness} (i) and \ref{lem:f-connected3}, one of $\gamma_{1}$ and $\gamma_{2}$ is of sign $\pm$ and of type 0 or 2, and the other of sign $\mp$ and of type 1 or 3.
This condition is satisfied only in the cases 7 and 8.
Furthermore, since $\{ \gamma_{3}, \gamma_{4}, \gamma_{5}, \gamma_{6} \}$ or $\{ \gamma_{3}, \gamma_{4}, \gamma_{5}, \gamma_{7} \}$ is $g$-connected, $\gamma_{5}$ of type 1, and both of $\gamma_{6}$ and $\gamma_{7}$ of type 3, in light of Lemma \ref{lem:f-connected4}, each of $\gamma_{3}$ and $\gamma_{4}$ is of type 0 or 2, or of type 1 or 3.
This condition is not satisfied in both cases (7 and 8).
We thus have no candidates of $\gamma$.

Assume that $q = [p + 3]$.
Then, we may assume that $p = \omega_{0}$ and $r = \omega_{1}$, and have $q = \omega_{3}$.
Furthermore, as we saw in the case (i), $\gamma_{5}$ is of type 0, both of $\gamma_{6}$ and $\gamma_{7}$ of type 3, $g(\gamma_{6} + \gamma_{7}) \neq 0$, $w^{r p q} = w^{r q p}$ (or $w^{p q r} = w^{q p r}$), and $w^{p q p} \neq w^{r p q}$ (or $w^{p q r}$).
In light of Lemmas \ref{lem:3-cycle_condition} (iii), \ref{lem:f-/g-connectedness} and \ref{lem:f-connected1}, and the formulae (\ref{eq:O6_IV_i_a}) and (\ref{eq:O6_IV_i_b}), we have the following cases rearranging the roles of $\gamma_{1}, \gamma_{2}, \gamma_{3}$ and $\gamma_{4}$ if necessary.
\begin{itemize}
\item[(e)]
$\{ \gamma_{1}, \gamma_{5} \}$ is $g$-connected.
\item[(f)]
$\{ \gamma_{1}, \gamma_{2}, \gamma_{5} \}$ is $g$-connected.
\item[(g)]
$\{ \gamma_{1}, \gamma_{2}, \gamma_{3}, \gamma_{5} \}$ is $g$-connected.
\end{itemize}

In the case (e), since both of $\gamma_{6}$ and $\gamma_{7}$ are of type 3, we have the following cases by Lemmas \ref{lem:3-cycle_condition} (iii), \ref{lem:f-/g-connectedness} (i), \ref{lem:f-connected1} and \ref{lem:f-connected2}.
\begin{itemize}
\item
$\{ \gamma_{2}, \gamma_{6}, \gamma_{7} \}$ and $\{ \gamma_{3}, \gamma_{4} \}$ are respectively $g$-connected.
\item
$\{ \gamma_{2}, \gamma_{3}, \gamma_{4}, \gamma_{6}, \gamma_{7} \}$ is $g$-connected.
\end{itemize}
The former case is nothing less than the reverse of the case (i).
Therefore, we have no candidates of $\gamma$ in this case.

In the latter case, we have $u^{a_{1} b_{1} c_{1}} = w^{p q p}$ and $u^{a_{2} b_{2} c_{2}} = u^{a_{3} b_{3} c_{3}} = u^{a_{4} b_{4} c_{4}} = w^{r p q}$ (or $w^{p q r}$) by Lemma \ref{lem:f-/g-connectedness} (iii).
Therefore, since $u^{a_{1} b_{1} c_{1}} \neq u^{a_{2} b_{2} c_{2}}$ by $w^{p q p} \neq w^{r p q}$ (or $w^{p q r}$), we have the cases listed in Table \ref{tab:O6_4_b}.
Since $g(\gamma_{1} + \gamma_{5}) = 0$, in light of the formula (\ref{eq:O6_IV_i_a}), $\gamma_{1}$ is of type 0.
This condition is not satisfied in each case.
Therefore, we have no candidates of $\gamma$.

In the case (f), since both of $\gamma_{6}$ and $\gamma_{7}$ are of type 3, $\{ \gamma_{3}, \gamma_{4}, \gamma_{6}, \gamma_{7} \}$ is $g$-connected by Lemmas \ref{lem:3-cycle_condition} (iii), \ref{lem:f-/g-connectedness} (i), \ref{lem:f-connected1} and \ref{lem:f-connected2}.
In light of Lemma \ref{lem:f-/g-connectedness} (iii), we have $u^{a_{1} b_{1} c_{1}} = u^{a_{2} b_{2} c_{2}} = w^{p q p}$ and $u^{a_{3} b_{3} c_{3}} = u^{a_{4} b_{4} c_{4}} = w^{r p q}$ (or $w^{p q r}$).
Therefore, since $u^{a_{1} b_{1} c_{1}} \neq u^{a_{3} b_{3} c_{3}}$ by $w^{p q p} \neq w^{r p q}$ (or $w^{p q r}$), we have the cases listed in Table \ref{tab:O6_4_a}.
Since $\gamma_{5}$ is of sign $\pm$ and of type 0, in light of Lemmas \ref{lem:reverse} (ii), \ref{lem:f-/g-connectedness} and \ref{lem:f-connected3}, each of $\gamma_{1}$ and $\gamma_{2}$ is of sign $\mp$ and of type 1 or 3.
Furthermore, since both of $\gamma_{6}$ and $\gamma_{7}$ are of sign $\mp$ and of type 3, in light of Lemma \ref{lem:f-connected4}, each of $\gamma_{3}$ and $\gamma_{4}$ is of sign $\pm$ and of type 1 or 3.
Those conditions are satisfied only in the cases 5, 6, 9, 10, 15 and 16.
In those cases, we have $\eta(\pi(\gamma)) = 0$ by Lemma \ref{lem:O6_weight}, contradicting to our assumption.

In the case (g), $\{ \gamma_{4}, \gamma_{6}, \gamma_{7} \}$ is $g$-connected by Lemmas \ref{lem:3-cycle_condition} (iii), \ref{lem:f-/g-connectedness} (i) and \ref{lem:f-connected1}.
In light of Lemma \ref{lem:f-/g-connectedness} (iii), we have $u^{a_{1} b_{1} c_{1}} = u^{a_{2} b_{2} c_{2}} = u^{a_{3} b_{3} c_{3}} = w^{p q p}$ and $u^{a_{4} b_{4} c_{4}} = w^{r p q}$ (or $w^{p q r}$).
Therefore, since $u^{a_{1} b_{1} c_{1}} \neq u^{a_{4} b_{4} c_{4}}$ by $w^{p q p} \neq w^{r p q}$ (or $w^{p q r}$), we have the cases listed in Table \ref{tab:O6_4_b}.
Since both of $\gamma_{6}$ and $\gamma_{7}$ are of type 3, in light of Lemma \ref{lem:f-connected3}, $\gamma_{4}$ is of type 0 or 2.
This condition is not satisfied in each case.
We thus have no candidates of $\gamma$.

\subsection{Case (iii)}
\label{subsec:proof_of_proposition_O6_IV_iii}

We may assume that $\gamma^{\prime \prime} = \gamma_{1} + \gamma_{2} + \gamma_{3} + \gamma_{4} + \gamma_{5}$ is one of the 3-chains in Lemma \ref{lem:f-connected5} up to sign with $(a, b) = (\omega_{0}, \omega_{1})$ or $(a, b, c) = (\omega_{0}, \omega_{3}, \omega_{1})$, and
\[
 \gamma_{6} + \gamma_{7} = + \> (v; p, q, p) - (v; q, p, q)
\]
with some $v \in O_{6}$ and distinct $p, q \in O_{6}$.
If $q = [p + 3]$ and $v = p$ or $q$, in light of Lemma \ref{lem:O6_f2_special}, $\gamma_{6} + \gamma_{7}$ is a 3-cycle satisfying $\eta(\pi(\gamma_{6} + \gamma_{7})) = 0$.
Therefore, $\gamma^{\prime \prime}$ is a 3-cycle satisfying $l(\gamma^{\prime \prime}) = |T_{n}(\gamma^{\prime \prime})| = 5$ and $\eta(\pi(\gamma^{\prime \prime})) \neq 0$.
It contradicts to Proposition \ref{prop:O6_II}.
We thus have $q \neq [p + 3]$ or $v \neq p, q$.
For the subsequent arguments, we rewrite $\gamma_{i}$ as $\varepsilon_{i} (u; a_{i}, b_{i}, c_{i})$ for $1 \leq i \leq 5$, and let $U$ denote the set $\{ u^{a_{1} b_{1} c_{1}}, u^{a_{2} b_{2} c_{2}}, u^{a_{3} b_{3} c_{3}}, u^{a_{4} b_{4} c_{4}}, u^{a_{5} b_{5} c_{5}} \}$.

Assume that $q \neq [p + 3]$.
Then, we have $v^{p q p} = v^{q p q}$.
Since reduced $g(\gamma_{6} + \gamma_{7})$ has six 2-terms, we have $g(\gamma_{i} + \gamma_{6} + \gamma_{7}) \neq 0$ for any $i$ ($1 \leq i \leq 5$).
Therefore, since both of $\gamma_{6}$ and $\gamma_{7}$ are of type 1, in light of Lemmas \ref{lem:3-cycle_condition} (iii), \ref{lem:reverse} (ii), \ref{lem:f-/g-connectedness} (i), \ref{lem:f-connected1} and \ref{lem:f-connected2}, we have the following cases rearranging the roles of $\gamma_{1}, \gamma_{2}, \gamma_{3}, \gamma_{4}$ and $\gamma_{5}$ if necessary.
\begin{itemize}
\item
$\{ \gamma_{1}, \gamma_{2}, \gamma_{6} \}$ and $\{ \gamma_{3}, \gamma_{4}, \gamma_{5}, \gamma_{7} \}$ are respectively $g$-connected.
\item
$\{ \gamma_{1}, \gamma_{2}, \gamma_{6}, \gamma_{7} \}$ and $\{ \gamma_{3}, \gamma_{4}, \gamma_{5} \}$ are respectively $g$-connected.
\item
$\{ \gamma_{1}, \gamma_{2}, \gamma_{3}, \gamma_{6}, \gamma_{7} \}$ and $\{ \gamma_{4}, \gamma_{5} \}$ are respectively $g$-connected.
\item
$\{ \gamma_{1}, \gamma_{2}, \dots, \gamma_{7} \}$ is $g$-connected.
\end{itemize}
The first and second cases are nothing less than the reverse of the case (ii).
We thus have no candidates of $\gamma$ in those cases.

In the third case, we have $|U| \leq 2$ by Lemma \ref{lem:f-/g-connectedness} (iii).
Therefore, we have the cases listed in Tables \ref{tab:O6_5_a} and \ref{tab:O6_5_b}.\footnote{The author enumerated the candidates of $\gamma_{1} + \gamma_{2} + \gamma_{3} + \gamma_{4} + \gamma_{5}$ with the aid of a computer. The C source code for the task is available at \url{https://github.com/ayminoue/LQC/blob/main/EC5.c}.}
Since $\{ \gamma_{4}, \gamma_{5} \}$ is $g$-connected, in light of Lemmas \ref{lem:f-/g-connectedness} (i) and \ref{lem:f-connected2}, each of $\gamma_{4}$ and $\gamma_{5}$ is of type 0 or 2.
Furthermore, since reduced $g(\gamma_{6} + \gamma_{7})$ has three positive 2-terms and three negative 2-terms, one of $\gamma_{1}, \gamma_{2}, \gamma_{3}$ is positive (or negative) and of type 0 or 2, and each of the others is negative (or positive) and of type 1 or 3 (count the number of positive or negative 2-terms of $g(\gamma_{1} + \gamma_{2} + \gamma_{3})$).
Therefore, three of $\gamma_{1}, \gamma_{2}, \gamma_{3}, \gamma_{4}$ and $\gamma_{5}$ are of type 0 or 2.
This condition is not satisfied for each case.
We thus have no candidates of $\gamma$.
\begin{table}[htbp]
 \centering
 \caption{The list of $\gamma_{1} + \gamma_{2} + \gamma_{3} + \gamma_{4} + \gamma_{5}$ satisfying $u^{a_{i} b_{i} c_{i}} = u^{a_{j} b_{j} c_{j}} = u^{a_{k} b_{k} c_{k}} \neq u^{a_{l} b_{l} c_{l}} = u^{a_{m} b_{m} c_{m}}$ ($\{ i, j, k, l, m \} = \{ 1, 2, 3, 4, 5 \}$). Here, values in the second column come from Lemma \ref{lem:f-connected5}.}
 \label{tab:O6_5_a}
 \begin{tabular}{|c|c|c|c|c|c|l|} \hline
  no.\ & case & $u$ & $b$ & $c$ & $d$ & \multicolumn{1}{c|}{result} \\ \hline \hline
  1 & (ii) & $\omega_{0}$ or $\omega_{3}$ & $\omega_{1}$ & $\omega_{5}$ & $\omega_{3}$ & $u^{a b c} = u^{d b c}$, $u^{b d b} = u^{d b d} = u^{a b d} = u^{a d c}$ \\ \hline
  2 & (iii) & $\omega_{1}$ or $\omega_{4}$ & $\omega_{1}$ & $\omega_{5}$ & $\omega_{2}$ & $u^{a b c} = u^{a d a} = u^{d a d}$, $u^{a d c} = u^{d a b} = u^{d b c}$ \\ \hline
  3 & (iv) & $\omega_{1}$ or $\omega_{4}$ & $\omega_{1}$ & $\omega_{5}$ & $\omega_{3}$ & $u^{a b c} = u^{c d c} = u^{d c d}$, $u^{a d c} = u^{a b d} = u^{b c d}$ \\ \hline
  4 & (v) & $\omega_{0}$ or $\omega_{3}$ & $\omega_{1}$ & $\omega_{2}$ & $\omega_{3}$ & $u^{a b c} = u^{d a b} = u^{d b c}$, $u^{a c a} = u^{c a c} = u^{d c a}$ \\ \hline
  5 & (v) & $\omega_{1}$ or $\omega_{4}$ & $\omega_{1}$ & $\omega_{2}$ & $\omega_{3}$ & $u^{a b c} = u^{d a b} = u^{d c a}$, $u^{a c a} = u^{c a c} = u^{d b c}$ \\ \hline
  6 & (v) & $\omega_{2}$ or $\omega_{5}$ & $\omega_{1}$ & $\omega_{2}$ & $\omega_{3}$ & $u^{a b c} = u^{a c a} = u^{c a c} = u^{d a b}$, $u^{d b c} = u^{d c a}$ \\ \hline
  7 & (v) & any & $\omega_{1}$ & $\omega_{5}$ & \hspace{-0.8em} \begin{tabular}{c} $\omega_{2}$ or \\ $\omega_{4}$ \end{tabular} \hspace{-0.8em} & $u^{a b c} = u^{a c a} = u^{c a c}$, $u^{d a b} = u^{d b c} = u^{d c a}$ \\ \hline
  8 & (v) & \hspace{-0.8em} \begin{tabular}{c} $\omega_{1}$, $\omega_{2}$, \\ $\omega_{4}$ or $\omega_{5}$ \end{tabular} \hspace{-0.8em} & $\omega_{1}$ & $\omega_{5}$ & $\omega_{3}$ & $u^{a b c} = u^{a c a} = u^{c a c}$, $u^{d a b} = u^{d b c} = u^{d c a}$ \\ \hline
  9 & (vi) & $\omega_{0}$ or $\omega_{3}$ & $\omega_{1}$ & $\omega_{2}$ & $\omega_{5}$ & $u^{a b c} = u^{a b d} = u^{b c d}$, $u^{a c a} = u^{c a c} = u^{c a d}$ \\ \hline
  10 & (vi) & $\omega_{1}$ or $\omega_{4}$ & $\omega_{1}$ & $\omega_{2}$ & $\omega_{5}$ & $u^{a b c} = u^{b c d} = u^{c a d}$, $u^{a c a} = u^{c a c} = u^{a b d}$ \\ \hline
  11 & (vi) & $\omega_{2}$ or $\omega_{5}$ & $\omega_{1}$ & $\omega_{2}$ & $\omega_{5}$ & $u^{a b c} = u^{a c a} = u^{c a c} = u^{b c d}$, $u^{a b d} = u^{c a d}$ \\ \hline
  12 & (vi) & \hspace{-0.8em} \begin{tabular}{c} $\omega_{1}$, $\omega_{2}$, \\ $\omega_{4}$ or $\omega_{5}$ \end{tabular} \hspace{-0.8em} & $\omega_{1}$ & $\omega_{5}$ & $\omega_{2}$ & $u^{a b c} = u^{a c a} = u^{c a c}$, $u^{a b d} = u^{b c d} = u^{c a d}$ \\ \hline
  13 & (vi) & any & $\omega_{1}$ & $\omega_{5}$ & \hspace{-0.8em} \begin{tabular}{c} $\omega_{3}$ or \\ $\omega_{4}$ \end{tabular} \hspace{-0.8em} & $u^{a b c} = u^{a c a} = u^{c a c}$, $u^{a b d} = u^{b c d} = u^{c a d}$ \\ \hline
  14 & (vii) & $\omega_{0}$ or $\omega_{3}$ & $\omega_{1}$ & $\omega_{4}$ & $\omega_{3}$ & $u^{a b c} = u^{d b c} = u^{d c b}$, $u^{a c a} = u^{c a c} = u^{c a b}$ \\ \hline
  15 & (viii) & $\omega_{2}$ or $\omega_{5}$ & $\omega_{1}$ & $\omega_{2}$ & $\omega_{3}$ & $u^{a b c} = u^{a c a} = u^{c a c} = u^{b a d}$, $u^{a b d} = u^{b c a}$ \\ \hline
  16 & (ix) & $\omega_{2}$ or $\omega_{5}$ & $\omega_{1}$ & $\omega_{2}$ & $\omega_{3}$ & $u^{a b c} = u^{a c a} = u^{c a c} = u^{d a b}$, $u^{b c a} = u^{d b a}$ \\ \hline
  17 & (ix) & $\omega_{0}$ or $\omega_{3}$ & $\omega_{1}$ & $\omega_{4}$ & $\omega_{4}$ & $u^{a b c} = u^{b c a} = u^{d b a}$, $u^{a c a} = u^{c a c} = u^{d a b}$ \\ \hline
  18 & (ix) & $\omega_{0}$ or $\omega_{3}$ & $\omega_{1}$ & $\omega_{5}$ & $\omega_{3}$ & $u^{a b c} = u^{a c a} = u^{c a c} = u^{d a b}$, $u^{b c a} = u^{d b a}$ \\ \hline
  19 & (ix) & $\omega_{0}$ or $\omega_{3}$ & $\omega_{3}$ & $\omega_{1}$ & $\omega_{1}$ & $u^{a b c} = u^{d a b} = u^{d b a}$, $u^{a c a} = u^{c a c} = u^{b c a}$ \\ \hline
  20 & (x) & $\omega_{1}$ or $\omega_{4}$ & $\omega_{1}$ & $\omega_{4}$ & $\omega_{4}$ & $u^{c b a} = u^{a b d} = u^{a c b}$, $u^{a c a} = u^{c a c} = u^{b a d}$ \\ \hline
  21 & (x) & $\omega_{0}$ or $\omega_{3}$ & $\omega_{1}$ & $\omega_{5}$ & $\omega_{3}$ & $u^{c b a} = u^{a c a} = u^{c a c} = u^{b a d}$, $u^{a b d} = u^{a c b}$ \\ \hline
  22 & (x) & $\omega_{1}$ or $\omega_{4}$ & $\omega_{3}$ & $\omega_{1}$ & \hspace{-0.8em} \begin{tabular}{c} $\omega_{1}$ or \\ $\omega_{4}$ \end{tabular} \hspace{-0.8em} & $u^{c b a} = u^{a b d} = u^{b a d}$, $u^{a c a} = u^{c a c} = u^{a c b}$ \\ \hline
 \end{tabular}
\end{table}
\begin{table}[htbp]
 \centering
 \caption{The list of $\gamma_{1} + \gamma_{2} + \gamma_{3} + \gamma_{4} + \gamma_{5}$ satisfying $u^{a_{i} b_{i} c_{i}} = u^{a_{j} b_{j} c_{j}} = u^{a_{k} b_{k} c_{k}} = u^{a_{l} b_{l} c_{l}} \neq u^{a_{m} b_{m} c_{m}}$ ($\{ i, j, k, l, m \} = \{ 1, 2, 3, 4, 5 \}$). Here, values in the second column come from Lemma \ref{lem:f-connected5}.}
 \label{tab:O6_5_b}
 \begin{tabular}{|c|c|c|c|c|c|l|} \hline
  no.\ & case & $u$ & $b$ & $c$ & $d$ & \multicolumn{1}{c|}{result} \\ \hline \hline
  23 & (vii) & $\omega_{0}$ or $\omega_{3}$ & $\omega_{1}$ & $\omega_{4}$ & $\omega_{5}$ & $u^{a c a} = u^{c a c} = u^{c a b} = u^{d b c} = u^{d c b}$ \\ \hline
  24 & (viii) & $\omega_{0}$ or $\omega_{3}$ & $\omega_{3}$ & $\omega_{1}$ & $\omega_{2}$ & $u^{a c a} = u^{c a c} = u^{a b d} = u^{b a d} = u^{b c a}$ \\ \hline
  25 & (ix) & $\omega_{0}$ or $\omega_{3}$ & $\omega_{3}$ & $\omega_{1}$ & $\omega_{2}$ & $u^{a c a} = u^{c a c} = u^{b c a} = u^{d a b} = u^{d b a}$ \\ \hline
  26 & (x) & $\omega_{1}$ or $\omega_{4}$ & $\omega_{3}$ & $\omega_{1}$ & $\omega_{5}$ & $u^{a c a} = u^{c a c} = u^{a b d} = u^{a c b} = u^{b a d}$ \\ \hline
 \end{tabular}
\end{table}

Since there are no candidates of $\gamma_{1} + \gamma_{2} + \gamma_{3} + \gamma_{4} + \gamma_{5}$ satisfying $|U| = 1$, the fourth case never occurs.

Assume that $q = [p + 3]$ and $v \neq p, q$.
Since we have $v^{p q p} \neq v^{q p q}$, in light of Lemmas \ref{lem:3-cycle_condition} (iii), \ref{lem:f-/g-connectedness} and \ref{lem:f-connected1}, we have the following cases rearranging the roles of $\gamma_{1}, \gamma_{2}, \gamma_{3}, \gamma_{4}$ and $\gamma_{5}$, and that of $\gamma_{6}$ and $\gamma_{7}$ if necessary.
\begin{itemize}
\item
$\{ \gamma_{1}, \gamma_{6} \}$, $\{ \gamma_{2}, \gamma_{7} \}$ and $\{ \gamma_{3}, \gamma_{4}, \gamma_{5} \}$ are respectively $g$-connected.
\item
$\{ \gamma_{1}, \gamma_{6} \}$, $\{ \gamma_{2}, \gamma_{3}, \gamma_{7} \}$ and $\{ \gamma_{4}, \gamma_{5} \}$ are respectively $g$-connected.
\item
$\{ \gamma_{1}, \gamma_{6} \}$ and $\{ \gamma_{2}, \gamma_{3}, \gamma_{4}, \gamma_{5}, \gamma_{7} \}$ are respectively $g$-connected.
\item
$\{ \gamma_{1}, \gamma_{2}, \gamma_{6} \}$ and $\{ \gamma_{3}, \gamma_{4}, \gamma_{5}, \gamma_{7} \}$ are respectively $g$-connected.
\end{itemize}
The first and second cases are nothing less than the reverse of the case (i).
Moreover, the fourth case is nothing less than the reverse of the case (ii).
Therefore, we have no candidates of $\gamma$ in those cases.

In the third case, we have $|U| = 2$ by Lemma \ref{lem:f-/g-connectedness} (iii).
Therefore, we have the cases listed in Tables \ref{tab:O6_5_a} and \ref{tab:O6_5_b}.
Since $\gamma_{6}$ is of type 0 and $g(\gamma_{1} + \gamma_{6}) = 0$, $\gamma_{1}$ is of type 0.
This condition is not satisfied in each case.
We thus have no candidates of $\gamma$.

\section{Proof of Proposition \ref{prop:O6_VIII}}
\label{sec:proof_of_proposition_O6_VIII}

We devote this section to prove Proposition \ref{prop:O6_VIII}.
Let $\gamma_{1}, \gamma_{2}, \dots, \gamma_{7}$ be 3-terms of $(O_{6}, S)$ and $\gamma = \gamma_{1} + \gamma_{2} + \dots + \gamma_{7}$.
Assume that $\gamma$ is a 3-cycle satisfying $l(\gamma) = 7$, $|T_{n}(\gamma)| = 3$, $|T_{n + 1}(\gamma)| = 4$ and $\eta(\pi(\gamma)) \neq 0$.
Then, for the same reason as in the proof of Proposition \ref{prop:O6_VII}, we may assume that
\begin{itemize}
\item[(i)]
$\gamma_{1} + \gamma_{2} + \gamma_{3} =  + \> (n, u; \omega_{0}, \omega_{1}, \omega_{0}) - (n, u; c, \omega_{0}, \omega_{1}) - (n, u; c, \omega_{1}, \omega_{0})$, or
\item[(ii)]
$\gamma_{1} + \gamma_{2} + \gamma_{3} =  + \> (n, u; \omega_{0}, \omega_{1}, \omega_{0}) - (n, u; \omega_{0}, \omega_{1}, c) - (n, u; \omega_{1}, \omega_{0}, c)$
\end{itemize}
up to sign, with some $u, c \in O_{6}$ ($c \neq \omega_{0}, \omega_{1}$).

\subsection{Case (i)}

If $c = \omega_{5}$, we let
\begin{align*}
 & \gamma_{4}^{\prime} + \gamma_{5}^{\prime} + \gamma_{6}^{\prime} = \\
 & + (n + 1, u^{\omega_{0}}; \omega_{1}, \omega_{0}, \omega_{2}) - (n + 1, u^{\omega_{1}}; \omega_{3}, \omega_{5}, \omega_{0}) - (n + 1, u^{\omega_{5}}; \omega_{0}, \omega_{1}, \omega_{0}).
\end{align*}
Then, as we saw in the proof of Proposition \ref{prop:O6_VII}, we have
\[
 \gamma^{\prime}
 = \gamma_{1} + \gamma_{2} + \gamma_{3} + \gamma_{4}^{\prime} + \gamma_{5}^{\prime} + \gamma_{6}^{\prime}
 = - \> \partial (n, u; \omega_{5}, \omega_{0}, \omega_{1}, \omega_{0}).
\]
Since $\gamma$ is a 3-cycle, $\gamma - \gamma^{\prime}$ is a 3-cycle.
Moreover, since $l(\gamma - \gamma^{\prime}) = |T_{n + 1}(\gamma - \gamma^{\prime})| \leq 7$, in light of Propositions \ref{prop:O6_list} and \ref{prop:O6_I}--\ref{prop:O6_IV}, we have $\eta(\pi(\gamma - \gamma^{\prime})) = 0$.
We thus have $\eta(\pi(\gamma)) = \eta(\pi(\gamma^{\prime})) = 0$ contradicting to our assumption.
Therefore, we assume that $c \neq \omega_{5}$.

Consider the following four 3-terms
\begin{align*}
 \delta_{4} & = + (n + 1, u^{\omega_{0}}; c^{\omega_{0}}, \omega_{1}, \omega_{0}), &
 \delta_{5} & = + (n + 1, u^{\omega_{0}}; c^{\omega_{0}}, \omega_{0}, \omega_{2}), \\
 \delta_{6} & = - (n + 1, u^{\omega_{1}}; c^{\omega_{1}}, \omega_{5}, \omega_{0}), &
 \delta_{7} & = - (n + 1, u^{c}; \omega_{0}, \omega_{1}, \omega_{0}).
\end{align*}
We note that if
\begin{equation}
 \gamma_{4} + \gamma_{5} + \gamma_{6} + \gamma_{7} = \delta_{4} + \delta_{5} + \delta_{6} + \delta_{7}, \label{eq:O6_VIII_i}
\end{equation}
then we have $\gamma = - \partial (n, u; c, \omega_{0}, \omega_{1}, \omega_{0})$ contradicting to our assumption.

In the case other than (a) or (b) introduced in the proof of Lemma \ref{lem:O6_f3_usual}, since $u^{\omega_{0}}$, $u^{\omega_{1}}$ and $u^{c}$ are mutually different, reduced $g(\gamma_{1} + \gamma_{2} + \gamma_{3})$ has four 2-terms of index $u^{\omega_{0}}$, and $g(\gamma_{1} + \gamma_{2} + \gamma_{3}) = f(\gamma_{4} + \gamma_{5} + \gamma_{6} + \gamma_{7})$ by Lemma \ref{lem:3-cycle_condition} (i), we may assume that
\begin{align*}
 f(\gamma_{4} + \gamma_{5}) & = f(\delta_{4} + \delta_{5}), &
 \gamma_{6} & = \delta_{6}, &
 \gamma_{7} & = - \> \langle n + 1, u^{c}; \omega_{0}, \omega_{1} \rangle.
\end{align*}
We note that if
\begin{equation}
 \gamma_{4} + \gamma_{5} + \gamma_{6} + \gamma_{7} = \delta_{5} + \delta_{6} + \delta_{7} - (n + 1, u^{c}; \omega_{1}, \omega_{0}, \omega_{1}), \label{eq:O6_VIII_i'}
\end{equation}
we have $g(\gamma_{4} + \gamma_{5} + \gamma_{6} + \gamma_{7}) \neq 0$ contradicting to Lemma \ref{lem:3-cycle_condition} (iii).
Since $f(\gamma_{4} + \gamma_{5}) = f(\delta_{4} + \delta_{5})$, we have the following cases other than $\gamma_{4} + \gamma_{5} = \delta_{4} + \delta_{5}$.
\begin{itemize}
\item[(A)]
$l(\gamma_{4} + \gamma_{5} - \delta_{4} - \delta_{5}) = l(\gamma_{i} - \delta_{j}) = 2$ and $\{ \gamma_{i}, - \delta_{j} \}$ is $f$-connected for some $4 \leq i, j \leq 5$.
\item[(B)]
$\{ \gamma_{4}, \gamma_{5}, - \delta_{4}, - \delta_{5} \}$ is $f$-connected.
\end{itemize}
In the case (A), since $c^{\omega_0} \neq \omega_{0}, \omega_{2}$, we have no candidates of $\gamma_{4} + \gamma_{5}$ by Lemma \ref{lem:f-connected2}.
In the case (B), we have
\[
 \gamma_{4} + \gamma_{5} = + \> (n + 1, u^{\omega_{0}}; c^{\omega_{0}}, \omega_{1}, \omega_{2}) + (n + 1, u^{\omega_{0}}; \omega_{1}, \omega_{0}, \omega_{2})
\]
by Lemma \ref{lem:f-connected4}.
We note that reduced $g(\gamma_{4} + \gamma_{5} + \gamma_{6} + \gamma_{7})$ has at most five negative 2-terms, even though it has six positive 2-terms.
Therefore, we have $g(\gamma_{4} + \gamma_{5} + \gamma_{6} + \gamma_{7}) \neq 0$ contradicting to Lemma \ref{lem:3-cycle_condition} (iii).

In the case (b) introduced in the proof of Lemma \ref{lem:O6_f3_usual}, since we have $g(\gamma_{1} + \gamma_{2} + \gamma_{3}) = f(\gamma_{4} + \gamma_{5} + \gamma_{6} + \gamma_{7})$, we may assume that
\begin{align*}
 f(\gamma_{4} + \gamma_{5}) & = f(\delta_{4} + \delta_{5}), &
 f(\gamma_{6} + \gamma_{7}) & = f(\delta_{6} + \delta_{7})
\end{align*}
as mentioned in the proof of Lemma \ref{lem:O6_f3_usual}.
Therefore, in light of the above argument, we have the following case:
\begin{itemize}
\item[(C)]
$\gamma_{4} + \gamma_{5} = \delta_{4} + \delta_{5}$ or
\item[(D)]
$\gamma_{4} + \gamma_{5} = + \> (n + 1, u^{\omega_{0}}; \omega_{5}, \omega_{1}, \omega_{2}) + (n + 1, u^{\omega_{0}}; \omega_{1}, \omega_{0}, \omega_{2})$,
\end{itemize}
and
\begin{itemize}
\item[(E)]
$\gamma_{6} + \gamma_{7} = \delta_{6} + \delta_{7}$,
\item[(F)]
$l(\gamma_{6} + \gamma_{7} - \delta_{6} - \delta_{7}) = l(\gamma_{i} - \delta_{j}) = 2$ and $\{ \gamma_{i}, - \delta_{j} \}$ is $f$-connected for some $6 \leq i, j \leq 7$, or
\item[(G)]
$\{ \gamma_{6}, \gamma_{7}, - \delta_{6}, - \delta_{7} \}$ is $f$-connected.
\end{itemize}
In the case (G), we have no candidates of $\gamma_{6} + \gamma_{7}$ by Lemma \ref{lem:f-connected4}.
Therefore, it is sufficient to consider the cases (C)-(E), (C)-(F), (D)-(E) and (D)-(F).
The first and second cases are respectively nothing less than the cases (\ref{eq:O6_VIII_i}) and (\ref{eq:O6_VIII_i'}), because we have
\[
 \gamma_{6} + \gamma_{7} = \delta_{6} - (n + 1, u; \omega_{1}, \omega_{0}, \omega_{1})
\]
by Lemma \ref{lem:f-connected2} in the case (F).
In the other cases, it is easy to see that we have $g(\gamma_{4} + \gamma_{5} + \gamma_{6} + \gamma_{7}) \neq 0$ contradicting to Lemma \ref{lem:3-cycle_condition} (iii).

In the case (a) introduced in the proof of Lemma \ref{lem:O6_f3_usual}, since we have $g(\gamma_{1} + \gamma_{2} + \gamma_{3}) = f(\gamma_{4} + \gamma_{5} + \gamma_{6} + \gamma_{7})$, we may assume that
\begin{itemize}
\item[($\alpha$)]
$f(\gamma_{4} + \gamma_{5}) = f(\delta_{4} + \delta_{5} + \delta_{7})$ and $f(\gamma_{6} + \gamma_{7}) = f(\delta_{6})$, or
\item[($\beta$)]
$f(\gamma_{4} + \gamma_{5} + \gamma_{6}) = f(\delta_{4} + \delta_{5} + \delta_{7})$ and $\gamma_{7} = \delta_{6}$
\end{itemize}
as mentioned in the proof of Lemma \ref{lem:O6_f3_usual}.

In the case ($\alpha$), we have the following case by Lemma \ref{lem:f-connected1}:
\begin{itemize}
\item[(H)]
$l(\gamma_{4} + \gamma_{5} - \delta_{4} - \delta_{5} - \delta_{7}) = l(\gamma_{i} - \delta_{j} - \delta_{k}) = 3$ and $\{ \gamma_{i}, - \delta_{j}, - \delta_{k} \}$ is $f$-connected for some $4 \leq i \leq 5$ and $j, k \in \{ 4, 5, 7 \}$ ($j \neq k$), or
\item[(I)]
$\{ \gamma_{4}, \gamma_{5}, - \delta_{4}, - \delta_{5}, - \delta_{7} \}$ is $f$-connected,
\end{itemize}
and
\begin{itemize}
\item[(J)]
$\{ \gamma_{6}, \gamma_{7}, - \delta_{6} \}$ is $f$-connected.
\end{itemize}
In the case (H), in light of Lemma \ref{lem:f-connected3}, we have
\[
 \gamma_{4} + \gamma_{5} = + \> (n + 1, u; \omega_{3}, \omega_{0}, \omega_{2}) - (n + 1, u; \omega_{3}, \omega_{0}, \omega_{1}).
\]
In the case (I), in light of Lemma \ref{lem:f-connected5}, we have
\begin{itemize}
\item
$\gamma_{4} + \gamma_{5} = + \> (n + 1, u; \omega_{3}, \omega_{1}, \omega_{2}) - (n + 1, u; \omega_{0}, \omega_{1}, \omega_{2})$,
\item
$\gamma_{4} + \gamma_{5} = + \> (n + 1, u; \omega_{0}, \omega_{2}, \omega_{1}) - (n + 1, u; \omega_{3}, \omega_{2}, \omega_{1})$, or
\item
$\gamma_{4} + \gamma_{5} = + \> (n + 1, u; \omega_{0}, \omega_{3}, \omega_{1}) - (n + 1, u; \omega_{0}, \omega_{3}, \omega_{2})$.
\end{itemize}
In the case (J), in light of Lemma \ref{lem:f-connected3}, we have
\begin{itemize}
\item
$\gamma_{6} + \gamma_{7} = - \> \langle n + 1, u^{\omega_{1}}; \omega_{0}, \omega_{5} \rangle + (n + 1, u^{\omega_{1}}; \omega_{2}, \omega_{0}, \omega_{5})$ or
\item
$\gamma_{6} + \gamma_{7} = - \> \langle n + 1, u^{\omega_{1}}; \omega_{2}, \omega_{5} \rangle + (n + 1, u^{\omega_{1}}; \omega_{5}, \omega_{2}, \omega_{0})$.
\end{itemize}
Therefore, in the cases (H)-(J) and (I)-(J), we have $g(\gamma_{4} + \gamma_{5} + \gamma_{6} + \gamma_{7}) \neq 0$ contradicting to Lemma \ref{lem:3-cycle_condition} (iii).

In the case ($\beta$), let $N_{ij}^{\varepsilon}$ denote the number of 3-terms among $\gamma_{4}, \gamma_{5}$ and $\gamma_{6}$ whose types are $i$ or $j$ and signs $\varepsilon$.
Then, since reduced $f(\delta_{4} + \delta_{5} + \delta_{7})$ has two positive 2-terms and two negative 2-terms (see the formula (\ref{eq:O6_f3_usual_i_a})), and $f(\gamma_{4} + \gamma_{5} + \gamma_{6}) = f(\delta_{4} + \delta_{5} + \delta_{7})$, we have
\[
 2 (N_{01}^{+} - N_{01}^{-}) + (N_{23}^{+} - N_{23}^{-}) = 0.
\]
Thus, remarking that $N_{01}^{+} + N_{01}^{-} + N_{23}^{+} + N_{23}^{-} = 3$, we have
\[
 (N_{01}^{+}, N_{01}^{-}, N_{23}^{+}, N_{23}^{-}) = (1, 0, 0, 2) \ \text{or} \ (0, 1, 2, 0).
\]
Therefore, we may assume that
\begin{align}
 & \gamma_{4} + \gamma_{5} + \gamma_{6} = \notag \\
 & + \varepsilon (n + 1, u; a_{4}, b_{4}, a_{4}) - \varepsilon (n + 1, u; a_{5}, b_{5}, c_{5}) - \varepsilon (n + 1, u; a_{6}, b_{6}, c_{6}) \label{eq:O6_VIII_beta}
\end{align}
with some $\varepsilon \in \{ + 1, - 1\}$, distinct $a_{4}, b_{4} \in O_{6}$, and mutually distinct $a_{i}, b_{i}, c_{i} \in O_{6}$ ($i = 5, 6$).
Since
\begin{align}
 f(\gamma_{4} + \gamma_{5} + \gamma_{6})
 & = - \> \varepsilon (b_{4}, a_{4}) - \varepsilon (a_{4}, b_{4}) \notag \\
 & \phantom{=} \ + \varepsilon (b_{5}, c_{5}) - \varepsilon (a_{5}, c_{5}) + \varepsilon (a_{5}, b_{5}) \notag \\
 & \phantom{=} \ + \varepsilon (b_{6}, c_{6}) - \varepsilon (a_{6}, c_{6}) + \varepsilon (a_{6}, b_{6}), \label{eq:O6_VIII_beta_f}
\end{align}
$(a_{i}, c_{i}) \neq (b_{i}, c_{i}), (a_{i}, b_{i})$ ($i = 5, 6$), and reduced $f(\gamma_{4} + \gamma_{5} + \gamma_{6})$ has two positive 2-terms and two negative 2-terms, we have the following cases.
\begin{itemize}
\item[(1)]
$(b_{5}, c_{5}) = (b_{4}, a_{4})$ and $(b_{6}, c_{6}) = (a_{4}, b_{4})$.
\item[(2)]
$(b_{5}, c_{5}) = (b_{4}, a_{4})$ and $(a_{6}, b_{6}) = (a_{4}, b_{4})$.
\item[(3)]
$(a_{5}, b_{5}) = (b_{4}, a_{4})$ and $(b_{6}, c_{6}) = (a_{4}, b_{4})$.
\item[(4)]
$(a_{5}, b_{5}) = (b_{4}, a_{4})$ and $(a_{6}, b_{6}) = (a_{4}, b_{4})$.
\item[(5)]
$(b_{5}, c_{5}) = (b_{4}, a_{4})$ or $(a_{4}, b_{4})$, and $(b_{6}, c_{6}) = (a_{5}, c_{5})$.
\item[(6)]
$(b_{5}, c_{5}) = (b_{4}, a_{4})$ or $(a_{4}, b_{4})$, and $(a_{6}, b_{6}) = (a_{5}, c_{5})$.
\item[(7)]
$(b_{5}, c_{5}) = (b_{4}, a_{4})$ or $(a_{4}, b_{4})$, and $(a_{5}, b_{5}) = (a_{6}, c_{6})$.
\item[(8)]
$(a_{5}, b_{5}) = (b_{4}, a_{4})$ or $(a_{4}, b_{4})$, and $(b_{6}, c_{6}) = (a_{5}, c_{5})$.
\item[(9)]
$(a_{5}, b_{5}) = (b_{4}, a_{4})$ or $(a_{4}, b_{4})$, and $(a_{6}, b_{6}) = (a_{5}, c_{5})$.
\item[(10)]
$(a_{5}, b_{5}) = (b_{4}, a_{4})$ or $(a_{4}, b_{4})$, and $(b_{5}, c_{5}) = (a_{6}, c_{6})$.
\item[(11)]
$a_{6} = b_{5}$, $b_{6} = a_{5}$ and $c_{6} = c_{5}$.
\item[(12)]
$a_{6} = a_{5}$, $b_{6} = c_{5}$ and $c_{6} = b_{5}$.
\end{itemize}
In the cases (2), (3), (5), (7) and (9)--(12), we have no candidates of $\gamma_{4} + \gamma_{5} + \gamma_{6}$.

In the case (1), we have
\begin{itemize}
\item
$\gamma_{4} + \gamma_{5} + \gamma_{6} = + \> \langle u; \omega_{1}, \omega_{2} \rangle - (u; \omega_{0}, \omega_{1}, \omega_{2}) - (u; \omega_{3}, \omega_{2}, \omega_{1})$ or
\item
$\gamma_{4} + \gamma_{5} + \gamma_{6} = - \> \langle u; \omega_{1}, \omega_{2} \rangle + (u; \omega_{0}, \omega_{2}, \omega_{1}) + (u; \omega_{3}, \omega_{1}, \omega_{2})$.
\end{itemize}
In each case, we have $g(\gamma_{4} + \gamma_{5} + \gamma_{6} + \gamma_{7}) \neq 0$ contradicting to Lemma \ref{lem:3-cycle_condition} (iii).

In the case (4), we have
\begin{itemize}
\item
$\gamma_{4} + \gamma_{5} + \gamma_{6} = + \> \langle u; \omega_{0}, \omega_{3} \rangle - (u; \omega_{0}, \omega_{3}, \omega_{2}) - (u; \omega_{3}, \omega_{0}, \omega_{1})$ or
\item
$\gamma_{4} + \gamma_{5} + \gamma_{6} = - \> \langle u; \omega_{0}, \omega_{3} \rangle + (u; \omega_{3}, \omega_{0}, \omega_{2}) + (u; \omega_{0}, \omega_{3}, \omega_{1})$.
\end{itemize}
In each case, we have $g(\gamma_{4} + \gamma_{5} + \gamma_{6} + \gamma_{7}) \neq 0$ contradicting to Lemma \ref{lem:3-cycle_condition} (iii).

In the case (6), we have
\begin{itemize}
\item
$\gamma_{4} + \gamma_{5} + \gamma_{6} = + \> \langle u; \omega_{1}, \omega_{3} \rangle - (u; \omega_{0}, \omega_{1}, \omega_{3}) - (u; \omega_{0}, \omega_{3}, \omega_{2})$,
\item
$\gamma_{4} + \gamma_{5} + \gamma_{6} = + \> \langle u; \omega_{0}, \omega_{2} \rangle - (u; \omega_{3}, \omega_{2}, \omega_{0}) - (u; \omega_{3}, \omega_{0}, \omega_{1})$,
\item
$\gamma_{4} + \gamma_{5} + \gamma_{6} = - \> \langle u; \omega_{2}, \omega_{3} \rangle + (u; \omega_{0}, \omega_{2}, \omega_{3}) + (u; \omega_{0}, \omega_{3}, \omega_{1})$, or
\item
$\gamma_{4} + \gamma_{5} + \gamma_{6} = - \> \langle u; \omega_{0}, \omega_{1} \rangle + (u; \omega_{3}, \omega_{1}, \omega_{0}) + (u; \omega_{3}, \omega_{0}, \omega_{2})$.
\end{itemize}
The last case is nothing less than the case (\ref{eq:O6_VIII_i}) or (\ref{eq:O6_VIII_i'}).
In the other cases, we have $g(\gamma_{4} + \gamma_{5} + \gamma_{6} + \gamma_{7}) \neq 0$ contradicting to Lemma \ref{lem:3-cycle_condition} (iii).

In the case (8), we have
\begin{itemize}
\item
$\gamma_{4} + \gamma_{5} + \gamma_{6} = + \> \langle u; \omega_{1}, \omega_{3} \rangle - (u; \omega_{1}, \omega_{3}, \omega_{2}) - (u; \omega_{0}, \omega_{1}, \omega_{2})$,
\item
$\gamma_{4} + \gamma_{5} + \gamma_{6} = + \> \langle u; \omega_{0}, \omega_{2} \rangle - (u; \omega_{2}, \omega_{0}, \omega_{1}) - (u; \omega_{3}, \omega_{2}, \omega_{1})$,
\item
$\gamma_{4} + \gamma_{5} + \gamma_{6} = - \> \langle u; \omega_{0}, \omega_{1} \rangle + (u; \omega_{1}, \omega_{0}, \omega_{2}) + (u; \omega_{3}, \omega_{1}, \omega_{2})$, or
\item
$\gamma_{4} + \gamma_{5} + \gamma_{6} = - \> \langle u; \omega_{2}, \omega_{3} \rangle + (u; \omega_{2}, \omega_{3}, \omega_{1}) + (u; \omega_{0}, \omega_{2}, \omega_{1})$.
\end{itemize}
In each case, we have $g(\gamma_{4} + \gamma_{5} + \gamma_{6} + \gamma_{7}) \neq 0$ contradicting to Lemma \ref{lem:3-cycle_condition} (iii).

\subsection{Case (ii)}

If $c = \omega_{2}$, we let
\begin{align*}
 & \gamma_{4}^{\prime \prime} + \gamma_{5}^{\prime \prime} + \gamma_{6}^{\prime \prime} = \\
 & + (n + 1, u^{\omega_{0}}; \omega_{1}, \omega_{0}, \omega_{2}) - (n + 1, u^{\omega_{1}}; \omega_{5}, \omega_{0}, \omega_{2}) - (n + 1, u^{\omega_{2}}; \omega_{1}, \omega_{3}, \omega_{1}).
\end{align*}
Then, as we saw in the proof of Proposition \ref{prop:O6_VII}, we have
\[
 \gamma_{1} + \gamma_{2} + \gamma_{3} + \gamma_{4}^{\prime \prime} + \gamma_{5}^{\prime \prime} + \gamma_{6}^{\prime \prime}
 = \partial (n, u; \omega_{0}, \omega_{1}, \omega_{0}, \omega_{2}).
\]
Therefore, for the same reason as in the case (i), we have $\eta(\pi(\gamma)) = 0$ contradicting to our assumption.
Thus, we assume that $c \neq \omega_{2}$.

Consider the following four 3-terms
\begin{align*}
 \lambda_{4} & = + (n + 1, u^{\omega_{0}}; \omega_{1}, \omega_{0}, c), &
 \lambda_{5} & = + (n + 1, u^{\omega_{0}}; \omega_{0}, \omega_{2}, c), \\
 \lambda_{6} & = - (n + 1, u^{\omega_{1}}; \omega_{5}, \omega_{0}, c), &
 \lambda_{7} & = - (n + 1, u^{c}; \omega_{0}^{c}, \omega_{1}^{c}, \omega_{0}^{c}).
\end{align*}
We note that if
\begin{equation}
 \gamma_{4} + \gamma_{5} + \gamma_{6} + \gamma_{7} = \lambda_{4} + \lambda_{5} + \lambda_{6} + \lambda_{7}, \label{eq:O6_VIII_ii}
\end{equation}
then we have $\gamma = \partial (n, u; \omega_{0}, \omega_{1}, \omega_{0}, c)$ contradicting to our assumption.

In the case other than (a) or (b) introduced in the proof of Lemma \ref{lem:O6_f3_usual}, for the same reason as in the case (i), we may assume that
\begin{align*}
 f(\gamma_{4} + \gamma_{5}) & = f(\lambda_{4} + \lambda_{5}), &
 \gamma_{6} & = \lambda_{6}, &
 \gamma_{7} & = - \> \langle n + 1, u^{c}; \omega_{0}^{c}, \omega_{1}^{c} \rangle.
\end{align*}
We note that if
\begin{equation}
 \gamma_{4} + \gamma_{5} + \gamma_{6} + \gamma_{7} = \lambda_{5} + \lambda_{6} + \lambda_{7} - (n + 1, u^{c}; \omega_{1}^{c}, \omega_{0}^{c}, \omega_{1}^{c}), \label{eq:O6_VIII_ii'}
\end{equation}
we have $g(\gamma_{4} + \gamma_{5} + \gamma_{6} + \gamma_{7}) \neq 0$ contradicting to Lemma \ref{lem:3-cycle_condition} (iii).
Since $f(\gamma_{4} + \gamma_{5}) = f(\lambda_{4} + \lambda_{5})$, we have the cases (A) and (B) replacing $\delta_{i}$ with $\lambda_{i}$ other than $\gamma_{4} + \gamma_{5} = \lambda_{4} + \lambda_{5}$.
For the same reason as in the case (i), we have no candidates of $\gamma_{4} + \gamma_{5}$ in the case (A), and
\[
 \gamma_{4} + \gamma_{5} = + \> (n + 1, u^{\omega_{0}}; \omega_{1}, \omega_{0}, \omega_{2}) + (n + 1, u^{\omega_{0}}; \omega_{1}, \omega_{2}, c)
\]
in the case (B) which yields $g(\gamma_{4} + \gamma_{5} + \gamma_{6} + \gamma_{7}) \neq 0$ contradicting to Lemma \ref{lem:3-cycle_condition} (iii).

In the case (b) introduced in the proof of Lemma \ref{lem:O6_f3_usual}, for the same reason as in the case (i), we may assume that
\begin{align*}
 f(\gamma_{4} + \gamma_{5}) & = f(\lambda_{4} + \lambda_{5}), &
 f(\gamma_{6} + \gamma_{7}) & = f(\lambda_{6} + \lambda_{7}),
\end{align*}
as mentioned in the proof of Lemma \ref{lem:O6_f3_usual}.
Therefore, in light of the above argument, we have
\begin{itemize}
\item[(C')]
$\gamma_{4} + \gamma_{5} = \lambda_{4} + \lambda_{5}$ or
\item[(D')]
$\gamma_{4} + \gamma_{5} = + \> (n + 1, u^{\omega_{0}}; \omega_{1}, \omega_{0}, \omega_{2}) + (n + 1, u^{\omega_{0}}; \omega_{1}, \omega_{2}, \omega_{4})$, and
\end{itemize}
(E), (F) or (G) replacing $\delta_{i}$ with $\lambda_{i}$.
For the same reason as in the case (i), we have
\[
 \gamma_{6} + \gamma_{7} = \lambda_{6} - (n + 1, u; \omega_{1}, \omega_{2}, \omega_{1})
\]
in the case (F), and no candidates of $\gamma_{6} + \gamma_{7}$ in the case (G).
Therefore, it is sufficient to consider the cases (C')-(E), (C')-(F), (D')-(E) and (D')-(F).
The first and second cases are nothing less than the cases (\ref{eq:O6_VIII_ii}) and (\ref{eq:O6_VIII_ii'}), respectively.
It is easy to see that we have $g(\gamma_{4} + \gamma_{5} + \gamma_{6} + \gamma_{7}) \neq 0$ contradicting to Lemma \ref{lem:3-cycle_condition} (iii) in the other cases.

In the case (a) introduced in the proof of Lemma \ref{lem:O6_f3_usual}, for the same reason as in the case (i), we may assume ($\alpha$) or ($\beta$) replacing $\delta_{i}$ with $\lambda_{i}$.

In the case ($\alpha$), we have the case (H) or (I) replacing $\delta_{i}$ with $\lambda_{i}$ by Lemma \ref{lem:f-connected1}.
In the case (H), since mutually distinct four elements of $O_{6}$ are appeared in any two of $\lambda_{4}, \lambda_{5}$ and $\lambda_{6}$ as entries of their colors, we have no candidates of $\gamma_{4} + \gamma_{5}$ by Lemma \ref{lem:f-connected3}.
In the case (I), since mutually distinct five elements of $O_{6}$ are appeared in $\lambda_{4}, \lambda_{5}$ and $\lambda_{7}$ as entries of their colors, we have no candidates of $\gamma_{4} + \gamma_{5}$ by Lemma \ref{lem:f-connected5}.

In the case ($\beta$), since reduced $f(\lambda_{4} + \lambda_{5} + \lambda_{7})$ has three positive 2-terms and three negative 2-terms (see the formula (\ref{eq:O6_f3_usual_ii_a})), we have
\[
 (N_{01}^{+}, N_{01}^{-}, N_{23}^{+}, N_{23}^{-}) = (1, 0, 0, 2) \ \text{or} \ (0, 1, 2, 0)
\]
in a similar way as in the case (i).
Therefore, we may assume the formula (\ref{eq:O6_VIII_beta}).
Since we have the formula (\ref{eq:O6_VIII_beta_f}), $(a_{i}, c_{i}) \neq (b_{i}, c_{i}), (a_{i}, b_{i})$ ($i = 5, 6$), and reduced $f(\gamma_{4} + \gamma_{5} + \gamma_{6})$ has three positive 2-terms and three negative 2-terms, we essentially have the following cases.
\begin{itemize}
\item[(13)]
$(b_{5}, c_{5}) = (b_{4}, a_{4})$ or $(a_{4}, b_{4})$.
\item[(14)]
$(a_{5}, b_{5}) = (b_{4}, a_{4})$ or $(a_{4}, b_{4})$.
\item[(15)]
$(b_{5}, c_{5}) = (a_{6}, c_{6})$.
\item[(16)]
$(a_{5}, b_{5}) = (a_{6}, c_{6})$.
\end{itemize}
In the cases (13) and (14), we have no candidates of $\gamma_{4} + \gamma_{5} + \gamma_{6}$.
In the case (15), we have
\[
 \gamma_{4} + \gamma_{5} + \gamma_{6} = - \> \langle u; \omega_{0}, \omega_{5} \rangle + (u; \omega_{1}, \omega_{0}, \omega_{3}) + (u; \omega_{0}, \omega_{2}, \omega_{3})
\]
which is nothing less than the case (\ref{eq:O6_VIII_ii}) or (\ref{eq:O6_VIII_ii'}).
In the case (16), we have
\[
 \gamma_{4} + \gamma_{5} + \gamma_{6} = - \> \langle u; \omega_{0}, \omega_{5} \rangle + (u; \omega_{1}, \omega_{2}, \omega_{3}) + (u; \omega_{1}, \omega_{0}, \omega_{2})
\]
which yields $g(\gamma_{4} + \gamma_{5} + \gamma_{6} + \gamma_{7}) \neq 0$ contradicting to Lemma \ref{lem:3-cycle_condition} (iii).

\section{Upper bounds of the lengths of the 3-cocycles}
\label{sec:upper_bounds_of_zeta_and_eta}

The aim of this section is to prove Theorems \ref{thm:R7} and \ref{thm:O6} giving upper bounds of the lengths of the Mochizuki 3-cocycle $\zeta$ of the 7-dihedral quandle $R_{7}$ and the 3-cocycle $\eta$ of the octahedral quandle $O_{6}$.
We also see that those proofs naturally yield the claims in Corollaries \ref{cor:2-twist-spun_5_2} and \ref{cor:4-twist-spun_3_1}.
To achieve the upper bounds, we start with reviewing that we have an inequality between the length of a 3-cocycle of a quandle and the triple point number of a surface knot, which was given by Satoh in \cite{Sat2016}.
We will use several basic terminologies and facts on surface knots without explanations.
We refer the reader \cite{CS1998, Kam2017} for more details about surface knots.

Let $F$ be an oriented surface knot and $D$ a diagram of $F$.
Consider to assign elements of a quandle $X$ to the sheets of $D$.
This assignment is said to be an \emph{$X$-coloring} of $D$ if it satisfies the condition depicted in the left-hand side of Figure \ref{fig:coloring} around each double point of $D$.
In the figure, the arrow perpendicular to the upper sheet denotes the normal orientation of the sheet, and $a, b$ and $a^{b}$ the elements of $X$ assigned to the corresponding sheets.
Associated with a 3-cocycle $\theta$ of $X$, we define the \emph{weight} of an $X$-coloring of $D$ by summing up the \emph{local weights} $w$ depicted in the middle and right-hand side of Figure \ref{fig:coloring} over all triple points of $D$.
It is known that the multi-set consisting of weights of all $X$-colorings of $D$ associated with $\theta$ is invariant under Roseman moves (see \cite{Kam2017} for example).
We call this multi-set a \emph{cocycle invariant} of $F$ associated with $\theta$.
\begin{figure}[htbp]
 \centering
 \includegraphics[scale=0.25]{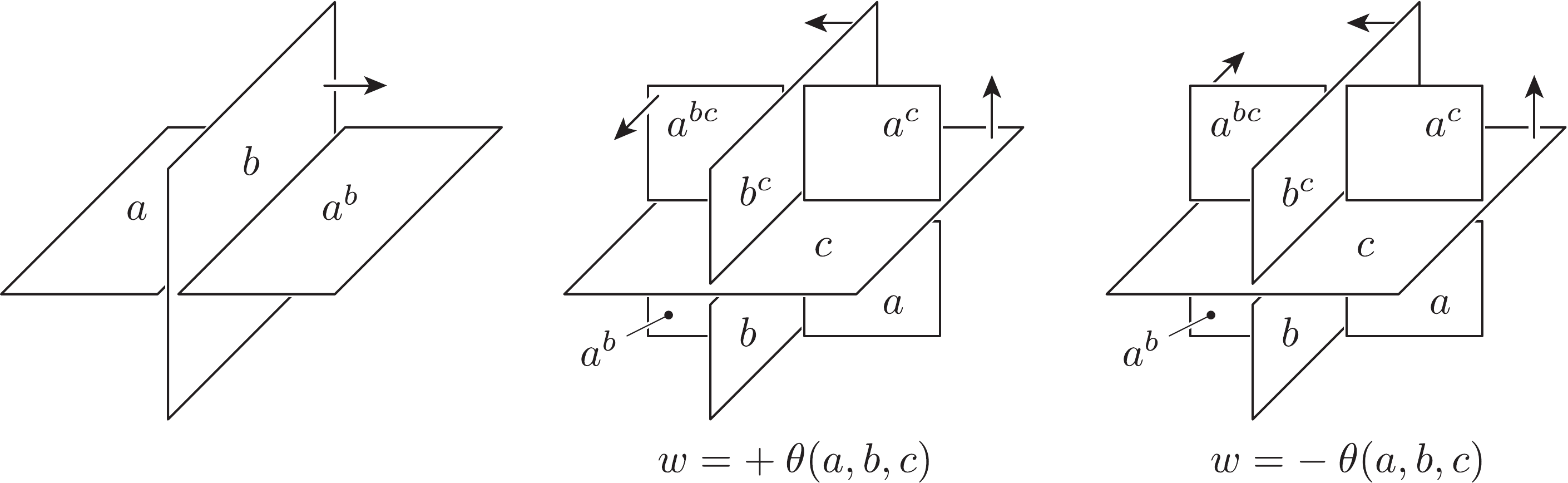}
 \caption{The $X$-coloring condition (left) and local weights derived from positive and negative triple points (middle and right).}
 \label{fig:coloring}
\end{figure}

Let $t(F)$ denote the triple point number of $F$, that is, the minimal number of triple points among all diagrams of $F$.
Then, Satoh showed in \cite{Sat2016} the following theorem.

\begin{theorem}[Theorem 2.2 of \cite{Sat2016}]
\label{thm:upper_bound}
Let $X$ be a quandle, $\theta$ a 3-cocycle of $X$, and $F$ an oriented surface knot.
If the cocycle invariant of $F$ associated with $\theta$ contains a non-zero element, then we have $l(\theta) \leq t(F)$.
\end{theorem}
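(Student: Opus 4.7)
The claim is essentially Theorem 2.2 of \cite{Sat2016}, so the plan is to reproduce Satoh's construction adapted to the framework of an $X$-set. First I would fix a diagram $D$ of $F$ realizing the minimum $t(F)$, and fix an $X$-coloring $C$ of $D$ whose contribution to the cocycle invariant is non-zero; such a coloring exists by hypothesis. The $X$-set $S$ will be taken to be the set of connected components (``regions'') of the complement in $\mathbb{R}^{4}$ of the projection of $F$, with the right action of $G(X)$ defined as follows: an element $a \in X$ (viewed as a generator of $G(X)$) sends a region $s$ to the region obtained by crossing the sheet labeled by $a$ in the direction of its normal orientation. The coloring condition at double points guarantees that this gives a well-defined right $G(X)$-action on $S$.

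Next, for each triple point $t$ of $D$, read off the colors $(a_{t}, b_{t}, c_{t})$ of the bottom/middle/top sheets, the region $s_{t}$ adjacent to the triple point opposite to the normal directions of the three sheets, and the sign $\varepsilon_{t} = \pm 1$ determined by the orientations. I will then set
\[
 \gamma \;=\; \sum_{t} \varepsilon_{t}\,(s_{t};\,a_{t}, b_{t}, c_{t}) \;\in\; C^{Q}_{3}(X)_{S},
\]
the sum running over all triple points of $D$. By construction $\gamma$ has at most $t(F)$ summands, so $l(\gamma) \leq t(F)$. Moreover $\theta(\pi(\gamma))$ is precisely the weight of the coloring $C$ associated with $\theta$, hence non-zero.

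The key step, and the main obstacle, is to verify that $\gamma$ is a $3$-cycle, i.e.\ $\partial \gamma = 0$. This is the content of the classical ``cocycle invariant'' argument, but must now be checked at the level of $X$-set chains rather than just $X$-chains. The verification is carried out locally along the singular set of $D$: along each arc of double points both $f$- and $g$-contributions from neighbouring triple points cancel in pairs because opposite sides of the double arc produce the same two-term with opposite signs and with regions related by the appropriate $G(X)$-action; at each branch point the three adjacent triple points produce degenerate terms which are killed by the convention that 2-terms with repeated middle entries are discarded; and triple points themselves produce the six boundary 2-terms whose global cancellation follows from the Reidemeister-like relations encoded in $\partial$. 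I would organise this verification by checking the four local models (double arc, branch point, triple point, and the types-I/II/III Roseman neighbourhoods that connect them) exactly as in \cite{Sat2016}, and conclude $\partial \gamma = 0$.

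Finally, assembling the pieces: since $\gamma$ is a 3-cycle on $(X,S)$ with $\theta(\pi(\gamma)) \neq 0$, the definition of $l(\theta,S)$ gives $l(\theta,S) \leq l(\gamma) \leq t(F)$, and taking the supremum over all $X$-sets yields $l(\theta) \leq t(F)$. The only delicate point beyond bookkeeping is the local verification that $\partial \gamma = 0$; once that is in hand, the inequality is immediate. I would present the argument compactly by citing Satoh's Theorem 2.2 of \cite{Sat2016} for the cycle verification, and only spell out the straightforward parts above.
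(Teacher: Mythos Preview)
The paper itself omits the proof of this theorem, simply citing Satoh's original. Your outline captures the right geometric idea---build a 3-chain from the triple points of a minimal diagram together with region data, check it is a cycle, and observe that its length is at most $t(F)$---but the final logical step is backwards and constitutes a genuine gap.

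You fix one particular $X$-set $S$ (the set of regions) and deduce $l(\theta,S)\le t(F)$. Since $l(\theta)=\max_{S'} l(\theta,S')$, an upper bound for a \emph{single} $S$ gives no upper bound for $l(\theta)$; ``taking the supremum over all $X$-sets'' of an inequality that holds only for your chosen $S$ yields nothing. What the argument actually requires is the inequality $l(\theta,S')\le t(F)$ for \emph{every} $X$-set $S'$. The fix is to let $S'$ be an arbitrary nonempty $X$-set, choose any $s_{0}\in S'$, and produce a \emph{shadow coloring} of the regions of $D$ by elements of $S'$: assign $s_{0}$ to the unbounded region and propagate across sheets by the rule $s\mapsto s^{a}$ when crossing a sheet colored $a$ along its normal. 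Well-definedness follows because the only local obstruction, around a double curve, is the identity $ab=b\,a^{b}$ in $G(X)$, which is exactly the defining relation. With this region coloring in hand your chain $\gamma$ lives in $C^{Q}_{3}(X)_{S'}$, the verification $\partial\gamma=0$ goes through unchanged, and you obtain $l(\theta,S')\le t(F)$ for every $S'$, hence $l(\theta)\le t(F)$.

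A secondary issue: your description of the $G(X)$-action on the set of regions (``cross the sheet labeled by $a$'') is not well defined as stated, since a given region need not be adjacent to any sheet colored $a$, and may be adjacent to several. The shadow-coloring formulation above avoids this by never trying to make the region set itself into an $X$-set; it only uses that an arbitrary $X$-set can be used to color the regions once an $X$-coloring of the sheets is fixed. (Also, the regions live in $\mathbb{R}^{3}$, the target of the projection, not in $\mathbb{R}^{4}$.)
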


We omit the proof.
In light of this theorem and Theorem \ref{thm:lower_bound_R7} or \ref{thm:lower_bound_O6}, if there is a surface knot whose triple point number is at most eight and cocycle invariant associated with $\zeta$ or $\eta$ contains a non-zero element, then we have Theorem \ref{thm:R7} or \ref{thm:O6}, respectively.
Thus, our last task is to give us such surface knots concretely.
To accomplish it, we start with reviewing the following methods to obtain surface knots.

Let $\mathbb{R}^{3}_{+}$ denote the upper half-space $\{ (x, y, z) \in \mathbb{R}^{3} \mid z \geq 0 \}$.
Consider a knotted arc $k$ which is smoothly and properly embedded in $\mathbb{R}^{3}_{+}$.
We let $K$ denote the classical knot which is obtained from $k$ by gluing its two ends together.
Rotate $\mathbb{R}^{3}_{+}$ 360 degrees in $\mathbb{R}^{4}$ along $\partial \mathbb{R}^{3}_{+}$.
Then, the locus of $k$ yields a $S^{2}$-knot, known as the \emph{spun} $K$ \cite{Art1926}.
Furthermore, let $B$ be a 3-ball in the interior of $\mathbb{R}^{3}_{+}$ which completely contains the knotted part of $k$.
We assume that $k$ intersects with $\partial B$ only at two antipodal points.
Rotate $B$ $360 n$ degrees ($n \geq 1$) along the line passing through the antipodal points, while rotating $\mathbb{R}^{3}_{+}$ 360 degrees.
Then the locus of $k$ also yields a $S^{2}$-knot, known as the \emph{$n$-twist-spun} $K$ \cite{Zee1965}.
Let $\Pi$ be an orthogonal projection from $\mathbb{R}^{3}_{+}$ to $\mathbb{R}^{2}_{+} = \{ (x, y) \in \mathbb{R}^{2} \mid y \geq 0 \}$ given by $\Pi(x, y, z) = (x, z)$.
Rotating $\mathbb{R}^{2}_{+}$ 360 degrees in $\mathbb{R}^{3}$ along $\partial \mathbb{R}^{2}_{+}$ simultaneously with $\mathbb{R}^{3}_{+}$, we obtain the locus of $\Pi(k)$ in $\mathbb{R}^{3}$.
Obviously, this locus of $\Pi(k)$ with height information yields a diagram of the $S^{2}$-knot in nearly every case.
In the remaining, we call the one-parameter family of $\Pi(k)$ with height information a \emph{motion picture} of the $S^{2}$-knot.

Satoh and Shima essentially shown in \cite{SS2004} that Figure \ref{fig:motion_picture_of_3_1} depicts a motion picture of the 4-twist-spun trefoil.
It is easy to see (as mentioned in Lemma 6.1 of \cite{SS2004}) that the diagram which Figure \ref{fig:motion_picture_of_3_1} yields has eight triple points, each of which respectively appears during (a), (b), (d), (e), (g), (h), (j) or (k).
Therefore, the triple point number of the 4-twist-spun trefoil is at most eight.
Similarly, in light of Theorem 1 of Yashiro's paper \cite{Yas2005}, Figure \ref{fig:motion_picture_of_5_2} depicts a motion picture of the 2-twist-spun $5_{2}$-knot.
It is easy to see that the diagram which Figure \ref{fig:motion_picture_of_5_2} yields has eight triple points, two of which respectively appear during (a), (b), (d) or (e).
Therefore, the triple point number of the 2-twist-spun $5_{2}$-knot is also at most eight.
\begin{figure}[htbp]
 \centering
 \includegraphics[scale=0.2]{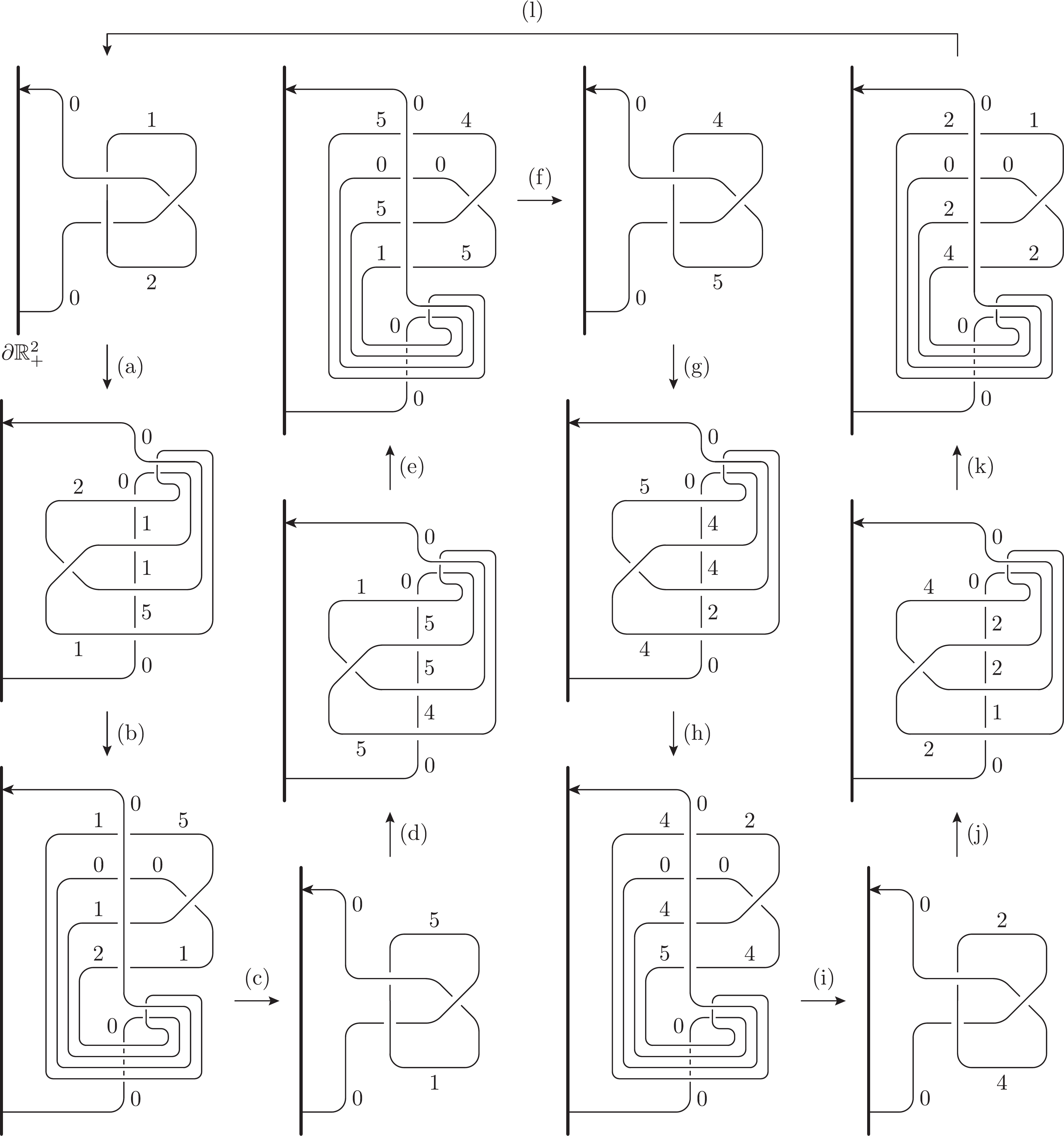}
 \caption{A motion picture of the 4-twist-spun trefoil.}
 \label{fig:motion_picture_of_3_1}
\end{figure}
\begin{figure}[htbp]
 \centering
 \includegraphics[scale=0.2]{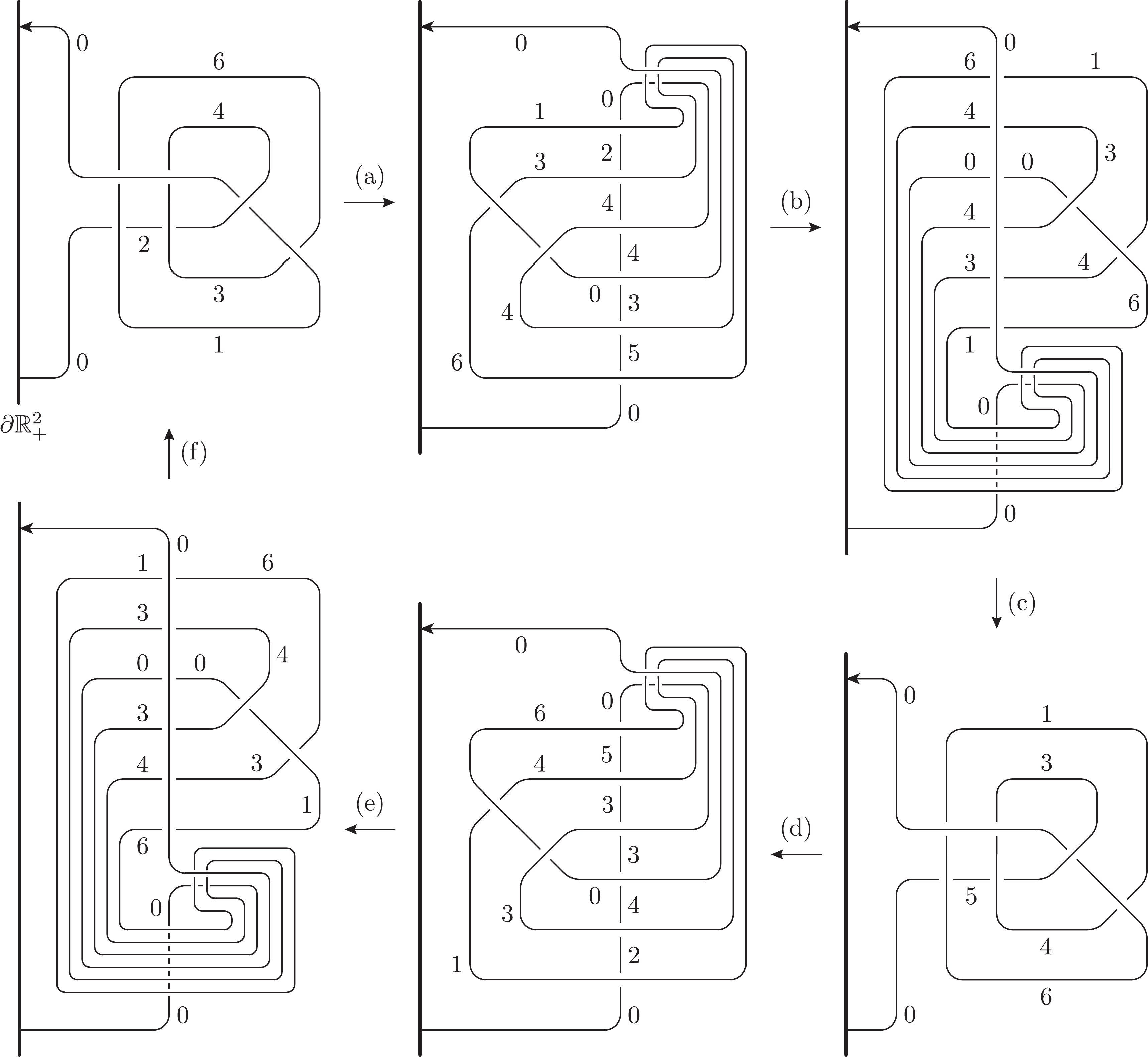}
 \caption{A motion picture of the 2-twist-spun $5_{2}$.}
 \label{fig:motion_picture_of_5_2}
\end{figure}

We are now ready to prove Theorems \ref{thm:R7} and \ref{thm:O6}.

\begin{proof}[Proof of Theorem \ref{thm:R7}]
Let $D$ denote the diagram of the 2-twist-spun $5_{2}$-knot which Figure \ref{fig:motion_picture_of_5_2} yields.
Assign elements of $R_{7}$ to sheets of $D$ as depicted in Figure \ref{fig:motion_picture_of_5_2}.
Then, this assignment is uniquely extended to an $R_{7}$-coloring of $D$.
Furthermore, it is routine to check that the weight of this $R_{7}$-coloring associated with $\zeta$ is
\begin{align*}
 & + \zeta(0, 6, 1) - \zeta(5, 1, 4) - \zeta(1, 6, 0) + \zeta(6, 3, 0) \\
 & \quad + \zeta(0, 1, 6) - \zeta(2, 6, 3) - \zeta(6, 1, 0) + \zeta(1, 4, 0)) = 6.
\end{align*}
Thus, the cocycle invariant of the 2-twist-spun $5_{2}$-knot associated with $\zeta$ contains a non-zero element.
Therefore, in light of Theorem \ref{thm:upper_bound}, we have
\begin{equation}
 l(\zeta) \leq t(\text{2-twist-spun $5_{2}$-knot}) \leq 8. \label{eq:5_2}
\end{equation}
On the other hand, in light of Theorem \ref{thm:lower_bound_R7}, we have
\[
 8 \leq l(\zeta, \mathbb{Z} \times R_{7}) \leq l(\zeta).
\]
We thus have $l(\zeta) = 8$.
\end{proof}

\begin{proof}[Proof of Theorem \ref{thm:O6}]
Let $D$ denote the diagram of the 4-twist-spun trefoil which Figure \ref{fig:motion_picture_of_3_1} yields.
Assign elements of $O_{6}$ to sheets of $D$ as depicted in Figure \ref{fig:motion_picture_of_3_1}.
Then, this assignment is uniquely extended to an $O_{6}$-coloring of $D$.
Furthermore, it is routine to check that the weight of this $O_{6}$-coloring associated with $\eta$ is
\begin{align*}
 & - \eta(0, 2, 1) + \eta(1, 5, 0) - \eta(0, 1, 5) + \eta(5, 4, 0) \\
 & \quad - \eta(0, 5, 4) + \eta(4, 2, 0) - \eta(0, 4, 2) + \eta(2, 1, 0)) = 1.
\end{align*}
Thus, the cocycle invariant of the 4-twist-spun trefoil associated with $\eta$ contains a non-zero element.
Therefore, in light of Theorem \ref{thm:upper_bound}, we have
\begin{equation}
 l(\eta) \leq t(\text{4-twist-spun trefoil}) \leq 8. \label{eq:3_1}
\end{equation}
On the other hand, in light of Theorem \ref{thm:lower_bound_O6}, we have
\[
 8 \leq l(\eta, \mathbb{Z} \times O_{6}) \leq l(\eta).
\]
We thus have $l(\eta) = 8$.
\end{proof}

In light of Theorem \ref{thm:R7} or \ref{thm:O6} and inequality (\ref{eq:5_2}) or (\ref{eq:3_1}), we immediately have Corollary \ref{cor:2-twist-spun_5_2} or \ref{cor:4-twist-spun_3_1}, respectively.

\section*{Acknowledgments}
This work was supported by JSPS KAKENHI Grant Number JP25K07014.

\appendix

\section{}
\label{app:counter_example}

The aim of this appendix is to introduce a concrete $R_{3}$-coloring of a diagram of a surface knot, whose existence might contradict to the claim of Theorem 1.1 of Yashiro's paper \cite{Yas2016}.
In another paper \cite{Yas2018}, Yashiro introduced an invariant $\mu$ of surface knots, and showed that the $2k$-twist-spun trefoil has the triple point number $4k$ for each $k \geq 1$ (Corollary 1.2 of \cite{Yas2018}) utilizing this invariant $\mu$.
Although invariance (well-definedness) of $\mu$ is claimed as Lemma 2.5 of \cite{Yas2018}, it seems that the argument in the proof of the lemma depends on exactness of the proof of Theorem 1.1 of \cite{Yas2016}.
Therefore, the existence of this $R_{3}$-coloring might also contradict to invariance of $\mu$.
We will use several terminologies and facts on surface knots and quandle colorings without explanations.
We refer the reader \cite{CS1998, Kam2017} for more details.

Let $\mathbb{R}^{3}_{+}$ denote the upper half-space again and $K$ be an oriented trefoil located in the box $\{ (x, y, z) \in \mathbb{R}^{3}_{+} \mid -1 \leq x \leq 1, \, -1 \leq y \leq 1, \, 2 \leq z \leq 4 \}$.
We assume that the left-hand side of Figure \ref{fig:diagram_of_trefoil_and_part_of_diagram} depicts the diagram of $K$ derived from the projection along the $x$-axis.
Let $F$ be the $T^{2}$-knot which is the locus of $K$ under spinning of $\mathbb{R}^{3}_{+}$ along $\partial \mathbb{R}^{3}_{+}$ in $\mathbb{R}^{4}$.
Here, $T^{2}$ denotes the torus as usual.
Moreover, allocate a 2-sphere $S = \{ (x, y, z, w) \in \mathbb{R}^{4} \mid x = -2, \, y^{2} + (z - 3)^{2} + w^{2} = 2^{2} \}$ in the ambient space of $F$.
Then, we have a $T^{2}$-knot $F^{\prime}$ connecting $S$ and $F$ by a tube so that the right-hand side of Figure \ref{fig:diagram_of_trefoil_and_part_of_diagram} depicts a part of the diagram $D^{\prime}$ of $F^{\prime}$ derived from the projection along the $x$-axis.
Obviously, $F^{\prime}$ is ambient isotopic to $F$.
Assigning elements of $R_{3}$ to the sheets of $D^{\prime}$ as depicted in the right-hand side of Figure \ref{fig:diagram_of_trefoil_and_part_of_diagram}, we have an $R_{3}$-coloring $\mathrm{Col}^{\prime}$ of $D^{\prime}$.
\begin{figure}[htbp]
 \centering
 \includegraphics[scale=0.25]{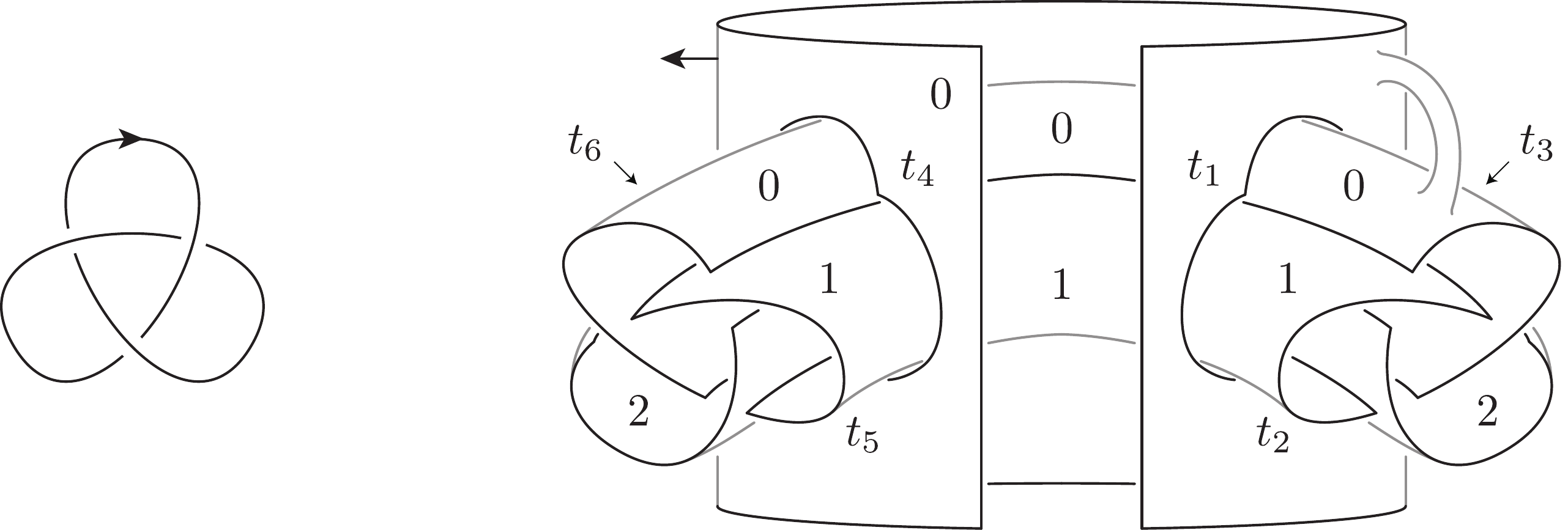}
 \caption{The diagram of $K$ (left) and a part of $D^{\prime}$ (right)}
 \label{fig:diagram_of_trefoil_and_part_of_diagram}
\end{figure}

Let us discuss why the existence of $\mathrm{Col}^{\prime}$ might contradict to the claim of Theorem 1.1 of \cite{Yas2016}.
We will use terminologies and notations introduced in \cite{Yas2016} without explanations.
Since the left-hand side of Figure \ref{fig:lower_decker_set_and_rcc_of_D_prime} depicts the double decker diagram of $F^{\prime}$ derived from the projection along the $x$-axis, we have the rectangular-cell-complex $K_{D^{\prime}}$ induced from $D^{\prime}$ depicted in the right-hand side of Figure \ref{fig:lower_decker_set_and_rcc_of_D_prime}.
In the figure, $T^{2}$ is unfolded into a rectangle in a usual way, both of the crossing $p_{i}$ and the rectangle $\sigma_{i}$ correspond to the triple point $t_{i}$ of $D^{\prime}$, and both of the lower decker set and $K_{D^{\prime}}$ are colored by $R_{3}$ related to $\mathrm{Col}^{\prime}$.
Consider the following fundamental chains of $K_{D^{\prime}}$:
\begin{align*}
 c_{1} & = + \> \sigma_{1} + \sigma_{2} + \sigma_{3}, &
 c_{2} & = - \> \sigma_{4} - \sigma_{5} - \sigma_{6}.
\end{align*}
Since
\[
 \mathrm{Col}^{\prime}_{\sharp} (c_{i})
 = (-1)^{i+1} ((2, 2, 1) + (2, 0, 2) + (2, 1, 0))
 = (-1)^{i+1} ((2, 0, 2) + (2, 1, 0)),
\]
it is routine to see that
\begin{align*}
 \partial \circ \mathrm{Col}^{\prime}_{\sharp} (c_{i}) & = 0, &
 \zeta_{3}(\mathrm{Col}^{\prime}_{\sharp} (c_{i})) & = (-1)^{i}.
\end{align*}
Here, $\mathrm{Col}^{\prime}_{\sharp}$ denotes the homomorphism $C^{\prime}_{2}(K_{D^{\prime}}) \to C^{Q}_{3}(R_{3})$ induced from $\mathrm{Col}^{\prime}$.
Therefore, $c_{1}$ and $c_{2}$ are pseudo-cycles in $K_{D^{\prime}}$ associated with $\mathrm{Col}^{\prime}$.\footnote{Although the condition (i) for a fundamental chain $c$ to be a pseudo-cycle associated with a coloring $\mathrm{Col}$ is given by $\mathrm{Col}_{\sharp} \circ \partial (c) = 0$ in Definition 4.2 of \cite{Yas2016}, it seems that this is a typo for $\partial \circ \mathrm{Col}_{\sharp} (c) = 0$ (see also Definition 2.1 of \cite{Yas2018}). Since $\partial (c_{i}) = 0$, our $c_{1}$ and $c_{2}$ are pseudo-cycles in $K_{D^{\prime}}$ associated with $\mathrm{Col}^{\prime}$ even if the condition (i) is correct.}
Since the minimal subcomplexes of $K_{D^{\prime}}$ respectively containing $\{ \sigma_{1}, \sigma_{2}, \sigma_{3} \}$ and $\{ \sigma_{4}, \sigma_{5}, \sigma_{6} \}$ only share a 0-cell, the maximal number of pseudo-cycles in $K_{D^{\prime}}$ associated with $\mathrm{Col}^{\prime}$ is at least two.\footnote{Although the definition of the maximal number of pseudo-cycles seems to be unclear in \cite{Yas2016}, it seems that we should consider ``separated'' pseudo-cycles to count the number (see also Definition 2.2 of \cite{Yas2018}). We note that $+ \> \sigma_{2} + \sigma_{3}$, $- \> \sigma_{5} - \sigma_{6}$, $+ \> \sigma_{1} + \sigma_{2} + \sigma_{3} - \sigma_{4}$, $+ \> \sigma_{1} - \sigma_{4} - \sigma_{5} - \sigma_{6}$, $+ \> \sigma_{2} + \sigma_{3} - \sigma_{4}$ and $+ \> \sigma_{1} - \sigma_{5} - \sigma_{6}$ are also pseudo-cycles in $K_{D^{\prime}}$ associated with $\mathrm{Col}^{\prime}$.}
On the other hand, let $D$ be the diagram of $F$ derived from the projection along the $x$-axis.
Since the left-hand side of Figure \ref{fig:lower_decker_set_and_rcc_of_D} depicts the double decker diagram of $F$ derived from the projection along the $x$-axis, we have the rectangular-cell-complex $K_{D}$ induced from $D$ depicted in the right-hand side of Figure \ref{fig:lower_decker_set_and_rcc_of_D}.
Since each 2-cell of $K_{D}$ is a bubble, for any quandle $X$ and $X$-coloring $\mathrm{Col}$ of $D$, the induced homomorphism $\mathrm{Col}_{\sharp}$ is the zero-map.
Therefore, the maximal number of pseudo-cycles in $K_{D}$ is always zero.
Since $F^{\prime}$ is ambient isotopic to $F$, it might contradict to the claim of Theorem 1.1 of \cite{Yas2016}: the maximal numbers of pseudo-cycles in $K_{D}$ and $K_{D^{\prime}}$, which are respectively associated with $\mathrm{Col}^{\prime}$ and an $R_{3}$-coloring of $D$ corresponding to $\mathrm{Col}^{\prime}$, are the same.
\begin{figure}[htbp]
 \centering
 \includegraphics[scale=0.25]{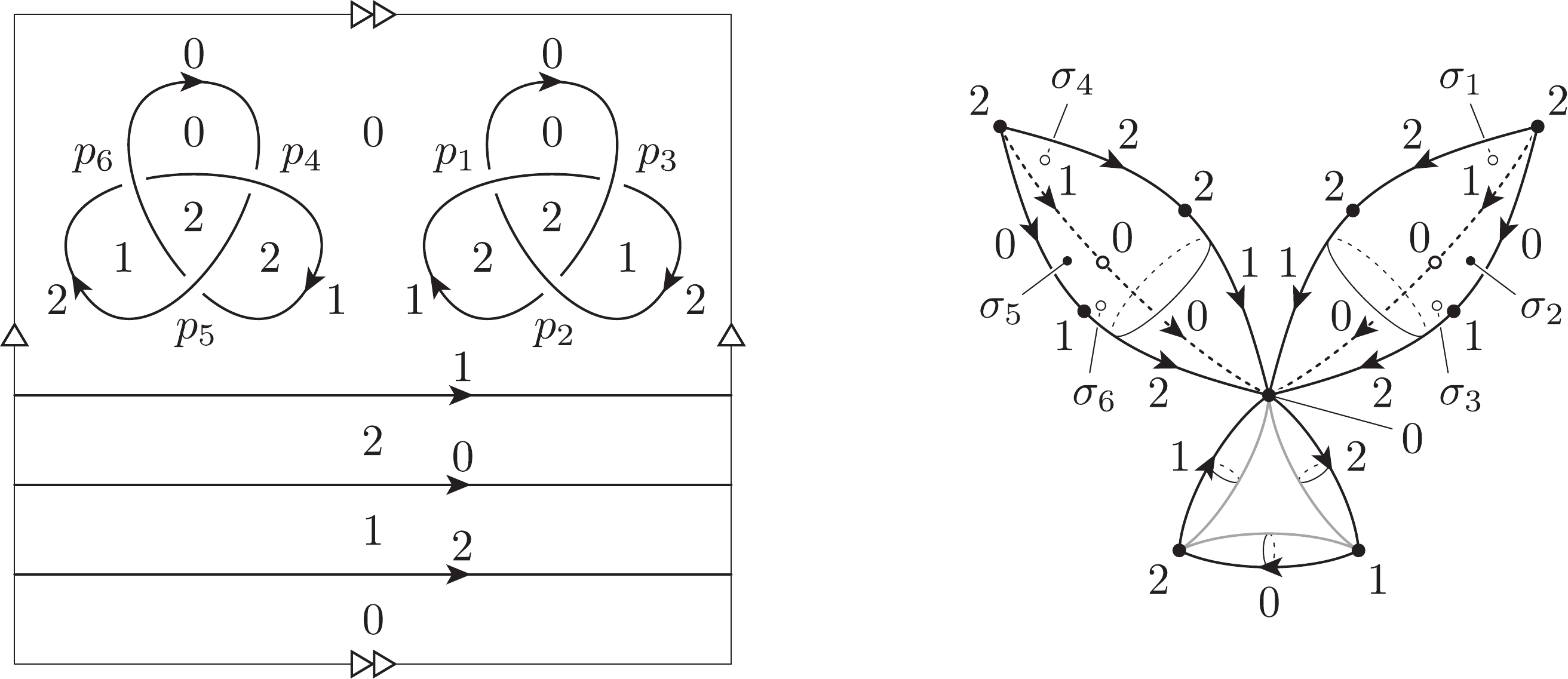}
 \caption{The lower decker diagram of $F^{\prime}$ derived from the projection along the $x$-axis (left) and the rectangular-cell-complex $K_{D^{\prime}}$ induced from $D^{\prime}$ (right)}
 \label{fig:lower_decker_set_and_rcc_of_D_prime}
\end{figure}
\begin{figure}[htbp]
 \centering
 \includegraphics[scale=0.25]{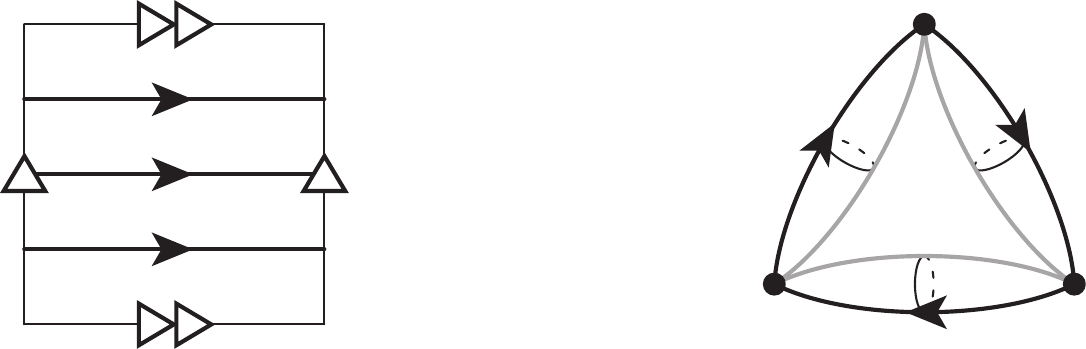}
 \caption{The lower decker diagram of $F$ derived from the projection along the $x$-axis (left) and the rectangular-cell-complex $K_{D}$ induced from $D$ (right)}
 \label{fig:lower_decker_set_and_rcc_of_D}
\end{figure}

\bibliographystyle{amsplain}

\end{document}